\newtheorem{theorem}{Theorem}[section]
\newtheorem{lemma}[theorem]{Lemma}
\theoremstyle{definition}
\newtheorem{definition}[theorem]{Definition}
\newtheorem{example}[theorem]{Example}
\newtheorem{prop}[theorem]{Proposition}
\newtheorem{cor}[theorem]{Corollary}
\theoremstyle{remark}
\newtheorem{remark}[theorem]{Remark}
\numberwithin{equation}{section}
\newcommand{\A}{\mathcal{A}}
\newcommand{\E}{\mathcal{E}}
\newcommand{\dist}{\mathrm{dist}}
\newcommand{\diam}{\mathrm{diam}}
\newcommand{\supp}{\mathrm{supp}}
\newcommand{\dx}{\;\mathrm{d}x}
\newcommand{\dy}{\;\mathrm{d}y}
\newcommand{\dr}{\;\mathrm{d}r}
\newcommand*{\avint}{\mathop{\ooalign{$\int$\cr$-$}}}
\begin{document}

\title[The Biharmonic Alt-Caffarelli Problem]{The Biharmonic Alt-Caffarelli Problem in 2D}

% Info Anna 
\author{Marius Müller}
%%    Address of record for the research reported here
\address{Institut für Analysis, Universität Ulm, 89069 Ulm}
%%    Current address
%\curraddr{Department of Mathematics and Statistics,
%Case Western Reserve University, Cleveland, Ohio 43403}
\email{marius.mueller@uni-ulm.de}
%%    \thanks will become a 1st page footnote.
\thanks{The author is supported by the LGFG Grant (Grant no. 1705 LGFG-E) and would like to thank Anna Dall'Acqua and Fabian Rupp for helpful discussions.}
%%    Information for first author
%\author{Tatsuya Miura}
%%    Address of record for the research reported here
%\address{Department of Mathematics, Louisiana State University, Baton
%Rouge, Louisiana 70803}
%%    Current address
%\curraddr{Department of Mathematics and Statistics,
%Case Western Reserve University, Cleveland, Ohio 43403}
%\email{xyz@math.university.edu}
%%    \thanks will become a 1st page footnote.
%\thanks{The first author was supported in part by NSF Grant \#000000.}
%
%%    Information for second author
%\author{Marius MÃŒller}
%\address{Helmholtzstrasse 18, 89081 Ulm, Germany }
%\email{marius.mueller@uni-ulm.de}
%\thanks{Support information for the second author.}

%    General info
\subjclass[2010]{Primary 35R35, Secondary 35G20, 49J40, 31A05}

%\date{January 1, 2001 and, in revised form, June 22, 2001.}

%\dedicatory{This paper is dedicated to our advisors.}

\keywords{}

%\begin{abstract}
%In this article, we examine a steepest energy descent flow with obstacle constraints in frameworks where no maximum principle is to be expected. The methods used are taken from the theory of gradient flows in metric spaces, but one has to make some adjustments for energies with fully nonlinear Euler Lagrange equations due to the lack of weak lower semicontinutiy of the gradient, which is usually assumed. This assumption can be replaced by a suitable compactness assumption on the discrete  trajectories. The required compactness can be obtained from variational inequalities for some fourth order fully nonlinear flows with navier boundary conditions, especially for the elastic flow of graphs, as we will outline. Since certain quantities can be controlled uniformly in time we obtain a subconvergence result under the condition that the graph does not develop vertical parts for large times. This retrieves the results for minimizers of the static problem obtained in \cite{Anna} and [MY FIRST ARITICLE]. [Conditions on initial data and obstacles that exclude development of vertical parts can also be formulated].  
%\end{abstract}
\begin{abstract}
We examine a variational free boundary problem of Alt-Caffarelli type for the biharmonic operator with Navier boundary conditions in two dimensions. We show interior $C^2$-regularity of minimizers and that the free boundary consists of finitely many $C^2$-hypersurfaces. With the aid of these results, we can prove that minimizers are in general not unique. We investigate radial symmetry of minimizers and compute radial solutions explicitly.
\end{abstract}
\maketitle

\section{Introduction}
\subsection{History and Context}
This article deals with a higher order version of the Alt-Caffarelli problem, which is a free boundary problem posed in \cite{Alt}. The classical first-order formulation can be understood as a variational Dirichlet problem with `adhesion' term. More exactly, the energy the authors consider is given by 
\begin{equation*}
\E_{AC}(u) := \int_\Omega |\nabla u |^2 \dx + |\{ x \in \Omega : u(x) > 0 \}|
\end{equation*}
 where $u \in W^{1,2}(\Omega)$ is such that $u - u_0 \in W_0^{1,2}(\Omega)$ for some given sufficently regular positive function $u_0$. Here, $|\cdot|$ denotes the Lebesgue measure and $\Omega \subset \mathbb{R}^n$ is some sufficiently regular domain. The two summands of $\E_{AC}$ impose competing conditions on minimizers: The Dirichlet term becomes small for functions that do not `vary too much' and the measure term (that we call \emph{adhesion term}) becomes small if the function is nonpositive in a large subregion of $\Omega$. Minimizers have to find a balance between these two terms.

%The Lebesgue measure term causes that $\E_{AC}$ is not Gateaux-differentiable and therefore no Euler-Lagrange equation can be found. 
The measure penalization can be understood as an adhesion to the zero level: Indeed, the lattice operations on $W^{1,2}$ imply that each minimizer of $\E_{AC}$ is nonnegative.  Given this, a minimizer $u$ divides $\Omega$ into two regions, namely $\{ u = 0 \}$, the so-called nodal set, and $\{ u > 0 \}$. The interface between the two regions is then a free boundary. Because of this structure, the Alt-Caffarelli problem is also called \emph{`adhesive free boundary problem'}.

%In \cite{Alt}, the authors use blow-up techniques and methods of geometric measure theory to find that minimizers grow linearly away from the free boundary. This behavior is fundamentally different from the Dirichlet obstacle problem, where the growth away from the free boundary has to be quadratic, see \cite{Caffaobs}.
% With a careful analysis of harmonic functions with linear growth, the authors obtain a representation of $u$ that allows a more precise characterization of the free boundary, see \cite[Theorem 4.5]{Alt}. A classification of possible blow-up limits in dimension 2 gives finally a regularity statement for the free boundary, see \cite[Theorem 8.3]{Alt}. For a more detailed overview, see also the monography \cite{Salsa}.

%Alt and Caffarelli applied their results to describe jets of fluids surrounded by air in celebrated articles with rich industrial applications, see \cite{Alt1}. For a description of the physical background see also \cite{Friedmanbook}. 

More recently, the biharmonic Alt-Caffarelli problem, which is also our object of study, has raised a lot of interest, cf. \cite{Valdinoci} and  \cite{ValdinociSing}. Here the energy reads 
\begin{equation*}
\E_{BAC}(u) := \int_\Omega |\Delta u |^2 \dx + |\{ x \in \Omega : u(x) > 0 \}| \quad u \in W^{2,2}(\Omega) : u - u_0 \in W_0^{1,2}(\Omega),
\end{equation*}
defined for $u \in W^{2,2}(\Omega)$ that satisfies again $u- u_0 \in W_0^{1,2}(\Omega)$ for $u_0,\Omega$ as above.  From now on we shall also assume that $\Omega \subset \mathbb{R}^2$ since two-dimensionality is essential for our argument.  

The minimization with no derivatives prescribed at the boundary is a weak formulation of Navier boundary conditions, cf. \cite[Chapter 2]{Sweers}. If a minimizer is $u$ is sufficiently regular, one can obtain classical Navier boundary conditions, i.e. $`\Delta u = 0 $ on $\partial \Omega$'.

Just as in the first-order case, $\E_{BAC}$ consists of two competing summands: The first one measures roughly how much a function bends. The second one measures the positivity set.  Minimizers of $\E_{BAC}$ again have to find a balance between `not bending too much' and being nonpositive in a large subregion of $\Omega$. 

 As the authors of \cite{Valdinoci} point out, the structure of the problem is now fundamentally different. Due to the lack of a maximum principle, a minimizer $u$ divides $\Omega$ suddenly into three regions $\{ u= 0 \}$, $\{ u > 0 \}$ and $\{u < 0 \}$. And indeed, as \cite[Proposition B.1]{Valdinoci} highlights, the third region will actually be present. Having three regions means that one can get two interfaces, one between $\{ u > 0 \}$ and $\{ u = 0 \}$ and one between $\{ u= 0 \}$ and $\{ u < 0 \}$, at least in case that $\{ u = 0 \}$ is a 'fat' set with nonempty interior. 
 %It could as well be a fractal and all three regions can have fuzzy boundaries. 
 
A promising technique to examine the boundary is to look at the gradient of a minimizer $u$ on $\{ u= 0 \}$. Recall that in the classical Alt-Caffarelli problem, where one has  nonnegativity of $u$, one can infer that $\nabla u = 0$ at all interface points, at least provided that $u$ is appropriately smooth. The regularity of $u$ was discussed in \cite{Alt} and turned out to be sufficient for this conclusion. 

%In the present problem however, one may find points in both interfaces where the gradient vanishes and other points where it does not. This phenomenon makes the study of the free boundary more involved. In particular, blow-up arguments look entirely different for different vanishing orders.
 
 The goal of this article is to show that $\{ u= 0 \}$ is a $C^2$-smooth manifold and $\nabla u \neq 0 $ on $\{u = 0\}$. Note that this behavior is exactly opposite to the first order problem, which is surprising. This also settles the aforementioned question of how the interfaces look like: There is only one interface of interest, namely the one between $\{u > 0 \}$ and $\{ u< 0 \}$, which is given by $\{u = 0 \}$. Moreover, the nodal set is nowhere 'fat', i.e. its Hausdorff dimension is at most one.
 
Our result can therefore be understood as an improvement of \cite[Theorem 1.10]{Valdinoci} and the following discussion in the special case of two dimensions.
%The authors formulate geometric conditions on $u$ under which the free boundary $\partial \{u > 0 \}$ is a set of Lebesgue measure zero. The findings are valid for arbitrary dimension.
% In the end of \cite{Valdinoci}, the free boundary is also examined more closely in the special case of two dimensions, see \cite[Theorem 1.13, Theorem 1.14]{Valdinoci}. In these theorems, the authors focus on special properties of the singular part of the free boundary, i.e. the set $\{ u = \nabla u = 0\}$, which we prove to be empty in the sequel, cf. Theorem \ref{thm:1.1}. That is our main result. 
%Unfortunately, we believe to have shown that the assumptions of \cite[Theorem 1.13, Theorem 1.14]{Valdinoci} can never be satisfied. (KANN ICH DEN SATZ SO SCHREIBEN ?!? ) 
Two-dimensionality is needed for our argument since it relies on the fact that every minimizer is semiconvex, cf. Lemma \ref{lem:semicon}, which we can prove with methods that do not immediately generalize to higher dimension.   
%To show this one needs the structure of the fundamental solution in dimension two, which allows us to perform a `curvature analysis' of $u$, i.e. to draw conclusions about the eigenvalues of the Hessian $D^2u$. Thereupon, an appropriate version of Aleksandrov's Theorem can be used for blow-up analyses at the free boundary. Possibly, similar blow-up analyses could be carried out in higher dimension, but classifying blow-ups might be more involved without additional regularity. 

The fact that the gradient does not vanish on the free boundary makes the problem fundamentally different from the obstacle problem for the biharmonic operator, which has been studied in a celebrated article by Caffarelli and Friedman in 1979, see \cite{Friedman}. The article was trendsetting for the study of fourth order free boundary problems and gave way to striking recent results in this field, cf. \cite{Aleksanyan}, \cite{Okabe1}, \cite{Okabe2}.% Some techniques applied in these articles will also be useful for the argumentation in the present article. 

%Remarkably, the nonvanishing gradient on $\{ u= 0 \}$ implies that the free boundary is just as regular as $u$ itself, which intertwines two fundamental questions in the context of free boundary problems: regularity of the minimizer and regularity of the free boundary. 

Higher order adhesive free boundary problems have many applications in the context of mathematical physics, for example for the study of elastic bodies adhering to solid substrates, see \cite{Miura1} and \cite{Miura2}. Moreover, the square integral of the Laplacian can be thought of as a linearization of the well-known Willmore energy, see the introduction of \cite{Valdinoci} for more details.

\subsection{Model and Main Results}
For the entire article the given framework is the following.
\begin{definition}(Admissible Set and Energy)\label{def:adm} 
 Let  $\Omega \subset \mathbb{R}^2$ be an open and  bounded domain with $C^2$-boundary. Further, let $u_0 \in C^\infty(\overline{\Omega})$ be such that $(u_0)_{\mid \partial \Omega} \geq \delta > 0 $ for some $\delta > 0 $. Define
\begin{equation*}
\A(u_0) := \{ u \in W^{2,2}(\Omega) : u- u_0 \in W_0^{1,2}(\Omega) \} 
\end{equation*}
and $\E : \A(u_0) \rightarrow \mathbb{R}$ by 
\begin{equation*}
\E(u) := \int_\Omega (\Delta u)^2 \dx + |\{u > 0 \}| .
\end{equation*}
We say that $u \in \A(u_0)$ is a minimizer if 
\begin{equation*}
\E(u) = \inf_{w \in \A(u_0)} \E(w).
\end{equation*}
\end{definition}

\begin{remark}\label{lem:eximin}
Existence of a minimizer $u \in \mathcal{A}(u_0)$ is shown in \cite[Lemma 2.1]{Valdinoci} with standard techniques in the calculus of variations.
\end{remark}

\begin{remark} 
As $\Omega$ is sufficently regular to have a trace operator (see \cite[Theorem 6.3.3]{Willem})  and $u \in W^{2,2}(\Omega) \subset C^{0,\beta}(\overline{\Omega})$ for each $\beta  \in (0,1)$, we get that $u_{\mid \partial \Omega} = u_0$ pointwise. 
\end{remark}
As we mentioned, the main goal of the article is to show 

\begin{theorem}[Regularity and Nodal Set] \label{thm:1.1}
Let $u \in \A(u_0)$ be a minimizer. Then $u \in C^2(\Omega)\cap W^{3,2-\beta}_{loc}(\Omega)$ for each $\beta > 0 $  and there exists a finite number $N \in \mathbb{N}$ such that
\begin{equation}\label{eq:geb}
\{ u <  0 \} = \bigcup_{i = 1}^N  G_i,
\end{equation}
where $G_i$ are disjoint domains with $C^2$-smooth boundary.
Moreover, $\nabla u \neq 0 $ on $\partial \{ u< 0 \} = \{ u = 0 \}$ and $\{u = 0 \}$ has finite $1$-Hausdorff measure. Additionally, $u$ solves 
\begin{equation}\label{eq:bihame}
2\int_\Omega \Delta u \Delta \phi \dx = -  \int_{\{ u = 0 \}} \phi \frac{1}{|\nabla u|}\, \mathrm{d}\mathcal{H}^1 \quad \forall \phi \in W^{2,2}(\Omega) \cap W_0^{1,2}(\Omega).
\end{equation}
\end{theorem}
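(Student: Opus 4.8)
The plan is to build the proof around the first-variation / Euler–Lagrange structure of $\E$, together with the key structural input that every minimizer is semiconvex (Lemma \ref{lem:semicon}) and hence in $C^{1,1}_{loc}(\Omega)$. I would proceed in roughly the following order.

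\emph{Step 1: Preliminary regularity away from the nodal set.} On the open set $\{u>0\}$ the adhesion term is locally constant, so $u$ is biharmonic there, in particular $u\in C^\infty(\{u>0\})$; similarly $u$ is biharmonic on the interior of $\{u\le 0\}$. The only possible loss of regularity is across the nodal set $\{u=0\}$. Semiconvexity gives $u\in W^{2,\infty}_{loc}(\Omega)$, so $\Delta u\in L^\infty_{loc}$ and all second derivatives are bounded; this is the starting point for everything else.

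\emph{Step 2: Obstacle-type comparison and the sign of $\Delta^2 u$.} Comparing $u$ with competitors of the form $u+t\phi$ for $\phi\ge 0$ supported where $u>0$ (no change in $|\{u>0\}|$ to first order there) forces $\Delta^2 u=0$ on $\{u>0\}$; comparing with $\phi\le 0$ there is free. The delicate competitors are those that push $u$ across the zero level. The quantitative gain is that, by a barrier argument using semiconvexity, $u$ cannot touch $0$ with vanishing gradient from the negative side: if $\nabla u(x_0)=0$ and $u(x_0)=0$, semiconvexity would force $u\ge -C|x-x_0|^2$ near $x_0$, and then a careful competitor lowering $u$ on a small ball around $x_0$ decreases the Dirichlet energy by more than the (at most quadratic in the ball radius) cost in the measure term — the standard Alt–Caffarelli scaling $r^n$ vs.\ $r^{n-2}$, here in $n=2$, run to the minimizer's advantage. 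This is the step I expect to be the main obstacle: getting the competitor and the energy bookkeeping sharp enough to rule out degenerate contact, and it is exactly where two-dimensionality (via semiconvexity) is essential.

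\emph{Step 3: Nondegeneracy $\Rightarrow$ smoothness of the nodal set.} Once $\nabla u\neq 0$ on $\{u=0\}$, the implicit function theorem (applied to $u\in C^1$, upgraded along the set using the one-sided $C^\infty$ regularity from Step 1) shows $\{u=0\}$ is locally a $C^1$-hypersurface; a bootstrap using that $u$ is biharmonic on each side with matching $C^{1}$ Cauchy data across the interface promotes this to $C^2$ (and gives $u\in C^2(\Omega)$, with the $W^{3,2-\beta}_{loc}$ statement following because the jump of $\partial_\nu\Delta u$ across the interface produces a bounded-density measure on a rectifiable curve, an $L^{2-\beta}$ but not $L^2$ object). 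Since $\nabla u\neq 0$ on the compact-in-$\Omega$ pieces of $\{u=0\}$, the nodal set has locally finite $\mathcal{H}^1$-measure; combined with the fact (from \cite{Valdinoci}) that $\{u<0\}$ has finite perimeter and is compactly contained in $\Omega$, one gets finitely many connected components $G_1,\dots,G_N$, each a domain with $C^2$ boundary, proving \eqref{eq:geb}, and $\{u=0\}$ has finite $\mathcal H^1$ measure.

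\emph{Step 4: The Euler–Lagrange equation \eqref{eq:bihame}.} Finally, take $\phi\in W^{2,2}\cap W^{1,2}_0$ and compute $\frac{d}{dt}\E(u+t\phi)\big|_{t=0}$. The Dirichlet part gives $2\int_\Omega \Delta u\,\Delta\phi$. For the measure part, $|\{u+t\phi>0\}|-|\{u>0\}|$ is, to first order, an integral over the moving interface $\{u=0\}$: since $u$ is $C^1$ with $\nabla u\neq 0$ there, the level set moves with normal speed $\phi/|\nabla u|$, and the coarea formula yields $\frac{d}{dt}|\{u+t\phi>0\}|\big|_{t=0}=\int_{\{u=0\}}\frac{\phi}{|\nabla u|}\,\mathrm d\mathcal H^1$ — with the sign as in \eqref{eq:bihame} after noting the minimizer must have nonpositive (resp. the appropriate) variation in both directions, forcing equality. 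Setting the total first variation to zero gives \eqref{eq:bihame}.
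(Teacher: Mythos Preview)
The main gap is Step 2. Your sketched argument does not work as stated: lowering $u$ near a point where $u=|\nabla u|=0$ incurs no ``cost'' in the measure term (it can only shrink $|\{u>0\}|$), so the inequality you want to set up points the wrong way; and the ``standard Alt--Caffarelli scaling $r^n$ vs.\ $r^{n-2}$'' is the scaling of the \emph{second-order} problem and has no useful analogue here---in two dimensions the biharmonic energy $\int(\Delta u)^2$ and the measure term scale identically under the natural blow-up, so no purely dimensional argument separates them. Semiconvexity alone gives only the one-sided bound $u\ge -C|x-x_0|^2$ and says nothing about the positive part of $u$ near $x_0$, which is what actually matters for the competitor comparison.

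The paper's route to emptyness of $\{u=|\nabla u|=0\}$ is substantially different and hinges on a fact you never invoke: $(\Delta u)^*$ is a nonnegative \emph{superharmonic} function on $\Omega$ (Corollaries \ref{lem:corsubham} and \ref{cor:suubhaam}). At a Lebesgue point $x_0$ of $D^2u$ in the singular nodal set, a blow-up argument combined with the quantitative bound $\limsup_{\epsilon\to0}|\{0<u<\epsilon\}|/\epsilon<\infty$ (Corollary \ref{cor:posdir}) forces $(D^2u)^*(x_0)$ to be either $0$ or positive definite; the positive-definite case is ruled out by a specific competitor (Lemma \ref{lem:Hesssing}); the zero case gives $(\Delta u)^*(x_0)=0$, and then the strong maximum principle for the superharmonic $(\Delta u)^*$ forces $\Delta u\equiv 0$, hence $u$ harmonic and strictly positive---a contradiction (Lemma \ref{lem:zerohaus}). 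Non-Lebesgue points (where $(\Delta u)^*=\infty$) are shown to be isolated and are removed by replacing $u$ on a small ball with the biharmonic extension of its own $(u,\nabla u)$-trace (Corollary \ref{cor:nosing}). This superharmonicity of $\Delta u$ is a Navier-boundary phenomenon and is the essential mechanism; your proposal misses it entirely. Separately, the paper identifies the Euler--Lagrange equation not by the direct first variation of $|\{u+t\phi>0\}|$ you outline in Step~4, but by first producing the biharmonic measure $\mu$ abstractly and then identifying it via an inner-variation (Noether) identity (Lemmas \ref{lem:noether} and \ref{lem:bihammeasreg}); your Step~4 is a plausible alternative, but only once Step~2 is repaired.
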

Let us remark that for smooth $\Omega$, one can remove "loc" in the $W^{3,2-\beta}$ regularity statement, see Section  \ref{sec:fwts} for details where also Navier boundary conditions are discussed. 
Let us formally motivate the term $\frac{1}{|\nabla u|} d\mathcal{H}^1$ in \eqref{eq:bihame}. It can be seen  as a `derivative' of the $|\{ u > 0 \}|$-term of the energy in the following way: By \cite[Prop.3, Sect.3.3.4]{EvGar} one has that  for each $f \in C^\infty(\mathbb{R}^n)$ with nonvanishing gradient, 
 \begin{equation}\label{eq:formmeas}
 \frac{d}{dt} | \{ f > t \} | = \int_{ \{ f = t \} } \frac{1}{|\nabla f |} \; \mathrm{d}\mathcal{H}^1  \quad \textrm{for  almost every $t \in \mathbb{R}$}.
 \end{equation}
 Theorem \ref{thm:1.1} will finally be proved in Section 6. 
 Section 3, 4 and 5 prepare the proof of the main theorem by showing some helpful properties of minimizers. Among those are semiconvexity, superharmonicity of the Laplacian and the blow-up behavior close to the nodal set. In Section 7 we show some estimates for the negativity region which underlines the importance of \eqref{eq:bihame} for applications and future research.
We will also show that minimizers are in general not unique, proved in Section 8.

\begin{theorem}[Non-Uniqueness of Minimizers]\label{thm:nonun}
There  exist $\Omega$ and $u_0$ as in Definition \ref{def:adm} such that $\E$ has more than one minimizer in $\A(u_0)$. 
\end{theorem}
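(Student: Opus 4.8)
The plan is to exploit a symmetry-breaking mechanism: choose $\Omega$ and $u_0$ invariant under a nontrivial isometry group $\Gamma$ of the plane (for instance, $\Omega$ a disk or annulus centered at the origin and $u_0$ radially symmetric, or $\Omega$ a domain with a reflection symmetry and $u_0$ correspondingly symmetric), and then show that no $\Gamma$-symmetric function can be a minimizer. Since the energy $\E$ is manifestly invariant under $\Gamma$ (both the Dirichlet-type term $\int_\Omega (\Delta u)^2$ and the measure term $|\{u>0\}|$ are unchanged when $u$ is precomposed with an element of $\Gamma$), if $u$ is a minimizer then so is $u \circ \gamma$ for every $\gamma \in \Gamma$. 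Hence if the minimizer were unique it would have to be $\Gamma$-symmetric. Producing a single example where symmetric candidates are ruled out therefore yields non-uniqueness, and applying $\Gamma$ to one asymmetric minimizer produces the others explicitly.

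The concrete route I would take: work on a ball $\Omega = B_R(0)$ (or an annulus) with constant boundary data $u_0 \equiv c > 0$, so that the relevant symmetry group is $O(2)$. First I would invoke the explicit computation of radial solutions promised in the abstract and in the introduction — the paper states it will "compute radial solutions explicitly" — to obtain the energy value $\E_{\mathrm{rad}}$ of the best radial competitor. The point is that a radial minimizer would have its negativity region $\{u<0\}$ equal to a concentric disk or annulus, and by Theorem \ref{thm:1.1} this region is a union of finitely many $C^2$ domains and the nodal set has finite $\mathcal H^1$ measure with $\nabla u \neq 0$ there; one can compute, via \eqref{eq:bihame}, the radial profile and its energy as a function of the one free parameter (the radius where the sign changes). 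Then I would construct a non-radial competitor $w \in \A(u_0)$ with $\E(w) < \E_{\mathrm{rad}}$ — e.g. by placing the negativity region off-center, or by breaking it into two smaller separated pieces — using that for the biharmonic operator there is no maximum principle forcing radial structure, and that a clever placement of the "dimple" can lower $\int (\Delta w)^2$ at a fixed positivity-set measure (or lower the measure at fixed bending energy). Comparing $\E(w)$ to $\E_{\mathrm{rad}}$ on a suitable choice of $R$ (or, if a disk does not work, on a dumbbell-shaped $\Omega$ where a symmetric minimizer would have to "choose" one lobe) forces the true minimizer to be non-symmetric, and then the images under $O(2)$ (resp. the reflection) give distinct minimizers.

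The main obstacle I anticipate is the quantitative comparison: one must actually produce an admissible $w$ whose energy beats every radial competitor, which requires either a sufficiently explicit radial formula (to know $\E_{\mathrm{rad}}$ exactly) together with a hands-on estimate of $\int(\Delta w)^2$ for the perturbed configuration, or a softer argument showing that the radial Euler–Lagrange configuration is not a local minimizer (e.g. a second-variation / instability computation, or a domain-variation argument showing the radial critical point can be pushed downhill by translating the hole). A secondary subtlety is ensuring the competitor lies in $\A(u_0)$, i.e.\ matches the boundary data in $W^{2,2}$ with $u-u_0 \in W^{1,2}_0(\Omega)$ — this is routine but must be checked, typically by gluing the perturbation well inside $\Omega$ with a cutoff so the boundary trace is untouched. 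I would lean on the dumbbell construction as the cleanest path: there a symmetric minimizer is forced by Theorem \ref{thm:1.1} (finiteness of $N$, $C^2$ structure) into a configuration that is strictly worse than one concentrated in a single lobe, making the comparison qualitative rather than requiring sharp constants.
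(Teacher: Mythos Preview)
Your symmetry-breaking strategy on the ball cannot work as stated: the paper itself proves (Corollary~\ref{cor:radial}, via Talenti's rearrangement inequality) that for $\Omega = B_1(0)$ and constant $u_0 \equiv C$ there \emph{always exists} a radial minimizer. Hence you will never be able to show that no $O(2)$-symmetric function minimizes, and your proposed comparison ``non-radial competitor $w$ with $\E(w) < \E_{\mathrm{rad}}$'' is impossible in this setting. The dumbbell fallback is only sketched and would require a separate argument not supplied here.

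The paper's mechanism is entirely different and does not rely on symmetry breaking. It exploits a \emph{threshold phenomenon}: for constant boundary data $u_0 \equiv C$ on the disk, small $C$ forces $\inf_{\A(C)} \E < \pi$ (so every minimizer has a nontrivial nodal set), while large $C$ gives $\inf_{\A(C)} \E = \pi$ with the constant function as unique minimizer. One then sets $\iota := \sup\{C>0 : \inf_{\A(C)}\E < \pi\}$, shows $\inf_{\A(\iota)}\E = \pi$ (Lemma~\ref{lem:limica}), so that $u \equiv \iota$ is one minimizer, and then extracts a \emph{second} minimizer as a weak $W^{2,2}$-limit of minimizers $u_n \in \A(\iota_n)$ along $\iota_n \nearrow \iota$; these $u_n$ satisfy $\inf_\Omega u_n \leq 0$, and this persists in the limit by compact embedding into $C(\overline{\Omega})$, so the limit is nonconstant. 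The two minimizers at level $\iota$ are thus a constant and a function with nonempty nodal set --- both, incidentally, radial.
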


The construction in the proof of this theorem depicts exactly one domain and one admissible boundary value for which minimizers are not unique. We do not think that it is impossible to obtain positive uniqueness results within certain ranges of initial values. Analysis of such is however beyond the scope of this article. 

The non-uniqueness relies on the following phenomenon: We choose $\Omega = B_1(0)$ and $u_0\equiv \iota$ to be a constant function. If the constant is small, we observe minimizers that are negative already really close to the boundary. We expect it to look roughly like a funnel, which grows steeply close to the boundary and has a round-off tip in the negative region. If however the constant is large, the minimizer is a constant function (which is then always positive). Therefore there has to be a limit case in which one can find minimizers with both shapes. 
%Details will be discussed in Section \ref{sec:nonuni}. Let us remark that our educated guess about the shape of the minimizer is mainly supported by Example \ref{ex:iotapos}, Lemma \ref{lem:semicon} and Remark \ref{rem:nontriv}. We can also quantify the constant boundary value $\iota$ where the aforementioned limit case occurs explicitly, it is given by $ \iota = 0.112814$ (cf. \ref{lem:iooo}). 

To do so, we compute radial minimizers explicitly. The fact that there exists radial minimizers follows from Talenti's symmetrization principle, see \cite{Talenti} and Section \ref{sec:talenti} for details. The explicit computation also relies on the Navier boundary conditions, which will be discussed in Section 9.

%Section \ref{sec:fwts} deals with the boundary regularity and the claimed strong navier boundary conditions. We prove the following theorem:
%
%\begin{theorem}[Boundary Regularity] \label{thm:navier}
%Let $\Omega = B_1(0)$ and $u \in \A(u_0)$ be a minimizer. Then $\Delta u \in C(\overline{\Omega})$ and $\Delta u = 0 $ on $\partial B_1(0)$. Moreover, 
%\begin{equation}
%\Delta u (x) = \int_{ \{ u = 0 \} } G(x,\xi) \frac{1}{|\nabla u (\xi)| } d\mathcal{H}^1(\xi)  \quad \forall x \in \overline{B_1(0)}, 
%\end{equation}
%where $G: B_1(0) \times B_1(0) \setminus \{ (x,x) : x \in B_1(0) \} \rightarrow \mathbb{R}$ denotes Green's function for the Dirichlet Laplacian on $B_1(0)$, cf.  Definition \ref{def:green}.
%\end{theorem}

\section{Preliminaries}
\subsection{Notation}
In the following we will fix some notation which we will use throughout the article. For a set $A \subset \mathbb{R}^n$ we denote its complement by $A^c := \mathbb{R}^n \setminus A$ and the interior of the complement by $A^C := \mathrm{int}(\Omega \setminus A)$. For a Lebesgue measurable set $E \subset \mathbb{R}^n$ we define the \emph{upper density} of $E$ at $x \in \mathbb{R}^n$ to be \begin{equation*}
\overline{\theta}(E,x) := \limsup_{r \rightarrow 0 + } \frac{|E \cap B_r(x)|}{|B_r(x)|} .
\end{equation*}
 We say that a point $x$ lies in the \emph{measure theoretic boundary} of $E$ if both $\overline{\theta}(E,x)$ and $\overline{\theta}(E^c,x)$ are strictly positive. The measure theoretic boundary of $E$ is denoted by $\partial^*E$. If $\alpha$ is a measure on a measurable space $(X, \mathcal{F})$ and $A \in \mathcal{F}$ then we define the \emph{restriction measure} $\alpha\llcorner_A : \mathcal{F} \rightarrow \mathbb{R}_+ \cup \{ \infty\} $ via $\alpha\llcorner_A(B) := \alpha(A \cap B)$. If $(X, \mathcal{F}) = (\mathbb{R}^n, \mathcal{B}(\mathbb{R}^n))$ is the Euclidean space endowed with the Borel-$\sigma$-Algebra and $U \subset \mathbb{R}^n$ is a Borel set, then we denote by $M (U)$ the set of Radon measures on $U$, see \cite[Section 1.1]{EvGar}. Moreover $\mathcal{H}^s$ denotes the $s-$dimensional Hausdorff measure on $\mathbb{R}^2$. 
 
\begin{definition}[The Hilbert Space $W^{2,2}(\Omega) \cap W_0^{1,2}(\Omega)$] \label{def:hilbers}
In this article, the Hilbert space $W^{2,2}(\Omega) \cap W_0^{1,2}(\Omega)$ is always endowed with the scalar product
\begin{equation*}
(u,v) := \int_\Omega \Delta u \Delta v \dx.
\end{equation*}
\end{definition}

%\begin{remark}
%The bilinear form defined in the previous definition is actually a scalar product by \cite[Theorem 2.31]{Sweers}. Some conditions on the boundary of $\Omega$ are needed in the cited theorem. These however are true for all domains with $C^2$-boundary, see \cite[Remark 4.3.8]{Han}.
%\end{remark}

\begin{definition}[Lebesgue Points] 
Let $1 \leq p < \infty$and  $f \in L^p_{loc}(\Omega)$. We say that $x_0 \in \Omega$ is a $p-$Lebesgue point of $f$ if 
\begin{equation} \label{eq:avliim}
f^*(x_0) := \lim_{r\rightarrow 0} \fint_{B_r(x_0)} f(y) \dy 
\end{equation} 
exists and 
\begin{equation*}
\lim_{r\rightarrow 0 } \fint_{B_r(x_0)} \left\vert f^*(x_0) -  f(x)\right\vert^p \dx  = 0 .
\end{equation*}
\end{definition}

%\begin{definition}[Complement of Sets]
%Let $A \subset \mathbb{R}^n$. We denote by $A^c := \mathbb{R}^n \setminus A $ the complement of $A$ and by $A^C := \mathrm{int}( \mathbb{R}^n \setminus A)$ the interior of the complement of $A$. 
%\end{definition}
%
%\begin{definition}[Density and Measure Theoretic Boundary]
%Let $E \subset \mathbb{R}^n$ be a Lebesgue measurable set. Then we define the upper density of $E$ at $x \in \mathbb{R}^n$ to be 
%\begin{equation*}
%\overline{\theta}(E,x) := \limsup_{r \rightarrow 0 + } \frac{|E \cap B_r(x)|}{|B_r(x)|} .
%\end{equation*}
%We say that a point $x$ lies in the measure theoretic boundary of $E$ if $\overline{\theta}(E,x), \overline{\theta}(E^c,x) > 0$. The measure theoretic boundary of $E$ is denoted by $\partial^*E$. 
%\end{definition}
%
%
%
%\begin{definition}[Some Notation for Measures] 
%Let $\alpha$ be a measure on a measurable space $(X, \mathcal{F})$ and $A$ let be an $\alpha$-measurable set. Then we define the restriction measure $\alpha \llcorner_A: \mathcal{F} \rightarrow \mathbb{R}_+ $ to be
%\begin{equation*}
% \alpha \llcorner_A (B) := \alpha (A \cap B) .
%\end{equation*}
%If $(X, \mathcal{F}) = (\mathbb{R}^n, \mathcal{B}(\mathbb{R}^n))$ is the euclidean space endowed with the Borel-$\sigma$-Algebra and $U \subset \mathbb{R}^n$ is a Borel set, then we denote by $M (U)$ the set of Radon measures on $U$, see \cite[Section 1.1]{EvGar}. Moreover $\mathcal{H}^s$ denotes the $s-$dimensional Hausdorff measure on $\mathbb{R}^2$.
%\end{definition}

\begin{definition}[Semiconvexity] 
Let $\Omega \subset \mathbb{R}^n$ be open, $f : \Omega \rightarrow \mathbb{R}$ be a function and $A \in \mathbb{R}$. We call $f$ $A$-semiconvex if for each $x_0 \in \mathbb{R}^n$ the map $x \mapsto f(x) + A|x-x_0|^2$ is convex.
\end{definition}

\begin{definition}(Superharmonic Functions)
Let $A \subset \mathbb{R}^n$ be open.  
A function $u : A \rightarrow \mathbb{R} \cup \{ - \infty, \infty\}$ is called \emph{superharmonic} if $u$ is lower semicontinuous  in $A$ and for each $x \in A$ and $r > 0 $ such that $\overline{B_r(x)} \subset A$ one has 
\begin{equation*}
u(x) \leq \frac{1}{\mathcal{H}^1(\partial B_r(x))} \int_{\partial B_r(x)} u(y) d\mathcal{H}^1(y) =: \fint_{\partial B_r(x)} u(y) dS_r(y). 
\end{equation*}
A function $u$ is called \emph{subharmonic} if $-u$ is superharmonic. 
\end{definition}

\subsection{Energy Bounds}

\begin{lemma}[Energy Bound for Minimizers] 
Let $u_0$ be as in Definition \ref{def:adm}. Then 
\begin{equation}\label{eq:infbound}
\inf_{w \in \A(u_0)} \E(w) \leq |\Omega|. 
\end{equation}
\end{lemma}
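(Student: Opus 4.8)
The plan is to produce a single admissible competitor $v \in \A(u_0)$ with $\E(v) \leq |\Omega|$; since the infimum is bounded above by the energy of any competitor, this yields \eqref{eq:infbound} at once. The natural choice is the harmonic extension of the boundary data, i.e.\ the (weak) solution $v$ of $\Delta v = 0$ in $\Omega$ with $v - u_0 \in W_0^{1,2}(\Omega)$. Existence and uniqueness of such a $v$ in $W^{1,2}(\Omega)$ is classical, for instance by minimizing $\int_\Omega |\nabla w|^2 \dx$ over $\{ w \in W^{1,2}(\Omega) : w - u_0 \in W_0^{1,2}(\Omega)\}$.

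The only genuine point to verify is that $v$ is in fact admissible, i.e.\ that $v \in W^{2,2}(\Omega)$, so that $v \in \A(u_0)$. This is exactly where the standing regularity assumptions of Definition~\ref{def:adm} are used: since $\partial\Omega$ is of class $C^2$ and $u_0 \in C^\infty(\overline{\Omega})$, global elliptic regularity for the Dirichlet problem for the Laplacian (the $H^2$, or more generally $W^{2,p}$, estimate up to the boundary) gives $v \in W^{2,2}(\Omega)$; in fact one even obtains $v \in C^\infty(\Omega) \cap C^{1,\alpha}(\overline{\Omega})$. I expect this elliptic-regularity step to be the only --- and still entirely routine --- obstacle.

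With admissibility in hand the estimate is immediate. Since $\Delta v = 0$ almost everywhere in $\Omega$, we have $\int_\Omega (\Delta v)^2 \dx = 0$, and since $\{ v > 0 \} \subset \Omega$ we have $|\{ v > 0 \}| \leq |\Omega|$. Hence
\begin{equation*}
\inf_{w \in \A(u_0)} \E(w) \;\leq\; \E(v) \;=\; \int_\Omega (\Delta v)^2 \dx + |\{ v > 0 \}| \;=\; |\{ v > 0 \}| \;\leq\; |\Omega|,
\end{equation*}
which is precisely \eqref{eq:infbound}. One may note in passing that $v$ is nothing but the minimizer of the Dirichlet-type part $\int_\Omega (\Delta\,\cdot\,)^2 \dx$ of $\E$ over $\A(u_0)$, so this choice is canonical rather than ad hoc.
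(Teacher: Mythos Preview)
Your proof is correct and follows essentially the same approach as the paper: both use the harmonic extension of the boundary data as a competitor and invoke global $W^{2,2}$-elliptic regularity to verify admissibility. The paper additionally applies the maximum principle to conclude $v \geq \delta > 0$ and hence $|\{v>0\}| = |\Omega|$, but your simpler observation $|\{v>0\}| \leq |\Omega|$ already suffices for the inequality.
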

\begin{proof}
Let $w\in W^{1,2}(\Omega)$ be the unique weak solution of 
\begin{equation*}
\begin{cases}
\Delta w = 0 & \mathrm{in} \; \Omega, \\
w = u_0 & \mathrm{on} \; \partial \Omega.
\end{cases}
\end{equation*}
By elliptic regularity, $w-u_0  \in W^{2,2}(\Omega)\cap W_0^{1,2}(\Omega)$ and hence $w \in \A(u_0)$. By the maximum principle, $\inf_\Omega w \geq \inf_{\partial\Omega} u_0 \geq \delta >0 $. Hence $|\{w > 0 \} | = |\Omega|$. All in all 
\begin{equation*}
\E(w) = \int_\Omega (\Delta w)^2 \dx + |\{ w > 0 \}| = |\Omega|. \qedhere
\end{equation*}
\end{proof}

\begin{example}\label{ex:iotapos}
In general, the bound in \eqref{eq:infbound} is not sharp. We give an example of $\Omega$ and $u_0$ as in Definition \ref{def:adm} such that 
\begin{equation}\label{eq:inbel}
\inf_{w \in A(u_0)} \E(w) < |\Omega|.
\end{equation}
Suppose that $\Omega = B_1(0)$ and $u_0 \equiv C$ for some $C <\frac{1}{8\sqrt{2}}$. Further define $w(x) := 2C |x|^2 - C$ for $x \in B_1(0)$. One easily checks that $w \in \A(u_0)$. Now $\{w > 0 \} = B_1(0) \setminus \overline{ B_{\frac{1}{\sqrt{2}}}(0)}$ and $\Delta w \equiv 8C $. Hence 
\begin{equation*}
\E(w) = \int_{B_1(0)} 64C^2 dx + |B_1(0) \setminus \overline{ B_{\frac{1}{\sqrt{2}}}(0)}| = 64C^2 \pi  + \frac{1}{2} \pi = \left( 64C^2 + \frac{1}{2} \right) \pi ,
\end{equation*}
that is smaller that $\pi = |\Omega|$ by the choice of $C$. 
\end{example}
\begin{remark}\label{rem:iotafin}
We claim that for large constant boundary values, \eqref{eq:infbound} is sharp. Indeed, let $\Omega$ be as in Definiton \ref{def:adm} and fix a constant function $u_0 \equiv const$ such that $u_0 > C_\Omega \diam(\Omega)^\frac{1}{2} |\Omega|^\frac{1}{2}$, where $C_\Omega$ denotes the operator norm of the embedding operator $W^{2,2}(\Omega) \cap W_0^{1,2}(\Omega) \hookrightarrow C^{0, \nicefrac{1}{2}}(\overline{\Omega})$.
%Suppose that $h \in C^\infty(\overline{\Omega}) $ is a harmonic extension of $u_0$, i.e. $h = u_0 - u_1 $ where $u_1$ is a solution of
% \begin{equation}
% \begin{cases}
% - \Delta w = - \Delta u_0 & \textrm{in  } \Omega \\ w = 0 & \textrm{on } \partial \Omega 
% \end{cases} .
% \end{equation}
% Clearly, $h$ is harmonic and by maximal regularity for the Poisson equation, cf. \cite[Theorem 9.19]{Gilbarg}, we get that $u_1 \in C^\infty( \overline{\Omega}) $, and hence $h$ has the same regularity. Moreover $u- h = (u- u_0 )+ u_1 \in W^{1,2}_0(\Omega)$. 
 If $u \in \A(u_0)$ is a minimizer then for each $x \in \Omega$ and $z \in \partial \Omega$ one has 
\begin{align*}
u(x)& \geq u_0 - |u(x)- u_0| = u_0 - |(u - u_0)(x)- (u-u_0)(z)|  \\ &  > C_\Omega \diam(\Omega)^\frac{1}{2} |\Omega|^\frac{1}{2} - ||u-u_0||_{C^{0,\nicefrac{1}{2}}} |x-z|^\frac{1}{2}
\\ & \geq C_\Omega \diam(\Omega)^\frac{1}{2} |\Omega|^\frac{1}{2}- C_\Omega ||u-u_0||_{W^{2,2} \cap W_0^{1,2}} \diam(\Omega)^\frac{1}{2} \geq 0, 
\end{align*}
since $||u-u_0||_{W^{2,2}\cap W_0^{1,2}} = || \Delta u- \Delta u_0||_{L^2} = ||\Delta u ||_{L^2} \leq \sqrt{\E(u)} \leq \sqrt{|\Omega|}$. Therefore, all minimizers are positive, which means in particular that \eqref{eq:infbound} is sharp and the unique minimizer is given by the weak solution of 
\begin{equation*}
\begin{cases}
\Delta u = 0 & \mathrm{in} \; \Omega
\\
u = u_0 & \mathrm{on} \; \partial \Omega,
\end{cases}
\end{equation*}
which is $u \equiv u_0$. 
\end{remark}

\begin{remark}\label{rem:nontriv}
If $\inf_{w \in A(u_0)} \E(w) < |\Omega|$ and $u \in \A(u_0)$ is a minimizer then $\{ u= 0 \}$ cannot be empty. Indeed, if it were empty then $\{ u > 0 \} = \Omega$ by the embedding $W^{2,2}(\Omega) \subset C(\overline{\Omega})$. A contradiction. 
\end{remark}

\subsection{Variational Inequality}
In the rest of this section we derive that each minimizer $u$ is biharmonic on $\{ u > 0\} \cup \{ u < 0 \}$ and $\Delta u $ is weakly superharmonic on the whole of $\Omega$. The techniques used are standard perturbation arguments. We also draw some first conclusions about regularity of $u$.  

\begin{lemma}[Biharmonicity away from Free Boundary] \label{lem:biham}
Let $u \in \A(u_0)$ be a minimizer. Further, let $\phi \in C_0^\infty(\{u > 0 \})$ or $\phi \in C_0^\infty(\{ u < 0 \})$ or $\phi \in W_0^{1,2}(\Omega) \cap W^{2,2}(\Omega)$ with compact support in $\{ u >0 \}$. Then
\begin{equation}\label{eq:biiham}
\int_\Omega \Delta u \Delta \phi \dx  = 0 .
\end{equation}
In particular, $u \in C^\infty(\{ u >0 \}) \cup C^\infty( \{ u < 0 \})$ and $\Delta^2 u = 0 $ in $\{ u > 0 \} \cup \{ u <0 \}$ .
\end{lemma}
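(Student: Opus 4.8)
The plan is the standard outer-variation argument, made possible by the continuity of minimizers: since $W^{2,2}(\Omega)\hookrightarrow C^{0,\beta}(\overline\Omega)$ in two dimensions, $u$ is continuous, so $\{u>0\}$ and $\{u<0\}$ are open, and a perturbation supported in a compact subset of one of them leaves the adhesion term $|\{u>0\}|$ untouched. Concretely, I would fix $\phi$ as in the statement and set $u_t:=u+t\phi$ for $t\in\mathbb{R}$. In all three cases $\phi\in W^{2,2}(\Omega)$ and $\phi\in W_0^{1,2}(\Omega)$, so $u_t-u_0=(u-u_0)+t\phi\in W^{2,2}(\Omega)\cap W_0^{1,2}(\Omega)$ and hence $u_t\in\A(u_0)$ for every $t$.

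Next I would check that the measure term is constant for small $t$. Let $K:=\supp\phi$, a compact subset of the open set $\{u>0\}$ (respectively $\{u<0\}$). By continuity of $u$ one has $m:=\min_K|u|>0$, and $\phi$ is continuous with compact support, so $M:=\|\phi\|_{L^\infty(\Omega)}<\infty$. For $|t|<m/(M+1)$ we then have $|u_t-u|\le |t|\,M<m$ pointwise on $K$, so $u_t$ has the same sign as $u$ throughout $K$, while $u_t=u$ on $\Omega\setminus K$. Consequently $\{u_t>0\}=\{u>0\}$ as sets, hence $|\{u_t>0\}|=|\{u>0\}|$. Expanding the energy, $\E(u_t)=\int_\Omega(\Delta u)^2\dx+2t\int_\Omega\Delta u\,\Delta\phi\dx+t^2\int_\Omega(\Delta\phi)^2\dx+|\{u>0\}|$, a polynomial in $t$ which, by minimality of $u$, has a local minimum at $t=0$. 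Differentiating at $t=0$ yields $2\int_\Omega\Delta u\,\Delta\phi\dx=0$, which is \eqref{eq:biiham}.

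For the concluding assertions I would insert arbitrary $\phi\in C_0^\infty(\{u>0\})$ into \eqref{eq:biiham} and integrate by parts, moving both Laplacians onto $\phi$; this gives $\int_\Omega u\,\Delta^2\phi\dx=0$, i.e. $\Delta^2u=0$ in $\{u>0\}$ in the sense of distributions. Hypoellipticity of the biharmonic operator (a Weyl-type lemma, or standard interior elliptic regularity) then yields $u\in C^\infty(\{u>0\})$ with $\Delta^2 u=0$ there classically, and the same reasoning applies verbatim on $\{u<0\}$. I do not expect a genuine obstacle in this lemma; the only point requiring care is the sign-preservation of the perturbation on $K$, which is a direct consequence of $u$ and $\phi$ being continuous with $\phi$ compactly supported away from the free boundary.
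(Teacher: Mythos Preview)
Your argument is correct and essentially identical to the paper's own proof: both use the continuity of $u$ to fix the adhesion term under compactly supported perturbations away from the free boundary, then read off \eqref{eq:biiham} from first-order optimality of the resulting quadratic in $t$. The only cosmetic difference is in the regularity step---the paper applies Weyl's lemma to $\Delta u$ directly, whereas you shift both Laplacians onto the test function and invoke hypoellipticity of $\Delta^2$; either route is standard and yields the same conclusion.
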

\begin{proof}
We show \eqref{eq:biiham} only for $\phi \in W_0^{1,2}(\Omega) \cap W^{2,2}(\Omega)$ with compact support in $\{ u >0 \}$. The other cases are similar. Since $u \in W^{2,2}(\Omega) \subset C(\overline{\Omega})$ and $\mathrm{supp}(\phi)$ is compact in $\{u > 0 \}$ there exists $\theta > 0 $ such that $u \geq \theta$ on  $\mathrm{supp}(\phi)$. In particular, for $t $ sufficiently small we get $\{u > 0 \} = \{ u + t\phi > 0 \} $. For such fixed $t$ one has 
\begin{equation*}
\frac{\E(u) - \E(u+t\phi) }{t} =2 \int_\Omega \Delta u \Delta \phi \dx + t \int_\Omega
 (\Delta \phi)^2 \dx .
\end{equation*}  
From the right hand side we infer that $t \mapsto \E(u+t\phi)$ is differentiable at $t = 0$. Using this and the fact that $u$ is a minimizer we obtain 
\begin{equation*}
0 = \frac{d}{dt}_{\mid_{t= 0}}  \E(u+t\phi) = 2 \int_\Omega \Delta u \Delta \phi \dx .
\end{equation*}
By Weyl's lemma $\Delta u $ is harmonic in $\{u> 0 \}$ and hence $C^\infty(\{u> 0 \})$. The claim follows.
\end{proof}

\begin{cor}[A Neighborhood of the Boundary] \label{cor:guterrand}
Let $u \in \A(u_0)$ be a minimizer and $\delta$ be as in Definition \ref{def:adm}. Then there exists $\epsilon_0 > 0 $ such that
$\Omega_{\epsilon_0} := \{ x \in \Omega : \mathrm{dist}(x, \partial \Omega) < \epsilon_0 \}$ has $C^2$-boundary, $u \in C^\infty(\Omega_{\epsilon_0})$ and $\Delta^2 u = 0$, as well as  $u \geq \frac{\delta}{2} $ in $ \Omega_{\epsilon_0} $. 
\end{cor}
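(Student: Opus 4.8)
The plan is to pick $\epsilon_0$ small enough that two essentially independent requirements hold simultaneously: the collar $\Omega_{\epsilon_0}$ is a $C^2$-domain, and $u$ stays above $\tfrac{\delta}{2}$ on it. Once both are arranged, the biharmonicity and the $C^\infty$-regularity come for free from Lemma \ref{lem:biham}, since $\Omega_{\epsilon_0}$ will be an open subset of $\{u > 0\}$.

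For the geometric part I would invoke the classical fact that, for a bounded domain $\Omega$ with compact $C^2$-boundary, the distance function $d(x) := \dist(x,\partial\Omega)$ is of class $C^2$ on a one-sided neighborhood $\{x \in \Omega : d(x) < \epsilon_2\}$ of $\partial\Omega$ and satisfies $|\nabla d| = 1$ there. Consequently, for every $\epsilon_0 \in (0,\epsilon_2)$ the level set $\{d = \epsilon_0\}$ is a regular $C^2$-hypersurface, so $\Omega_{\epsilon_0} = \{0 < d < \epsilon_0\}$ has boundary $\partial \Omega_{\epsilon_0} = \partial\Omega \cup \{d = \epsilon_0\}$, which is $C^2$.

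For the pointwise lower bound I would use $u \in W^{2,2}(\Omega) \subset C^{0,\beta}(\overline{\Omega})$ together with $u_{\mid\partial\Omega} = u_0 \geq \delta$: by uniform continuity of $u$ on $\overline{\Omega}$ there is $\epsilon_1 > 0$ with $|u(x) - u(z)| < \tfrac{\delta}{2}$ whenever $|x-z| < \epsilon_1$, so choosing for $x \in \Omega_{\epsilon_1}$ a nearest boundary point $z$ gives $u(x) > u_0(z) - \tfrac{\delta}{2} \geq \tfrac{\delta}{2} > 0$. Since $u$ is continuous, $\{u > 0\}$ is open, and $\Omega_{\epsilon_1} \subset \{u > 0\}$.

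Finally I would set $\epsilon_0 := \tfrac12 \min\{\epsilon_1,\epsilon_2\}$: then $\Omega_{\epsilon_0}$ has $C^2$-boundary, $u \geq \tfrac{\delta}{2}$ on $\Omega_{\epsilon_0}$ because $\epsilon_0 < \epsilon_1$, and since $\Omega_{\epsilon_0} \subset \{u > 0\}$, Lemma \ref{lem:biham} gives $u \in C^\infty(\Omega_{\epsilon_0})$ with $\Delta^2 u = 0$ in $\Omega_{\epsilon_0}$. The only mildly delicate point is the geometric claim that the collar is a $C^2$-domain, but this is a standard consequence of the $C^2$-regularity of the distance function near a $C^2$-boundary, so I expect no real obstacle.
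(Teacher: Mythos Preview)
Your proof is correct and follows essentially the same approach as the paper: uniform continuity of $u$ gives the lower bound $u>\tfrac{\delta}{2}$ near $\partial\Omega$, the $C^2$-regularity of the distance function (the paper cites \cite[Lemma~14.16]{Gilbarg}) handles the collar's boundary regularity, and Lemma~\ref{lem:biham} then yields smoothness and biharmonicity on $\Omega_{\epsilon_0}\subset\{u>0\}$. The only cosmetic difference is that the paper takes $\epsilon_0=\min\{\epsilon^*,\epsilon'\}$ rather than half the minimum.
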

\begin{proof}
Let $\delta$ be as in Definition \ref{def:adm}. 
%We claim that there is $\epsilon^* > 0 $ such that $u(x) > \frac{\delta}{2} $ whenever $\mathrm{dist}(x, \partial \Omega) < \epsilon^*$.  Notice that $u \in C^{0, \nicefrac{1}{2} }(\overline{\Omega})$ and $u_{\mid_{\partial \Omega} } = (u_0)_{\mid_{\partial \Omega }} \geq \delta$. Now choose $\epsilon^* := \frac{\delta^2}{4||u||^2_{C^{0,1/2}}} $. Fix $x \in \Omega$ such that $\dist(x, \partial \Omega) < \epsilon^* $ and choose $x_0 \in \partial \Omega$ such that $|x-x_0|< \epsilon^*$. We obtain
%\begin{equation*}
%u(x) \geq u(x_0) - |u(x)- u(x_0) | \geq \delta - ||u||_{C^{0, \frac{1}{2} }} |x-x_0|^\frac{1}{2} \geq \frac{\delta}{2}
%\end{equation*}
%by the choice of $\epsilon^*$.
Due to the uniform continuity of $u$, there exists $\epsilon^* >0 $ such that $u(x) > \frac{\delta}{2}$ whenever $\dist(x, \partial \Omega) < \epsilon^*$.  
 Because of \cite[Lemma 14.16]{Gilbarg} there is $\epsilon' > 0$ such that $\epsilon \leq \epsilon'$ implies that $\Omega_{\epsilon} := \{ x \in \Omega : \mathrm{dist}(x, \partial \Omega) < \epsilon \} $ has $C^2$-boundary.
The claim follows taking $\epsilon_0 := \min\{ \epsilon^* , \epsilon'\} $ and using Lemma \ref{lem:biham}.  
%  We take $\epsilon_0 := \min\{\epsilon^*, \epsilon' \} $. Then, $\Omega_{\epsilon_0}$ is a domain with $C^2$-boundary and a subset of $\{u > 0\}$. Hence we get by Lemma \ref{lem:biham} that 
%\begin{equation}\label{eq:weakbiham}
%\int_{\Omega_{\epsilon_0}} \Delta u \Delta \phi  \dx = 0  \quad \forall \phi \in C_0^\infty(\Omega_{\epsilon_0}). 
%\end{equation} 
%We conclude again by Weyl's Lemma (see \cite[Lemma 2]{Weyl}) that $\Delta u $ is harmonic in $\Omega_{\epsilon_0}$ and therefore lies in $C^\infty(\Omega_{\epsilon_0})$. By elliptic regularity, one obtains that $u \in C^\infty(\Omega_{\epsilon_0})$. 
%By the Gagliardo-Nirenberg interpolation inequality (which is valid for bounded Lipschitz domains) we get that $\Delta u$ 
\end{proof}
%\begin{remark}
%Even though the boundaries are appropriately smooth for maximal $L^p$-regularity statements, it is not necessarily true that biharmonicity of $u$ on $\Omega_{\epsilon_0}$ implies that $u \in W^{4,2}(\Omega_{\epsilon_0})$ as $\Delta u$ is only a very weak solution to Laplace's equation.
%% Indeed, if $U\subset \mathbb{R}^n$ is open and bounded with smooth boundary, then $w \in L^2(U)$ and 
%%\begin{equation}
%%\int_\Omega w \Delta \phi = 0 \quad \forall \phi \in C_0^\infty(U)
%%\end{equation}
%%does not imply that $w \in W^{2,2}(U)$. As a counterexample one can take $n=2$ and $U$ to be any smooth subdomain of the upper half plane such that $(-1,1) \times \{0 \} \subset \partial U $. Then $w(x) := \log |x|$ is not $W^{2,2}(U)$. 
%\end{remark}
\begin{remark}
Since it is needed very often, we will use the notation $\Omega_{\epsilon_0}$ from now on without giving further reference to Corollary \ref{cor:guterrand}. 
\end{remark}

\begin{lemma}[Euler-Lagrange-Type Properties] \label{lem:EL}
Let $u \in \A(u_0) $ be a minimizer. Then  for each $\phi \in W^{2,2}(\Omega) \cap W_0^{1,2}(\Omega)$ such that $\phi \geq 0 $ one has  
\begin{equation}\label{eq:subbiham}
\int_\Omega \Delta u \Delta \phi \dx \leq 0
\end{equation}
and
\begin{equation}\label{eq:meas}
\limsup_{\epsilon  \rightarrow 0+ } \frac{| \{ 0< u < \epsilon \phi \} | }{\epsilon}\leq  2\E(u) ^\frac{1}{2} ||\Delta \phi ||_{L^2}. 
\end{equation}
\end{lemma}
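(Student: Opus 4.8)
The plan is to test the energy against the one-sided perturbation $u - t\phi$ for small $t > 0$ and exploit minimality. For \eqref{eq:subbiham}, since $\phi \geq 0$ we have the pointwise inclusion $\{u - t\phi > 0\} \subset \{u > 0\}$ for every $t > 0$, hence $|\{u - t\phi > 0\}| \leq |\{u > 0\}|$. Expanding the Dirichlet part gives
\begin{equation*}
\E(u - t\phi) = \E(u) - 2t\int_\Omega \Delta u\, \Delta\phi \dx + t^2 \int_\Omega (\Delta\phi)^2 \dx + \big(|\{u - t\phi > 0\}| - |\{u > 0\}|\big),
\end{equation*}
and since $u$ is a minimizer, $\E(u - t\phi) - \E(u) \geq 0$. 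Dropping the nonpositive measure difference, dividing by $t > 0$ and letting $t \to 0^+$ yields $-2\int_\Omega \Delta u\,\Delta\phi \dx \geq 0$, which is \eqref{eq:subbiham}. (One should note $\phi \in W^{2,2}(\Omega)\cap W_0^{1,2}(\Omega)$ guarantees $u - t\phi \in \A(u_0)$, so the perturbation is admissible.)

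For \eqref{eq:meas} I would keep the measure term rather than discard it. The key observation is the set identity
\begin{equation*}
\{u > 0\} \setminus \{u - t\phi > 0\} = \{0 < u \leq t\phi\},
\end{equation*}
so that $|\{u > 0\}| - |\{u - t\phi > 0\}| = |\{0 < u \leq t\phi\}|$ (using $\phi \geq 0$, points where $u > 0$ but $u - t\phi \leq 0$ are exactly those with $0 < u \leq t\phi$). Feeding this into the minimality inequality $\E(u - t\phi) \geq \E(u)$ gives
\begin{equation*}
|\{0 < u \leq t\phi\}| \leq -2t\int_\Omega \Delta u\,\Delta\phi \dx + t^2\int_\Omega (\Delta\phi)^2 \dx.
\end{equation*}
Dividing by $t$, we obtain $\limsup_{t\to 0^+} t^{-1}|\{0 < u < t\phi\}| \leq -2\int_\Omega \Delta u\,\Delta\phi \dx$ (the sets $\{0<u<t\phi\}$ and $\{0<u\le t\phi\}$ differ by the $t$-dependent set $\{u = t\phi\}$, which has measure zero for a.e.\ $t$, or one simply notes monotonicity suffices to pass to the open version). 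It remains to bound the right-hand side by $2\E(u)^{1/2}\|\Delta\phi\|_{L^2}$: by Cauchy--Schwarz, $-2\int_\Omega \Delta u\,\Delta\phi \dx \leq 2\|\Delta u\|_{L^2}\|\Delta\phi\|_{L^2}$, and $\|\Delta u\|_{L^2} = \big(\int_\Omega (\Delta u)^2\dx\big)^{1/2} \leq \E(u)^{1/2}$ since the adhesion term is nonnegative.

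The main (and essentially only) obstacle is the careful handling of the measure term: one must verify the set identity $\{0 < u \le t\phi\} = \{u>0\}\setminus\{u-t\phi>0\}$ precisely, and ensure that replacing $\le$ by $<$ under the $\limsup$ is harmless — this is where one invokes that $|\{u = t\phi\}| = 0$ for a.e.\ $t$ (the level sets of the fixed function $u - $ a multiple of $\phi$, or rather of $u/\phi$ on $\{\phi>0\}$, are disjoint for distinct $t$, so only countably many carry positive measure) or simply observes that $t \mapsto |\{0 < u < t\phi\}|$ is monotone so its $\limsup$ along all $t\to 0^+$ equals that along any chosen sequence. Everything else is the standard first-variation computation together with Cauchy--Schwarz and the trivial energy bound.
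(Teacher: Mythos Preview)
Your proposal is correct and follows essentially the same one-sided perturbation argument as the paper: the paper sets $\psi := -\phi$ and computes $\limsup_{\epsilon\to 0^+}\epsilon^{-1}(\E(u)-\E(u+\epsilon\psi))$, first discarding the nonnegative measure term to obtain \eqref{eq:subbiham}, then retaining it and applying Cauchy--Schwarz together with $\|\Delta u\|_{L^2}\le \E(u)^{1/2}$ to obtain \eqref{eq:meas}. Your treatment of the $\leq$ versus $<$ issue in the measure term is in fact more careful than the paper's, though the simple inclusion $\{0<u<t\phi\}\subset\{0<u\le t\phi\}$ already suffices without invoking the null-set or monotonicity argument.
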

\begin{proof}
Set $\psi:= - \phi$. Then one has 
\begin{align}\label{eq:EL}
0 & \geq \limsup_{\epsilon \rightarrow 0 + } \frac{\E(u) - \E(u+\epsilon\psi) }{\epsilon} 
 \\ & =\limsup_{\epsilon \rightarrow 0 } - 2\int_\Omega \Delta u \Delta \psi \dx - \epsilon \int_\Omega (\Delta \psi)^2 \dx + \frac{ |\{0 < u < -\epsilon \psi \}|}{\epsilon}
\nonumber \\ & = - 2\int_\Omega \Delta u \Delta \psi \dx + \limsup_{\epsilon \rightarrow 0 + }  \frac{ |\{0 < u < -\epsilon \psi \}|}{\epsilon} \nonumber.
\end{align}
Since $\psi \leq 0$, we can first estimate the measure term from below by zero to obtain 
\begin{equation*}
0 \geq  -   \int_\Omega \Delta u \Delta \psi \dx = \int_\Omega \Delta u \Delta \phi \dx \quad \forall \phi \in W^{2,2}(\Omega) \cap W_0^{1,2}(\Omega) : \phi \geq 0,
\end{equation*}
that is \eqref{eq:subbiham}. Going back to \eqref{eq:EL}  and using the Cauchy-Schwarz inequality we find
\begin{equation*}
\limsup_{\epsilon \rightarrow 0 + } \frac{|\{0 < u < \epsilon(-\psi) \}| }{\epsilon } \leq 2 \int \Delta u \Delta \psi  \dx \leq 2 ||\Delta u||_{L^2} ||\Delta \psi||_{L^2} \leq 2\sqrt{\E(u)} || \Delta \psi ||_{L^2}   ,
\end{equation*} 
from which \eqref{eq:meas} follows again replacing $\phi := - \psi$. 
\end{proof}
\begin{cor}[Subharmonicity] \label{lem:corsubham} 
Let $u \in \A(u_0)$ be a minimizer. Then $(\Delta u)^* \geq  0 $ at every $1-$ Lebesgue point of $\Delta u$. In particular, $u$ is subharmonic. 
\end{cor}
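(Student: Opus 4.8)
The plan is to deduce superharmonicity of $-\Delta u$ (equivalently subharmonicity of $u$) from the weak inequality \eqref{eq:subbiham} already established in Lemma \ref{lem:EL}. The inequality $\int_\Omega \Delta u\, \Delta\phi \dx \le 0$ for all nonnegative $\phi \in W^{2,2}(\Omega)\cap W_0^{1,2}(\Omega)$ says precisely that $v := \Delta u \in L^2(\Omega)$ satisfies $-\Delta v \ge 0$ in the distributional sense, since for $\phi \in C_0^\infty(\Omega)$ nonnegative we get $\int_\Omega v\,\Delta\phi \dx \le 0$, i.e. $\langle -\Delta v, \phi\rangle \ge 0$ for all such $\phi$. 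Thus $-\Delta v$ is a nonnegative distribution, hence (by the Riesz representation theorem for nonnegative distributions) a nonnegative Radon measure $\mu$ on $\Omega$.

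First I would record the distributional identity and invoke the classical regularity/representation theory for distributional supersolutions of the Laplace equation: a function $v \in L^1_{loc}(\Omega)$ with $-\Delta v \ge 0$ in $\mathcal{D}'(\Omega)$ has a representative that is superharmonic in the classical sense (lower semicontinuous, satisfying the sub-mean-value inequality), and this representative is given pointwise by the limit of spherical (or solid) averages. The standard reference here is the theory of (super)harmonic functions — e.g. the fact that such a $v$ agrees a.e. with its superharmonic representative $v^*(x) = \lim_{r\to 0}\fint_{B_r(x)} v\,\mathrm dy$, and that this limit exists at every point (allowing the value $+\infty$) by monotonicity of the averages in $r$ for superharmonic functions. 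Since $-\Delta u = -v$, this is equivalent to saying $u$ is subharmonic after modification on a null set; but in fact $u \in W^{2,2}(\Omega) \subset C^0(\overline\Omega)$ is already continuous, and one checks directly from \eqref{eq:subbiham} applied to mollified bump functions that $u$ itself satisfies the sub-mean-value inequality, so no modification of $u$ is needed — only $\Delta u$ is the object that gets a superharmonic (lsc) representative.

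Next, for the pointwise statement about $(\Delta u)^*$: at a $1$-Lebesgue point $x_0$ of $\Delta u$ the average $\fint_{B_r(x_0)}\Delta u \dy$ converges to $(\Delta u)^*(x_0)$ by definition. On the other hand, the superharmonic representative $v^*$ of $\Delta u$ also satisfies $v^*(x_0) = \lim_{r\to 0}\fint_{B_r(x_0)} \Delta u\dy$ wherever the right-hand limit exists, so $(\Delta u)^*(x_0) = v^*(x_0)$. Since $v^* = \Delta u$ a.e. and $v^*$ is superharmonic, for small $r$ we have $v^*(x_0) \ge \fint_{\partial B_\rho(x_0)} v^*\,dS$ for $\rho \le r$; but a cleaner route is: from \eqref{eq:subbiham} with a suitable nonnegative test function approximating the normalized surface measure on $\partial B_\rho(x_0)$ minus the Dirac-like bump at $x_0$ (or rather, integrating the distributional inequality against the Green-type function for a ball), one gets that $\rho \mapsto \fint_{B_\rho(x_0)}\Delta u\dy$ is monotone nondecreasing as $\rho \to 0$, hence $(\Delta u)^*(x_0) = \lim_{\rho\to 0}\fint_{B_\rho(x_0)}\Delta u \dy \ge \fint_{B_{\rho_0}(x_0)}\Delta u\dy$ for any fixed $\rho_0$ with $\overline{B_{\rho_0}(x_0)}\subset\Omega$. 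It remains only to see this last average is $\ge 0$: one more application of \eqref{eq:subbiham}, now with $\phi$ the (nonnegative) solution of $-\Delta\phi = \chi_{B_{\rho_0}(x_0)}/|B_{\rho_0}(x_0)|$ on a slightly larger ball with zero boundary data (extended by zero, which lies in $W^{2,2}\cap W_0^{1,2}(\Omega)$ after the usual smoothing/cutoff near $\partial\Omega$ — here one uses Corollary \ref{cor:guterrand} or simply that $\rho_0$ can be taken small and $x_0$ interior), yields $\fint_{B_{\rho_0}(x_0)}\Delta u \dy = \int_\Omega \Delta u\,(-\Delta\phi)\dx \ge 0$. Hence $(\Delta u)^*(x_0) \ge 0$.

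I expect the main obstacle to be the bookkeeping around the test function: \eqref{eq:subbiham} requires $\phi \in W^{2,2}(\Omega)\cap W_0^{1,2}(\Omega)$ globally, but the natural candidates (Newtonian-type potentials of $\chi_{B_{\rho_0}(x_0)}$, or the Green function of a ball) are only locally defined and must be cut off and modified near $\partial\Omega$ while preserving nonnegativity and $W^{2,2}$-regularity. Since $x_0$ is an interior point and $\rho_0$ can be chosen as small as we like, the cutoff happens far from $x_0$ and does not affect the value of $\int_\Omega \Delta u \,\Delta\phi$ near $x_0$; nonnegativity is preserved because the potential is nonnegative in the interior and we can arrange the cutoff to stay nonnegative. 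Alternatively, one avoids this entirely by quoting the standard fact (e.g. from potential theory, or Evans–Gariepy §4 / Gilbarg–Trudinger) that an $L^2$ function which is a distributional supersolution of $-\Delta v \ge 0$ coincides a.e. with a superharmonic function whose value at every point equals the limit of its solid averages, and that at Lebesgue points this limit is the Lebesgue-point value; superharmonicity then forces this value to dominate nearby averages, which are nonnegative since $\Delta u \ge 0$ a.e.\ is \emph{false} a priori — so in fact one genuinely needs the supersolution property and not just $L^2$ membership, confirming that the distributional inequality \eqref{eq:subbiham} is doing the essential work. The subharmonicity of $u$ itself is then immediate: $\Delta u = v^* \le$ (its averages in the distributional sense) gives $u$ satisfies the sub-mean value property by integrating twice, or more directly, $-\Delta(-u) = \Delta u \ge 0$ distributionally once we know $(\Delta u)^* \ge 0$, wait — rather, $u \in W^{2,2} \subset C^0$ and $\Delta u \ge 0$ in the sense that its precise representative is $\ge 0$, so $u$ is subharmonic by the standard criterion.
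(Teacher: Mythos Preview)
Your approach is essentially the same as the paper's --- use the variational inequality \eqref{eq:subbiham} with a test function coming from a Poisson problem to show $\fint_{B_r(x_0)}\Delta u\,\mathrm{d}y \ge 0$, then pass to the limit at Lebesgue points --- but you introduce an unnecessary difficulty in the construction of the test function. Solving $-\Delta\phi = |B_{\rho_0}(x_0)|^{-1}\chi_{B_{\rho_0}(x_0)}$ on a \emph{ball} and extending by zero does \emph{not} give a function in $W^{2,2}(\Omega)$: the second derivatives jump across the boundary of the ball, and ``smoothing/cutoff'' will destroy the exact identity $-\Delta\phi = |B_{\rho_0}(x_0)|^{-1}\chi_{B_{\rho_0}(x_0)}$ that you need. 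Your remark that the cutoff ``does not affect the value of $\int_\Omega \Delta u\,\Delta\phi$ near $x_0$'' is not a precise statement and does not close the argument.

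The paper avoids this entirely by solving the Dirichlet problem \emph{globally on $\Omega$}: take $\phi_r \in W_0^{1,2}(\Omega)$ with $\Delta\phi_r = |B_r(x)|^{-1}\chi_{B_r(x)}$ in $\Omega$. Elliptic regularity gives $\phi_r \in W^{2,2}(\Omega)\cap W_0^{1,2}(\Omega)$ directly, and the maximum principle gives $\phi_r \le 0$, so $-\phi_r \ge 0$ is admissible in \eqref{eq:subbiham}. This yields $\fint_{B_r(x)}\Delta u\,\mathrm{d}y \ge 0$ for \emph{every} $r \in (0,\dist(x,\partial\Omega))$ in one line, and your detour through monotonicity of averages becomes unnecessary: at a Lebesgue point the limit of nonnegative averages is nonnegative. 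The subharmonicity of $u$ then follows from $\Delta u \ge 0$ a.e.\ and continuity of $u$, as you note at the end (once the stream-of-consciousness is cleaned up).
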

\begin{proof}
Fix $x \in \Omega$ and let $r \in (0, \mathrm{dist}(x, \partial \Omega))$ be  arbitrary. Denote by $\phi_r$ the weak $W_0^{1,2}$-solution of 
\begin{equation*}
\begin{cases} 
\Delta \phi_r = \frac{1}{|B_r(x)| } \chi_{B_r(x)}  & \mathrm{in} \; \Omega \\ 
\phi_r = 0 & \mathrm{on} \;  \partial \Omega
\end{cases} .
\end{equation*}
By elliptic regularity, $\phi_r \in W^{2,2}(\Omega) \cap W_0^{1,2}(\Omega)$. By the maximum principle, $\phi_r \leq 0 $ a.e.. By \eqref{eq:subbiham}
\begin{equation*}
0 \leq \int_\Omega \Delta u \Delta \phi_r \dy = \fint_{B_r(x)} \Delta u \dy.
\end{equation*}
If $x$ is a Lebesgue point of $\Delta u $, we can let $r \rightarrow 0 $ and find that  $(\Delta u)^*(x) \geq  0$. Since $u \in W^{2,2}(\Omega)$, almost every point is a Lebesgue point and hence $\Delta u \geq 0 $ a.e.. Since $u$ is furthermore continuous, we get that $u$ is subharmonic, see \cite[Theorem 4.3]{Serrin}.
\end{proof}
\begin{cor}[Positive Part Near Free Boundary]\label{cor:posdir}
Let $u \in \A(u_0)$ be a minimizer. Then 
\begin{equation*}
\limsup_{\epsilon \rightarrow 0 } \frac{|\{0 < u < \epsilon\}|}{\epsilon} < \infty.
\end{equation*}
\end{cor}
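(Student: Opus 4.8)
The plan is to apply the estimate \eqref{eq:meas} of Lemma \ref{lem:EL} with a suitably chosen test function. The obstacle is that \eqref{eq:meas} only controls $|\{0 < u < \epsilon \phi\}|$ rather than $|\{0<u<\epsilon\}|$, and $\phi$ is forced to vanish on $\partial \Omega$, so one cannot simply take $\phi \equiv 1$. The observation that resolves this is Corollary \ref{cor:guterrand}: for $\epsilon < \frac{\delta}{2}$ the sublevel set $\{0 < u < \epsilon\}$ does not meet the boundary collar $\Omega_{\epsilon_0}$, since there $u \geq \frac{\delta}{2} > \epsilon$. Hence $\{0 < u < \epsilon\}$ is contained in the compact set $K := \{ x \in \Omega : \dist(x, \partial \Omega) \geq \epsilon_0 \}$, which is compactly contained in $\Omega$.

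First I would fix a function $\phi \in C_0^\infty(\Omega) \subset W^{2,2}(\Omega) \cap W_0^{1,2}(\Omega)$ with $\phi \geq 0$ on $\Omega$ and $\phi \geq 1$ on $K$; such a $\phi$ exists by a standard bump-function construction since $K \Subset \Omega$. Then for $\epsilon < \frac{\delta}{2}$ one has, on $K$, the inequality $\epsilon \phi \geq \epsilon$, so that $\{0 < u < \epsilon\} = \{0 < u < \epsilon\} \cap K \subseteq \{0 < u < \epsilon \phi\}$, and therefore $\frac{|\{0<u<\epsilon\}|}{\epsilon} \leq \frac{|\{0 < u < \epsilon\phi\}|}{\epsilon}$ for all sufficiently small $\epsilon$.

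Taking $\limsup_{\epsilon \to 0+}$ on both sides and invoking \eqref{eq:meas} with this $\phi$ yields
\begin{equation*}
\limsup_{\epsilon \to 0+} \frac{|\{0 < u < \epsilon\}|}{\epsilon} \leq \limsup_{\epsilon \to 0+} \frac{|\{0 < u < \epsilon\phi\}|}{\epsilon} \leq 2\, \E(u)^{\frac{1}{2}}\, \|\Delta \phi\|_{L^2} < \infty,
\end{equation*}
which is the assertion. I do not expect a genuine obstacle here: the only points requiring care are that the chosen $\phi$ indeed lies in the admissible test class of Lemma \ref{lem:EL}, and that the inclusion of sublevel sets is used only once $\epsilon$ is small enough that $\{0 < u < \epsilon\}$ avoids $\Omega_{\epsilon_0}$ — which is exactly what Corollary \ref{cor:guterrand} guarantees.
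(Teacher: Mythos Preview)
Your proof is correct and follows essentially the same approach as the paper: both choose a nonnegative cutoff $\phi \in C_0^\infty(\Omega)$ that equals (or exceeds) $1$ on the compact set $\Omega \setminus \Omega_{\epsilon_0}$, use Corollary \ref{cor:guterrand} to ensure that $\{0<u<\epsilon\}$ is contained in that set for small $\epsilon$, and then apply the estimate \eqref{eq:meas} from Lemma \ref{lem:EL}. The only cosmetic difference is that the paper phrases the boundary-collar contribution as a separate term tending to zero, whereas you absorb it directly via the inclusion $\{0<u<\epsilon\}\subset K$.
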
 
\begin{proof}
Choose $\phi^* \in C_0^\infty(\Omega)$ such that $0 \leq \phi^* \leq 1 $ and $\phi^* \equiv 1 $ on $\Omega_{\epsilon_0}^C$, which is defined as in Lemma \ref{lem:biham}. Then $u(x) \geq \frac{\delta}{2}$ for all $x \in \Omega_{\epsilon_0}$, where $\delta$ is given by Definition \ref{def:adm}. Therefore note that 
\begin{equation*}
\lim_{\epsilon \rightarrow 0 } \frac{|\{ 0 < u < \epsilon \} \cap \Omega_{\epsilon_0}| }{\epsilon} = 0. 
\end{equation*} 
By \eqref{eq:meas}
\begin{align*}
\limsup_{\epsilon \rightarrow 0 } \frac{|\{0 < u < \epsilon\}|}{\epsilon} & = \limsup_{\epsilon \rightarrow 0 } \frac{|\{0 < u < \epsilon\} \cap \Omega_{\epsilon_0}^C|}{\epsilon} \\ &  \leq \limsup_{\epsilon \rightarrow 0 } \frac{|\{0 < u < \epsilon\phi^*\}|}{\epsilon} \leq 2 \sqrt{\E(u) }||\Delta \phi^*||_{L^2} < \infty. \qedhere 
\end{align*}
\end{proof}
\begin{cor}[The Biharmonic Measure] Let $u \in \A(u_0)$ be a minimizer. 
Then there exists a finite Radon measure $\mu \in M(\Omega)$ such that $\mathrm{supp}(\mu) \subset \{ u = 0 \}$ and 
\begin{equation}\label{eq:bihammeas}
2\int_\Omega \Delta u \Delta \phi \dx  =-  \int_\Omega \phi \; \mathrm{d}\mu \quad \forall \phi \in W_0^{2,2}(\Omega) .
\end{equation} 
\end{cor}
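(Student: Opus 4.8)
The plan is to realize the left‑hand side of \eqref{eq:bihammeas} as a positive linear functional and to invoke the Riesz–Markov representation theorem. Consider the functional $L$ on $C_0^\infty(\Omega)$ given by
\[
L(\phi) := -2\int_\Omega \Delta u\,\Delta\phi \dx .
\]
Since $C_0^\infty(\Omega)\subset W^{2,2}(\Omega)\cap W_0^{1,2}(\Omega)$, inequality \eqref{eq:subbiham} shows that $L(\phi)\ge 0$ whenever $\phi\ge 0$. To upgrade $L$ to a measure I would fix a compact set $K\subset\Omega$ and a cutoff $\eta_K\in C_0^\infty(\Omega)$ with $0\le\eta_K\le 1$ and $\eta_K\equiv 1$ on $K$; for $\phi\in C_0^\infty(\Omega)$ with $\supp\phi\subset K$ the functions $\|\phi\|_{L^\infty}\eta_K\pm\phi$ are nonnegative, so positivity of $L$ yields $|L(\phi)|\le L(\eta_K)\,\|\phi\|_{L^\infty}$. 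Hence $L$ extends to a positive linear functional on $C_c(\Omega)$, and the Riesz–Markov theorem (see \cite[Section 1.8]{EvGar}) produces a nonnegative Radon measure $\mu\in M(\Omega)$ with $L(\phi)=\int_\Omega\phi\,\mathrm{d}\mu$ for all $\phi\in C_c(\Omega)$, in particular for all $\phi\in C_0^\infty(\Omega)$.

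Next I would localize the support of $\mu$. The sets $\{u>0\}$ and $\{u<0\}$ are open and disjoint because $u$ is continuous, so any $\phi\in C_0^\infty(\Omega)$ whose (compact) support misses $\{u=0\}$ splits as $\phi=\phi_1+\phi_2$ with $\phi_i\in C_0^\infty(\Omega)$, $\supp\phi_1\subset\{u>0\}$ and $\supp\phi_2\subset\{u<0\}$. Lemma \ref{lem:biham} gives $\int_\Omega\Delta u\,\Delta\phi_i\dx=0$, hence $L(\phi)=0$. As $C_0^\infty(\Omega\setminus\{u=0\})$ is dense in $C_c(\Omega\setminus\{u=0\})$, this forces $\mu(\Omega\setminus\{u=0\})=0$, i.e. $\supp\mu\subset\{u=0\}$.

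For finiteness I use Corollary \ref{cor:guterrand}: $u\ge\tfrac\delta2$ on $\Omega_{\epsilon_0}$, so $\{u=0\}$ is a closed subset of the compact set $\{x\in\Omega:\dist(x,\partial\Omega)\ge\epsilon_0\}$ and therefore compact in $\Omega$. Choosing $\eta\in C_0^\infty(\Omega)$ with $0\le\eta\le1$ and $\eta\equiv1$ on a neighbourhood of $\{u=0\}$ I obtain
\[
\mu(\Omega)=\int_\Omega\eta\,\mathrm{d}\mu=L(\eta)=-2\int_\Omega\Delta u\,\Delta\eta\dx\le 2\|\Delta u\|_{L^2}\|\Delta\eta\|_{L^2}<\infty .
\]
Finally, both sides of $L(\phi)=\int_\Omega\phi\,\mathrm{d}\mu$ are continuous in $\phi\in W_0^{2,2}(\Omega)$ — the left‑hand side by Cauchy–Schwarz, the right‑hand side because $\mu$ is finite and $W^{2,2}(\Omega)\hookrightarrow C^0(\overline\Omega)$ in two dimensions — and $C_0^\infty(\Omega)$ is dense in $W_0^{2,2}(\Omega)$; hence the identity persists for all $\phi\in W_0^{2,2}(\Omega)$, which rearranges to \eqref{eq:bihammeas}.

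The only genuinely non‑formal point is the passage from the positive functional, which a priori is controlled only on the Sobolev space $W^{2,2}(\Omega)\cap W_0^{1,2}(\Omega)$, to an honest Radon measure: this requires the comparison with cutoffs to produce the local sup‑norm bound, and afterwards the finiteness of $\mu$ must be recovered from the fact that $\supp\mu$ is contained in the compact set $\{u=0\}$. Everything else is routine.
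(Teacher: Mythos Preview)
Your proof is correct and follows essentially the same route as the paper: define the positive linear functional $L(\phi)=-2\int_\Omega\Delta u\,\Delta\phi\,\mathrm{d}x$, apply Riesz--Markov, use Lemma~\ref{lem:biham} to localize $\supp\mu$ to $\{u=0\}$, exploit compactness of $\{u=0\}$ in $\Omega$ for finiteness, and extend to $W_0^{2,2}(\Omega)$ by density and the embedding into $C(\overline\Omega)$. Your version is in fact slightly more careful in two places---the explicit sup-norm bound via $\|\phi\|_{L^\infty}\eta_K\pm\phi$ that justifies extending $L$ from $C_0^\infty$ to $C_c$, and the direct computation $\mu(\Omega)=L(\eta)$---but these are refinements of the same argument, not a different approach.
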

\begin{proof}
Define $L: C_0^\infty(\Omega) \rightarrow \mathbb{R}$ by 
\begin{equation*}
L(\phi) := - 2\int_\Omega \Delta u \Delta \phi \dx . 
\end{equation*}
The map $L$ is linear and satisfies  $L(f) \geq 0 $ for each $f \geq 0 $ by \eqref{eq:subbiham}. By the Riesz-Markov-Kakutani Theorem (see \cite[Corollary 1, Section 1.8]{EvGar}) we infer that there exists a (not necessarily finite) Radon measure $\mu \in M(\Omega)$ such that 
\begin{equation*}
L(\phi) = \int_\Omega \phi \; \mathrm{d}\mu .
\end{equation*}
Furthermore, by Lemma \ref{lem:biham} we have that $L(\phi) = 0$ for each $\phi \in C_0^\infty(\{ u > 0 \}) \cup C_0^\infty( \{ u < 0 \} ) $. Since $\mu $ is Radon, this implies that $\mu( \{ u > 0 \} ) = \mu( \{ u < 0 \} ) = 0 $. Since $\{u > 0 \} $ and $\{ u < 0 \} $ are open by continuity of $u$, we have $\mathrm{\supp}(\mu) \subset \{ u = 0 \}$. However, since $u_{\mid_{\partial \Omega}} \geq \delta > 0 $ by Definition \ref{def:adm}, $\{u = 0 \} $ is compactly contained in $\Omega$. Hence $\mu(\Omega) = \mu(\{ u= 0 \} ) < \infty$ since $\mu$ is finite on compact subsets of $\Omega$.  It remains to show that \eqref{eq:bihammeas} holds for $\phi \in W_0^{2,2}(\Omega)$, but this holds because of density and the fact that $W_0^{2,2}(\Omega) \subset C(\overline{\Omega})$. 
\end{proof}
\begin{remark}
Note that for $\phi \in W_0^{2,2}(\Omega)$, \eqref{eq:bihammeas} holds only for the continuous representative of $\phi$. The precise representative is important since $\mu$ may not be absolutely continuous with respect to the Lebesgue measure. 
\end{remark}
From now on, whenever we address a minimizer $u \in \A(u_0)$, $\mu_u$ or in case of nonambiguity $\mu$ denotes the measure that satisfies \eqref{eq:bihammeas}. 

\begin{lemma}[Local BMO-regularity]\label{lem:bihmmeasmore}
Let $u \in \mathcal{A}(u_0)$ be a minimizer. Then $\Delta u \in BMO_{loc}(\Omega) \subset L^q_{loc}(\Omega), q \in [1, \infty)$ and \eqref{eq:bihammeas} holds true also for $\phi \in W_0^{2,p}(\Omega_{\epsilon_0}^C)$ for each $p \in (1,2)$. 
\end{lemma}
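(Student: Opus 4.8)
The plan is to split the statement into the $BMO$-regularity of $\Delta u$ and the extension of \eqref{eq:bihammeas}, treating them in turn. For the first, I would note that testing \eqref{eq:bihammeas} against $\phi \in C_0^\infty(\Omega)$ exhibits $v := \Delta u \in L^2(\Omega)$ as a distributional solution of $-\Delta v = \tfrac12\mu$ on $\Omega$, where $\mu$ is the finite nonnegative Radon measure from \eqref{eq:bihammeas}, with $\mathrm{supp}(\mu)\subset\{u=0\}$ a compact subset of $\Omega$. I would then subtract off the logarithmic potential $N_\mu(x) := -\tfrac{1}{2\pi}\int_{\mathbb{R}^2}\log|x-y|\,\mathrm{d}\mu(y)$, which is locally integrable on $\mathbb{R}^2$ (as $\mu$ is finite with compact support) and satisfies $-\Delta N_\mu = \mu$ on $\mathbb{R}^2$. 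Then $v-\tfrac12 N_\mu$ is a distribution on $\Omega$ with vanishing Laplacian, hence by Weyl's lemma equals a harmonic, in particular $C^\infty(\Omega)$, function $h$, so that $\Delta u = \tfrac12 N_\mu + h$ almost everywhere in $\Omega$.

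Since $h\in C^\infty(\Omega)\subset L^\infty_{loc}(\Omega)\subset BMO_{loc}(\Omega)$, the whole matter reduces to showing $N_\mu\in BMO(\mathbb{R}^2)$. For this I would use that the model function $\log|\cdot|$ lies in $BMO(\mathbb{R}^2)$, so every translate $\log|\cdot-y|$ does too, with $BMO$-seminorm independent of $y$, and then estimate, for an arbitrary ball $B\subset\mathbb{R}^2$, by Minkowski's integral inequality,
\begin{equation*}
\fint_B \bigl| N_\mu - (N_\mu)_B \bigr|\,\mathrm{d}x \le \frac{1}{2\pi}\int_{\mathbb{R}^2}\fint_B\bigl| \log|x-y| - (\log|\cdot - y|)_B\bigr|\,\mathrm{d}x\,\mathrm{d}\mu(y) \le \frac{1}{2\pi}\,\|\log|\cdot|\|_{BMO}\,\mu(\mathbb{R}^2).
\end{equation*}
This yields $N_\mu\in BMO(\mathbb{R}^2)$, whence $\Delta u = \tfrac12 N_\mu + h \in BMO_{loc}(\Omega)$; the inclusion $BMO_{loc}(\Omega)\subset L^q_{loc}(\Omega)$ for every $q\in[1,\infty)$ is then just the John--Nirenberg inequality applied on balls compactly contained in $\Omega$.

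For the extension of \eqref{eq:bihammeas}, fix $p\in(1,2)$ and $\phi\in W_0^{2,p}(\Omega_{\epsilon_0}^C)$, extended by zero to $\Omega$; recall from Corollary \ref{cor:guterrand} that $\Omega_{\epsilon_0}^C$ is compactly contained in $\Omega$. I would approximate $\phi$ by functions $\phi_k\in C_0^\infty(\Omega_{\epsilon_0}^C)$ with $\phi_k\to\phi$ in $W^{2,p}$; since each $\phi_k$, extended by zero, lies in $C_0^\infty(\Omega)\subset W_0^{2,2}(\Omega)$, \eqref{eq:bihammeas} applies to it, and I would pass to the limit. On the left-hand side, the first part of the lemma gives $\Delta u\in L^{p'}(\Omega_{\epsilon_0}^C)$ with $p'=p/(p-1)$, and $\Delta\phi_k\to\Delta\phi$ in $L^p(\Omega_{\epsilon_0}^C)$, so Hölder's inequality passes the limit. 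On the right-hand side, the Sobolev embedding $W^{2,p}(\mathbb{R}^2)\hookrightarrow C^0(\mathbb{R}^2)$ — valid since $2-2/p>0$ for $p>1$ — forces $\phi_k\to\phi$ uniformly, and the finiteness of $\mu$ gives $\int_\Omega\phi_k\,\mathrm{d}\mu\to\int_\Omega\phi\,\mathrm{d}\mu$. Both sides therefore converge and \eqref{eq:bihammeas} holds for $\phi$.

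The step I expect to require the most care is $N_\mu\in BMO(\mathbb{R}^2)$, i.e. the fact that the logarithmic potential of a finite measure in the plane has bounded mean oscillation; the remaining ingredients — the distributional identity $-\Delta v=\tfrac12\mu$, Weyl's lemma, John--Nirenberg, and the Sobolev embedding — are standard. Should one prefer, this step can also be bypassed by invoking known potential estimates for the planar equation $-\Delta v=\mu$ with measure data.
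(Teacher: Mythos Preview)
Your argument is correct. For the extension of \eqref{eq:bihammeas} to $W_0^{2,p}(\Omega_{\epsilon_0}^C)$ you do exactly what the paper does: approximate by $C_0^\infty$ functions, use the Sobolev embedding $W^{2,p}\hookrightarrow C^0$ in two dimensions to get uniform convergence for the right-hand side, and use $\Delta u\in L^{p'}_{loc}$ (which follows from the first part) to pass to the limit on the left.

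For the $BMO_{loc}$ assertion, however, the paper takes a different and much shorter route: it simply cites \cite[Theorem 1.1]{Valdinoci}, where this regularity is already established. Your approach---writing $\Delta u=\tfrac12 N_\mu+h$ with $h$ harmonic via Weyl's lemma, and then showing directly that the logarithmic potential $N_\mu$ of a finite compactly supported measure lies in $BMO(\mathbb{R}^2)$ by averaging the translation-invariant $BMO$ bound for $\log|\cdot|$---is a genuinely self-contained alternative. It is more elementary in that it avoids the black box, and it also anticipates the representation used later in Lemma~\ref{lem:bihammeasrep}, where the paper decomposes $u$ (rather than $\Delta u$) via the biharmonic fundamental solution; your decomposition of $\Delta u$ is essentially the Laplacian of that formula. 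The trade-off is length: the paper's citation is one line, whereas your argument requires the potential-theoretic computation and the $BMO$ estimate for $N_\mu$. Both are valid.
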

\begin{proof}
For the assertion that $\Delta u \in BMO_{loc}(\Omega)\subset L^q_{loc}(\Omega), q \in [1, \infty)$ we refer to \cite[Theorem 1.1]{Valdinoci}. Now fix $\phi \in W_0^{2,p}(\Omega_{\epsilon_0}^C)$. Since $\Omega_{\epsilon_0}^C$ has $C^2$-boundary by Corollary \ref{cor:guterrand} we obtain by Sobolev embedding that $\phi \in C(\overline{\Omega_{\epsilon_0}^C})$ and that there exists a sequence $(\phi_n)_{n = 1}^\infty \subset C_0^\infty(\Omega_{\epsilon_0}^C)$ that is convergent to $\phi$  in $W^{2,p}(\Omega_{\epsilon_0}^C)$ and in $C(\overline{\Omega_{\epsilon_0}^C})$. From this and the fact that \eqref{eq:bihammeas} holds for all $\phi_n$ one can infer that it also holds for $\phi$.
\end{proof}

\begin{remark}\label{rem:ceins}
In particular the previous Lemma implies that each minimizer lies in $C^1(\Omega)$.
\end{remark}
\section{Regularity and Semiconvexity}

In this section we will study regularity and some properties of the minimizer, in particular the set of non-$1-$Lebesgue points of $D^2u$. We will expose a singular behavior of the Laplacian at all those points. Moreover we prove that minimizers are semiconvex, which can also be seen as a regularity property, having Aleksandrov's theorem in mind. 

For our arguments, we need some remarkable facts about the fundamental solution in two dimensions that were already discovered and applied to the biharmonic obstacle problem by Caffarelli and Friedman in \cite[e.g. Equation (6.3)]{Friedman}. 
%The proof is a standard computation and we refer to  for details. 
\begin{lemma}[{Fundamental Solution of the Biharmonic Operator, cf. \cite[Section 7.3]{Mitrea}}] \label{lem:green}
Define $F : \mathbb{R}^2 \times \mathbb{R}^2 \setminus \{(x,x): x \in \mathbb{R}^2 \} \rightarrow \mathbb{R}$ via 
\begin{equation*}
F(x,y) :=  \frac{1}{8\pi} |x-y|^2 \log |x-y| .
\end{equation*}
Then $F$ satisfies $\Delta^2 F(x,\cdot) = \delta_x$ on $\mathbb{R}^2$, where $\delta_x$ denotes the Dirac measure of $\{x\}$. Then for each $\beta  \in (0,1] $ one has that $F(\cdot,y) \in W^{3,2-\beta}_{loc} (\mathbb{R}^2)$  for each $y \in \mathbb{R}^2$. Moreover, for all $(x,y) \in \mathbb{R}^2 $ such that $x \neq y$ one has
\begin{align}\label{eq:ablgreen}
\nabla_x F(x,y) & = - \nabla_y F(x,y) = \frac{1}{8\pi}(2 \log|x-y| + 1) (x-y), \\
\partial_{x_ix_i}^2 F(x,y)&  = \frac{1}{8\pi} \left( 1+ 2 \frac{(x_i-y_i)^2}{|x-y|^2} +2 \log |x-y| \right) \quad i = 1,2 , \label{eq:seconder}  \\
\partial^2_{x_1x_2} F(x,y)& = \frac{1}{4\pi} \frac{(x_1-y_1) (x_2-y_2) }{|x-y|^2}. 
\end{align}
In particular, 
\begin{equation}\label{eq:laplaci}
\Delta_x F(x,y) = \frac{1}{2\pi} \left( \log |x-y| + 1 \right), 
\end{equation} 
and $\partial_{x_1x_1}^2F(x,\cdot) - \partial_{x_2x_2}^2F(x,\cdot), \partial_{x_1x_2}F(x, \cdot)\leq \frac{3}{8\pi}$  on $\mathbb{R}^2 \setminus\{x\}$ for each $x \in \mathbb{R}^2$. Moreover, there is $C > 0 $ such that 
\begin{equation}\label{eq:thidbound}
|D_x^3F(x,y)| \leq \frac{C}{|x-y|} \quad \forall y \in \mathbb{R}^2 \setminus \{ x \} .
\end{equation}
\end{lemma}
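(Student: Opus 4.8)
The plan is to derive every assertion by direct differentiation on $\{x\neq y\}$, supplemented by one classical fact about the two-dimensional Newtonian potential and a routine check that the classically computed derivatives are the distributional ones. Throughout I would write $F(x,y)=G(|x-y|)$ with $G(r)=\frac{1}{8\pi}r^2\log r$ (and $G(0):=0$), which is continuous and locally bounded, so that $F(\cdot,y)\in L^1_{loc}(\mathbb{R}^2)$ and all distributional derivatives considered below make sense.

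First I would compute the pointwise derivatives away from the diagonal. Using $\nabla_x|x-y|^2=2(x-y)$ and $\nabla_x\log|x-y|=(x-y)/|x-y|^2$ one gets \eqref{eq:ablgreen}, and the antisymmetry $\nabla_xF=-\nabla_yF$ is immediate since $F$ depends only on $x-y$. Differentiating once more, with $\partial_{x_i}(x_j-y_j)=\delta_{ij}$, yields $\partial^2_{x_ix_j}F=\frac{1}{8\pi}\big[(2\log|x-y|+1)\delta_{ij}+2(x_i-y_i)(x_j-y_j)/|x-y|^2\big]$, which is exactly \eqref{eq:seconder} for $i=j$ and the stated formula for $\partial^2_{x_1x_2}F$ for $i\neq j$. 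Summing the diagonal entries and using $(x_1-y_1)^2+(x_2-y_2)^2=|x-y|^2$ gives $\Delta_xF=\frac{1}{8\pi}(4\log|x-y|+4)=\frac{1}{2\pi}(\log|x-y|+1)$, that is \eqref{eq:laplaci}. From \eqref{eq:seconder} and the mixed formula, $D^2_xF$ has at most a logarithmic singularity along the diagonal, so $D^2F\in L^p_{loc}$ for every $p<\infty$ and $F(\cdot,y)\in W^{2,p}_{loc}(\mathbb{R}^2)$; in particular the distributional Laplacian of $F(\cdot,y)$ equals the locally integrable function in \eqref{eq:laplaci}. Applying $\Delta$ once more distributionally, the constant drops out and $\Delta^2F(x,\cdot)=\frac{1}{2\pi}\Delta_y\log|x-y|=\delta_x$, by the classical identity that $\frac{1}{2\pi}\log|\cdot|$ is a fundamental solution of $\Delta$ in $\mathbb{R}^2$ (using also the symmetry $F(x,y)=F(y,x)$, so differentiation in $x$ and in $y$ agree up to sign).

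For the Sobolev statement and \eqref{eq:thidbound}, I would differentiate \eqref{eq:seconder} and the mixed formula once more: the $\log$ term contributes $\partial_{x_k}\log|x-y|=(x_k-y_k)/|x-y|^2$ and the quadratic-over-quadratic terms contribute analogous homogeneous degree $(-1)$ expressions, so every entry of $D^3_xF$ is bounded in modulus by $C/|x-y|$ for an explicit $C$, giving \eqref{eq:thidbound}. Since $\int_{B_R}|x-y|^{-(2-\beta)}\,\mathrm{d}x<\infty$ in $\mathbb{R}^2$ whenever $2-\beta<2$, i.e. for every $\beta>0$, one gets $D^3_xF(\cdot,y)\in L^{2-\beta}_{loc}$; together with $F,\nabla F\in C(\mathbb{R}^2)$ and $D^2F\in L^{2-\beta}_{loc}$ this yields $F(\cdot,y)\in W^{3,2-\beta}_{loc}(\mathbb{R}^2)$, once one checks that these pointwise third derivatives are genuinely the weak ones. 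That check is the only non-mechanical point: the components of $D^2F(\cdot,y)$ are $C^1$ off the single point $\{y\}$ with gradient dominated by $C/|x-y|\in L^1_{loc}(\mathbb{R}^2)$, hence belong to $W^{1,1}_{loc}$ with weak gradient equal a.e. to the classical one (a short mollification argument, or absolute continuity on a.e.\ line), so no spurious $\delta$-contributions arise for derivatives of order up to three — this is precisely where two-dimensionality enters, through the local integrability of $\log|x-y|$ and $|x-y|^{-1}$.

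Finally, for the two pointwise bounds: \eqref{eq:seconder} gives $\partial^2_{x_1x_1}F-\partial^2_{x_2x_2}F=\frac{1}{4\pi}\,\frac{(x_1-y_1)^2-(x_2-y_2)^2}{|x-y|^2}$ and the mixed formula gives $\partial^2_{x_1x_2}F=\frac{1}{4\pi}\,\frac{(x_1-y_1)(x_2-y_2)}{|x-y|^2}$. Since $|t_1^2-t_2^2|\le t_1^2+t_2^2$ and $2|t_1t_2|\le t_1^2+t_2^2$, both expressions are bounded in modulus by $\frac{1}{4\pi}\le\frac{3}{8\pi}$, which proves the claim. The main obstacle is thus not any single computation but the bookkeeping that the classical formulas above are the distributional derivatives; everything else is elementary differentiation plus the standard $\mathbb{R}^2$ Laplace fundamental solution.
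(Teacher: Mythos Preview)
Your proof is correct and complete. The paper does not give an explicit proof of this lemma---it simply records the formulas and cites \cite[Section~7.3]{Mitrea} for the fundamental-solution identity $\Delta^2 F(x,\cdot)=\delta_x$---so your direct computation of the derivatives, together with the classical $\mathbb{R}^2$ Laplace fundamental solution and the local integrability check for the $W^{3,2-\beta}_{loc}$ assertion, is exactly the argument the paper leaves implicit.
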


\begin{lemma}\label{lem:greensubham}
Let $x_0,y \in \mathbb{R}^2$ and
\begin{equation*}
H(r) := - \frac{1}{8\pi}\fint_{B_r(x_0)} \log|x-y| dx.
\end{equation*}
Then $H$ is decreasing on $(0,\infty)$ and its pointwise limit as $r \rightarrow 0$ is given by $- \frac{1}{8\pi}\log|x_0 - y|$ with the convention that $- \log 0 := \infty$
\end{lemma}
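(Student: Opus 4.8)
The plan is to reduce the statement to elementary facts about the two-dimensional logarithmic kernel $x \mapsto \log|x-y|$, which is harmonic on $\mathbb{R}^2 \setminus \{y\}$ and subharmonic on all of $\mathbb{R}^2$ (since $\Delta_x \log|x - y| = 2\pi\delta_y$); equivalently, $x \mapsto -\frac{1}{8\pi}\log|x-y|$ is a positive multiple of the logarithmic potential of $\delta_y$ and hence superharmonic, so one expects its averages over concentric balls to be monotone. Rather than invoking abstract potential theory I would simply compute $H$ in closed form and differentiate.

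First I would record the mean-value identity
\begin{equation*}
\fint_{\partial B_\rho(x_0)} \log|x-y| \,\mathrm{d}\mathcal{H}^1(x) = \log \max\{\rho,\, |x_0 - y|\}, \qquad \rho > 0.
\end{equation*}
This follows by fixing $\rho$ and observing that $z \mapsto \psi(z) := \fint_{\partial B_\rho(x_0)} \log|x - z| \,\mathrm{d}\mathcal{H}^1(x)$ is radial about $x_0$ and harmonic for $z \notin \partial B_\rho(x_0)$ (differentiating under the integral, $\Delta_z\psi$ is a measure carried by $\partial B_\rho(x_0)$); hence $\psi(z) = a + b\log|z-x_0|$ on $\{|z-x_0| > \rho\}$ and $\psi \equiv c$ on $\{|z-x_0| < \rho\}$, with $b=1$ and $a=0$ forced by $\log|x-z| = \log|z| + O(|z|^{-1})$ as $|z|\to\infty$, and $c = \log\rho$ by continuity of the single-layer potential across $\partial B_\rho(x_0)$. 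Evaluating at $z = y$ gives the identity.

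Next, passing to polar coordinates and abbreviating $d := |x_0 - y|$, one has $\fint_{B_r(x_0)} \log|x-y| \dx = \frac{2}{r^2}\int_0^r \rho \log\max\{\rho,d\}\,\mathrm{d}\rho$. Splitting the integral at $\rho = d$ (the split is vacuous when $d = 0$) and using $\int \rho\log\rho\,\mathrm{d}\rho = \frac{\rho^2}{2}\log\rho - \frac{\rho^2}{4}$ yields
\begin{equation*}
H(r) = -\frac{1}{8\pi}\,\fint_{B_r(x_0)} \log|x-y| \dx =
\begin{cases}
-\frac{1}{8\pi}\log d, & 0 < r \le d, \\
-\frac{1}{8\pi}\left(\log r - \frac{1}{2} + \frac{d^2}{2r^2}\right), & r > d.
\end{cases}
\end{equation*}
If $d > 0$ this shows that $H$ is constant on $(0,d]$ and that $H'(r) = -\frac{1}{8\pi}\,\frac{r^2 - d^2}{r^3} < 0$ on $(d,\infty)$, so $H$ is (non-strictly) decreasing on $(0,\infty)$ with $\lim_{r\to 0+} H(r) = -\frac{1}{8\pi}\log d$; if $d = 0$ then $H(r) = -\frac{1}{8\pi}(\log r - \frac{1}{2})$ is strictly decreasing with $\lim_{r\to 0+} H(r) = +\infty = -\frac{1}{8\pi}\log 0$. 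This is precisely the assertion.

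There is no genuinely hard step here; the only points requiring care are the degenerate case $x_0 = y$, where the pointwise limit is $+\infty$ and must be matched against the convention $-\log 0 := \infty$, and the fact that "decreasing" must be read as "non-increasing" — indeed for $x_0 \ne y$ the kernel $\log|\cdot - y|$ is harmonic on $B_r(x_0)$ for $r \le |x_0 - y|$, so $H$ is literally constant there. Everything else is a routine one-variable computation.
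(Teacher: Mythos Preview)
Your proof is correct; the explicit formula for $H(r)$ and the differentiation argument are clean and complete, including the degenerate case $x_0 = y$. The paper, by contrast, gives no argument but simply cites two textbook facts from \cite{Berenstein}: the sphere-mean identity for the logarithmic kernel (your first display is precisely \cite[Proposition 4.4.11(6)]{Berenstein}) and the general principle that ball averages of a superharmonic function are nonincreasing in the radius and converge to the value at the center (\cite[Proposition 4.4.15]{Berenstein}). So the underlying content is the same --- superharmonicity of $-\log|\cdot - y|$ --- but you unpack it into a self-contained one-variable computation. This buys you independence from the reference and makes the constancy of $H$ on $(0,|x_0-y|]$ transparent; the paper's route buys brevity and places the lemma as an immediate instance of standard potential theory.
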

\begin{proof}
The claim follows directly from \cite[Proposition 4.4.11(6)]{Berenstein} and \cite[Proposition 4.4.15]{Berenstein}.
\end{proof}
The following result is very similar to crucial observations in \cite{Friedman}. %and the method of proof is very similar.  
\begin{lemma}[Biharmonic Measure Representation, Proof in Appendix \ref{sec:tecprf}] \label{lem:bihammeasrep}

Let $u \in \A(u_0)$ be a minimizer and $\mu$ be as in \eqref{eq:bihammeas}. Further let $\Omega_{\epsilon_0} $ be as in Corollary \ref{cor:guterrand}. Then there exists $h \in C^\infty(\overline{\Omega_{\epsilon_0}^C}) $ such that 
\begin{equation*}
u(x) = - \frac{1}{2}\int_\Omega F(x,y) \; \mathrm{d}\mu(y) + h(x)  \quad \forall  x \in \Omega_{\epsilon_0}^C,
\end{equation*}
where $F$ is the same as in Lemma \ref{lem:green}. 
\end{lemma}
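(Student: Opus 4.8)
The plan is to represent $u$ as a Newtonian-type potential of its biharmonic data plus a smooth correction, working on the set $\Omega_{\epsilon_0}^C$ where $u$ is merely $W^{2,2}$ (as opposed to $\Omega_{\epsilon_0}$, where Corollary \ref{cor:guterrand} already gives smoothness). The natural candidate is
\begin{equation*}
v(x) := -\frac{1}{2}\int_\Omega F(x,y)\,\mathrm{d}\mu(y),
\end{equation*}
the convolution of $F$ with the finite Radon measure $\mu$ supported in $\{u=0\}$. First I would verify that $v$ is well defined: since $\mu$ is finite with compact support in $\Omega$ and $F(x,\cdot)$ is locally integrable (indeed $F(\cdot,y)\in W^{3,2-\beta}_{loc}$ by Lemma \ref{lem:green}), the integral converges for every $x$, and standard potential-theoretic estimates — differentiating under the integral sign, using the bounds \eqref{eq:ablgreen}, \eqref{eq:seconder}, \eqref{eq:thidbound} on the derivatives of $F$ together with $\mathrm{supp}(\mu)\Subset\Omega$ — show $v\in W^{2,2}_{loc}(\Omega)$ (in fact $v\in W^{3,2-\beta}_{loc}$). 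Crucially, by construction and the fundamental-solution property $\Delta_x^2 F(x,\cdot)=\delta_x$, one has $\Delta^2 v = -\tfrac12\mu$ in the distributional sense, i.e.
\begin{equation*}
2\int_\Omega \Delta v\,\Delta\phi\dx = -\int_\Omega \phi\,\mathrm{d}\mu \qquad \forall\,\phi\in C_0^\infty(\Omega).
\end{equation*}

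Next I would set $h := u - v$ on $\Omega_{\epsilon_0}^C$ and show $h$ is biharmonic there in the weak sense. Subtracting the identity above from \eqref{eq:bihammeas} gives $\int_\Omega \Delta(u-v)\,\Delta\phi\dx = 0$ for all $\phi\in C_0^\infty(\Omega)$, hence $\Delta^2 h = 0$ distributionally in $\Omega$. By Weyl's lemma (applied to the harmonic function $\Delta h$, after noting $\Delta h\in L^2_{loc}$) or directly by elliptic regularity for $\Delta^2$, $h\in C^\infty(\Omega)$; in particular $h\in C^\infty(\overline{\Omega_{\epsilon_0}^C})$ once one checks the regularity extends up to $\partial(\Omega_{\epsilon_0}^C)$. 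For the latter one uses that $\partial\Omega_{\epsilon_0}^C = \partial\Omega_{\epsilon_0}\setminus\partial\Omega$ is a $C^2$-hypersurface interior to $\Omega$, so $h$ being globally smooth in $\Omega$ is actually enough — no boundary estimate is needed, $\overline{\Omega_{\epsilon_0}^C}\subset\Omega$. This yields exactly $u(x) = -\tfrac12\int_\Omega F(x,y)\,\mathrm{d}\mu(y) + h(x)$ on $\Omega_{\epsilon_0}^C$ with $h\in C^\infty(\overline{\Omega_{\epsilon_0}^C})$.

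The main obstacle I anticipate is making the distributional identity $2\int \Delta v\,\Delta\phi = -\int\phi\,\mathrm{d}\mu$ rigorous, since $\mu$ is only a measure and one must justify the Fubini-type interchange $\int_\Omega\big(\int_\Omega \Delta_x F(x,y)\Delta\phi(x)\dx\big)\mathrm{d}\mu(y) = \int_\Omega\big(-\tfrac12\cdot 2\,\phi(y)\big)\mathrm{d}\mu(y)$ using $\Delta_x^2 F(x,y)=\delta_y$; this requires enough integrability of $\Delta_x F(x,y) = \tfrac{1}{2\pi}(\log|x-y|+1)$ against $\mu\otimes\mathrm{d}x$, which follows from $\log$-integrability of a finite compactly supported measure but should be spelled out. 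A secondary technical point is confirming $v\in W^{2,2}_{loc}$ rather than just $W^{1,q}_{loc}$, so that $\Delta v$ pairs with $\Delta\phi$ in $L^2$; here the $BMO_{loc}$ bound on $\Delta u$ from Lemma \ref{lem:bihmmeasmore} combined with the corresponding estimate on $\Delta v$ (the logarithmic potential of a finite measure is $BMO_{loc}$) closes the gap. Everything else — differentiation under the integral, the derivative formulas for $F$, Weyl's lemma — is routine. This is presumably why the authors defer the full argument to Appendix \ref{sec:tecprf}.
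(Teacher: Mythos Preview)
Your approach is correct and genuinely different from the paper's. The paper does not define the potential $v$ and subtract; instead it works directly with $u$: it introduces a cutoff $\xi\in C_0^\infty(\Omega)$ with $\xi\equiv 1$ on a neighborhood of $\Omega_{\epsilon_0}^C$, approximates $u-u_0$ by smooth $\phi_n$ solving auxiliary Poisson problems, writes $\phi_n\xi = \int F(x,y)\Delta^2(\phi_n\xi)\,dy$ via the fundamental solution, expands the bilaplacian of the product, passes to the limit, and finally invokes \eqref{eq:bihammeas} with the admissible test function $F(x,\cdot)\xi\in W_0^{2,2}(\Omega)$. All the commutator terms involving derivatives of $\xi$ are collected into explicit smooth remainders $R_1,R_2,R_3$ supported away from $\Omega_{\epsilon_0}^C$. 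Your route---define $v$, verify $2\int\Delta v\,\Delta\phi=-\int\phi\,d\mu$ by Fubini and the identity $\Delta_x\log|x-y|=2\pi\delta_y$, subtract, and apply Weyl's lemma to $\Delta(u-v)$---is cleaner and avoids both the cutoff and the approximation. The paper's version has the minor advantage of producing $h$ constructively as an explicit sum of integral error terms, which could matter if one wanted quantitative control on $h$; your version trades that for brevity. One small comment: your ``secondary technical point'' about $v\in W^{2,2}_{loc}$ is less delicate than you suggest---you only need $\Delta v\in L^1_{loc}$ to pair with $\Delta\phi$ for $\phi\in C_0^\infty$, and the logarithmic potential of a finite measure is in every $L^p_{loc}$, so no appeal to the $BMO$ bound of Lemma~\ref{lem:bihmmeasmore} is necessary (which is good, since that lemma appears after Lemma~\ref{lem:bihammeasrep}).
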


The explicit representation of the minimizer will help to prove a first regularity result. The method used here is explained in the following lemma, whose proof is very straightforward by the definition of a weak derivative and Fubini's theorem.
\begin{lemma}[Kernel Operators with Measures]\label{lem:kernelmeas}
Let $\Omega\subset \mathbb{R}^n$ be open and bounded and $1 \leq p < \infty$. Let $\alpha$ be a finite Borel measure on $\Omega$ and let $\lambda$ denote the $n$-dimensional Lebesgue measure on $\Omega$. Let $H :\Omega \times \Omega \rightarrow \overline{\mathbb{R}}$ be a Borel measurable function on $\Omega \times \Omega$ such that 
\begin{enumerate}
\item $(x,y) \mapsto H(x,y) \in L^p(\lambda \times \alpha )$
\item For each $y \in \Omega$, $x \mapsto H(x,y)$ is weakly differentiable with $\Omega\times \Omega$-Borel measurable weak derivative $\nabla_x H(x,y)$.
\item $(x,y) \mapsto \nabla_x H(x,y) \in L^p(\lambda \times \alpha) $. 
\end{enumerate}
Then $A(x) := \int_\Omega H(x,y) \; \mathrm{d}\alpha(y) $ lies in $W^{1,p}(\Omega)$ and its weak derivative satisfies 
\begin{equation}\label{eq:abliden}
\nabla A(x) = \int_\Omega \nabla_x H(x,y) \; \mathrm{d}\alpha(y) .
\end{equation}
\end{lemma}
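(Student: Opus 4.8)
The statement is essentially a ``differentiation under the integral sign'' result for kernel operators against a finite measure, and I would prove it directly from the definition of weak derivative, exactly as the statement hints. First I would verify that $A(x) := \int_\Omega H(x,y)\,\mathrm{d}\alpha(y)$ is well-defined for $\lambda$-a.e.\ $x$ and lies in $L^p(\lambda)$: since $(x,y)\mapsto H(x,y) \in L^p(\lambda\times\alpha)$ and $\alpha$ is finite, Fubini's theorem gives that $y\mapsto H(x,y)$ is in $L^1(\alpha)$ for $\lambda$-a.e.\ $x$ (using $\alpha(\Omega)<\infty$ and Hölder/Jensen to pass from $L^p$ to $L^1$ on the finite measure space $(\Omega,\alpha)$), and Minkowski's integral inequality bounds $\|A\|_{L^p(\lambda)} \le \int_\Omega \|H(\cdot,y)\|_{L^p(\lambda)}\,\mathrm{d}\alpha(y)$, which is finite by the same Fubini/Hölder argument applied in the other order. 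The same reasoning applied to $\nabla_x H$ shows $B(x) := \int_\Omega \nabla_x H(x,y)\,\mathrm{d}\alpha(y)$ is a well-defined $L^p(\lambda)$ (in particular $L^1_{loc}$) vector field.

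\textbf{Main step: identifying $B$ as the weak derivative of $A$.} Fix a test function $\varphi \in C_0^\infty(\Omega)$ and a coordinate direction $i$. I would compute
\begin{equation*}
\int_\Omega A(x)\,\partial_{x_i}\varphi(x)\,\mathrm{d}x = \int_\Omega \left( \int_\Omega H(x,y)\,\mathrm{d}\alpha(y) \right) \partial_{x_i}\varphi(x)\,\mathrm{d}x = \int_\Omega \int_\Omega H(x,y)\,\partial_{x_i}\varphi(x)\,\mathrm{d}x\,\mathrm{d}\alpha(y),
\end{equation*}
where the interchange of integrals is justified by Fubini, since $(x,y)\mapsto H(x,y)\partial_{x_i}\varphi(x)$ is integrable on $\lambda\times\alpha$ (it is a product of the $L^p(\lambda\times\alpha)$ function $H$, restricted to $\mathrm{supp}\,\varphi$ which has finite $\lambda$-measure, with the bounded function $\partial_{x_i}\varphi$, and $\alpha$ is finite — so integrability follows from Hölder on the product space). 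For each fixed $y$, hypothesis (2) says $x\mapsto H(x,y)$ is weakly differentiable with weak derivative $\nabla_x H(x,y)$, so by definition of weak derivative $\int_\Omega H(x,y)\,\partial_{x_i}\varphi(x)\,\mathrm{d}x = -\int_\Omega \partial_{x_i}H(x,y)\,\varphi(x)\,\mathrm{d}x$. Substituting and applying Fubini once more (justified by (3) together with boundedness of $\varphi$ and finiteness of $\alpha$ and of $|\mathrm{supp}\,\varphi|$) yields
\begin{equation*}
\int_\Omega A(x)\,\partial_{x_i}\varphi(x)\,\mathrm{d}x = -\int_\Omega \int_\Omega \partial_{x_i}H(x,y)\,\varphi(x)\,\mathrm{d}\alpha(y)\,\mathrm{d}x = -\int_\Omega B_i(x)\,\varphi(x)\,\mathrm{d}x,
\end{equation*}
which is precisely the statement that $\partial_{x_i}A = B_i$ in the weak sense. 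Since $A, B \in L^p(\lambda)$, this gives $A \in W^{1,p}(\Omega)$ with $\nabla A = B$, i.e.\ \eqref{eq:abliden}.

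\textbf{Expected obstacle.} There is no deep difficulty here; the proof is, as the paper says, ``very straightforward.'' The only points requiring genuine care are the repeated invocations of Fubini's theorem and the measurability hypotheses: one must check at each interchange that the relevant integrand is jointly measurable on $\Omega\times\Omega$ (guaranteed by the Borel-measurability assumptions in (2)) and absolutely integrable on $\lambda\times\alpha$ (guaranteed by combining the $L^p$ hypotheses (1) and (3) with the finiteness of $\alpha$, the boundedness of the test function and its derivative, and the finite Lebesgue measure of its compact support, via Hölder's inequality on the product measure space). I would also note in passing that hypotheses (1) and (3) together with $\alpha(\Omega) < \infty$ and $\lambda(\Omega)<\infty$ mean $H$ and $\nabla_x H$ are in $L^1(\lambda\times\alpha)$ as well, which streamlines all the integrability checks. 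The conclusion that the ``slicewise'' weak derivative assembles into the genuine weak derivative of the averaged function is then immediate from the computation above.
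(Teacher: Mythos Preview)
Your proposal is correct and follows exactly the approach the paper indicates: the paper does not give a detailed proof but simply states that the lemma is ``very straightforward by the definition of a weak derivative and Fubini's theorem,'' and your argument carries this out carefully, including the integrability checks needed to justify each application of Fubini. There is nothing to add or correct.
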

Using induction and the previous lemma, one easily obtains the following higher order version. 
%\begin{remark}
%The claim is false for $p = \infty$.
%\end{remark}
\begin{cor}[Higher Order Derivatives]\label{cor:regreg}
Let $\Omega \subset \mathbb{R}^n$ be open and $1 \leq p < \infty$. Let $H : \Omega \times \Omega \rightarrow \mathbb{R}$ be Borel measurable on $\Omega \times \Omega$ such that for each $y \in \Omega$ the map $x \mapsto H(x,y)$ lies in $W^{k,p}(\Omega)$ and $H, D_x H ,D_x^2 H ,... D_x^k H \in L^p(\lambda \times \alpha)$ and all derivatives are all Borel measurable in $\Omega \times \Omega$. Then $A(x) := \int_\Omega H(x,y) d\alpha(y)$ lies in $W^{k,p}(\Omega)$.
Moreover one has
\begin{equation}\label{eq:346}
D^kA(x) = \int_\Omega D^kH(x,y)  \; \mathrm{d}\alpha(y) \quad k = 1,...,n \quad \mathrm{a.e.} \; x \in \Omega.
\end{equation}
\end{cor}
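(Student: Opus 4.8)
The plan is to prove Corollary~\ref{cor:regreg} by induction on $k$, with Lemma~\ref{lem:kernelmeas} supplying both the base case and the inductive step. For $k=1$ the statement is exactly Lemma~\ref{lem:kernelmeas}. Assume the claim holds for some $k-1 \geq 1$: if $H$ satisfies the hypotheses for the index $k$, then in particular it satisfies them for the index $k-1$, so $A(x) := \int_\Omega H(x,y)\,\mathrm{d}\alpha(y)$ lies in $W^{k-1,p}(\Omega)$ with $D^{k-1}A(x) = \int_\Omega D_x^{k-1}H(x,y)\,\mathrm{d}\alpha(y)$ for a.e.\ $x$.

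First I would observe that the kernel $\widetilde{H}(x,y) := D_x^{k-1}H(x,y)$ satisfies the hypotheses of Lemma~\ref{lem:kernelmeas}: by assumption $x \mapsto H(x,y) \in W^{k,p}(\Omega)$, so $x \mapsto \widetilde{H}(x,y) = D_x^{k-1}H(x,y)$ is weakly differentiable with weak derivative $D_x^k H(x,y)$; and both $\widetilde H = D_x^{k-1}H$ and $\nabla_x \widetilde H = D_x^k H$ lie in $L^p(\lambda \times \alpha)$ and are Borel measurable on $\Omega \times \Omega$ by hypothesis. Applying Lemma~\ref{lem:kernelmeas} to $\widetilde H$, the function $\widetilde A(x) := \int_\Omega \widetilde H(x,y)\,\mathrm{d}\alpha(y)$ lies in $W^{1,p}(\Omega)$ and $\nabla \widetilde A(x) = \int_\Omega D_x^k H(x,y)\,\mathrm{d}\alpha(y)$ a.e. But $\widetilde A = D^{k-1}A$ (componentwise) by the inductive hypothesis, so $D^{k-1}A \in W^{1,p}(\Omega)$, i.e.\ $A \in W^{k,p}(\Omega)$, and $D^k A(x) = \int_\Omega D_x^k H(x,y)\,\mathrm{d}\alpha(y)$ a.e., which closes the induction.

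The only mild subtlety — and the place I would be most careful — is the bookkeeping of multi-indices: $D_x^{k-1}H$ is vector-valued (one component per multi-index of length $k-1$), so Lemma~\ref{lem:kernelmeas} should be applied componentwise, and one must check that the weak derivative of the $\alpha$-integral of a vector-valued kernel is again obtained componentwise, which is immediate since weak differentiation and the integral against $\alpha$ both act componentwise. The measurability and integrability hypotheses transfer to each component without loss, since they are assumed for $D_x^j H$ as a whole for $j = 0,\dots,k$. No further analytic input is needed beyond Lemma~\ref{lem:kernelmeas}; the result is purely a formal iteration. (One should also note, as a harmless remark, that the range $k = 1,\dots,n$ in the statement is merely the range in which the corollary will later be invoked — the induction itself works for arbitrary $k \in \mathbb{N}$.)
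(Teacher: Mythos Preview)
Your proof is correct and follows exactly the approach indicated in the paper, which simply states that the result follows by induction from Lemma~\ref{lem:kernelmeas}. Your careful handling of the componentwise application to the vector-valued kernel $D_x^{k-1}H$ fills in precisely the details the paper omits.
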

%\begin{proof}
%This can easily be seen by induction and Lemma \ref{lem:kernelmeas}. 
%\end{proof}
\begin{cor}[Sobolev Regularity of Minimizers] \label{cor:minregu}
Let $u\in \A(u_0)$ be a minimizer and $\beta \in (0,1]$. Then $u \in W^{3,2-\beta}(\Omega_{\epsilon_0}^C)$ for each $\beta > 0$ and the set of non-$1$-Lebesgue points of $D^2u$ in $\Omega_{\epsilon_0}^C$ has Hausdorff dimension $0$. Moreover, at every $1$-Lebesgue point of $D^2u$ which is not an atom of $\mu$  one has 
\begin{equation}\label{eq:347}
(D^2u)^*(x) =- \frac{1}{2} \int_\Omega D^2F(x,y)\; \mathrm{d}\mu(y) + D^2h(x), 
\end{equation}
where $F$, $\mu$ and $h$ are given in Lemma \ref{lem:bihammeasrep}.
\end{cor}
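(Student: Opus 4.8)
The plan is to exploit the integral representation from Lemma \ref{lem:bihammeasrep}, namely $u(x) = -\frac{1}{2}\int_\Omega F(x,y)\,\mathrm{d}\mu(y) + h(x)$ on $\Omega_{\epsilon_0}^C$ with $h \in C^\infty(\overline{\Omega_{\epsilon_0}^C})$, together with the differentiation-under-the-integral result of Corollary \ref{cor:regreg}. Writing $A(x) := \int_\Omega F(x,y)\,\mathrm{d}\mu(y)$, I would apply Corollary \ref{cor:regreg} on the open bounded set $\Omega_{\epsilon_0}^C$ with kernel $H = F$, order $k = 3$, exponent $p = 2-\beta$ and measure $\alpha = \mu$ (note $\supp(\mu)\subset\{u=0\}\subset\Omega_{\epsilon_0}^C$ and $\mu$ is finite). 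Each slice $x\mapsto F(x,y)$ lies in $W^{3,2-\beta}(\Omega_{\epsilon_0}^C)$ by Lemma \ref{lem:green}, and the required membership $F, D_xF, D_x^2F, D_x^3F \in L^{2-\beta}(\lambda\times\mu)$ is checked via Fubini: $F$ and $D_xF$ are bounded on bounded sets; $|D_x^2F(x,y)|\le C(1+|\log|x-y||)$, and $|\log|x-y||^q$ is locally integrable in $x$ in $\mathbb{R}^2$ for every $q<\infty$; and $|D_x^3F(x,y)|\le C|x-y|^{-1}$ by \eqref{eq:thidbound}, with $|x-y|^{-(2-\beta)}$ locally integrable in $\mathbb{R}^2$ precisely because $\beta>0$. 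Since the inner integrals are bounded uniformly in $y$, each double integral is dominated by a constant times $\mu(\Omega)<\infty$. Corollary \ref{cor:regreg} then gives $A\in W^{3,2-\beta}(\Omega_{\epsilon_0}^C)$ with $D^2A(x) = \int_\Omega D_x^2F(x,y)\,\mathrm{d}\mu(y)$ for a.e.\ $x$, and hence $u = -\frac{1}{2}A + h \in W^{3,2-\beta}(\Omega_{\epsilon_0}^C)$ for each $\beta\in(0,1]$.

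For the dimension statement, the above shows that every component of $D^2u$ belongs to $W^{1,2-\beta}(\Omega_{\epsilon_0}^C)$ for all $\beta\in(0,1)$, while the set $E\subset\Omega_{\epsilon_0}^C$ of non-$1$-Lebesgue points of $D^2u$ does not depend on $\beta$. By the fine properties of Sobolev functions, $E$ has zero $C_{1,2-\beta}$-capacity, and a set of vanishing $C_{1,p}$-capacity with $p<n$ is $\mathcal{H}^s$-null for every $s>n-p$; here $n=2$, so $\mathcal{H}^s(E)=0$ for all $s>\beta$ (see, e.g., \cite[Section 4.8]{EvGar} and the standard comparison of Bessel capacities with Hausdorff measures). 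As $\beta>0$ is arbitrary, $\mathcal{H}^s(E)=0$ for every $s>0$, i.e.\ $E$ has Hausdorff dimension $0$.

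It remains to establish \eqref{eq:347}, which I consider the delicate part; all identities below are understood componentwise. Fix a $1$-Lebesgue point $x_0\in\Omega_{\epsilon_0}^C$ of $D^2u$ that is not an atom of $\mu$. The key preliminary step is to show $y\mapsto\log|x_0-y|$ is $\mu$-integrable. Since $x_0$ is in particular a $1$-Lebesgue point of $\Delta u = \mathrm{tr}(D^2u)$, the limit $(\Delta u)^*(x_0) = \lim_{r\to0}\fint_{B_r(x_0)}\Delta u\dx$ exists and is finite. By Lemma \ref{lem:bihammeasrep}, \eqref{eq:laplaci} and (the $k=2$ case of) Corollary \ref{cor:regreg}, $\Delta u = -\frac{1}{2}\Delta A + \Delta h$ a.e.\ with $\Delta A(x) = \frac{1}{2\pi}\int_\Omega(\log|x-y|+1)\,\mathrm{d}\mu(y)$ a.e.; Fubini then shows that $\lim_{r\to0}\int_\Omega g_r(y)\,\mathrm{d}\mu(y)$ is finite, where $g_r(y) := \fint_{B_r(x_0)}\log|x-y|\dx$. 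By Lemma \ref{lem:greensubham}, for each $y$ the map $r\mapsto g_r(y)$ is non-decreasing with $g_r(y)\downarrow\log|x_0-y|$ as $r\downarrow0$, while $g_r(y)\le M := \log(1+\diam(\Omega_{\epsilon_0}^C))$ for all small $r$ and all $y\in\Omega_{\epsilon_0}^C$. Thus $M-g_r(y)\ge0$ increases to $M-\log|x_0-y|\ge0$, and monotone convergence gives $\int_\Omega g_r(y)\,\mathrm{d}\mu(y)\downarrow\int_\Omega\log|x_0-y|\,\mathrm{d}\mu(y)$; finiteness of the left-hand limit forces $\int_\Omega\log|x_0-y|\,\mathrm{d}\mu(y)>-\infty$, and together with $\int_\Omega(\log|x_0-y|)_+\,\mathrm{d}\mu(y)\le\max(M,0)\,\mu(\Omega)<\infty$ this yields $\log|x_0-\cdot|\in L^1(\mu)$.

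With this in hand I would finish by a dominated-convergence argument. From the explicit second-order formulas in Lemma \ref{lem:green} one has $|D_x^2F(x,y)|\le C(1+|\log|x-y||)$, and combining this with the monotonicity of $g_r$ used above gives $|\fint_{B_r(x_0)}D_x^2F(x,y)\dx|\le C(1+|\log|x_0-y||)$ for all small $r$, a bound that is $\mu$-integrable by the previous step; moreover, for every $y\ne x_0$ one has $\fint_{B_r(x_0)}D_x^2F(x,y)\dx\to D_x^2F(x_0,y)$ as $r\to0$, hence for $\mu$-a.e.\ $y$ since $\mu(\{x_0\})=0$. By Fubini and dominated convergence, $\fint_{B_r(x_0)}D^2A\dx = \int_\Omega\fint_{B_r(x_0)}D_x^2F(x,y)\dx\,\mathrm{d}\mu(y)\to\int_\Omega D_x^2F(x_0,y)\,\mathrm{d}\mu(y)$; since also $\fint_{B_r(x_0)}D^2h\dx\to D^2h(x_0)$ by continuity and $\fint_{B_r(x_0)}D^2u\dx\to(D^2u)^*(x_0)$ by the definition of a Lebesgue point, inserting these into $u=-\frac{1}{2}A+h$ gives \eqref{eq:347}. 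The main obstacle is exactly the preliminary fact $\log|x_0-\cdot|\in L^1(\mu)$: the naive formula \eqref{eq:347} can genuinely fail at points where $\mu$ accumulates too fast, and it is the $1$-Lebesgue-point hypothesis — routed through $\Delta u$ and the monotonicity in Lemma \ref{lem:greensubham} — that rules this out.
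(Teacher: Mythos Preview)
Your argument is correct and follows essentially the same route as the paper: Corollary~\ref{cor:regreg} for the $W^{3,2-\beta}$ regularity (with the same $|x-y|^{-(2-\beta)}$ integrability check for $D_x^3F$), capacity theory for the Hausdorff-dimension statement, and the monotonicity of averaged logarithms from Lemma~\ref{lem:greensubham} together with dominated convergence for \eqref{eq:347}. The only cosmetic difference is that you first isolate the fact $\log|x_0-\cdot|\in L^1(\mu)$ and then run a single dominated-convergence argument with dominating function $C(1+|\log|x_0-\cdot||)$, whereas the paper splits each second derivative into its bounded part and its $\log$ part and handles them separately via dominated and monotone convergence respectively; your packaging is slightly tidier but the underlying idea is identical.
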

\begin{proof}
For the $W^{3,2-\beta}$-regularity we use the representation in Lemma \ref{lem:bihammeasrep} and Corollary  \ref{cor:regreg}. The requirements of Corollary \ref{cor:regreg} are satisfied if we can show that $F,D_x F,D_x^2F$ and $D_x^3F$ lie in $L^{2-\beta}(\lambda \times \mu)$ (since the remaining requirements follow immediately from Lemma \ref{lem:green}). We show this only for $D_x^3F$, the other computations are very similar. Using  \eqref{eq:thidbound}, Tonelli's Theorem and radial integration we find
\begin{align*}
\int_\Omega |D_x^3F&(x,y) |^{2-\beta} \; \mathrm{d}( \lambda \times \mu)(x,y)  = \int_\Omega \int_\Omega |D_x^3F(x,y)|^{2-\beta} \dx  \; \mathrm{d}\mu(y) \\ &  \leq \int_\Omega \int_\Omega \frac{C^{2-\beta}}{|x-y|^{2-\beta}} \dx \; \mathrm{d}\mu(y) 
 \leq C^{2-\beta} \int_\Omega \int_{B_{\diam(\Omega)}(y)} \frac{1}{|x-y|^{2-\beta}} \dx \; \mathrm{d}\mu(y) 
\\ & \leq C^{2-\beta} \int_\Omega \int_0^{\diam(\Omega)} 2\pi \frac{r}{r^{2-\beta}} \dr \; \mathrm{d}\mu(y) 
\leq C^{2-\beta} \int_\Omega 2\pi \diam(\Omega)^\beta \; \mathrm{d}\mu(y) \\ &  =2\pi C^{2-\beta} \diam(\Omega)^\beta \mu(\Omega) < \infty. 
\end{align*}
The $W^{3,2-\beta}$-regularity claim is shown. We conclude that $D^2u \in W^{1,2-\beta}(\Omega_{\epsilon_0}^C)$ for each $\beta > 0$. Since $\Omega_{\epsilon_0}^C$ has Lipschitz boundary, $D^2u$ extends to a function in $W^{1,2-\beta}(\mathbb{R}^n)$ (cf.  \cite[Thm.1, Sect.5.4]{Evans}). From \cite[Thm.1(i),(ii), Sect.4.8]{EvGar} follows that there is a Borel set $E_\beta \subset \Omega$ of $\beta$-Capacity zero, such that the non-$1-$Lebesgue points are contained in $E_\beta$. Now \cite[Thm.4, Sect.4.7]{EvGar} implies that $\mathcal{H}^{2\beta}(E_\beta)= 0$ and hence the set of non-$1-$Lebesgue points is a $\mathcal{H}^{2\beta}$ null set. Equation \eqref{eq:347} does not follow directly, since \eqref{eq:346} only gives one representative of $D^2u$. Let $x_0$ be a $1-$Lebesgue point of $D^2u$. Then, according to Lemma \ref{lem:green}
\begin{align*}
2 (\partial^2_{x_1x_1} u)^*(x_0)  & = 2\lim_{r \rightarrow 0 } \fint_{B_r(x_0)} (\partial^2_{x_1x_1} u)(y) \dy \\ &  = \lim_{r\rightarrow 0 } \fint_{B_r(x_0)} \int_\Omega \frac{-1}{8\pi} \left( 1 + 2 \frac{(x_1 - y_1)^2}{|x-y|^2} + 2 \log|x-y| \right) \dx  \; \mathrm{d}\mu(y) \\ & \quad \qquad + 2 \fint_{B_r(x_0)} \partial^2_{x_1x_1} h(x) \dx .
\end{align*}
Since $h$ is smooth, the last summand tends to $\partial_{x_1x_1}^2h(x_0)$. We have already shown above that $\partial_{x_1x_1}^2F = \frac{-1}{8\pi} \left( 1 + 2 \frac{(x_1 - y_1)^2}{|x-y|^2} + 2 \log|x-y| \right)$ lies in $L^{2-\beta}(\lambda \times \mu)$. Therefore we can interchange the order of the two integrations by Fubini's Theorem. Hence
\begin{align*}
2(\partial^2_{x_1x_1} u)^*(x_0) & = 2 \partial_{x_1x_1}^2 h(x_0)  \\ &+ \quad   \lim_{r\rightarrow0 }\int_\Omega \fint_{B_r(x_0)} \frac{-1}{8\pi} \left( 1 + 2 \frac{(x_1 - y_1)^2}{|x-y|^2} + 2 \log|x-y| \right) \dx \; \mathrm{d}\mu(y) .
\end{align*}
Now observe that  
\begin{equation*}
r \mapsto \fint_{B_r(x_0)} \frac{-1}{8\pi} \log|x-y| \dx  
\end{equation*}
is decreasing in $r$  because of Lemma \ref{lem:greensubham} and hence the monotone convergence theorem yields
\begin{equation} \label{eq:convi}
\lim_{r\rightarrow 0+} \int_\Omega \fint_{B_r(x_0)} \frac{-1}{8\pi}  \log|x-y| \dx \; \mathrm{d}\mu(y) = \int_\Omega \frac{-1}{8\pi}\log|x_0-y| \; \mathrm{d}\mu(y) . 
\end{equation}
(Actually, the monotone convergence theorem is not exactly applicable since the  integrand is not necessarily positive. This can however be fixed since $\mu$ is finite and for each $r$ the integrand is bounded from below by  $-\frac{1}{8\pi}\log \diam(\Omega)$. Adding and subtracting this quantity one obtains the claimed convergence).
Therefore 
\begin{align}\label{eq:lebptprf}
2(\partial^2_{x_1x_1} u)^*(x_0)&  =\lim_{r\rightarrow0 }\int_\Omega \fint_{B_r(x_0)} \frac{-1}{8\pi} \left( 1 + 2 \frac{(x_1 - y_1)^2}{|x-y|^2} \right) \dx \; \mathrm{d}\mu(y)\nonumber\\ & \qquad +  2\partial_{x_1x_1}^2 h(x_0)  - \int_\Omega \frac{1}{4\pi} \log|x_0-y| \dx \; \mathrm{d}\mu(y). 
\end{align}

%\begin{equation}\label{eq:boundleb}
%\fint_{B_r(x_0)} \left\vert \frac{-1}{8\pi} \left( 1 + 2 \frac{(x_1 - y_1)^2}{|x-y|^2} \right) \right\vert \dx \leq \frac{3}{8\pi}
%\end{equation}
Observe that  for $y \neq x_0$ one has 
\begin{equation*}
\lim_{r \rightarrow 0+} \fint_{B_r(x_0)} \frac{-1}{8\pi} \left( 1 + 2 \frac{(x_1 - y_1)^2}{|x-y|^2} \right) \dx = \frac{-1}{8\pi} \left( 1 + 2 \frac{((x_0)_1 - y_1)^2}{|x_0-y|^2} \right).
\end{equation*}
Since $\mu(\{x_0\})= 0$ the integrand converges $\mu$-almost everywhere to the right hand side. This and fact that the expression is uniformly bounded in $r$ by $\frac{3}{8\pi}$ imply together with the dominated convergence theorem that  
\begin{equation*}
\lim_{r\rightarrow0+}\int_\Omega \fint_{B_r(x_0)} \frac{-1}{8\pi} \left( 1 + 2 \frac{(x_1 - y_1)^2}{|x-y|^2} \right) \dx \; \mathrm{d}\mu(y) = \int_\Omega  \frac{-1}{8\pi} \left( 1 + 2 \frac{((x_0)_1 - y_1)^2}{|x_0-y|^2} \right) \; \mathrm{d}\mu(y).
\end{equation*}
Plugging this into \eqref{eq:lebptprf} we find 
\begin{equation*}
2(\partial^2_{x_1x_1} u)^*(x_0) = \int_\Omega  \frac{-1}{8\pi} \left( 1 + 2 \frac{((x_0)_1 - y_1)^2}{|x_0-y|^2} + 2 \log|x_0-y| \right)\; \mathrm{d}\mu(y) + 2 \partial^2_{x_1x_1} h(x_0) .
\end{equation*}
The same techniques apply for $(\partial_{x_1x_2}^2u)^*$ and $(\partial_{x_2x_2}^2u)^*$. This proves \eqref{eq:347}.
\end{proof}

\begin{cor}\label{cor:lebgem}
Let $u \in \A(u_0)$ be a minimizer. Then $\partial^2_{x_1x_2}u$ and $\partial^2_{x_1x_1} u - \partial^2_{x_2x_2}u $ lie in $L^\infty(\Omega_{\epsilon_0}^C)$. Moreover, each $x_0 \in \Omega$ that is not an atom of $\mu$ is a Lebesgue point of $\partial^2_{x_1x_2}u$ and $\partial^2_{x_1x_1} u - \partial^2_{x_2x_2}u $. 
\end{cor}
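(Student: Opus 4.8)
\textbf{Proof plan for Corollary \ref{cor:lebgem}.}
The key input is the formula \eqref{eq:347} from Corollary \ref{cor:minregu}, together with the pointwise bounds on second derivatives of the fundamental solution recorded in Lemma \ref{lem:green}, namely $\partial^2_{x_1x_1}F(x,\cdot) - \partial^2_{x_2x_2}F(x,\cdot) \leq \frac{3}{8\pi}$ and $\partial^2_{x_1x_2}F(x,\cdot) \leq \frac{3}{8\pi}$ on $\mathbb{R}^2 \setminus \{x\}$. The crucial observation is that the \emph{logarithmic} terms — which are the only unbounded contributions in \eqref{eq:seconder} — cancel in the difference $\partial^2_{x_1x_1}F - \partial^2_{x_2x_2}F$ and are simply absent from the mixed derivative $\partial^2_{x_1x_2}F$; what remains in both cases is a quantity bounded by $\frac{3}{8\pi}$ in absolute value (the lower bound being the analogous elementary estimate, since $\left|1 + 2\frac{(x_i-y_i)^2}{|x-y|^2}\right| \leq 3$ and $\left|\frac{(x_1-y_1)(x_2-y_2)}{|x-y|^2}\right| \leq \frac12$).

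First I would establish the $L^\infty$ bound. By Corollary \ref{cor:minregu}, $\mu$ has at most countably many atoms, so for a.e.\ $x_0 \in \Omega_{\epsilon_0}^C$ the point $x_0$ is a $1$-Lebesgue point of $D^2u$ and not an atom of $\mu$, and \eqref{eq:347} applies. Taking the appropriate linear combination of the components of \eqref{eq:347} and using the bounds above together with the smoothness of $h$ on $\overline{\Omega_{\epsilon_0}^C}$, one gets
\begin{equation*}
\left| (\partial^2_{x_1x_1}u - \partial^2_{x_2x_2}u)^*(x_0) \right| \leq \frac{1}{2} \cdot \frac{3}{8\pi}\,\mu(\Omega) + \| D^2h \|_{L^\infty(\Omega_{\epsilon_0}^C)},
\end{equation*}
and the analogous bound for $(\partial^2_{x_1x_2}u)^*(x_0)$; since this holds for a.e.\ $x_0$, we conclude $\partial^2_{x_1x_2}u, \partial^2_{x_1x_1}u - \partial^2_{x_2x_2}u \in L^\infty(\Omega_{\epsilon_0}^C)$.

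For the Lebesgue-point claim I would argue directly that the right-hand side of \eqref{eq:347}, in the two relevant linear combinations, depends continuously on $x_0$ away from atoms of $\mu$, and then upgrade this to the full Lebesgue-point property. Concretely, fix $x_0$ that is not an atom of $\mu$. Retracing the dominated-convergence argument in the proof of Corollary \ref{cor:minregu}, but now without the offending logarithmic term (which is exactly the term whose removal required the delicate monotone-convergence step via Lemma \ref{lem:greensubham}), the integrand in the averaged version of $\partial^2_{x_1x_1}F - \partial^2_{x_2x_2}F$ (resp.\ $\partial^2_{x_1x_2}F$) is uniformly bounded by $\frac{3}{8\pi}$ and converges $\mu$-a.e.\ as $r \to 0$ to its value at $x_0$; dominated convergence then gives that $\fint_{B_r(x_0)}$ of these second-difference quantities of $u$ converges, which combined with the $L^\infty$ bound just proved and the standard characterization of Lebesgue points (boundedness plus convergence of averages upgrades to the $L^1$-Lebesgue-point condition) yields that every non-atom $x_0$ is a Lebesgue point.

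The main obstacle I anticipate is the second part: making rigorous the passage from ``the average $\fint_{B_r(x_0)}$ converges'' to ``$x_0$ is a genuine $1$-Lebesgue point'' for the full combination $\partial^2_{x_1x_1}u - \partial^2_{x_2x_2}u$ — one must be careful that the bounded, $\mu$-a.e.-convergent integrand argument really controls $\fint_{B_r(x_0)} |(\partial^2_{x_1x_1}u - \partial^2_{x_2x_2}u) - (\partial^2_{x_1x_1}u - \partial^2_{x_2x_2}u)^*(x_0)|\,dx$ and not merely the signed average, which may require combining the representation formula with the already-established $BMO_{loc}$/$L^q_{loc}$ regularity from Lemma \ref{lem:bihmmeasmore} or invoking that $D^2u \in W^{1,2-\beta}$ so that a.e.\ point is a Lebesgue point and only the non-atoms need separate treatment. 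Everything else is bookkeeping with the explicit kernel in Lemma \ref{lem:green}.
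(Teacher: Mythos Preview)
Your approach is the same as the paper's: use the representation \eqref{eq:347}, observe that the logarithmic singularity cancels in $\partial^2_{x_1x_1}F-\partial^2_{x_2x_2}F$ and is absent from $\partial^2_{x_1x_2}F$, and then invoke the uniform bound $\tfrac{3}{8\pi}$ together with dominated convergence.

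The one place where you hesitate is exactly where the paper is more direct. You worry that ``boundedness plus convergence of averages'' may not automatically give the full $1$-Lebesgue-point property (and you are right to worry---that implication is false in general), and you speculate that $BMO_{loc}$ or $W^{1,2-\beta}$ regularity might be needed. The paper avoids any such appeal. Working with $G(x):=\int_\Omega \partial^2_{x_1x_2}F(x,y)\,\mathrm{d}\mu(y)$ (and analogously for the difference), after establishing $\fint_{B_r(x_0)}G(z)\,\mathrm{d}z\to G(x_0)$ one estimates the actual Lebesgue-point quantity by the triangle inequality and Fubini:
\[
\fint_{B_r(x_0)}\bigl|G(z)-G(x_0)\bigr|\,\mathrm{d}z
\;\leq\;
\int_\Omega \fint_{B_r(x_0)}\bigl|\partial^2_{x_1x_2}F(z,y)-\partial^2_{x_1x_2}F(x_0,y)\bigr|\,\mathrm{d}z\,\mathrm{d}\mu(y),
\]
and the right-hand side tends to zero by the \emph{same} dominated-convergence argument (the inner average is bounded by $\tfrac{3}{4\pi}$ uniformly and converges to $0$ for $\mu$-a.e.\ $y$ since $x_0$ is not an atom). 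So no extra regularity input is required; the representation formula plus the uniform kernel bound already closes the loop.
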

\begin{proof}
For the fact that $\partial^2_{x_1x_1} u - \partial^2_{x_2x_2} u  \in L^\infty(\Omega_{\epsilon_0}^c)$ observe with the notation of \eqref{eq:347} that  almost everywhere one has
\begin{align*}
|\partial^2_{x_1x_1} u - \partial^2_{x_2x_2} u | & = \left\vert- \frac{1}{2} \int_\Omega (\partial^2_{x_1x_1}F - \partial^2_{x_2x_2} F) \; \mathrm{d}\mu(y)  + \partial^2_{x_1x_1} h - \partial^2_{x_2x_2} h \right\vert  \\
& \leq \frac{3}{16 \pi} \mu(\Omega) + 2|| D^2h||_\infty < \infty,
\end{align*} 
where we used Lemma  \ref{lem:green} in the last step. Similarly one shows that $\partial^2_{x_1x_2} u  \in L^\infty(\Omega_{\epsilon_0}^C)$. 
Now we show that each non-atom $x$ of $\mu$ is a $1-$Lebesgue point of $\partial^2_{x_1x_2 }u $. By \eqref{eq:347} it is sufficient to show that each non-atom of $\mu$ is a $1-$Lebesgue point of $\int_\Omega \partial^2_{x_1x_2}F(\cdot,y) d\mu(y) $ as each point in $\Omega_{\epsilon_0}^C$ is a Lebesgue point of $D^2h$. We have already discussed in Corollary \ref{cor:minregu} that $\partial^2_{x_1x_2}F$ is $(\lambda \times \mu)$-measurable. Moreover it is product integrable as it is uniformly bounded.  

By Fubini's theorem
\begin{align*}
& \frac{1}{|B_r(x)|} \int_{B_r(x) } \left(  \int_\Omega  \partial_{x_1x_2}^2 F(z,y) \; \mathrm{d}\mu(y) \right) \; \mathrm{d}z  \\ & \quad \quad \quad \quad \quad \quad \quad \quad  =  \int_\Omega \left( \frac{1}{|B_r(x)|} \int_{B_r(x)} \partial^2_{x_1x_2} F(z,y)\; \mathrm{d}z \right) \; \mathrm{d}\mu(y) . 
\end{align*}
For each $y \in \Omega \setminus \{x\} $ the expression in parentheses converges to  $\partial^2_{x_1x_2} F(x,y)$ as $r \rightarrow 0 $ and since $x$ is not an atom of $\mu$ the expression converges to $ \partial^2_{x_1x_2} F(x,y)$ $\mu$-almost everywhere. Moreover Lemma \ref{lem:green} yields that the expression is uniformly bounded by $\frac{3}{8\pi}$ 
%\begin{equation}
% \left\vert \frac{1}{|B_r(x)|} \int_{B_r(x)} \partial^2_{x_1x_2} F(z,y) dz \right\vert \leq \frac{3}{8\pi}
%\end{equation}
 and hence the dominated convergence theorem yields 
 \begin{equation}\label{eq:3.26}
 \lim_{r\rightarrow 0 } \frac{1}{|B_r(x)|} \int_{B_r(x) } \left(  \int_\Omega  \partial_{x_1x_2}^2 F(z,y) \; \mathrm{d}\mu(y) \right) \; \mathrm{d}z  = \int_\Omega \partial^2_{x_1x_2} F(x,y) \; \mathrm{d}\mu(y).
 \end{equation}
 To show the Lebesgue point property it remains to show that 
 \begin{equation*}
 \lim_{r \rightarrow 0} \frac{1}{|B_r(x)|} \int_{B_r(x)}  \left\vert \int_\Omega \partial^2_{x_1x_2} F(z,y) \; \mathrm{d}\mu(y) -  \int_\Omega \partial^2_{x_1x_2} F(x,y) \; \mathrm{d}\mu(y) \right\vert = 0 .
 \end{equation*}
 This is immediate once one observes with the triangle inequality and Fubini's theorem that 
 \begin{align*}
 & \frac{1}{|B_r(x)|} \int_{B_r(x)}  \left\vert \int_\Omega \partial^2_{x_1x_2} F(z,y) \; \mathrm{d}\mu(y) -  \int_\Omega \partial^2_{x_1x_2} F(x,y) \; \mathrm{d}\mu(y)  \right\vert \; \mathrm{d}z
 \\ &  \quad \quad \quad \quad \quad \quad \leq \int_\Omega \frac{1}{|B_r(x)|}\int_{B_r(x)} |\partial^2_{x_1x_2} F(z,y) - \partial^2_{x_1x_2} F(x,y)| \; \mathrm{d}z \; \mathrm{d}\mu(y) .
 \end{align*}
 The term on the right hand side can be shown to tend to zero as $r \rightarrow 0 $ with the dominated convergence theorem using arguments similar to the discussion before \eqref{eq:3.26}. For $\partial^2_{x_1x_1} u - \partial^2_{x_2x_2}u $ the analogous statement can be shown similarly.  
\end{proof}

\begin{cor}\label{cor:suubhaam}
Let $u \in \A(u_0)$ be a minimizer. Then for each $x \in \Omega$ the quantity $(\Delta u)^*(x) := \lim_{r \rightarrow 0 } \avint_{B_r(x)} \Delta u(y) dy$ exists in $[0, \infty]$. Moreover, the map $x \mapsto (\Delta u)^*(x)$ is superharmonic.  
% in the sense that $(Delta u)^*$ is lower semicontinu
%\begin{equation}
%(\Delta u)^*(x) \geq \frac{1}{\mathcal{H}^1(\partial B_r(x))} \int_{\partial B_r(x) } (\Delta u)^*(y) 
%\end{equation}
\end{cor}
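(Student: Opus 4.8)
The plan is to use the biharmonic measure representation from Lemma \ref{lem:bihammeasrep} together with the monotonicity fact in Lemma \ref{lem:greensubham}. Recall from \eqref{eq:laplaci} that $\Delta_x F(x,y) = \frac{1}{2\pi}(\log|x-y|+1)$, so applying $\Delta_x$ formally to the representation $u(x) = -\frac12 \int_\Omega F(x,y)\,\mathrm{d}\mu(y) + h(x)$ on $\Omega_{\epsilon_0}^C$ gives, for almost every $x$,
\begin{equation*}
\Delta u(x) = -\frac{1}{4\pi}\int_\Omega (\log|x-y|+1)\,\mathrm{d}\mu(y) + \Delta h(x).
\end{equation*}
First I would justify this identity in the a.e.\ sense: by Corollary \ref{cor:minregu}, $u \in W^{3,2-\beta}(\Omega_{\epsilon_0}^C)$ and \eqref{eq:347} holds at every $1$-Lebesgue point of $D^2u$ that is not an atom of $\mu$; tracing $\partial^2_{x_1x_1}F + \partial^2_{x_2x_2}F = \Delta_x F$ through \eqref{eq:seconder} recovers \eqref{eq:laplaci}, so the displayed formula for $(\Delta u)^*$ holds at every such point, in particular almost everywhere. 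Then I would average over $B_r(x_0)$ for an arbitrary $x_0 \in \Omega$ (shrinking $\epsilon_0$ or working locally so that $B_r(x_0) \subset \Omega_{\epsilon_0}^C$ for small $r$ — note $\{u=0\}$ is compactly contained in $\Omega$, so this is fine near the boundary too since there $\Delta u$ is already smooth and harmonic), interchange the $x$-average and the $\mathrm{d}\mu(y)$ integral by Fubini (legitimate because $\log|\cdot|$ is locally integrable and $\mu$ is finite, after subtracting the constant lower bound $\log\diam(\Omega)$ as in the proof of Corollary \ref{cor:minregu}), and obtain
\begin{equation*}
\fint_{B_r(x_0)}\Delta u\,\mathrm{d}x = \int_\Omega \left( -\frac{1}{4\pi}\fint_{B_r(x_0)}(\log|x-y|+1)\,\mathrm{d}x\right)\mathrm{d}\mu(y) + \fint_{B_r(x_0)}\Delta h\,\mathrm{d}x.
\end{equation*}

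Next I would take $r \to 0$. By Lemma \ref{lem:greensubham}, for each fixed $y$ the inner average $-\frac{1}{8\pi}\fint_{B_r(x_0)}\log|x-y|\,\mathrm{d}x$ is \emph{decreasing} in $r$ with limit $-\frac{1}{8\pi}\log|x_0-y| \in (-\infty,\infty]$; hence $-\frac{1}{4\pi}\fint_{B_r(x_0)}(\log|x-y|+1)\,\mathrm{d}x$ increases to $-\frac{1}{4\pi}(\log|x_0-y|+1)$ as $r\downarrow 0$. Since all these quantities are bounded below by the $r$-independent constant $-\frac{1}{4\pi}(\log\diam(\Omega)+1)$ and $\mu$ is finite, the monotone convergence theorem (after the usual constant shift) gives that $\int_\Omega(-\frac{1}{4\pi})\fint_{B_r(x_0)}(\log|x-y|+1)\,\mathrm{d}x\,\mathrm{d}\mu(y)$ converges, monotonically increasingly, to $\int_\Omega -\frac{1}{4\pi}(\log|x_0-y|+1)\,\mathrm{d}\mu(y) \in (-\infty,+\infty]$; meanwhile $\fint_{B_r(x_0)}\Delta h\,\mathrm{d}x \to \Delta h(x_0)$ by continuity of $\Delta h$. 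Therefore $(\Delta u)^*(x_0) = \lim_{r\to 0}\fint_{B_r(x_0)}\Delta u\,\mathrm{d}x$ exists in $(-\infty,+\infty]$, and by Corollary \ref{lem:corsubham} (which shows $(\Delta u)^*\ge 0$ at $1$-Lebesgue points, and more precisely the average $\fint_{B_r(x)}\Delta u\ge 0$ for all admissible $r$ by the argument with $\phi_r$) the limit actually lies in $[0,+\infty]$. In the boundary strip $\Omega_{\epsilon_0}$, $\Delta u$ is smooth and harmonic hence the limit is just $\Delta u(x_0)\ge \delta/2\cdot(\text{irrelevant})$ — in any case finite and $\ge 0$ by Corollary \ref{lem:corsubham}.

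Finally, superharmonicity of $x\mapsto (\Delta u)^*(x)$: the function $x \mapsto -\frac{1}{4\pi}(\log|x-y|+1)$ is superharmonic on $\mathbb{R}^2$ (it is harmonic away from $y$ and $+\infty$ at $y$, being a positive multiple of the negative Newtonian potential's planar analogue, and it satisfies the sub-mean-value property for its spherical averages by Lemma \ref{lem:greensubham}), and a superposition $\int_\Omega(\cdots)\,\mathrm{d}\mu(y)$ of superharmonic functions against a nonnegative measure is again superharmonic — the sub-mean-value inequality passes through the integral by Tonelli, and lower semicontinuity follows from Fatou applied to the increasing spherical averages. Adding the harmonic (hence superharmonic) function $\Delta h$ on $\Omega_{\epsilon_0}^C$, and using that $\Delta u$ is harmonic on $\Omega_{\epsilon_0}$, one concludes $(\Delta u)^*$ is superharmonic on all of $\Omega$; the two regions are reconciled because $(\Delta u)^*$ agrees with the globally defined superposition formula wherever the representation of Lemma \ref{lem:bihammeasrep} applies, and near $\partial\Omega$ the measure $\mu$ has no mass so the formula is harmonic there, matching $\Delta u$.

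The main obstacle I anticipate is the bookkeeping around the region $\Omega_{\epsilon_0}$ versus $\Omega_{\epsilon_0}^C$: the representation formula only holds on $\Omega_{\epsilon_0}^C$, so one must check that the superharmonicity glues across $\partial\Omega_{\epsilon_0}^C$, which works precisely because $\mathrm{supp}(\mu)\subset\{u=0\}\Subset \Omega_{\epsilon_0}^C$ and $\Delta u$ is genuinely harmonic (and smooth) on the overlap. The other delicate point is making the monotone/dominated convergence interchanges rigorous when the integrand $\log|x-y|$ changes sign and can blow up, but this is handled exactly as in the proof of Corollary \ref{cor:minregu} by subtracting the uniform lower bound $-\frac{1}{4\pi}(\log\diam(\Omega)+1)$ before invoking monotone convergence.
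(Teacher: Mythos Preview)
Your proposal is correct and follows essentially the same route as the paper: both use the representation formula from Lemma~\ref{lem:bihammeasrep} to write $(\Delta u)^*(x) = -\tfrac{1}{4\pi}\int_\Omega(\log|x-y|+1)\,\mathrm{d}\mu(y) + \Delta h(x)$, then exploit the monotonicity of spherical averages of $-\log|\cdot|$ (Lemma~\ref{lem:greensubham}) and Fatou/monotone convergence. The only real difference is that the paper outsources two steps to general results of Serrin on weakly superharmonic functions: existence of $(\Delta u)^*$ in $[0,\infty]$ comes straight from weak superharmonicity of $\Delta u$ (Lemma~\ref{lem:EL}) via \cite[Theorem~4.1]{Serrin}, and superharmonicity of $(\Delta u)^*$ is obtained by verifying lower semicontinuity (a single Fatou argument) and invoking \cite[Theorem~4.3]{Serrin}. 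This sidesteps your gluing across $\partial\Omega_{\epsilon_0}^C$ and your explicit superposition argument, making the proof a few lines shorter; your version, on the other hand, is more self-contained and makes clear why $\Delta h$ is harmonic (since $h$ is biharmonic, which the paper uses implicitly but does not state).
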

\begin{proof}
Recall that by $\Delta u $ is weakly superharmonic by \eqref{eq:subbiham}. By \cite[Theorem 4.1]{Serrin} follows immediately that $(\Delta u)^*(x)$ exists in $\mathbb{R} \cup \{ \infty \}$ for all $x \in \Omega$. By Corollary \ref{lem:corsubham} it has to lie in $[0, \infty]$, which shows the first part of the claim. From \eqref{eq:347} and Lemma \ref{lem:green} we infer that 
\begin{equation*}
\Delta u (x) = - \frac{1}{4\pi} \int_\Omega (\log|x-y| + 1 ) \; \mathrm{d}\mu(y) + \Delta h (x) \quad a.e. .
\end{equation*}
Similar to the discussion in \eqref{eq:convi} we can derive, using the special properties of the logarithm that 
\begin{equation}\label{eq:3.32}
(\Delta u)^*(x) = - \frac{1}{4\pi} \int_\Omega (\log|x-y| + 1 ) \; \mathrm{d}\mu(y) + \Delta h (x) \quad \forall x \in \Omega_{\epsilon_0}^C.
\end{equation}
Note that $(\Delta u)^*$ is the so-called \emph{canonical representative} of a weakly subharmonic function in the sense of \cite[p.360]{Serrin}. To show that $(\Delta u)^*$ is subharmonic it suffices according to \cite[Theorem 4.3]{Serrin} to show that $(\Delta u)^*$ is lower semicontinuous. For this let $(x_n)_{n = 1}^\infty  \subset \Omega_{\epsilon_0}^C$ be such that $x_n \rightarrow x \in \Omega_{\epsilon_0}^C$. Note that $ - \log|x_n - \cdot |$ is bounded from below independently of $n$ by $-\log\diam(\Omega)$. Thus Fatou's lemma yields 
\begin{equation}\label{eq:faatou}
\liminf_{n \rightarrow \infty} \int_\Omega - \log |x_n - y | \; \mathrm{d}\mu(y) \geq \int_\Omega \liminf_{n\rightarrow \infty} ( - \log |x_n - y| \; \mathrm{d}\mu (y) = -\int_\Omega  \log |x-y| \; \mathrm{d}\mu (y) .
\end{equation}
Since $(\Delta u)^*(x_n)$ consists only of continuous terms and a positive multiple of the left hand side in \eqref{eq:faatou}, one has $\liminf_{n \rightarrow \infty } (\Delta u)^*(x_n) \geq (\Delta u)^*(x)$, that is $(\Delta u)^*$ is lower semicontinuous.  As we already explained this implies superharmonicity of $(\Delta u)^*$. 
\end{proof}
\begin{remark}
Note that the notation $(\Delta u)^*$ creates a slight ambiguity with \eqref{eq:avliim}, namely whenever the limit in the definition is infinite. It will always be clear from the context what convention is used, especially in view of the following consistency result.
\end{remark}
\begin{prop}\label{prop:genfac}
Let $f : \Omega \rightarrow \mathbb{R}$ be  a  nonnegative superharmonic function. Then each point where $f< \infty$ is a $1-$Lebesgue point of $f$. 
\end{prop}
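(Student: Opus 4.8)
The plan is to fix a point $x_0 \in \Omega$ with $f(x_0) < \infty$ and to check directly the two requirements in the definition of a $1$-Lebesgue point: that $f^*(x_0) := \lim_{r \to 0}\fint_{B_r(x_0)} f \dy$ exists, and that $\fint_{B_r(x_0)} \abs{f^*(x_0) - f(y)}\dy \to 0$ as $r \to 0$. Superharmonicity will enter only through two facts: the (spherical, hence also solid) mean value inequality, which together with $f \ge 0$ gives $\fint_{B_r(x_0)} f\dy \le f(x_0)$ for every $r$ with $\overline{B_r(x_0)} \subset \Omega$ --- in particular $f \in L^1_{\mathrm{loc}}(\Omega)$, so all the averages below are finite --- and lower semicontinuity.

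The first step is to identify $f^*(x_0)$. The mean value inequality already yields $\limsup_{r \to 0}\fint_{B_r(x_0)} f\dy \le f(x_0)$. For the reverse inequality I would use lower semicontinuity: given $\epsilon > 0$ there is $\delta > 0$ with $f(y) > f(x_0) - \epsilon$ on $B_\delta(x_0)$, so $\fint_{B_r(x_0)} f\dy \ge f(x_0) - \epsilon$ for all $r < \delta$; letting $r \to 0$ and then $\epsilon \to 0$ gives $\liminf_{r\to 0}\fint_{B_r(x_0)} f\dy \ge f(x_0)$. Hence the limit exists and $f^*(x_0) = f(x_0)$.

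For the $L^1$ convergence I would use the elementary identity $\abs{t} = t + 2\max\{-t, 0\}$ with $t = f(y) - f(x_0)$, i.e. $\abs{f(y) - f(x_0)} = (f(y) - f(x_0)) + 2\max\{f(x_0) - f(y), 0\}$. Averaging over $B_r(x_0)$: the first summand contributes $\fint_{B_r(x_0)} f\dy - f(x_0)$, which tends to $0$ by the first step; the second summand satisfies $0 \le \max\{f(x_0) - f(y), 0\} \le f(x_0)$ (because $f \ge 0$), and by lower semicontinuity $\max\{f(x_0) - f(y), 0\} \le \epsilon$ on $B_\delta(x_0)$, so $\fint_{B_r(x_0)}\max\{f(x_0) - f(y), 0\}\dy \le \epsilon$ for $r < \delta$, and hence this term too tends to $0$. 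Therefore $\fint_{B_r(x_0)}\abs{f^*(x_0) - f(y)}\dy \to 0$, which is precisely the assertion that $x_0$ is a $1$-Lebesgue point of $f$.

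I do not expect a genuine obstacle; the proof is just a bookkeeping of the two standard properties of superharmonic functions. The one point deserving care --- and the one I would settle at the very beginning --- is that the averages $\fint_{B_r(x_0)} f\dy$ are a priori finite, since a general lower semicontinuous function need not be locally integrable; here this is exactly where nonnegativity combines with the mean value inequality. This finiteness/Lebesgue-point property is what lets one apply the statement to the canonical representative $(\Delta u)^*$ from Corollary~\ref{cor:suubhaam}, which is nonnegative and superharmonic but may equal $+\infty$ at the atoms of $\mu$.
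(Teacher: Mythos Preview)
Your proof is correct and follows essentially the same approach as the paper: both arguments combine the solid mean-value inequality (giving $\fint_{B_r(x_0)} f \le f(x_0)$) with lower semicontinuity (giving the matching lower control) to force both $\fint f \to f(x_0)$ and $\fint |f - f(x_0)| \to 0$. The only cosmetic difference is the final bookkeeping---the paper routes through $\inf_{B_r(x_0)} f$ and the identity $f(x_0)=\liminf_{y\to x_0} f(y)$, whereas you use the decomposition $|t| = t + 2\max\{-t,0\}$; your version is slightly more self-contained since you derive the convergence of the averages rather than citing it.
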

\begin{proof}
By \cite[Theorem 3.1.3]{Armitage} one has $f(x) = \liminf_{y \rightarrow x} f(y)$ for each $x \in \Omega$. In particular 
\begin{equation}\label{eq:liminf}
f(x) = \lim_{r \rightarrow 0 } \inf_{B_r(x)} f . 
\end{equation}
Now suppose that $f(x)< \infty$. Then by the triangle inequality
\begin{align*}
\fint_{B_r(x)} |f(z) - f(x) |  \; \mathrm{d}z & \leq \fint_{B_r(x)} | f(z) - \inf_{B_r(x)} f|  \; \mathrm{d}z  +   | \inf_{B_r(x)} f - f(x) | 
\\ & \leq \fint_{B_r(x)} f(z) \; \mathrm{d}z - \inf_{B_r(x)} f + | \inf_{B_r(x)} f - f(x) |.
\end{align*}
As $f$ is superharmonic we have 
$
\fint_{B_r(x)} f(z) dz \rightarrow f(x)$ as $r \rightarrow 0+$.
Using this, $f(x)< \infty$ and \eqref{eq:liminf} we obtain that 
\begin{equation*}
 \lim_{r \rightarrow 0 } \fint_{B_r(x)} |f(z) - f(x) | \; \mathrm{d}z = f(x) - f(x) = 0 . \qedhere
\end{equation*}
\end{proof}

Putting the previous results together we obtain the following 

\begin{cor}
Each non-$1-$Lebesgue point $x$ of $D^2u $ is an atom of $\mu$ or satisfies $(\Delta u)^*(x) = \infty$. 
\end{cor}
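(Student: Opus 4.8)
The plan is to prove the contrapositive: if $x \in \Omega$ is \emph{not} an atom of $\mu$ and $(\Delta u)^*(x) < \infty$, then $x$ is a $1$-Lebesgue point of $D^2 u$. Since $u \in C^\infty(\Omega_{\epsilon_0})$ by Corollary~\ref{cor:guterrand}, every point of $\Omega_{\epsilon_0}$ is automatically a $1$-Lebesgue point of $D^2u$; moreover $\{u=0\}$ — and with it the set of possible non-Lebesgue points of $D^2 u$ — is compactly contained in $\Omega$, so after possibly shrinking $\epsilon_0$ (which does not affect the validity of the earlier corollaries) it suffices to treat $x \in \Omega_{\epsilon_0}^C$, where Corollaries~\ref{cor:lebgem} and~\ref{cor:suubhaam} are available.

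First I would use Corollary~\ref{cor:suubhaam}: the canonical representative $(\Delta u)^*$ is a nonnegative superharmonic function, and by hypothesis it is finite at $x$. Proposition~\ref{prop:genfac} then applies and shows that $x$ is a $1$-Lebesgue point of $(\Delta u)^*$; since $(\Delta u)^* = \Delta u = \partial^2_{x_1x_1}u + \partial^2_{x_2x_2}u$ almost everywhere, $x$ is also a $1$-Lebesgue point of $\partial^2_{x_1x_1}u + \partial^2_{x_2x_2}u$. Next I would invoke Corollary~\ref{cor:lebgem}: as $x$ is not an atom of $\mu$, it is a $1$-Lebesgue point of $\partial^2_{x_1x_2}u$ and of $\partial^2_{x_1x_1}u - \partial^2_{x_2x_2}u$. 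Then, using the identities
\begin{equation*}
2\,\partial^2_{x_1x_1}u = \bigl(\partial^2_{x_1x_1}u + \partial^2_{x_2x_2}u\bigr) + \bigl(\partial^2_{x_1x_1}u - \partial^2_{x_2x_2}u\bigr), \qquad 2\,\partial^2_{x_2x_2}u = \bigl(\partial^2_{x_1x_1}u + \partial^2_{x_2x_2}u\bigr) - \bigl(\partial^2_{x_1x_1}u - \partial^2_{x_2x_2}u\bigr),
\end{equation*}
together with the elementary fact that a finite linear combination of functions each possessing a $1$-Lebesgue point at $x$ again possesses a $1$-Lebesgue point at $x$ (immediate from the triangle inequality applied to the defining averages), I conclude that $x$ is a $1$-Lebesgue point of $\partial^2_{x_1x_1}u$, of $\partial^2_{x_2x_2}u$ and — together with $\partial^2_{x_1x_2}u$ — of $D^2u$. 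This proves the contrapositive.

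I do not expect a genuine obstacle here; the statement is essentially a synthesis of Corollaries~\ref{cor:lebgem} and~\ref{cor:suubhaam} with Proposition~\ref{prop:genfac}. The two points that require a little care are: (i) that Proposition~\ref{prop:genfac} is precisely what upgrades mere convergence of the spherical/solid averages of $(\Delta u)^*$ to the full $L^1$-averaged Lebesgue-point property in the sense of the paper's definition — which is why it is formulated for nonnegative superharmonic functions rather than for $\Delta u$ directly; and (ii) the reduction to $\Omega_{\epsilon_0}^C$ described above, so that Corollaries~\ref{cor:lebgem} and~\ref{cor:suubhaam}, whose proofs rest on the representation formula of Lemma~\ref{lem:bihammeasrep} valid only there, may be applied.
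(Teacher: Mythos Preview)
Your proof is correct and follows essentially the same route as the paper's own argument: prove the contrapositive by combining Corollary~\ref{cor:suubhaam} with Proposition~\ref{prop:genfac} to get that $x$ is a $1$-Lebesgue point of $\Delta u$, then invoke Corollary~\ref{cor:lebgem} for the remaining combinations, and recover all entries of $D^2u$ as linear combinations. The explicit reduction to $\Omega_{\epsilon_0}^C$ you include is not needed at the level of the statements (Corollaries~\ref{cor:lebgem} and~\ref{cor:suubhaam} are formulated on all of $\Omega$, with the $\Omega_{\epsilon_0}$ part being trivial by smoothness), but it does no harm.
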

\begin{proof}
Suppose that $x$ is neither an atom of $\mu$ nor $(\Delta u)^*(x) = \infty$. By Corollary \ref{cor:suubhaam} and Proposition \ref{prop:genfac} we get that $x$ is a $1-$Lebesgue point of $\Delta u$. By Corollary \ref{cor:lebgem} we also know that $x$ is a $1-$Lebesgue point of $\partial^2_{x_1x_1} u - \partial^2_{x_2x_2} u $ and $\partial^2_{x_1x_2} u $. Since all second derivatives of $u$ are linear combinations of the mentioned quantities, $x$ is a $1-$Lebesgue point of $D^2u$. The claim follows by contraposition. 
\end{proof}

We can refine the statement with the following observations

\begin{lemma}
Let $u \in \A(u_0)$ be a minimizer. If $x_0 \in \Omega$ is an atom of $\mu$ then  $(\Delta u)^*(x_0) = \infty$. 
\end{lemma}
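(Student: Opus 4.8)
The plan is to evaluate $(\Delta u)^*$ at $x_0$ by means of the integral representation \eqref{eq:3.32} and to observe that the logarithmic kernel forces the value $+\infty$ precisely at the atoms of $\mu$.

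First I would record that, being an atom of $\mu$, the point $x_0$ lies in $\supp(\mu) \subset \{u = 0\}$, and that $\{u = 0\} \subset \Omega_{\epsilon_0}^C$ by Corollary \ref{cor:guterrand} (indeed $u \geq \tfrac{\delta}{2}$ on a neighbourhood of $\partial\Omega$, so $\{u=0\}$ stays away from $\partial\Omega$). Hence $x_0 \in \Omega_{\epsilon_0}^C$ and \eqref{eq:3.32} applies at $x_0$, yielding
\begin{equation*}
(\Delta u)^*(x_0) = -\frac{1}{4\pi}\int_\Omega \bigl(\log|x_0-y|+1\bigr)\,\mathrm{d}\mu(y) + \Delta h(x_0),
\end{equation*}
where $(\Delta u)^*(x_0)$ is a well-defined element of $[0,\infty]$ by Corollary \ref{cor:suubhaam} and $\Delta h(x_0) \in \mathbb{R}$ since $h \in C^\infty(\overline{\Omega_{\epsilon_0}^C})$ by Lemma \ref{lem:bihammeasrep}.

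Next I would split $\int_\Omega \log|x_0-y|\,\mathrm{d}\mu(y)$ into the contributions of $\{x_0\}$ and of $\Omega \setminus \{x_0\}$. Writing $m := \mu(\{x_0\}) > 0$, the contribution of $\{x_0\}$ equals $m\log 0 = -\infty$ with the convention $-\log 0 := \infty$ of Lemma \ref{lem:greensubham}, while on $\Omega\setminus\{x_0\}$ one has $\log|x_0-y| \leq \log\diam(\Omega)$, so $\int_{\Omega\setminus\{x_0\}} \log|x_0-y|\,\mathrm{d}\mu(y) < \infty$ (in particular there is no $\infty-\infty$ ambiguity, the integrand being bounded above on all of $\Omega$). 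Therefore $\int_\Omega \log|x_0-y|\,\mathrm{d}\mu(y) = -\infty$, and since $\mu(\Omega)<\infty$ this gives $-\frac{1}{4\pi}\int_\Omega (\log|x_0-y|+1)\,\mathrm{d}\mu(y) = +\infty$; adding the finite number $\Delta h(x_0)$ we conclude $(\Delta u)^*(x_0) = \infty$.

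The one step that deserves a word of care — rather than a genuine obstacle — is that \eqref{eq:3.32} was obtained from the a.e.\ identity for $\Delta u$ by a monotone convergence argument, and one should verify that it remains valid at an atom. This is the case: by Lemma \ref{lem:greensubham} the map $r \mapsto -\frac{1}{8\pi}\fint_{B_r(x_0)}\log|x-y|\,\mathrm{d}x$ is decreasing in $r$ with pointwise limit $-\frac{1}{8\pi}\log|x_0-y| \in (-\infty,\infty]$ for \emph{every} $y \in \Omega$, including $y = x_0$, where the limit is $+\infty$. Subtracting the uniform lower bound $-\frac{1}{8\pi}\log\diam(\Omega)$ and applying the monotone convergence theorem exactly as in the derivation of \eqref{eq:convi} produces \eqref{eq:3.32} directly, with right-hand side equal to $+\infty$ at $x_0$. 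Thus no circularity is involved and the argument closes.
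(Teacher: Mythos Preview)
Your proof is correct and follows essentially the same approach as the paper: both arguments use the representation \eqref{eq:3.32} at $x_0$, isolate the contribution of the atom $\{x_0\}$ (you by splitting the domain of integration, the paper by writing $\widetilde{\mu}=\mu-\mu(\{x_0\})\delta_{x_0}$ and bounding for general $x$ before setting $x=x_0$), and observe that the $-\log|x_0-x_0|$ term forces the value $+\infty$. Your added paragraph verifying that \eqref{eq:3.32} remains valid at atoms via the monotone convergence argument of Lemma~\ref{lem:greensubham} is a welcome clarification that the paper's Corollary~\ref{cor:suubhaam} asserts but does not spell out.
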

%\begin{proof} Let $x_0$ be a in the statement and suppose that $(\Delta u)^*(x_0 \in [0, \infty)$.  
%Consider for the first step some arbitrary $\phi \in C_0^\infty(\mathbb{R}^n)$ such that $\phi \geq 0 $ and $\phi(0) > 0 $. Define $\phi_{\epsilon,x_0}(x) :=  \phi( \frac{x-x_0}{\epsilon} )$. For $\epsilon $ small enough one has $\phi_{\epsilon,x_0} \in C_0^\infty(\Omega)$ and therefore, by \eqref{eq:bihammeas} one has 
%\begin{equation}
%\int_\Omega \Delta u \Delta \phi_{\epsilon,x_0} dx = - \int_\Omega \phi_{\epsilon, x_0 } d\mu 
%\end{equation}
%This implies, using the defintion of $\phi_{\epsilon,x_0}$ that 
%\begin{equation}
%\frac{1}{\epsilon^2} \int_{\mathbb{R}^2} \Delta u(x) \Delta \phi \left( \frac{x-x_0}{\epsilon} \right) dx = - \int \phi_{\epsilon,x_0} \leq - \phi(0)\mu(\{x_0\}) .  
%\end{equation} 
%Using a substitution we obtain
%\begin

%\end{proof}
\begin{proof}
Suppose that $x_0 $ is an atom of $\mu$ and set $\widetilde{\mu} := \mu - \mu(\{x_0\}) \delta_{x_0}$ which is also a finite measure. 
Using \eqref{eq:3.32} we find with the notation from there that for each $x \in \Omega_{\epsilon_0}^C$ 
\begin{align*}
(\Delta u)^*(x) & = - \frac{1}{4\pi} \int_\Omega (\log |x-y| + 1 )\; \mathrm{d}\mu(y) + \Delta h(x) \\ & = - \frac{1}{4\pi} (\log |x-x_0| + 1) \mu(\{x_0\}) - \frac{1}{4\pi}\int_\Omega (\log |x-y| + 1 ) \; \mathrm{d}\widetilde{\mu}(y) + \Delta h (x) 
\\ & \geq - ||\Delta h ||_\infty - \frac{1}{4\pi}( \log \diam(\Omega) + 1) \widetilde{\mu}(\Omega) - \frac{\mu(\{ x_0\} )}{4\pi} ( 1 + \log |x-x_0| ) . 
\end{align*}
Plugging in $x= x_0 $ we obtain finally that $(\Delta u)^*(x_0) = \infty$ as claimed. 
\end{proof}

\begin{remark}\label{rem:atoom}
The previous observations show that each non-$1-$Lebesgue point of $D^2u$ satisfies $(\Delta u)^* = \infty$ and each atom of $\mu$ is a non-$1-$Lebesgue point of $D^2u$. 
\end{remark}

\begin{lemma}[Semiconvexity]\label{lem:semicon} 
Let $u \in \A(u_0)$ be a minimizer and set 
\begin{equation*}
A := \frac{\sqrt{5}}{2} \left(2 ||D^2h||_\infty + \frac{3}{16\pi} \mu(\Omega) \right).
\end{equation*}
Then at each $x \in \Omega_{\epsilon_0}^C$ which is $1-$Lebesgue point of $D^2u$  the matrix $(D^2 u)^* + AI $ is positive semidefinite, where $I = \mathrm{diag}(1,1)$ denotes the identity matrix. In particular, for each $x_0 \in \mathbb{R}^2$ one has  that $x \mapsto u(x) + \frac{1}{2} A |x-x_0|^2$ is convex on $\Omega_{\epsilon_0}^C$. 
\end{lemma}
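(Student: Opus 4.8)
The plan is to reduce the semidefiniteness claim to a pointwise eigenvalue estimate for the symmetric $2\times 2$ matrix $(D^2u)^*(x)$, read off from the integral representation of Corollary \ref{cor:minregu}. Fix $x\in\Omega_{\epsilon_0}^C$ that is a $1$-Lebesgue point of $D^2u$. By Remark \ref{rem:atoom} such a point cannot be an atom of $\mu$, so \eqref{eq:347} is available:
\[
(D^2u)^*(x)=-\tfrac12\int_\Omega D^2F(x,y)\,\mathrm{d}\mu(y)+D^2h(x).
\]
Denote the entries of $(D^2u)^*(x)$ by $a=(\partial^2_{x_1x_1}u)^*(x)$, $c=(\partial^2_{x_2x_2}u)^*(x)$ and $b=(\partial^2_{x_1x_2}u)^*(x)$. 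A $1$-Lebesgue point of $D^2u$ is a $1$-Lebesgue point of $\Delta u=\partial^2_{x_1x_1}u+\partial^2_{x_2x_2}u$, so Corollary \ref{lem:corsubham} gives $a+c=(\Delta u)^*(x)\ge 0$. The eigenvalues of $(D^2u)^*(x)$ are $\tfrac{a+c}{2}\pm\sqrt{(\tfrac{a-c}{2})^2+b^2}$, so since $a+c\ge 0$ the smallest one is at least $-\sqrt{(\tfrac{a-c}{2})^2+b^2}$; it remains to bound $|a-c|$ and $|b|$.

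For these bounds I would use only the crude estimates that follow from the explicit formulae in Lemma \ref{lem:green}, namely $|\partial^2_{x_1x_1}F-\partial^2_{x_2x_2}F|\le\tfrac{3}{8\pi}$ and $|\partial^2_{x_1x_2}F|\le\tfrac{3}{8\pi}$ on $\mathbb{R}^2\setminus\{x\}$. Integrating the representation against the finite measure $\mu$ and using $\|D^2h\|_\infty<\infty$ (Lemma \ref{lem:bihammeasrep}) gives
\[
|a-c|\le\tfrac12\cdot\tfrac{3}{8\pi}\mu(\Omega)+2\|D^2h\|_\infty=:B,\qquad |b|\le\tfrac12\cdot\tfrac{3}{8\pi}\mu(\Omega)+\|D^2h\|_\infty\le B,
\]
hence $(\tfrac{a-c}{2})^2+b^2\le\tfrac{B^2}{4}+B^2=\tfrac54B^2$. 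Therefore the smallest eigenvalue of $(D^2u)^*(x)$ is at least $-\tfrac{\sqrt5}{2}B$, and since $B=2\|D^2h\|_\infty+\tfrac{3}{16\pi}\mu(\Omega)$ this is exactly $-A$; thus $(D^2u)^*(x)+AI$ is positive semidefinite, as claimed.

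It remains to deduce the convexity statement. Since the set of non-$1$-Lebesgue points of $D^2u$ in $\Omega_{\epsilon_0}^C$ has Lebesgue measure zero (Corollary \ref{cor:minregu}), the inequality $(D^2u)^*(x)+AI\ge 0$ holds for almost every $x\in\Omega_{\epsilon_0}^C$; as $u\in W^{2,2}(\Omega)$, its distributional Hessian is represented by the $L^2$ field $D^2u$, so the distributional Hessian of $v(x):=u(x)+\tfrac12A|x-x_0|^2$, namely $D^2u+AI$, is pointwise a.e.\ positive semidefinite. By the standard characterization of convexity (a continuous $W^{1,1}_{loc}$ function with a.e.\ positive semidefinite distributional Hessian coincides, via mollification and local uniform convergence, with a convex function on every convex subdomain), $v$ is convex on $\Omega_{\epsilon_0}^C$. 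The argument is essentially bookkeeping; the only genuinely necessary inputs are the a priori positivity $(\Delta u)^*\ge0$ and the representation \eqref{eq:347} (which forces one to note first, via Remark \ref{rem:atoom}, that the chosen Lebesgue point is not an atom of $\mu$), together with matching the eigenvalue bound to the stated constant $A$.
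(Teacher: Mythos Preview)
Your argument is correct and follows essentially the same route as the paper: both use the representation \eqref{eq:347} (after noting via Remark \ref{rem:atoom} that a $1$-Lebesgue point cannot be an atom of $\mu$), bound $|a-c|$ and $|b|$ by $B=2\|D^2h\|_\infty+\tfrac{3}{16\pi}\mu(\Omega)$ via the pointwise estimates in Lemma \ref{lem:green}, combine this with $a+c=(\Delta u)^*\ge 0$ from Corollary \ref{lem:corsubham} in the $2\times 2$ eigenvalue formula to get the lower bound $-A=-\tfrac{\sqrt5}{2}B$, and then pass to convexity by mollification and uniform convergence. The only cosmetic difference is that the paper applies the eigenvalue formula to $M=(D^2u)^*+AI$ directly, whereas you bound the smallest eigenvalue of $(D^2u)^*$ from below by $-A$; these are of course equivalent.
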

\begin{proof}
Let $x$ be a Lebesgue point of $D^2u$.  By Remark \ref{rem:atoom}, $x$ is not an atom of $\mu$. 
Note that if $M  = \begin{pmatrix} m_{11} & m_{12} \\ m_{12} & m_{22} \end{pmatrix} \in \mathbb{R}^{2\times 2} $ is a symmetric matrix then the eigenvalues of $M$ are given by 
\begin{equation}\label{eq:eigw}
\lambda_{1,2} = \frac{m_{11} + m_{22}}{2} \pm \sqrt{\frac{1}{4} (m_{11} - m_{22} )^2 + m_{12}^2 }.
\end{equation}
If $M = (D^2u)^*(x)  + AI$ then Corollary \ref{lem:corsubham} implies that 
\begin{equation}\label{eq:eigenval}
\frac{m_{11} + m_{22}}{2} = (\Delta u)^* + 2A \geq 2 A . 
\end{equation}
Using \eqref{eq:347}, the fact that $x$ is not an atom of $\mu$, and Lemma \ref{lem:green} we obtain 
\begin{align*}
|m_{11}- m_{22}|& = \left\vert\frac{1}{2} \int_\Omega (\partial^2_{x_1x_1} F(x,y) - \partial^2_{x_2x_2}F(x,y) ) \; \mathrm{d}\mu(y) + \partial^2_{x_1x_1}h - \partial_{x_2x_2}^2h \right\vert \\ & \leq\frac{1}{2} \int_{\Omega \setminus \{x\} } |\partial^2_{x_1x_1} F(x,y) - \partial^2_{x_2x_2}F(x,y) | \; \mathrm{d}\mu(y) + 2 ||D^2h||_\infty\\ &  \leq \frac{3}{16\pi}\mu(\Omega) + 2||D^2h||_\infty
. 
\end{align*} 
Analogously one can show that 
\begin{equation*}
|m_{12}| \leq \frac{3}{16\pi} \mu(\Omega) + 2 ||D^2h ||_\infty.
\end{equation*}
Hence
\begin{equation*}
\sqrt{\frac{1}{4} (m_{11} - m_{22} )^2 + m_{12}^2 } \leq \sqrt{ \left( 1 + \frac{1}{4}\right) \left(\frac{3}{16\pi} \mu(\Omega) + 2 ||D^2h ||_\infty\right)^2 } \leq A.
\end{equation*}
Plugging this and \eqref{eq:eigenval} into \eqref{eq:eigw} we find
\begin{equation*}
\lambda_{1,2} \geq 2 A - A = A \geq 0. 
\end{equation*}
Thus we obtain that $M$ is indeed positive semidefinite. For $\epsilon >0 $ let $\rho_\epsilon$ be the standard mollifier. Set $f_\epsilon(x) := \left(  u(\cdot) + \frac{1}{2}A |\cdot - x_0|^2\right) * \rho_\epsilon$. Observe that for $\epsilon< \epsilon_0$,  $f_\epsilon \in C^2(\Omega_{\epsilon_0}^C)$ and $D^2 f_\epsilon = (D^2 u + AI) * \rho_\epsilon$ on $\Omega_{\epsilon_0}^C$. This matrix is positive semidefinite since for each $z \in \mathbb{R}^2$
\begin{equation*}
z^TD^2f_\epsilon(x) z = (z^T(D^2u + AI) z * \rho_\epsilon)(x) \geq 0 ,  
\end{equation*}
as $\rho_\epsilon $ is nonnegative and $D^2 u + AI$ is positive semidefinite almost everywhere. Hence $f_\epsilon$ is convex. However $f_\epsilon$ also converges to $u + \frac{1}{2} A |\cdot - x_0|^2$ uniformly on $\Omega_{\epsilon_0}^C$ as the latter function is continuous. It is easy to verify with the definition of convexity that uniform limits of convex functions are convex again. 
\end{proof}
%\begin{remark}
%In the literature $u$ is usually called $A$-semiconvex if $x \mapsto u(x) + \frac{1}{2}A |x-x_0|^2$ is convex. 
%\end{remark}
 \section{Emptyness of The Singular Nodal Set}
 In this section we study the gradient $\nabla u $ at points where $u$ vanishes. Whenever we refer to the gradient, we always mean its continuous representative, cf. Remark \ref{rem:ceins}. 
We show that the set  $\{ u = \nabla u = 0 \}$, which we refer to as \emph{singular nodal set}, is empty. It is vital for the argument to look at the behavior of the Hessian at points that lie in the singular nodal set. We have to distinguish between $1-$Lebesgue points of the Hessian and non-$1-$Lebesgue points of the Hessian. The $1-$Lebegue points can be discussed using blow-up arguments. For non $1-$Lebesgue points, one will profit from the characterization in Remark \ref{rem:atoom}.
 
 The blow-up arguments in this section are based on the following version of Aleksandrov's theorem, which allows for a second order Taylor-type expansion.  
 %We present the proof for the sake of completeness but it actually follows the lines of
 \begin{lemma}[A version of Aleksandrovs theorem in $\mathbb{R}^n$]\label{lem:aleks}\label{rem:quasialeks}
 Let $\Omega \subset \mathbb{R}^n$ be bounded and $f \in W^{2,2}(\Omega)\cap C^1(\Omega)$ be $A$-semiconvex for some $A \in \mathbb{R}$. If $x_0\in \Omega$ is a $1-$Lebesgue point of $D^2f$, then 
 \begin{equation}\label{eq:aleks}
 f(x) - f(x_0) - \nabla f(x_0) (x-x_0) - \frac{1}{2}(x-x_0)^T (D^2f)^*(x_0) (x-x_0) = o(|x-x_0|^2 ).
\end{equation}  
 \end{lemma}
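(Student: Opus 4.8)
\textbf{Proof plan for Lemma \ref{lem:aleks}.}
The plan is to reduce the statement to the classical Aleksandrov theorem for convex functions by exploiting $A$-semiconvexity, and then to upgrade the a.e.\ twice-differentiability conclusion to one that holds specifically at the prescribed $1$-Lebesgue point $x_0$, using the $C^1$-regularity and the defining property of Lebesgue points of $D^2f$. First I would fix $x_0$ and, without loss of generality, set $x_0 = 0$, $f(0) = 0$, $\nabla f(0) = 0$ (subtracting the affine part, which does not affect $D^2 f$). Write $g(x) := f(x) + \tfrac{1}{2}A|x|^2$, which is convex on a ball $B := B_\rho(0) \subset \Omega$ and satisfies $g \in W^{2,2}(B) \cap C^1(B)$. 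Since the desired expansion \eqref{eq:aleks} for $f$ is equivalent to the analogous expansion for $g$ with Hessian $(D^2 g)^*(0) = (D^2 f)^*(0) + AI$, it suffices to prove the statement for the convex function $g$. So from now on I may assume $f$ itself is convex on $B$.

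The key step is the following local second-order control coming from convexity. For a convex $C^1$ function $f$ on $B$, monotonicity of the gradient gives, for $x, y \in B$,
\begin{equation*}
0 \leq \langle \nabla f(x) - \nabla f(y), x - y \rangle,
\end{equation*}
and combined with $\nabla f \in W^{1,2}$ one has the representation $\nabla f(x) - \nabla f(0) = \int_0^1 D^2 f(tx)\, x \, \mathrm{d}t$ for a.e.\ $x$, interpreted via the $W^{1,2}$ trace of $D^2 f$ along almost every ray. The plan is to show that the Lebesgue-point hypothesis at $0$ forces
\begin{equation*}
\frac{1}{r}\left\| \nabla f(x) - (D^2 f)^*(0)\, x \right\|_{L^2(B_r(0))} = o(r) \quad \text{as } r \to 0,
\end{equation*}
by writing $\nabla f(x) - (D^2 f)^*(0)x = \int_0^1 \big( D^2 f(tx) - (D^2 f)^*(0)\big) x \, \mathrm{d}t$, taking $L^2$ norms, applying Minkowski's integral inequality, and using that $\fint_{B_s(0)} |D^2 f - (D^2 f)^*(0)| \to 0$ as $s \to 0$ (the defining property of a $1$-Lebesgue point) together with a change of variables $s = tr$. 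Then I would pass from this $L^2$-in-$x$ estimate on $\nabla f$ to a pointwise-in-$x$ estimate using convexity: by the monotonicity/convexity sandwich, the pointwise oscillation of $\nabla f$ on $B_r$ around the linear map $x \mapsto (D^2f)^*(0)x$ is controlled (up to dimensional constants) by its average oscillation on a slightly larger ball $B_{2r}$ — this is the standard interior gradient estimate for convex functions, where the supremum of a convex-type quantity is bounded by its mean. Finally, integrating the pointwise estimate $|\nabla f(x) - (D^2f)^*(0) x| = o(|x|)$ along the segment from $0$ to $x$ yields $|f(x) - \tfrac12 x^T (D^2f)^*(0) x| = o(|x|^2)$, which is \eqref{eq:aleks}.

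I expect the main obstacle to be the passage from the $L^2$-averaged second-order estimate to the genuinely pointwise estimate $|\nabla f(x) - (D^2 f)^*(0)x| = o(|x|)$: this is where convexity is essential and cannot be replaced by mere $W^{2,2}$-regularity (in $\mathbb{R}^n$ with $n \geq 3$, $W^{2,2}$ does not embed into $C^1$, let alone give pointwise Hessian control). The clean way to handle it is to invoke the classical Aleksandrov theorem proof for convex functions — e.g.\ the monotone rearrangement / subdifferential argument in \cite{EvGar} — but carefully tracking that the point $0$ is a Lebesgue point of $D^2 f$ rather than merely an a.e.\ point; alternatively one can cite a known "Lebesgue-point version" of Aleksandrov's theorem directly. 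A secondary technical point is justifying the fundamental-theorem-of-calculus representation of $\nabla f$ along rays for a $W^{2,2} \cap C^1$ function, which follows from absolute continuity on a.e.\ line (ACL characterization of Sobolev functions) combined with continuity of $\nabla f$; I would state this as a routine application and not belabor it.
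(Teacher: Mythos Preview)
Your reduction to the convex case via $g(x) = f(x) + \tfrac{A}{2}|x-x_0|^2$ is exactly what the paper does. From there, however, the paper takes precisely the route you yourself call ``the clean way'' at the end of your proposal, and does not attempt the direct $L^2$-to-pointwise argument at all: it examines the proof of Aleksandrov's theorem in \cite[Thm.~1, Sect.~6.4]{EvGar} and observes that Part~1 of that proof establishes \eqref{eq:aleks} at any point $x_0$ satisfying three conditions: (1) $x_0$ is a $1$-Lebesgue point of $\nabla f$; (2) $x_0$ is a $1$-Lebesgue point of the Radon--Nikodym density of $[D^2f]_{ac}$; (3) the singular part satisfies $r^{-n}[D^2f]_s(B_r(x_0))\to 0$. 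Since $f\in C^1$ makes (1) automatic, $f\in W^{2,2}$ forces $[D^2f]_s = 0$ so (3) is vacuous, and (2) is exactly the hypothesis, the lemma follows in two lines once the reference is in hand.

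Your more explicit plan would also lead to a proof, but the step you correctly flag as the main obstacle --- upgrading the averaged estimate on $\nabla f(x) - (D^2f)^*(0)x$ to a pointwise one --- is \emph{not} the ``standard interior gradient estimate for convex functions''. That estimate controls $\sup_{B_r}|\nabla g|$ by the oscillation of a \emph{convex} $g$ on $B_{2r}$, whereas $x\mapsto f(x) - \tfrac{1}{2}x^T(D^2f)^*(0)x$ is in general not convex (subtracting a positive semidefinite quadratic form from a convex function destroys convexity). Making this step rigorous requires the monotonicity/covering argument that is already the heart of the Evans--Gariepy proof, so your ``main'' route would end up redoing that argument rather than bypassing it; the paper's shortcut is the efficient choice here.
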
 
 \begin{proof}
By considering $\widetilde{f} := f +  \frac{1}{2}A|\cdot-x_0|^2$ we can assume without loss of generality that $f$ is convex. Note that for convex functions \cite[Thm.2,Sect.6.3]{EvGar} yields that $D^2f = (\mu_{i,j})_{i,j=1,...,n}$ for signed Radon measures $\mu_{i,j}$ in the sense of distributions. Hence one can also decompose the measures in their absolutely continuous and singular parts, i.e.  $D^2f = [D^2f]_{ac} + [D^2f]_s$. In our case $[D^2f]_s = 0$ because of the additional regularity assumption that $f \in W^{2,2}(\Omega)$. Moreover $[D^2f]_{ac} = D^2 f \cdot \lambda$, where $\lambda$ denotes the $n$-dimensional Lebesgue measure. 
In  \cite[Thm.1,Sect.6.4]{EvGar}, a proof of the classical Aleksandrov theorem is given, and examining Part $1$ of the given proof, it is shown that \eqref{eq:aleks} holds for each convex $f$ and each point $x_0$ such that
\begin{enumerate}
\item $\nabla f(x_0)$ exists and $x_0$ is a $1-$Lebesgue point of $\nabla f$.  
\item $x_0$ is a $1-$Lebesgue point of the Radon-Nikodym density of $[D^2f]_{ac}$ 
\item $x_0$ satisfies $\lim_{r\rightarrow 0 } \frac{1}{r^n} [D^2f]_s(B_r(x_0)) = 0 $
\end{enumerate}
Since $f$ was assumed to be $C^1$, each point $x_0$ trivially satisfies $(1)$. As we mentioned above $[D^2f]_s = 0$, so each point $x_0$ automatically satisfies $(3)$. Hence the proof works for each point $x_0$ satisfying $(2)$, i.e. each $1-$Lebesgue point of $D^2f$. \qedhere
 \end{proof}
% \begin{remark}\label{rem:quasialeks}
% Equation \eqref{eq:aleks} is also valid for each $1-$Lebesgue point of $D^2f$ if $f$ is only required to be $A-$semiconvex. Indeed, if $f$ is semiconvex and $x_0$ is a $1-$Lebesgue point of $D^2f$, then $\widetilde{f} : x \mapsto f(x) + \frac{1}{2} A |x-x_0|^2 $ is  convex and $D^2 \widetilde{f} $ also has a $1-$Lebesgue  in $x_0$. Using that \eqref{eq:aleks} holds for $\widetilde{f}$ and the definition of $\widetilde{f}$ we find that \eqref{eq:aleks} holds true for $f$. 
% \end{remark} 
 \begin{remark}\label{rem:tayli}
% The Taylor expansion also allows for classical results on local maxima and minima of a convex function, i.e. if $f$ satisfies the assumptions of Lemma \ref{lem:aleks} and if $x_0$ is a$1-$Lebesgue point of $D^2f$ auch that $\nabla f (x_0) = 0$ and $(D^2f)^*(x_0)$ is positive definite, then $x_0$ is an isolated local minimum of $f$, i.e. there exists $\epsilon >0 $ such that $f(x) > f(x_0)$ for each $x \in B_\epsilon(x_0) \setminus \{ x_0 \}$ 
For $f$ as in the statement of Lemma \ref{lem:aleks}, Equation \eqref{eq:aleks} can be seen as a Taylor expansion around each $1$-Lebesgue point of $D^2f$. In particular note that each $1-$Lebesgue point $x_0$ of $D^2f$ such that $\nabla f (x_0) = 0 $ and $(D^2f)^*(x_0)$ is positive definite is a strict local minimum of $f$. 
 \end{remark}
 
 \begin{lemma}[Hessian on Singular Nodal Set- I]
 Let $u \in \A(u_0)$ be a minimizer and $x_0 \in \Omega_{\epsilon_0}^C$ be a Lebesgue point of $D^2u$  such that $u(x_0) = \nabla u(x_0) = 0$. Then either $(D^2u)^*(x_0)= 0 $ or $(D^2 u)^*(x_0)$ is positive definite. 
 \end{lemma}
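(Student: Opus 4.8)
The plan is to combine the second-order expansion of Lemma~\ref{lem:aleks} with the adhesion estimate~\eqref{eq:meas}. Since $x_0\in\Omega_{\epsilon_0}^C$ is a $1$-Lebesgue point of $D^2u$, and $u\in W^{2,2}(\Omega)\cap C^1(\Omega)$ (Remark~\ref{rem:ceins}) is $A$-semiconvex on $\Omega_{\epsilon_0}^C$ (Lemma~\ref{lem:semicon}), Lemma~\ref{lem:aleks} applies at $x_0$. Writing $M:=(D^2u)^*(x_0)$ and $Q(x):=\tfrac12(x-x_0)^TM(x-x_0)$, and using $u(x_0)=0$, $\nabla u(x_0)=0$, we obtain $u(x)=Q(x)+o(|x-x_0|^2)$ as $x\to x_0$. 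The matrix $M$ is symmetric, and by Corollary~\ref{lem:corsubham} its trace equals $(\Delta u)^*(x_0)\ge 0$, so its eigenvalues $\lambda_1\le\lambda_2$ satisfy $\lambda_1+\lambda_2\ge 0$. If $\lambda_1>0$ then $M$ is positive definite, and if $\lambda_1=\lambda_2=0$ then $M=0$; hence it remains to exclude the case $\lambda_1\le 0<\lambda_2$, which is exactly the case in which $M$ is neither zero nor positive definite.

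So assume $\lambda_1\le 0<\lambda_2$; the goal is a contradiction with~\eqref{eq:meas}. Fix $\phi\in C_0^\infty(\Omega)$ with $\phi\ge 0$ and $\phi\equiv 1$ on a ball $B_{\rho_0}(x_0)\subset\Omega$, so that~\eqref{eq:meas} gives $\limsup_{\epsilon\to0+}\epsilon^{-1}\,|\{0<u<\epsilon\phi\}|\le 2\sqrt{\E(u)}\,\|\Delta\phi\|_{L^2}<\infty$. I claim this $\limsup$ is in fact $+\infty$. Fix a small $\eta>0$; the expansion provides $\rho_\eta\in(0,\rho_0]$ with $|u(x)-Q(x)|\le\eta|x-x_0|^2$ whenever $|x-x_0|<\rho_\eta$. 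Then, on the set
\[
\Sigma_\eta^\epsilon:=\bigl\{\,x:\ |x-x_0|<\rho_\eta,\ Q(x)>\eta|x-x_0|^2,\ Q(x)+\eta|x-x_0|^2<\tfrac\epsilon2\,\bigr\},
\]
one has $0<u(x)<\tfrac\epsilon2\le\epsilon\phi(x)$, so $\Sigma_\eta^\epsilon\subset\{0<u<\epsilon\phi\}$. Under the substitution $x=x_0+\sqrt\epsilon\,y$ the three conditions defining $\Sigma_\eta^\epsilon$ become $|y|<\rho_\eta/\sqrt\epsilon$, $\widetilde Q(y)>\eta|y|^2$ and $\widetilde Q(y)+\eta|y|^2<\tfrac12$, where $\widetilde Q(y):=\tfrac12 y^TMy$; hence $\epsilon^{-1}|\Sigma_\eta^\epsilon|=\bigl|K_\eta\cap B_{\rho_\eta/\sqrt\epsilon}(0)\bigr|\to|K_\eta|$ as $\epsilon\to0+$, where $K_\eta:=\{\,y:\widetilde Q(y)>\eta|y|^2,\ \widetilde Q(y)+\eta|y|^2<\tfrac12\,\}$. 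Consequently $\limsup_{\epsilon\to0+}\epsilon^{-1}|\{0<u<\epsilon\phi\}|\ge|K_\eta|$ for every small $\eta>0$.

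It remains to see that $|K_\eta|\to+\infty$ as $\eta\to0+$, which is where $\lambda_1\le 0<\lambda_2$ is used: the set $\{\widetilde Q=0\}$ is a line through the origin when $\lambda_1=0$ and a union of two lines through the origin when $\lambda_1<0$ — in either case unbounded of positive length — and $K_\eta$ is, up to the $\eta$-thickening, the unit-scale model of a one-sided $\epsilon$-neighbourhood of this set. A direct computation in the eigen-coordinates of $M$ (splitting into $\lambda_1=0$ and $\lambda_1<0$) shows that $|K_\eta|$ is finite for each $\eta>0$ but blows up as $\eta\to0+$ — like $\eta^{-1/2}$ when $\lambda_1=0$ and like $\log(1/\eta)$ when $\lambda_1<0$ — so the left-hand side above is $+\infty$, contradicting the finite bound from~\eqref{eq:meas}. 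The main obstacle is precisely this measure estimate: Lemma~\ref{lem:aleks} supplies only the $o(|x-x_0|^2)$-remainder and not a pointwise $C^2$-bound, so a priori one only knows that the nodal set of $u$ near $x_0$ is squeezed into a thin double-cone around $\{Q=0\}$; the parameter $\eta$ is introduced to replace $u$ by the quadratic barriers $Q\pm\eta|\cdot-x_0|^2$, and the divergence of $|K_\eta|$ as $\eta\downarrow 0$ is exactly what makes the comparison decisive. Carrying out the two-case integration for $|K_\eta|$ is the only genuine computation in the argument.
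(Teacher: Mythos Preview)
Your argument is correct and follows essentially the same route as the paper: a $\sqrt{\epsilon}$-parabolic blow-up via the Aleksandrov expansion (Lemma~\ref{lem:aleks}), combined with the finiteness of $\limsup_{\epsilon\to0+}\epsilon^{-1}|\{0<u<\epsilon\}|$ coming from \eqref{eq:meas}/Corollary~\ref{cor:posdir}, yields a contradiction because the limiting quadratic sublevel set has infinite area when one eigenvalue is nonpositive and the other positive. The only cosmetic difference is that the paper handles the $o(|x-x_0|^2)$-remainder by invoking locally uniform convergence on $B_\tau$ and comparing with the fixed window $\{1/4<\widetilde Q<1/2\}$ before letting $\tau\to\infty$, whereas you introduce the auxiliary $\eta$-barriers and let $\eta\downarrow0$; both devices serve the same purpose and your monotonicity observation $K_\eta\uparrow K_0$ already gives $|K_\eta|\to\infty$ without needing the precise rates you mention.
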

 \begin{proof}
 First define 
 \begin{equation*}
 C := \limsup_{\epsilon \rightarrow 0+ } \frac{|\{ 0 < u < \epsilon \}| }{\epsilon},
 \end{equation*}
which is finite because of Corollary \ref{cor:posdir}. By Lemma \ref{lem:semicon} and Lemma \ref{rem:quasialeks} we get the following blow-up profile at $x_0$: 
 \begin{equation}\label{eq:425}
 \frac{u(x_0 + \sqrt{\epsilon} w)}{\epsilon} \rightarrow \frac{1}{2} w^T (D^2u)^*(x_0) w ,
 \end{equation}
 locally uniformly as $\epsilon \rightarrow 0$. 
  Now fix $ \tau > 0$  and observe that 
  \begin{equation*}
  C \geq \limsup_{\epsilon \rightarrow 0+ } \frac{|\{ 0 < u <  \epsilon \} \cap B_{\tau \sqrt{\epsilon}}(x_0) | }{ \epsilon}.
  \end{equation*}
 Using scaling properties of the Lebesgue measure and  \eqref{eq:425} we get 
  \begin{align*}
  C & \geq  \limsup_{\epsilon \rightarrow 0+ }  \frac{|\{ 0 < u < \epsilon \} \cap B_{\tau \sqrt{\epsilon}}(x_0) | }{ \epsilon}
  \\ & = \limsup_{\epsilon \rightarrow 0 } \frac{1}{\epsilon} \left\vert \left\lbrace \sqrt{\epsilon}w : w \in B_{\tau}(0)  \; \textrm{s.t.} \; 0 < \frac{u(x_0 + \sqrt{\epsilon} w)}{\epsilon} < 1 \right\rbrace \right\vert 
  \\ & = \limsup_{\epsilon \rightarrow 0 }  \left\vert \left\lbrace w \in B_{\tau}(0) :  0 < \frac{u(x_0 + \sqrt{\epsilon} w)}{\epsilon} < 1 \right\rbrace \right\vert 
  \\ & \geq \left\vert \left\lbrace w \in B_{\tau}(0) :  \frac{1}{4} < \frac{1}{2} w^T (D^2u)^*(x_0) w < \frac{1}{2} \right\rbrace \right\vert ,
  \end{align*}
  where the last step can be carried out because the convergence in \eqref{eq:425} is  uniform in $B_\tau(0)$.
  Now let $\lambda_1 , \lambda_2 $ be the eigenvalues of $\frac{1}{2}(D^2u)^*(x_0)$. Since $(D^2u)^*$ is symmetric, we can use an orthogonal transformation to obtain 
  \begin{equation*}
  C \geq \left\vert \left\lbrace w \in B_{\tau}(0) :  \frac{1}{4} < \lambda_1 w_1^2 + \lambda_2 w_2^2 < \frac{1}{2} \right\rbrace \right\vert.
  \end{equation*}
  since $\tau > 0 $ was arbitrary we can let $\tau \rightarrow \infty$ to find 
  \begin{equation}\label{eq:measres}
  C \geq \left\vert \left\lbrace w \in \mathbb{R}^2 :  \frac{1}{4} < \lambda_1 w_1^2 + \lambda_2 w_2^2 < \frac{1}{2} \right\rbrace \right\vert.
  \end{equation}
  Recall moreover from Corollary \ref{lem:corsubham} that
  \begin{equation}\label{eq:laplgro}
  0 \leq (\Delta u)^* =\mathrm{tr}((D^2u)^*) = 2( \lambda_1 + \lambda_2) .
  \end{equation}
  Now we distinguish cases to show that $\lambda_1 = \lambda_2 = 0$ or $\lambda_1,\lambda_2 > 0$. Assume that none of the two cases apply. One out of the two eigenvalues has to be positive because of \eqref{eq:laplgro} and the other one has to be zero or negative. Without loss of generality $\lambda_1 > 0 $.
  % If both $\lambda_1$ and $\lambda_2 $ are positive then writing \eqref{eq:measres} as an integral and using the substitution $(z_1,z_2)^T = (\sqrt{\lambda_1}w_1, \sqrt{\lambda_2}w_2 )^T$ we find
%  \begin{align*}
%  C\theta & \geq \int_{ \nicefrac{\theta}{4} < \lambda_1 w_1^2 + \lambda_2 w_2^2  < \nicefrac{\theta}{2} } \; \mathrm{d}(w_1,w_2) = \frac{1}{\sqrt{\lambda_1\lambda_2}} \int_{\nicefrac{\theta}{4} < z_1^2 + z_2^2 < \nicefrac{\theta}{2} } \; \mathrm{d}(z_1,z_2) \\ &  = \frac{1}{\sqrt{\lambda_1\lambda_2}} \int_{\frac{\theta}{4}}^{\frac{\theta}{2}} \int_0^{2\pi} r \dr \; \mathrm{d}\theta  = \frac{3\pi}{8\sqrt{\lambda_1\lambda_2}} \theta^2  \quad \forall \theta > 0 .
 % \end{align*}
 % Choosing $\theta := \frac{16C\sqrt{\lambda_1\lambda_2}}{3\pi} $ we obtain a contradiction. 
If $\lambda_2$  is negative one can observe that if $w_1 > 0$  
\begin{equation*}
\frac{1}{4} < \lambda_1 w_1^2 + \lambda_2 w_2^2 < \frac{1}{2} \quad \Leftrightarrow  \frac{1}{\sqrt{\lambda_1}}\sqrt{\frac{1}{4} + |\lambda_2| w_2^2} < w_1 < \frac{1}{\sqrt{\lambda_1}}\sqrt{\frac{1}{2} + |\lambda_2| w_2^2}.
\end{equation*}
  Therefore \eqref{eq:measres} yields, using that for positive $a,b$ one has  $a-b = \frac{a^2-b^2}{a+b}$ one has
  \begin{align*}
C & \geq \int_0^\infty \frac{1}{\sqrt{\lambda_1}} \left(  \sqrt{\frac{1}{2} + |\lambda_2| w_2^2} - \sqrt{\frac{1}{4} + |\lambda_2| w_2^2} \right) \; \mathrm{d}w_2 \\ & =  \frac{1}{\sqrt{\lambda_1}} \int_0^\infty \frac{1}{4} \frac{1}{ \sqrt{\frac{1}{2} + |\lambda_2| w_2^2} + \sqrt{\frac{1}{4} + |\lambda_2| w_2^2} } \; \mathrm{d}w_2  \\ & \geq \frac{1}{\sqrt{\lambda_1}} \int_0^\infty \frac{1}{8 } \frac{1}{\sqrt{\frac{1}{4} + |\lambda_2| w_2^2 }}dw_2 = \infty,
  \end{align*}
  a contradiction. If $\lambda_1 > 0 $ and $\lambda_2 = 0 $ then it is easy to see that the right hand side of \eqref{eq:measres} equals infinity again. Therefore we obtain a contradiction and hence $\lambda_1 = \lambda_2 = 0 $ or $\lambda_1,\lambda_2 >0$. Since $(D^2u)^*(x_0)$ is symmetric and therefore diagonalizable we obtain that $(D^2u)^*(x_0) = 0 $ or $(D^2u)^*(x_0)$ is positive definite.
 \end{proof}
 
% \begin{remark}\label{countable}
% By the previous lemma, the set of all $1-$Lebesgue points $x_0$ of $D^2u$ such that $u(x_0) = \nabla u(x_0) = 0 $ and $(D^2u)^*(x_0)$ is positive definite contains only isolated points of $\{ u \leq 0 \}$. As $\{ u \leq 0 \}$ is a second-countable topological space it has the Lindelöf property, cf. \cite[Theorem 30.3 and below]{Munkres}.  Since subsets of topological Lindelöf spaces that contain only isolated points are at most countable, we obtain that the case where $(D^2u)^*(x_0)$ is positive definite occurs only countably many times.
% \end{remark}

Next we exclude that $(D^2u)^*(x_0)$ is positive definite using a variational argument. 
%As a preparation we need a lemma that studies a connection between two types of 'bad' points, namely the atoms of $\mu$ and the Non-Lebesgue points of $D^2u$. 

\begin{lemma}[Hessian on Singular Nodal Set - II] \label{lem:Hesssing}
 Let $u \in \A(u_0)$ be a minimizer and $x_0 \in \Omega_{\epsilon_0}^C$ be a Lebesgue point of $D^2u$  such that $u(x_0) = \nabla u(x_0) = 0$. Then $(D^2u)^*(x_0)= 0 $.
\end{lemma}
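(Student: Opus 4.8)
The plan is to argue by contradiction: suppose $(D^2u)^*(x_0)$ is positive definite at the singular nodal point $x_0$. By the previous lemma we already know the only alternative to $(D^2u)^*(x_0)=0$ is positive definiteness, so excluding the latter finishes the proof. The idea is that a minimizer which touches zero with a strict local minimum is ``wasteful'': one can push it down slightly near $x_0$, thereby enlarging the negativity set, decreasing the $|\{u>0\}|$-term at linear rate, while paying only a quadratic price in the Dirichlet term. Concretely, I would use the Aleksandrov/Taylor expansion from Lemma \ref{lem:aleks} together with semiconvexity (Lemma \ref{lem:semicon}) to get, near $x_0$,
\begin{equation*}
u(x) = \tfrac12 (x-x_0)^T (D^2u)^*(x_0) (x-x_0) + o(|x-x_0|^2) \geq c|x-x_0|^2 + o(|x-x_0|^2)
\end{equation*}
for some $c>0$, using $u(x_0)=\nabla u(x_0)=0$. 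In particular $u>0$ on a punctured neighborhood of $x_0$, so $x_0$ is an isolated point of $\{u=0\}$ in the sense that a whole ball $B_\rho(x_0)\subset \{u\ge 0\}$.

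Next I would construct the competitor. Take a fixed smooth cutoff $\phi\in C_0^\infty(B_\rho(x_0))$ with $\phi\equiv 1$ on $B_{\rho/2}(x_0)$, $0\le\phi\le 1$, and set $u_t := u - t\phi$ for small $t>0$; note $u_t\in\A(u_0)$ since $\phi$ is compactly supported away from $\partial\Omega$. The Dirichlet term changes by $\int_\Omega(\Delta u_t)^2 - \int_\Omega(\Delta u)^2 = -2t\int_\Omega \Delta u\,\Delta\phi\,\dx + t^2\int_\Omega(\Delta\phi)^2\dx$; the first-order term vanishes by Lemma \ref{lem:biham} because $\phi$ has compact support in $\{u>0\}$ (recall $B_\rho(x_0)\setminus\{x_0\}\subset\{u>0\}$ and $\mu$ charges only $\{u=0\}$, with $x_0$ a single point — actually I must be slightly careful here, so the cleaner route is to use $2\int_\Omega \Delta u\,\Delta\phi\,\dx = -\int_\Omega \phi\,\mathrm d\mu = -\phi(x_0)\mu(\{x_0\})$ via \eqref{eq:bihammeas}, and to first rule out that $x_0$ is an atom of $\mu$ — but by Remark \ref{rem:atoom} atoms of $\mu$ are non-$1$-Lebesgue points of $D^2u$, and $x_0$ \emph{is} such a Lebesgue point, so $\mu(\{x_0\})=0$). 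Hence the Dirichlet term increases by exactly $t^2\int_\Omega(\Delta\phi)^2\dx = O(t^2)$. For the measure term, on $B_{\rho/2}(x_0)$ we have $u_t = u - t \le c|x-x_0|^2 + o(|x-x_0|^2) - t$, which is negative whenever $|x-x_0|^2 \lesssim t$; thus $\{u_t\le 0\}$ contains a ball of radius $\sim\sqrt{t/c}$ around $x_0$, so $|\{u_t>0\}| \le |\{u>0\}| - c'\,t$ for some $c'>0$ and all small $t$. Therefore
\begin{equation*}
\E(u_t) - \E(u) \le t^2 \int_\Omega(\Delta\phi)^2\dx - c' t < 0
\end{equation*}
for $t$ small enough, contradicting minimality of $u$. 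Hence $(D^2u)^*(x_0)$ cannot be positive definite, so $(D^2u)^*(x_0)=0$.

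The main obstacle I anticipate is making the measure-gain estimate $|\{u_t>0\}| \le |\{u>0\}| - c't$ rigorous and uniform in $t$: the Taylor expansion from Lemma \ref{lem:aleks} is only a pointwise $o(|x-x_0|^2)$ statement, so I need a quantitative version on a fixed ball to guarantee that the sublevel set $\{u_t\le 0\}$ really contains a ball of radius comparable to $\sqrt t$ for \emph{all} small $t$, not just in the limit. Concretely, given $\varepsilon>0$ there is $r_\varepsilon>0$ with $u(x)\le (\tfrac12\lambda_{\max}+\varepsilon)|x-x_0|^2$ on $B_{r_\varepsilon}(x_0)$ — actually I want an upper bound, so $u(x) \le (\lambda_{\max}+\varepsilon)|x-x_0|^2$ — and then for $t$ with $\sqrt{t/(\lambda_{\max}+\varepsilon)} < r_\varepsilon$ the ball $B_{\sqrt{t/(\lambda_{\max}+\varepsilon)}}(x_0)$ lies in $\{u < t\}\supset\{u_t<0\}$ provided also $\phi\equiv 1$ there, which holds once $\sqrt{t/(\lambda_{\max}+\varepsilon)}<\rho/2$. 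This gives $|\{u_t>0\}|\le|\{u>0\}| - \pi t/(\lambda_{\max}+\varepsilon)$, which suffices. A secondary point requiring care is confirming $u_t$ genuinely lies in $\A(u_0)$ (immediate) and that the $\{u<0\}$-region plays no role — it does not, since we only shrink the positivity set. Everything else is bookkeeping.
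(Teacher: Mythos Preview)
Your proof is correct and follows essentially the same route as the paper: both argue by contradiction assuming $(D^2u)^*(x_0)$ is positive definite, use the Aleksandrov expansion (Lemma~\ref{lem:aleks}) to obtain a quadratic upper bound $u(x)\le\beta|x-x_0|^2$ near $x_0$, plug in the competitor $u-t\phi$ with a cutoff $\phi\equiv 1$ on an inner ball, convert the cross term via \eqref{eq:bihammeas}, and exploit that $x_0$ is not an atom of $\mu$ (Remark~\ref{rem:atoom}) to conclude the cross term contributes nothing, leaving a linear gain in the measure term against a quadratic Dirichlet penalty. The only cosmetic difference is that the paper bounds $-\int\psi\,\mathrm d\mu\le\mu(B_r(x_0))$ and sends $r\to 0$ at the end, whereas you observe directly that $\{u=0\}\cap B_\rho(x_0)=\{x_0\}$ so $\int\phi\,\mathrm d\mu=\mu(\{x_0\})=0$; your version is in fact slightly cleaner.
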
 
\begin{proof}
By the previous lemma, it remains to show that $(D^2u)^*(x_0)$ is not positive definite. To do so, we suppose the opposite, i.e. $(D^2u)^*(x_0)$ is positive definite. By   Lemma \ref{rem:quasialeks} and Remark \ref{rem:tayli} $x_0$ is a strict local minimum of $u$ and grows quadratically away from $x_0$, i.e. there exists $r_0> 0 $ and $\beta > 0 $ such that $0 < u(x) < \beta |x-x_0|^2 $ for each $x \in B_{r_0}(x_0) \setminus \{x_0 \}$. Let $r \in (0, r_0)$ be arbitrary. Now choose $\phi \in C_0^\infty(B_r(x_0))$ such that  $ 0 \geq \psi \geq - 1$ and $\psi \equiv - 1$ in $B_\frac{r}{2}(x_0)$. As for each $\epsilon > 0 $ the function $u + \epsilon \psi $ is admissible, one has 
\begin{align*}
\E(u) & \leq \E(u + \epsilon \psi)  \leq \int_\Omega ( \Delta u )^2 \; \mathrm{d}x + 2 \epsilon \int_\Omega \Delta u \Delta \psi \; \mathrm{d}x \\ & \quad  + \epsilon^2 \int_\Omega (\Delta \psi)^2 \; \mathrm{d}x + \{ u > 0 \} | - |\{ x \in B_\frac{r}{2}(x_0) :0 < u(x) < \epsilon \} | \\
& = \E(u) - \epsilon \int_\Omega \psi \; \mathrm{d}\mu  + \epsilon^2 \int_\Omega (\Delta \psi)^2 - |\{ x \in B_\frac{r}{2}(x_0) : u(x) < \epsilon \} |,
 \end{align*}
 where we used \eqref{eq:bihammeas} and the strict local minimum property of $x_0$  in the last step. 
 Note that 
\begin{equation*} 
 | \{ x \in B_\frac{r}{2}(x_0) : u(x) < \epsilon \} | \geq | \{  x \in B _\frac{r}{2}(x_0) : \beta |x-x_0|^2 < \epsilon \} | = \min \left\langle \pi \frac{\epsilon}{\beta}, \frac{r}{2} \right\rangle. 
 \end{equation*} 
 We can compute for each $\epsilon < \frac{\beta r}{2\pi}$  
 \begin{align*}
 \E(u)  & \leq \E(u) - \epsilon \int_\Omega \psi d\mu + \epsilon^2 \int_\Omega (\Delta \psi)^2 - \epsilon \frac{\pi}{\beta}
 \\ & \leq \E(u) + \epsilon  \left(\mu(B_r(x_0)) -  \frac{\pi}{\beta} \right) + \epsilon^2 \int_\Omega (\Delta \psi)^2.
 \end{align*}
 Rearranging and dividing by $\epsilon$ we obtain 
 \begin{equation*}
 - \mu(B_r(x_0)) + \frac{\pi}{\beta} \leq \epsilon \int_\Omega (\Delta \psi)^2
 \end{equation*}
 Letting first $\epsilon \rightarrow 0 $ and then $r \rightarrow 0 $ we find 
 \begin{equation*}
 \frac{1}{\beta} \leq \mu(\{x_0\}) = 0,
 \end{equation*}
 where we used in the last step that by Remark \ref{rem:atoom} $x_0$ is not an atom of $\mu$. Finally, we obtain a contradiction. 
\end{proof}

\begin{lemma}[Hessian on Singular Nodal Set - III] \label{lem:zerohaus}
Let $u\in \A(u_0)$ be a minimizer. Then $\{ u = \nabla u = 0 \}$ does not contain any $1-$Lebesgue points of $D^2u$. In particular $\{u = \nabla u = 0 \}$ is of zero Hausdorff dimension and each $x_0 \in \{ u = \nabla u = 0 \} $ satisfies $(\Delta u)^*(x_0) = \infty$. 
\end{lemma}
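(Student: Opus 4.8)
The plan is to argue by contradiction, reducing the statement to the single claim that \emph{no $1$-Lebesgue point of $D^2u$ can lie in $\{u = \nabla u = 0\}$}, and then to deduce this claim from Lemma \ref{lem:Hesssing} together with the minimum principle for superharmonic functions.

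I would first record how the two ``in particular'' assertions follow from that claim. By Corollary \ref{cor:guterrand} one has $u \geq \frac{\delta}{2} > 0$ on a neighbourhood of $\partial\Omega$, hence $\{u = \nabla u = 0\} \subset \Omega_{\epsilon_0}^C$; granting the claim, this set then consists entirely of non-$1$-Lebesgue points of $D^2u$. Corollary \ref{cor:minregu} says the set of such points has Hausdorff dimension $0$, so the same holds for $\{u = \nabla u = 0\}$, and Remark \ref{rem:atoom} says every non-$1$-Lebesgue point of $D^2u$ satisfies $(\Delta u)^* = \infty$, which is the remaining assertion.

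For the claim, suppose that some $x_0 \in \{u = \nabla u = 0\}$ is a $1$-Lebesgue point of $D^2u$; as noted, $x_0 \in \Omega_{\epsilon_0}^C$, so Lemma \ref{lem:Hesssing} gives $(D^2u)^*(x_0) = 0$ and therefore $(\Delta u)^*(x_0) = \mathrm{tr}\big((D^2u)^*(x_0)\big) = 0$. Now by Corollary \ref{lem:corsubham} and Corollary \ref{cor:suubhaam} the function $(\Delta u)^*$ is nonnegative and superharmonic on the connected open set $\Omega$, so it attains its infimum $0$ at the interior point $x_0$. The strong minimum principle for superharmonic functions (see \cite{Armitage}) then forces $(\Delta u)^* \equiv 0$ on $\Omega$, hence $\Delta u = 0$ almost everywhere, so that $u$ is weakly and thus, by Weyl's lemma, classically harmonic in $\Omega$. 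The maximum principle and $u_{\mid \partial \Omega} = u_0 \geq \delta > 0$ then give $u \geq \delta > 0$ on $\Omega$, contradicting $u(x_0) = 0$.

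The substantive analytic work has already been carried out in Lemma \ref{lem:Hesssing} and in the dichotomy of the preceding lemma (``$(D^2u)^*(x_0)$ is $0$ or positive definite''), so the proof above is short. The one step I would be careful about is the use of the minimum principle: $(\Delta u)^*$ is only lower semicontinuous and may equal $+\infty$, so one must invoke the principle in the form valid for general superharmonic functions rather than the elementary version for continuous harmonic functions. I expect this to be the only delicate point.
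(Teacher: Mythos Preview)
Your proof is correct and follows essentially the same route as the paper's: both argue by contradiction, apply Lemma \ref{lem:Hesssing} to get $(\Delta u)^*(x_0)=0$, and then use the strong minimum principle for the nonnegative superharmonic function $(\Delta u)^*$ (Corollary \ref{cor:suubhaam}) to force $u$ harmonic and hence strictly positive, contradicting $u(x_0)=0$. Your explicit remark that $x_0\in\Omega_{\epsilon_0}^C$ and that the minimum principle must be invoked in its form for general (lower semicontinuous, $[0,\infty]$-valued) superharmonic functions is a useful clarification that the paper leaves implicit.
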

\begin{proof}
Assume that $\{ u = \nabla u = 0 \}$ contains a Lebesgue point $x_0$ of $D^2 u$. Then, according to the previous Lemma, $(D^2u)^*(x_0) = 0$. This implies in particular that $(\Delta u)^*(x_0) = 0 $. Now note that by Corollary \ref{cor:suubhaam} $(\Delta u)^*$ is a nonnegative superharmonic function. Nonnegativity of $(\Delta u)^*$  implies that $x_0$ is a point where $(\Delta u)^*$ attains its global minimum in $\Omega$, namely zero. By the strong maximum principle it follows that $(\Delta u)^* \equiv 0 $, which would however imply that $u$ is harmonic and hence positive since its boundary data $(u_0)_{\mid_{\partial \Omega}}$ are strictly positive. Thus $\{ u = \nabla u = 0 \}= \emptyset$, contradicting the existence of $x_0$. The  first sentence of the statement follows.  The second sentence of the statement follows immediately from Corollary \ref{cor:minregu} and  Remark \ref{rem:atoom}. 
\end{proof}

 \begin{lemma}[Singular Nodal Points are Isolated]
 Suppose that $x_0 \in \{ u = \nabla u =  0 \}$. Then there exists $r > 0 $ such that $u$ is convex and nonnegative on $B_r(x_0)$. Moreover, $B_r(x_0) \cap \{ u = 0 \} = \{ x_0 \} $.   
 \end{lemma}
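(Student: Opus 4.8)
The plan is to localize near $x_0$ using the quadratic growth already established and the semiconvexity of $u$, then upgrade to genuine convexity by a maximum-principle argument on $(\Delta u)^*$. First I would recall from Lemma \ref{lem:zerohaus} (and the Lemmas preceding it) that $x_0$ is not a $1$-Lebesgue point of $D^2u$, that $x_0$ is not an atom of $\mu$ cannot hold, so in fact $x_0$ \emph{is} a non-$1$-Lebesgue point with $(\Delta u)^*(x_0)=\infty$, and that $x_0$ is an atom of $\mu$ by Remark \ref{rem:atoom}. Since $\mu$ is a finite Radon measure, there is $r_1>0$ with $\mu(B_{r_1}(x_0)\setminus\{x_0\})$ as small as we like; shrinking further we may assume $B_{r_1}(x_0)\subset\Omega_{\epsilon_0}^C$.

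Next I would prove convexity on a small ball. By Lemma \ref{lem:semicon}, at every $1$-Lebesgue point $x$ of $D^2u$ in $\Omega_{\epsilon_0}^C$ we have $(D^2u)^*(x)+AI\ge 0$. The sharper input is Corollary \ref{lem:corsubham}: $(\Delta u)^*\ge 0$ everywhere, and $(\Delta u)^*$ is superharmonic by Corollary \ref{cor:suubhaam}. I want to show $(D^2u)^*(x)\ge 0$ for a.e.\ $x$ near $x_0$, which together with the mollification argument at the end of the proof of Lemma \ref{lem:semicon} gives convexity of $u$ on $B_r(x_0)$. For this, using the representation \eqref{eq:347} and splitting $\mu=\mu(\{x_0\})\delta_{x_0}+\widetilde\mu$, the atomic part contributes $-\tfrac{1}{2}\mu(\{x_0\})D^2F(x,x_0)$; from \eqref{eq:seconder} and the formula for $\partial^2_{x_1x_2}F$ one checks that the matrix $-D^2F(x,x_0)=-\tfrac{1}{8\pi}\big((1+2\log|x-x_0|)I + 2\,\widehat v\widehat v^T\big)$ where $\widehat v$ is the unit vector in direction $x-x_0$; for $x$ close to $x_0$, $\log|x-x_0|\to-\infty$, so $-(1+2\log|x-x_0|)\to+\infty$ dominates the bounded rank-one piece, and this atomic contribution is positive definite with smallest eigenvalue $\gtrsim -\tfrac{1}{4\pi}\mu(\{x_0\})\log|x-x_0|\to+\infty$. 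Meanwhile the $\widetilde\mu$-part and $D^2h$ are controlled exactly as in Lemma \ref{lem:semicon} by $\tfrac{3}{16\pi}\widetilde\mu(B_{r_1}(x_0))+\tfrac{3}{16\pi}\widetilde\mu(\Omega)+2\|D^2h\|_\infty$, which is bounded. Hence there is $r\in(0,r_1)$ such that $(D^2u)^*(x)\ge 0$ at every $1$-Lebesgue point $x\in B_r(x_0)$, and mollifying as in Lemma \ref{lem:semicon} yields that $u$ is convex on $B_r(x_0)$.

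Then nonnegativity and the last claim follow. Since $u$ is convex on $B_r(x_0)$, $C^1$ there (Remark \ref{rem:ceins}), with $u(x_0)=0$ and $\nabla u(x_0)=0$, the supporting hyperplane inequality gives $u(x)\ge u(x_0)+\nabla u(x_0)\cdot(x-x_0)=0$ for all $x\in B_r(x_0)$, so $u\ge 0$ on $B_r(x_0)$. Finally, to see $B_r(x_0)\cap\{u=0\}=\{x_0\}$: on $B_r(x_0)$, $u$ is nonnegative and subharmonic (Corollary \ref{lem:corsubham}), hence by the strong maximum principle for subharmonic functions it cannot attain its minimum value $0$ at an interior point unless it is constant; but if $u\equiv 0$ on some ball around another zero, then $\Delta u=0$ there, contradicting $(\Delta u)^*(x_0)=\infty$ (or, more simply, contradicting that $x_0$ is an atom of $\mu$ whose support would then have to avoid a neighborhood). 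More cleanly: if $z\in B_r(x_0)$, $z\ne x_0$, $u(z)=0$, then $z$ is also an interior minimum of the subharmonic function $u$ on a small ball around $z$ inside $B_r(x_0)$, forcing $u\equiv 0$ there; this forces $\mu$ to vanish on that ball, hence $\supp\mu$ misses it — fine so far — but then on that ball $u$ is biharmonic and $\equiv 0$; propagating via the open-ness of $\{u\equiv 0\}$-components and connectedness shows $u\equiv 0$ on the connected component of $\{u=0\}^{\circ}$ containing it, and in particular near $x_0$, again contradicting $(\Delta u)^*(x_0)=\infty$.

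The main obstacle I expect is making the maximum-principle step in the last paragraph airtight: the strong maximum principle for merely $W^{2,2}\cap C^1$ subharmonic functions on the ball requires care (one should invoke it in the potential-theoretic form, e.g.\ via \cite{Serrin} as used in Corollary \ref{lem:corsubham}, noting $u$ restricted to $B_r(x_0)$ is continuous and subharmonic), and one must correctly exploit that the atom of $\mu$ at $x_0$ forbids $u$ from vanishing on a neighborhood of $x_0$. The convexity step, by contrast, is a routine variant of Lemma \ref{lem:semicon} once the blow-up of the atomic term of $D^2F$ is observed.
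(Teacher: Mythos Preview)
There are two genuine gaps.

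\textbf{(1) The atom claim is unjustified.} Remark \ref{rem:atoom} says every atom of $\mu$ is a non-$1$-Lebesgue point of $D^2u$, not the converse; it is perfectly possible that $(\Delta u)^*(x_0)=\infty$ (i.e.\ the logarithmic potential of $\mu$ diverges at $x_0$) without $\mu(\{x_0\})>0$. Your convexity argument, however, hinges entirely on the atomic contribution $-\tfrac{1}{2}\mu(\{x_0\})D^2F(\cdot,x_0)$ blowing up; if $\mu(\{x_0\})=0$ you are left with only the semiconvexity bound of Lemma \ref{lem:semicon}, which does not give $(D^2u)^*\ge 0$. The paper avoids this by using directly what you mention but do not exploit: since $(\Delta u)^*$ is superharmonic (Corollary \ref{cor:suubhaam}) and equals $\infty$ at $x_0$, lower semicontinuity gives $(\Delta u)^*>5M$ on some $B_{\overline r}(x_0)$, where $M$ is the uniform bound on $|\partial^2_{x_1x_1}u-\partial^2_{x_2x_2}u|$ and $|\partial^2_{x_1x_2}u|$ from Corollary \ref{cor:lebgem}. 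The principal-minor criterion then forces $(D^2u)^*$ positive definite at every $1$-Lebesgue point in that ball, and your mollification step finishes the convexity.

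\textbf{(2) The maximum-principle step is wrong.} Subharmonic functions obey a \emph{maximum} principle, not a minimum principle: $u(x)=|x|^2$ is subharmonic, nonnegative, and attains its interior minimum $0$ at the origin without being constant. So from ``$u$ subharmonic and $u(z)=0$'' you cannot conclude $u\equiv 0$ near $z$, and the propagation argument that follows has no starting point. The paper's route is cleaner and uses only the convexity you have already established: if $u(x_1)=0$ for some $x_1\ne x_0$ in $B_r(x_0)$, then by convexity and nonnegativity $u\equiv 0$ on the whole segment $\overline{x_0x_1}$; every point of this segment is then a local minimum of $u$ (since $u\ge 0$ on $B_r(x_0)$), hence $\nabla u$ vanishes there, so $\overline{x_0x_1}\subset\{u=\nabla u=0\}$. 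This contradicts Lemma \ref{lem:zerohaus}, which says $\{u=\nabla u=0\}$ has Hausdorff dimension zero.
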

 \begin{proof} First we show convexity. As an intermediate step we show that there exists $r  >0 $ such that for each $1-$Lebesgue point $x$ of $D^2u $ in $B_r(x_0)$ the matrix $(D^2u)^*(x)$ is positive definite. Note that by Corollary \ref{cor:lebgem} there exists $M > 0 $ such that for each $1-$Lebesgue point $x$ of $D^2 u$  in $\Omega_{\epsilon_0}^C$ one has
 \begin{equation}\label{eq:difeigenval}
 | ( \partial^2_{x_1x_1}u )^* - ( \partial^2_{x_2x_2} u )^* | \leq M
 \end{equation}
 and 
 \begin{equation*}
 |( \partial^2_{x_1x_2} u )^*| \leq M 
 \end{equation*}
 As $(\Delta u )^*$ is subharmonic by Corollary \ref{cor:suubhaam}, \cite[Theorem 3.1.3]{Armitage} yields that $(\Delta u)^*(x_0) = \liminf_{x \rightarrow x_0} (\Delta u)^*(x)$, which equals infinity by the previous lemma. Hence one can find $\overline{r} > 0 $ such that $(\Delta u)^* > 5 M $ on $B_{\overline{r}}(x_0)$. If $x \in B_{\overline{r}}(x_0)$ is now a $1-$Lebesgue point of $D^2 u$ this implies that   $(\partial^2_{x_1x_1}u )^*(x) + ( \partial^2_{x_2x_2} u )^*(x)  \geq 5M$. Together with \eqref{eq:difeigenval} we obtain that $(\partial^2_{x_ix_i} u)^*(x) \geq 2M $ for all $i = 1,2$. Now we can show using the principal minor criterion that $(D^2u)^*(x)$ is positive definite. Indeed $(\partial^2_{x_1x_1} u)^*(x) \geq 2M > 0$ and $\det (D^2u)^*(x) = (\partial^2_{x_1x_1} u)^*(x)(\partial^2_{x_2x_2} u)^*(x) -(\partial^2_{x_1x_2} u)^*(x)^2 \geq 4M^2 - M^2 > 0 $. All in all, $(D^2u)^*$ is positive definite on $B_{\overline{r}}(x_0)$. We will show next that this implies convexity of $u$ on a smaller ball. For $\epsilon  \in (0, \frac{\overline{r}}{2})$ let $\phi_\epsilon$ be the standard mollifier with support in $B_\epsilon(0)$. Note  that  $D^2(u * \phi_\epsilon) = D^2u * \phi_\epsilon $ on $B_\frac{\overline{r}}{2}(x_0)$. As an easy computation shows,  $(D^2u * \phi_\epsilon)(x)$ is positive definite for each $x \in B_\frac{\overline{r}}{2}(x_0)$. Therefore $u* \phi_\epsilon $ is convex on $B_\frac{\overline{r}}{2}(x_0)$. Eventually, $u$ is convex on $B_\frac{\overline{r}}{2}(x_0)$ as uniform limit of convex functions. Choosing $r := \frac{\overline{r}}{2}$ implies the desired convexity. Convexity also implies that for each $x,y \in B_r(x_0)$ one has 
 \begin{equation*}
 u(x) - u(y) \geq \nabla u(y) \cdot (x-y) .
 \end{equation*}
 Plugging in $y = x_0$, we obtain $u(x) \geq 0 $ which shows the desired nonnegativity on $B_r(x_0)$. It remains to  show that $B_r(x_0) \cap \{ u = 0 \} = \{ x_0 \}$. Assume that there is a point $x_1 \in B_r(x_0)$ such that $u(x_1)=0$. By convexity and nonnegativity we obtain for each $\lambda \in (0,1)$ that 
 \begin{equation*}
 0 \leq u(\lambda x_1 + (1-\lambda) x_0 ) \leq \lambda u(x_1) + (1- \lambda) u(x_0) = 0 .
 \end{equation*}
Hence $u_{\mid_{\overline{x_0x_1}}}  \equiv 0 $, where $\overline{x_0x_1}$ denotes the line segment connecting $x_0$ and $x_1$. Now this line segment lies completely in $B_r(x_0)$ and because of the nonnegativity, each point in $\overline{x_0x_1}$ is a local minimum of $u$. This yields that $\nabla u $ vanishes on this line segment and hence $\overline{x_0x_1} \subset \{ u = \nabla u = 0 \}$. This contradicts  Lemma \ref{lem:zerohaus}, as $\{u = \nabla u = 0 \} $ must have zero Hausdorff dimension. The claim follows. 
 \end{proof}
 \begin{cor}(Emptyness of the Singular Nodal Set) \label{cor:nosing}
 Let $u \in \A(u_0) $ be a minimizer. Then $\{ u =  \nabla u = 0 \} = \emptyset $. 
 \end{cor}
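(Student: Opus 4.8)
The plan is to argue by contradiction with a direct competitor that simply removes the singularity. Suppose $x_0 \in \{u = \nabla u = 0\}$ and let $r > 0$ be given by the previous lemma; shrinking $r$ if necessary we may assume $\overline{B_r(x_0)} \subset \Omega$ (recall that $\{u = 0\}$ is compactly contained in $\Omega$ because $(u_0)_{\mid \partial \Omega} \geq \delta > 0$). Thus $u \geq 0$ on $B_r(x_0)$ and $\{u = 0\} \cap B_r(x_0) = \{x_0\}$. Let $\tilde u$ be the unique minimizer of $v \mapsto \int_{B_r(x_0)} (\Delta v)^2 \dx$ over $\{ v \in W^{2,2}(B_r(x_0)) : v - u \in W_0^{2,2}(B_r(x_0))\}$, which is well posed since $\|\Delta \cdot\|_{L^2(B_r(x_0))}$ is an equivalent norm on $W_0^{2,2}(B_r(x_0))$, making the functional strictly convex and coercive. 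Extend $\tilde u$ by $u$ on $\Omega \setminus B_r(x_0)$; as $\tilde u$ differs from $u$ only by (the zero-extension of) an element of $W_0^{2,2}(B_r(x_0))$ supported in $\overline{B_r(x_0)} \subset \Omega$, we have $\tilde u \in \A(u_0)$.

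I would first compare the Dirichlet terms. Since $\tilde u = u$ off $B_r(x_0)$ and $\tilde u$ minimizes the Dirichlet integral over $B_r(x_0)$ in $u$'s affine class, $\int_\Omega (\Delta \tilde u)^2 \dx \leq \int_\Omega (\Delta u)^2 \dx$. If equality held, strict convexity would force $u = \tilde u$ on $B_r(x_0)$, so $u$ would satisfy $\int_{B_r(x_0)} \Delta u \, \Delta \phi \dx = 0$ for all $\phi \in C_0^\infty(B_r(x_0))$; then $\Delta u$ is weakly harmonic in $B_r(x_0)$, hence smooth by Weyl's lemma, whence $(\Delta u)^*(x_0) = \Delta u(x_0) < \infty$, contradicting $(\Delta u)^*(x_0) = \infty$ from Lemma \ref{lem:zerohaus}. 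Therefore $\int_\Omega (\Delta \tilde u)^2 \dx < \int_\Omega (\Delta u)^2 \dx$ strictly.

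Next I would check that the measure term does not grow. As the two functions agree outside $B_r(x_0)$, it suffices to compare inside. Because $u > 0$ on $B_r(x_0) \setminus \{x_0\}$ we have $|\{u > 0\} \cap B_r(x_0)| = |B_r(x_0)|$, while $|\{\tilde u > 0\} \cap B_r(x_0)| \leq |B_r(x_0)|$ trivially, so $|\{\tilde u > 0\}| \leq |\{u > 0\}|$. Combining this with the strict Dirichlet estimate yields $\E(\tilde u) < \E(u)$, contradicting minimality of $u$; hence $\{u = \nabla u = 0\} = \emptyset$.

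The one delicate point, and the only place the earlier work enters, is that near a putative singular nodal point the geometry is rigid enough to make this swap strictly profitable: the previous lemma gives a ball on which $u$ is nonnegative with a single isolated zero, so replacing $u$ by its biharmonic extension costs nothing in the measure term, and Lemma \ref{lem:zerohaus} gives $(\Delta u)^*(x_0) = \infty$, so $u$ is genuinely not biharmonic there and the Dirichlet energy strictly decreases. The rest is routine: admissibility of the competitor and the fact that $\|\Delta \cdot\|_{L^2}$ is an equivalent norm on $W_0^{2,2}$ of a ball, so that the auxiliary minimizer exists, is unique, and is characterized by $\Delta^2 \tilde u = 0$.
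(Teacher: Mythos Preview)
Your proof is correct and follows essentially the same approach as the paper's: both replace $u$ on a small ball around the putative singular point by the biharmonic extension with the same Dirichlet data $(u,\nabla u)$ on the sphere, observe that the measure term cannot increase because $u>0$ a.e.\ on that ball, and use $(\Delta u)^*(x_0)=\infty$ to rule out that $u$ itself is biharmonic there. The only cosmetic differences are that the paper works on $B_{r/2}(x_0)$ and invokes the classical existence theorem for smooth boundary data (exploiting that $u$ is biharmonic and hence smooth near $\partial B_{r/2}(x_0)$), whereas you stay on $B_r(x_0)$ and use the variational characterization directly; and the paper phrases the contradiction as ``equality forces $u=h$ hence $u$ smooth'' while you phrase it as ``strict Dirichlet decrease''.
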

 \begin{proof}
 Suppose that there exists some $x_0 \in \{ u = \nabla u = 0 \}$. Recall that then $(\Delta u)^*(x_0) = \infty$ by Lemma \ref{lem:zerohaus}. Also, by the previous Lemma, there exists $r > 0$ such that $\{u = 0 \} \cap B_r(x_0)= \{ x_0 \}$ and $u(x) > 0 $ for each $x \in B_r(x_0)\setminus \{ x_0 \}$. By possibly choosing a smaller radius $r$ we can achieve that $B_r(x_0) \subset \Omega_{\epsilon_0}^C$.  Now define $g_1 := u_{\mid_{ \partial B_\frac{r}{2}(x_0)}}$ and $g_2 := \nabla u_{\mid_{ \partial B_\frac{r}{2}(x_0)}}$. Note that $g_1,g_2 \in C^\infty( \partial B_\frac{r}{2}(x_0))$ by Lemma \ref{lem:biham}. By \cite[Theorem 2.19]{Sweers} one obtains that there exists a unique solution $h \in C^\infty (\overline{B_\frac{r}{2}(x_0)})$ such that 
 \begin{equation*}
 \begin{cases}
 \Delta^2 h = 0 & \textrm{in } B_\frac{r}{2}(x_0), \\ 
 h  = g_1 & \textrm{on } \partial B_\frac{r}{2}(x_0), \\
  \nabla h = g_2 & \textrm{in } \partial B_\frac{r}{2}(x_0). 
 \end{cases}
\end{equation*}
Moreover, as a standard variational argument shows, $h$ is uniquely determined by 
\begin{align*}
\int_{B_\frac{r}{2}(x_0) } (\Delta h)^2 \; \mathrm{d}x = \inf \Bigg\lbrace \int_{B_\frac{r}{2}(x_0) } (\Delta w)^2 \; \mathrm{d}x :  w \in & W^{2,2}(B_\frac{r}{2}(x_0) ) \\ & \; \textrm{s.t.} \; w_{\mid_{\partial B_\frac{r}{2}(x_0) }} \equiv g_1,  \nabla w_{\mid_{\partial B_\frac{r}{2}(x_0) }} \equiv g_2  \Bigg\rbrace, 
\end{align*}  
where '$\equiv$' here means equality in the trace sense. In particular one has 
\begin{equation}\label{eq:4.25}
\int_{B_\frac{r}{2}(x_0) } (\Delta h)^2 \; \mathrm{d}x \leq \int_{B_\frac{r}{2}(x_0) } (\Delta u)^2 \; \mathrm{d}x
\end{equation}
and equality holds if and only if $h \equiv u$, by strict convexity of the energy. Now define 
\begin{equation*}
\widetilde{u}(x) := \begin{cases} u(x) & x \not \in B_\frac{r}{2}(x_0), \\ h(x) & x \in B_\frac{r}{2}(x_0). \end{cases}
\end{equation*}
Since $\widetilde{u}$ has the right regularity and the same boundary data as $u$ one obtains that $\widetilde{u} \in \A(u_0)$. Therefore one can compute with \eqref{eq:4.25}
 \begin{align*}
\E(u) & \leq \E(\widetilde{u}) = \int_{\Omega \setminus B_\frac{r}{2}(x_0) } (\Delta u )^2 \; \mathrm{d}x + \int_{B_\frac{r}{2}(x_0) } (\Delta h)^2 \; \mathrm{d}x \\ & \quad \quad \quad \quad \quad \quad + |\{ u >0  \} \cap \Omega \setminus B_\frac{r}{2}(x_0) | + | \{ h > 0 \} \cap B_\frac{r}{2}(x_0) | 
\\ & \leq  \int_{\Omega \setminus B_\frac{r}{2}(x_0) } (\Delta u )^2 \; \mathrm{d}x + \int_{B_\frac{r}{2}(x_0) } (\Delta u)^2  \; \mathrm{d}x + |\{ u >0  \} \cap \Omega \setminus B_\frac{r}{2}(x_0) | + |B_\frac{r}{2}(x_0) |.
\end{align*}
Now note that $|B_\frac{r}{2}(x_0) | = |B_\frac{r}{2}(x_0) \setminus \{ x_0 \} | = |\{u >0 \} \cap B_\frac{r}{2}(x_0) | $, as we explained in the beginning of the proof. Therefore we obtain
\begin{equation*}
\E(u) \leq \E(\widetilde{u}) \leq \int_\Omega (\Delta u)^2  \; \mathrm{d}x + |\{ u >0 \}| = \E(u). 
\end{equation*}
This means in particular that all estimates used on the way have to hold with equality. Since we used estimate \eqref{eq:4.25}, equality holds in \eqref{eq:4.25} and from this we can infer (see discussion below \eqref{eq:4.25}) that $h = u$. In particular $u \in C^\infty (\overline{B_\frac{r}{2}(x_0)})$. This however is a contradiction to $(\Delta u)^*(x_0) = \infty$ and the claim follows. 
 \end{proof}
\section{Nodal Set and Biharmonic Measure} 
  In this section we are finally able to understand the regularity of the free boundary $\{ u = 0 \}$ and - as a byproduct - the measure $\mu$ of \eqref{eq:bihammeas}. The fact that $\nabla u $ does not vanish on $\{u = 0 \}$ and $u \in C^1(\Omega)$ makes $\{u = 0\}$ already a $C^1$-manifold. By deriving \eqref{eq:bihame} for $u$, we can give a rigorous version of the formal statement \eqref{eq:formmeas}. Afterwards we use this equation to obtain $C^2$ for $u$ and as a result the same additional regularity for $\{ u = 0 \}$. 
 \begin{lemma}[The Measure-Theoretic Boundary]\label{lem:measthebou}
 Let $u \in \A(u_0)$ be a minimizer. Then 
 \begin{equation}\label{eq:438}
 \partial^* \{ u > 0 \} = \{ u = 0 \} ,
 \end{equation}
 \end{lemma}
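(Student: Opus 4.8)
The plan is to prove the equality \eqref{eq:438} by establishing the two inclusions separately, using nothing more than continuity of $u$ for one direction, and the emptiness of the singular nodal set (Corollary \ref{cor:nosing}) together with $u \in C^1(\Omega)$ (Remark \ref{rem:ceins}) for the other. Note that throughout we may work on $\Omega_{\epsilon_0}^C$, since $\{u=0\}$ is compactly contained there.

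\textbf{The inclusion $\partial^*\{u>0\}\subseteq\{u=0\}$.} I would argue by contradiction. If $x_0\in\partial^*\{u>0\}$ but $u(x_0)\neq 0$, then by continuity of $u$ there is $r>0$ such that $u$ has a fixed sign on $B_r(x_0)$. If $u(x_0)>0$ then $B_r(x_0)\subseteq\{u>0\}$, hence $|\{u>0\}^c\cap B_\rho(x_0)|=0$ for all $\rho<r$ and so $\overline{\theta}(\{u>0\}^c,x_0)=0$; if $u(x_0)<0$ then $B_r(x_0)\cap\{u>0\}=\emptyset$, hence $\overline{\theta}(\{u>0\},x_0)=0$. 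In both cases $x_0\notin\partial^*\{u>0\}$, a contradiction. So $u(x_0)=0$.

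\textbf{The inclusion $\{u=0\}\subseteq\partial^*\{u>0\}$.} This is the substantive direction. Fix $x_0\in\{u=0\}$. By Corollary \ref{cor:nosing} we have $\nabla u(x_0)\neq 0$, and since $u\in C^1$ near $x_0$, after a rotation of coordinates I may assume $\partial_{x_2}u(x_0)=|\nabla u(x_0)|>0$, so that $\partial_{x_2}u>0$ on some $B_r(x_0)\subset\Omega$ by continuity. On $B_r(x_0)$ the function $u$ is then strictly increasing in the $x_2$-direction, so after shrinking $r$ the implicit function theorem gives a $C^1$ function $g$ with $\{u=0\}\cap B_r(x_0)=\{x_2=g(x_1)\}\cap B_r(x_0)$, and consequently $\{u>0\}\cap B_r(x_0)=\{x_2>g(x_1)\}\cap B_r(x_0)$ and $\{u<0\}\cap B_r(x_0)=\{x_2<g(x_1)\}\cap B_r(x_0)$. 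Since $g$ is $C^1$ with $g(x_{0,1})=x_{0,2}$ and $g'(x_{0,1})=0$, a blow-up at $x_0$ (rescale $B_\rho(x_0)$ to unit size and let $\rho\to 0$) shows that $\{u>0\}$ and $\{u<0\}$ converge in $L^1_{loc}$ to complementary half-planes through the origin, whence $\overline{\theta}(\{u>0\},x_0)=\overline{\theta}(\{u<0\},x_0)=\tfrac12$. As $\{u<0\}\subset\{u>0\}^c$, both densities in the definition of $\partial^*\{u>0\}$ are strictly positive, so $x_0\in\partial^*\{u>0\}$. Combining the two inclusions yields \eqref{eq:438}.

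I expect the only mildly technical obstacle to be making the density statement precise, i.e. verifying that the Lebesgue density of a $C^1$ graph at a boundary point equals $\tfrac12$; this is routine and can be carried out either by the rescaling argument just sketched or by sandwiching $\{u>0\}\cap B_\rho(x_0)$ between two spherical caps whose relative measures tend to $\tfrac12$ as $\rho\to 0$, using that $g'$ is continuous and vanishes at $x_{0,1}$. All of the real content of the lemma is already contained in Corollary \ref{cor:nosing} and the $C^1$-regularity of $u$.
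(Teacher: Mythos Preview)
Your proof is correct and follows essentially the same strategy as the paper: the easy inclusion via continuity of $u$, and the substantive inclusion via Corollary~\ref{cor:nosing} together with a first-order blow-up. The only cosmetic difference is that the paper blows up $u$ directly, using that $\tfrac{u(x_0+rx)}{r}\to\nabla u(x_0)\cdot x$ uniformly on $B_1(0)$ to read off the half-space densities, whereas you pass through the implicit function theorem first and then blow up the graph; both routes encode the same linearization and yield the same $\tfrac12$ densities.
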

 \begin{proof}
 For the '$\supset$' inclusion in \eqref{eq:438} note that $x_0 \in \{ u = 0 \}$ implies by Corollary \ref{cor:nosing} that  $\nabla u(x_0) \neq 0 $. Moreover one has
 \begin{align*}
 \frac{|\{u > 0\} \cap B_r(x_0) | }{|B_r(x_0)|} & = \frac{1}{|B_1(0)| r^2}\left\vert  \left\lbrace rx : x \in B_1(0) \; \textrm{s.t} \;  u(x_0 + rx) > 0 \right\rbrace \right\vert 
 \\ & =  \frac{1}{|B_1(0)|}\left\vert  \left\lbrace x \in B_1(0) :   u(x_0 + rx) > 0 \right\rbrace \right\vert
 \\ & =  \frac{1}{|B_1(0)|}\left\vert  \left\lbrace x \in B_1(0) :   \frac{u(x_0 + rx)}{r}  > 0 \right\rbrace \right\vert.
 \end{align*}
Since the expression in the measure term converges uniformly in $r$ to $\nabla u(x_0) \cdot x $ we get by Fatou's lemma   
 \begin{align*}
 \overline{\theta}(\{u > 0 \} ,x_0) & = \limsup_{r\rightarrow 0 }  \frac{|\{u > 0\} \cap B_r(x_0) | }{|B_r(x_0)|} \\ &  \geq \frac{1}{|B_1(0)|}\left\vert  \left\lbrace x \in B_1(0) :  \nabla u(x_0) \cdot x  > 0 \right\rbrace \right\vert = \frac{1}{2},
 \end{align*}
 as $\{ x : \nabla u(x_0) \cdot x > 0 \}$ defines a half plane through the origin. 
 Similarly one shows $\overline{\theta}(\{ u \leq 0 \},x_0)  \geq \frac{1}{2} > 0 $ and hence the inclusion is shown. 
For the remaining inclusion take $x_0 \in \partial^* \{ u > 0 \} $. If $u(x_0) > 0$ then there exists $r_0 > 0 $ such that $ u > 0$ on $B_{r_0} (x_0)$ and this implies by definition of $\overline{\theta}$ that $\overline{\theta} (\{u \leq  0 \} ,x_0 ) = 0$.  Similarly one shows that $u(x_0)< 0 $ implies that $\theta ( \{ u > 0 \} , x_0 ) = 0 $. Hence $u(x_0) = 0 $ and the claim follows.
 \end{proof}
 We will now characterize the measure found in \eqref{eq:bihammeas} using an inner variation technique that has led to rich insights in \cite{Valdinoci}.
 \begin{lemma}[Noether Equation] \label{lem:noether}
 Let $u \in \A(u_0)$ be a minimizer. Then 
 \begin{equation}\label{eq:finper}
 \int_{\Omega_{\epsilon_0}^C}  \chi_{\{u> 0 \}} \mathrm{div}(\phi)\dx  = -  \int_{\Omega_{\epsilon_0}^C} \nabla u \cdot \phi \; \mathrm{d}\mu  \quad \forall \phi \in C_0^\infty(\Omega_{\epsilon_0}^C, \mathbb{R}^2),
 \end{equation}
 where $\mu$ is the biharmonic measure from \eqref{eq:bihammeas}.
 \end{lemma}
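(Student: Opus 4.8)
The plan is to use an inner (domain) variation of the energy. Given $\phi \in C_0^\infty(\Omega_{\epsilon_0}^C,\mathbb{R}^2)$, first fix $t_0>0$ so small that $\Psi_t(x):=x+t\phi(x)$ is a $C^\infty$-diffeomorphism of $\mathbb{R}^2$ for $|t|<t_0$; this is possible since $D\Psi_t=I+tD\phi$ is invertible and $\Psi_t$ is the identity off $\mathrm{supp}\,\phi$, and $\Psi_t$ then restricts to a diffeomorphism of $\Omega$ equal to the identity near $\partial\Omega$. Set $u_t:=u\circ\Psi_t^{-1}$. Since composition with $\Psi_t^{-1}$ preserves $W^{2,2}(\Omega)$ and $u_t=u$ outside a compact subset of $\Omega_{\epsilon_0}^C$, one gets $u_t\in\A(u_0)$, so minimality forces $\E(u_t)\ge\E(u)$ for all $|t|<t_0$. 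It therefore suffices to prove that $t\mapsto\E(u_t)$ is differentiable at $0$ and that $\frac{d}{dt}\big|_{t=0}\E(u_t)=0$ unwinds to \eqref{eq:finper}.

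For the adhesion term the change of variables $y=\Psi_t(x)$ gives $|\{u_t>0\}|=\int_{\{u>0\}}\det D\Psi_t\dx$, and $\det D\Psi_t=1+t\,\divi\phi+t^2\det D\phi$ is a polynomial in $t$; hence its derivative at $t=0$ equals $\int_{\{u>0\}}\divi\phi\dx=\int_{\Omega_{\epsilon_0}^C}\chi_{\{u>0\}}\divi\phi\dx$, the left-hand side of \eqref{eq:finper}. For the bending term I would change variables again, using the transformation rule $(\Delta u_t)\circ\Psi_t=\frac{1}{\sqrt{\det g_t}}\,\partial_i\big(\sqrt{\det g_t}\,g_t^{ij}\partial_j u\big)$ for the pulled-back metric $g_t=(D\Psi_t)^TD\Psi_t$, to obtain
\begin{equation*}
\int_\Omega(\Delta u_t)^2\dx=\int_\Omega m_t\Big(\partial_i\big(a_t^{ij}\partial_j u\big)\Big)^2\dx,\qquad a_t^{ij}:=\sqrt{\det g_t}\,g_t^{ij},\quad m_t:=\frac{1}{\sqrt{\det g_t}},
\end{equation*}
where $t\mapsto a_t^{ij}$ and $t\mapsto m_t$ are smooth curves in $C^\infty(\mathbb{R}^2)$ with $a_0^{ij}=\delta_{ij}$, $m_0=1$, and both $a_t^{ij}-\delta_{ij}$ and $m_t-1$ supported in $\mathrm{supp}\,\phi$. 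As $\partial_j u,\partial_{ij}u\in L^2$, the right-hand side is a smooth function of $t$ and may be differentiated under the integral sign.

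Differentiating at $t=0$, with $\frac{\partial}{\partial t}\big|_{t=0}m_t=-\divi\phi$, $\frac{\partial}{\partial t}\big|_{t=0}a_t^{ij}=(\divi\phi)\delta_{ij}-\partial_i\phi^j-\partial_j\phi^i$ and the ensuing identity $\partial_i\big(\frac{\partial}{\partial t}\big|_{t=0}a_t^{ij}\big)=-\Delta\phi^j$, a short computation gives
\begin{equation*}
\frac{d}{dt}\Big|_{t=0}\int_\Omega(\Delta u_t)^2\dx=\int_\Omega(\Delta u)^2\,\divi\phi\dx-4\int_\Omega\Delta u\,\partial_i\phi^j\,\partial_{ij}u\dx-2\int_\Omega\Delta u\,\partial_j u\,\Delta\phi^j\dx,
\end{equation*}
all integrals being absolutely convergent because $\Delta u,\nabla u,D^2u\in L^2$. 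It remains to identify this as a $\mu$-integral. Put $\psi:=\nabla u\cdot\phi$. Since $u\in C^1(\Omega)$ and, by Corollary~\ref{cor:minregu}, $u\in W^{3,2-\beta}(\Omega_{\epsilon_0}^C)$ for every $\beta>0$, while $\psi$ has compact support in $\Omega_{\epsilon_0}^C$, one has $\psi\in W_0^{2,p}(\Omega_{\epsilon_0}^C)$ with $p:=2-\beta\in(1,2)$; expanding $\Delta\psi=\partial_j(\Delta u)\phi^j+2\partial_{ij}u\,\partial_i\phi^j+\partial_j u\,\Delta\phi^j$ by the product rule and integrating the first summand by parts — legitimate because $(\Delta u)^2\in W_{loc}^{1,s}$ for some $s>1$, as $\Delta u\in BMO_{loc}$ and $\nabla\Delta u\in L_{loc}^{2-\beta}$ — shows that the displayed derivative equals $-2\int_\Omega\Delta u\,\Delta\psi\dx$. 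By Lemma~\ref{lem:bihmmeasmore}, i.e.\ the extension of \eqref{eq:bihammeas} to test functions in $W_0^{2,p}(\Omega_{\epsilon_0}^C)$, this equals $\int_\Omega\psi\,\mathrm{d}\mu=\int_{\Omega_{\epsilon_0}^C}(\nabla u\cdot\phi)\,\mathrm{d}\mu$. Combining with the adhesion term and $\frac{d}{dt}\big|_{t=0}\E(u_t)=0$ yields \eqref{eq:finper}.

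The crux — and the reason this is genuinely harder than the second-order Alt--Caffarelli case — is that the natural test function $\psi=\nabla u\cdot\phi$ lies only in $W^{2,p}$ with $p<2$, one derivative short of the energy space $W^{2,2}$, so it cannot be fed into \eqref{eq:bihammeas} directly. Making the last step rigorous hinges on the refined regularity $u\in W^{3,2-\beta}(\Omega_{\epsilon_0}^C)$ of Corollary~\ref{cor:minregu} together with the $W_0^{2,p}(\Omega_{\epsilon_0}^C)$-version of the Euler--Lagrange equation from Lemma~\ref{lem:bihmmeasmore}; the same low regularity is what must be tracked when justifying the differentiation under the integral sign in the bending term and the integration by parts in the expansion of $\Delta\psi$.
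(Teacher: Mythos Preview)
Your proof is correct and follows essentially the same route as the paper. The only difference is cosmetic: the paper quotes the inner-variation identity \eqref{eq:515} directly from \cite[Lemma 4.3]{Valdinoci}, whereas you rederive it from scratch via the pulled-back metric $g_t=(D\Psi_t)^TD\Psi_t$; your displayed formula for $\tfrac{d}{dt}\big|_{t=0}\int(\Delta u_t)^2$ is, after one integration by parts, exactly the first term in \eqref{eq:515}. The decisive step---recognising that $\psi=\nabla u\cdot\phi$ lies only in $W_0^{2,p}(\Omega_{\epsilon_0}^C)$ for $p<2$ and then invoking Corollary~\ref{cor:minregu} together with the $W_0^{2,p}$-extension of \eqref{eq:bihammeas} in Lemma~\ref{lem:bihmmeasmore}---is identical in both arguments, as is the handling of the $(\Delta u)^2$-divergence term.
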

 \begin{proof}
To compactify notation, we will leave out the `$\cdot$' to indicate the dot product for this proof.
 From \cite[Lemma 4.3]{Valdinoci} follows that for each $\phi \in C_0^\infty(\Omega; \mathbb{R}^2)$ one has
  \begin{equation}\label{eq:515}
2  \int_\Omega \Delta u  \sum_{m = 1}^{2} \left(  2 \nabla (\partial_m u) \cdot \nabla \phi^m  + \partial_m u\Delta \phi^m \right) \dx - \int_\Omega ( ( \Delta u)^2 + \chi_{\{ u > 0 \} } ) \mathrm{div}(\phi) \dx = 0  .
 \end{equation}
 Fix  $\phi \in C_0^\infty(\Omega_{\epsilon_0}^C; \mathbb{R}^2)$. Then there is $\beta \in (0,1)$ such that $\nabla u\cdot \phi \in W_0^{2,2-\beta}(\Omega_{\epsilon_0}),$ by Corollary \ref{cor:minregu}. Observe that $\nabla u \cdot \phi$ is a valid test function for \eqref{eq:bihammeas} (cf. Lemma \ref{lem:bihmmeasmore}).   Starting from \eqref{eq:515} Corollary \ref{cor:minregu} we can  use \eqref{eq:bihammeas} to find
 \begin{align*}
 \int_{\Omega_{\epsilon_0}^C}  \chi_{ \{u > 0 \}}&  \mathrm{div}(\phi) \dx   =2  \int_{\Omega_{\epsilon_0}^C} \Delta u  \sum_{m = 1}^{2} \left(  2 \nabla (\partial_m u) \nabla \phi^m  + \partial_m u\Delta \phi^m \right) \dx \\ & \quad \quad \quad  \quad \quad \quad \quad \quad \quad \quad \quad \quad \quad \quad \quad \quad \quad \quad \quad \quad \quad - \int_{\Omega_{\epsilon_0}^C}  ( \Delta u)^2  \mathrm{div}(\phi) \dx
 \\ & =  2  \int_{\Omega_{\epsilon_0}^C} \Delta u  \sum_{m = 1}^{2} \left( 
 \Delta (\partial_m u \phi^m - \phi^m  \Delta (\partial_m u) \right) \dx -  \int_{\Omega_{\epsilon_0}^C}   (\Delta u )^2 \mathrm{div}(\phi) \dx  
 \\ & =2  \int_{\Omega_{\epsilon_0}^C} \Delta u \Delta ( \nabla u \phi) \dx - \int_{\Omega_{\epsilon_0}^C} ( \phi \nabla (\Delta u)^2 + (\Delta u)^2 \mathrm{div}(\phi) \dx
 \\ & = - \int_{\Omega_{\epsilon_0}^C} \nabla u \phi \; \mathrm{d}\mu - \int_{\Omega_{\epsilon_0}^C} \mathrm{div} ( (\Delta u)^2 \phi ) \dx  . 
 \end{align*}
The second integral vanishes by the Gauss divergence theorem and the claim follows. 
% \begin{equation*}
%  \int_{\Omega_{\epsilon_0}^C }  \chi_{\{ u > 0 \}} \mathrm{div}(\phi) \dx = - \int_{\Omega_{\epsilon_0}^C} \nabla u \phi  \; \mathrm{d}\mu.  \qedhere
% \end{equation*}
 \end{proof}
\begin{cor}\label{cor:finhau}
Let $u \in \A(u_0) $ be a minimizer. Then $\{ u > 0 \} $ has finite perimeter in $\Omega$ and $\mathcal{H}^1(\{ u = 0  \} ) < \infty.$
\end{cor}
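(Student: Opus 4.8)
The plan is to read the finite‑perimeter property of $\{u>0\}$ directly off the Noether equation \eqref{eq:finper}, and then to convert the resulting bound on the perimeter into a bound on $\mathcal{H}^1(\{u=0\})$ via Lemma \ref{lem:measthebou} and the fine structure theory of sets of finite perimeter. Morally, \eqref{eq:finper} asserts that on $\Omega_{\epsilon_0}^C$ the distributional gradient $D\chi_{\{u>0\}}$ equals the \emph{finite} vector measure $(\nabla u)\,\mu$, and since $\chi_{\{u>0\}}\equiv 1$ on the neighbourhood $\Omega_{\epsilon_0}$ of $\partial\Omega$, nothing happens there; the only point requiring care — and really the only (minor) obstacle — is to make this localization rigorous, since \eqref{eq:finper} is available only on $\Omega_{\epsilon_0}^C$.

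To that end I would first note that $\{u\le 0\}$ is a compact subset of $\Omega$ lying inside $\Omega_{\epsilon_0}^C$: indeed $u\ge\delta/2>0$ on $\Omega_{\epsilon_0}$ by Corollary \ref{cor:guterrand} and $u_{\mid\partial\Omega}\ge\delta>0$, so the relatively closed set $\{u\le 0\}$ is bounded and stays at positive distance from $\partial\Omega$. Fix once and for all a cut‑off $\eta\in C_0^\infty(\Omega_{\epsilon_0}^C)$ with $0\le\eta\le 1$ and $\eta\equiv 1$ on an open neighbourhood of $\{u\le 0\}$, and set $K:=\sup_{\supp\eta}|\nabla u|$, which is finite because $u\in C^1(\Omega)$ by Remark \ref{rem:ceins}. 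Given an arbitrary $\phi\in C_0^\infty(\Omega,\mathbb{R}^2)$ with $|\phi|\le 1$, write $\mathrm{div}(\phi)=\mathrm{div}(\eta\phi)+\mathrm{div}((1-\eta)\phi)$. The field $(1-\eta)\phi$ is smooth, compactly supported, and vanishes on a neighbourhood of $\{u\le 0\}$, so its support lies in the open set $\{u>0\}$; hence $\int_\Omega\chi_{\{u>0\}}\,\mathrm{div}((1-\eta)\phi)\dx=\int_{\{u>0\}}\mathrm{div}((1-\eta)\phi)\dx=0$ by the divergence theorem. On the other hand $\eta\phi\in C_0^\infty(\Omega_{\epsilon_0}^C,\mathbb{R}^2)$, so \eqref{eq:finper} applies and gives
\[
\left|\int_\Omega\chi_{\{u>0\}}\,\mathrm{div}(\phi)\dx\right|=\left|\int_{\Omega_{\epsilon_0}^C}\nabla u\cdot(\eta\phi)\,\mathrm{d}\mu\right|\le K\,\mu(\Omega).
\]
Taking the supremum over all such $\phi$ shows that $\{u>0\}$ has finite perimeter in $\Omega$, with $\mathrm{Per}(\{u>0\};\Omega)\le K\mu(\Omega)<\infty$.

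Once finite perimeter is known, the structure theory for sets of finite perimeter (De Giorgi's structure theorem together with Federer's theorem; see e.g. \cite[Chapter~5]{EvGar}) gives that the total variation measure satisfies $|D\chi_{\{u>0\}}|=\mathcal{H}^1\llcorner\partial^*\{u>0\}$, where $\partial^*$ denotes the measure‑theoretic boundary introduced in Section~2. By Lemma \ref{lem:measthebou} we have $\partial^*\{u>0\}=\{u=0\}$, and since $u_{\mid\partial\Omega}>0$ this set is contained in $\Omega$; therefore
\[
\mathcal{H}^1(\{u=0\})=|D\chi_{\{u>0\}}|(\Omega)=\mathrm{Per}(\{u>0\};\Omega)<\infty,
\]
which is the claim. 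Apart from the localization bookkeeping described above, this is a direct combination of Lemma \ref{lem:noether}, Lemma \ref{lem:measthebou} and the standard fine theory of $BV$ functions.
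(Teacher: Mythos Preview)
Your argument is correct and follows essentially the same route as the paper: you bound the total variation of $D\chi_{\{u>0\}}$ using the Noether equation \eqref{eq:finper} together with the $C^1$-regularity of $u$, and then identify $\partial^*\{u>0\}=\{u=0\}$ via Lemma~\ref{lem:measthebou} to conclude $\mathcal{H}^1(\{u=0\})<\infty$. The only cosmetic difference is in the localization step: the paper first establishes finite perimeter in $\Omega_{\epsilon_0}^C$ and then invokes \cite[Thm.~1, Sect.~5.11]{EvGar} to pass to $\Omega$, whereas you achieve the same thing directly by splitting an arbitrary test field with a cut-off $\eta$; both arguments exploit that $\{u\le 0\}\subset\Omega_{\epsilon_0}^C$.
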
 
\begin{proof}
We first show that $\{u >0 \}$ has finite perimeter in $\Omega_{\epsilon_0}^C$. Observe that by  \eqref{eq:finper} one has for each $\phi \in C_0^\infty ( \Omega_{\epsilon_0}^C ; \mathbb{R}^2) $ such that $\sup_{\Omega_{\epsilon_0}^C}  |\phi| \leq 1$,
\begin{align*}
\int_{\Omega_{\epsilon_0}^C } \chi_{\{u > 0\} } \mathrm{div} ( \phi) \dx =  -\int_{\Omega_{\epsilon_0}^C} \nabla u \cdot \phi \; \mathrm{d}\mu  \leq   \mu(\Omega)\sup_{x \in \Omega_{\epsilon_0}^C }|\nabla u (x)| .
\end{align*} 
The quantity on the right hand side is finite since by Corollary \ref{cor:minregu} there is $\beta \in (0,1)$ such that  $\nabla u \in W^{2,2-\beta}(\Omega_{\epsilon_0}^C) \subset C(\overline{\Omega_{\epsilon_0}^C})$. By \cite[Thm.1(i), Sect.5.9]{EvGar} we conclude that $\mathcal{H}^1(\partial^* \{ u > 0 \} \cap \Omega_{\epsilon_0}^C) < \infty$.
By Lemma \ref{lem:measthebou} we have $\partial^*\{ u > 0 \} =  \{u = 0 \} \subset \Omega_{\epsilon_0}^C$. Therefore, $\mathcal{H}^1( \Omega \cap  \partial^*\{ u >0 \} ) < \infty$ and by \cite[Thm.1, Sect.5.11]{EvGar} we obtain that $\{u > 0 \} $ has finite perimeter in $\Omega$. By Lemma \ref{lem:measthebou} we conclude
$\infty > \mathcal{H}^1( \partial^* \{ u > 0 \} ) = \mathcal{H}^1 ( \{ u = 0 \} )$. \qedhere

\end{proof}
\begin{lemma}[Biharmonic Measure and Hausdorff Measure] \label{lem:bihammeasreg}
Let $ A \subset \Omega$ be a Borel set and $u \in \A(u_0)$ be a minimizer. Then 
\begin{equation}\label{eq:542}
\mu ( A  ) = \int_A \frac{1}{|\nabla u |} \; \mathrm{d}\mathcal{H}^1  \llcorner_{ \{ u = 0  \} }.
\end{equation}
\end{lemma}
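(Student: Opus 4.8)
The plan is to read off $\mu$ from the Noether equation \eqref{eq:finper}, combine it with the Gauss--Green formula for the open set $\{u>0\}$, and then ``divide by $\nabla u$'', which is legitimate precisely because $\nabla u\neq 0$ on $\{u=0\}$ by Corollary \ref{cor:nosing}.

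First I would record the geometric input. Since $u\in C^1(\Omega)$ by Remark \ref{rem:ceins}, $\nabla u\neq 0$ on $\{u=0\}$ by Corollary \ref{cor:nosing}, and $\{u=0\}$ is compactly contained in $\Omega$, the zero set is a regular $C^1$-level set; hence $\partial\{u>0\}\cap\Omega=\{u=0\}$ is a $C^1$-curve and $\{u>0\}$ is locally a $C^1$-subgraph. The classical divergence theorem therefore applies and gives, for every $\phi\in C_0^\infty(\Omega_{\epsilon_0}^C;\mathbb{R}^2)$,
\begin{equation*}
\int_{\Omega_{\epsilon_0}^C}\chi_{\{u>0\}}\,\mathrm{div}(\phi)\dx = \int_{\{u=0\}}\phi\cdot\nu\;\mathrm{d}\mathcal{H}^1,\qquad \nu=-\frac{\nabla u}{|\nabla u|},
\end{equation*}
where $\nu$ is the outer unit normal of $\{u>0\}$ along $\{u=0\}$, which points opposite to $\nabla u$ since $u$ increases into $\{u>0\}$. (Equivalently, one may invoke Corollary \ref{cor:finhau}, Lemma \ref{lem:measthebou} and De Giorgi's structure theorem: the reduced boundary of $\{u>0\}$ is all of $\{u=0\}$ with the stated normal.) Comparing this with \eqref{eq:finper} yields the vector-valued identity
\begin{equation}\label{eq:vecmeas}
\int_\Omega \nabla u\cdot\phi\;\mathrm{d}\mu = \int_{\{u=0\}} \frac{\nabla u}{|\nabla u|}\cdot\phi\;\mathrm{d}\mathcal{H}^1\qquad \forall\,\phi\in C_0^\infty(\Omega_{\epsilon_0}^C;\mathbb{R}^2),
\end{equation}
and, since $\mu$ and $\mathcal{H}^1\llcorner_{\{u=0\}}$ are finite, both sides are bounded by a constant multiple of $\sup_{\Omega_{\epsilon_0}^C}|\phi|$, so \eqref{eq:vecmeas} extends to all $\phi\in C_c(\Omega_{\epsilon_0}^C;\mathbb{R}^2)$ by density.

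Next I would divide by $\nabla u$. Fix a scalar $\psi\in C_c(\Omega)$. Because $\{u=0\}$ is compact, $u\in C^1$, and $|\nabla u|>0$ on $\{u=0\}$, there is an open neighbourhood $U$ of $\{u=0\}$ with $\overline{U}\subset\Omega_{\epsilon_0}^C$ and $\inf_{\overline{U}}|\nabla u|>0$, together with $\eta\in C_c^\infty(U)$ satisfying $\eta\equiv 1$ near $\{u=0\}$. Then $\phi:=\psi\,\eta\,\frac{\nabla u}{|\nabla u|^2}\in C_c(\Omega_{\epsilon_0}^C;\mathbb{R}^2)$ is admissible in \eqref{eq:vecmeas}. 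Inserting it and using $\nabla u\cdot\frac{\nabla u}{|\nabla u|^2}=1$ and $\frac{\nabla u}{|\nabla u|}\cdot\frac{\nabla u}{|\nabla u|^2}=\frac{1}{|\nabla u|}$ on $\{u=0\}$, together with $\eta\equiv 1$ on $\{u=0\}\supset\mathrm{supp}(\mu)$, gives
\begin{equation*}
\int_\Omega \psi\;\mathrm{d}\mu = \int_{\{u=0\}}\psi\,\frac{1}{|\nabla u|}\;\mathrm{d}\mathcal{H}^1\qquad\forall\,\psi\in C_c(\Omega).
\end{equation*}
By uniqueness in the Riesz--Markov--Kakutani representation theorem, $\mu=\frac{1}{|\nabla u|}\,\mathcal{H}^1\llcorner_{\{u=0\}}$ as Radon measures on $\Omega$, which is exactly \eqref{eq:542} for every Borel set $A\subset\Omega$.

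I expect the only genuine subtlety to be the identification of the outer normal of $\{u>0\}$ along $\{u=0\}$ with $-\nabla u/|\nabla u|$; this is where the regularity results of the previous sections, above all $u\in C^1(\Omega)$ and the non-vanishing of $\nabla u$ on $\{u=0\}$, enter decisively, since without them the interface could be ``fat'' and $\mu$ could carry a part singular with respect to $\mathcal{H}^1\llcorner_{\{u=0\}}$. The remaining steps — matching the vector-valued measures in \eqref{eq:vecmeas} and the cutoff bookkeeping needed to divide by $\nabla u$ — are then routine.
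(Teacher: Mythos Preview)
Your proof is correct and follows essentially the same route as the paper: combine the Noether identity \eqref{eq:finper} with the Gauss--Green formula for $\{u>0\}$ (using that $\{u=0\}$ is a $C^1$ level set with outer normal $-\nabla u/|\nabla u|$), then ``divide by $\nabla u$'' via a cutoff supported where $|\nabla u|>0$. The only cosmetic difference is that the paper mollifies $\phi\nabla u$ to obtain a smooth admissible test field in \eqref{eq:finper} and then passes to the limit, whereas you first extend the resulting identity to $C_c$ test fields by density and then plug in $\psi\eta\nabla u/|\nabla u|^2$; the two bookkeeping orders are equivalent.
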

\begin{proof}
 %We use the Radon-Nikodym Theorem in the formulation of \cite[Theorem 2 in Section 1.6]{EvGar} for the measures $\mu$ and  $\mathcal{H}^1 \llcorner _{\{u = 0  \}}$. Note that both of these measures are Radon measures by \cite[Theorem 3 in Section 1.1]{EvGar}. We have  to show that $\mu$ is absolutely continuous with respect to $\mathcal{H}^1 \llcorner _{\{u = 0\}}$.
 % Now let $A \subset \Omega$ be Borel and such that $\mathcal{H}^1(A \cap  \{u = 0 , \nabla u \neq 0 \} ) = 0 $. Let $\phi_n \subset C_0^\infty(\Omega_{\epsilon_0}^C)$ such that 
 We first prove the formula
 \begin{equation}\label{eq:543}
 \int_{\Omega_{\epsilon_0}^C} \phi |\nabla u |^2 d\mu = \int_{\{ u = 0\} } \phi |\nabla u| \; \mathrm{d}\mathcal{H}^1 \quad \forall \phi \in C_0(\Omega_{\epsilon_0}^C). 
\end{equation}
By density, it suffices to prove the claim for $\phi \in    C_0^\infty(\Omega_{\epsilon_0}^C). $
For $\epsilon >0 $, let $\rho_\epsilon$ be the standard mollifier and define $f_\epsilon := (\phi \nabla u )* \rho_\epsilon$. Now note that $f_\epsilon$ lies in $C_0^\infty(\Omega_{\epsilon_0}^C)$ for appropriately small $\epsilon > 0$ and $f_\epsilon$ converges uniformly to $ \phi \nabla u$. By \eqref{eq:finper} and the fact that $\{ u >0 \}$ has finite perimeter in $\Omega_{\epsilon_0}^C$ by Corollary \ref{cor:finhau} we obtain with \cite[Thm.1,Sect.5.9]{EvGar} that
\begin{align}
\int_{\Omega_{\epsilon_0}^C} \phi |\nabla u|^2 \; \mathrm{d}\mu & =  \lim_{\epsilon \rightarrow 0 } \int_{\Omega_{\epsilon_0}^C} \nabla u \cdot f_\epsilon \; \mathrm{d}\mu \nonumber \\
& \label{eq:545} = -  \lim_{\epsilon \rightarrow 0 }  \int_{\Omega_{\epsilon_0}^C} 
 \chi_{\{ u >0 \}} \mathrm{div} (f_\epsilon) \dx 
 =-  \lim_{\epsilon \rightarrow 0 } \int_{\partial^* \{ u > 0 \}} f_\epsilon \cdot \nu_{ \{ u > 0 \} }\; \mathrm{d}\mathcal{H}^1
 \\ & = -  \lim_{\epsilon \rightarrow 0 } \int_{ \{ u = 0 \}} f_\epsilon \cdot \nu_{ \{ u > 0 \} }\; \mathrm{d}\mathcal{H}^1, \nonumber
\end{align}  
where $\nu_{\{ u > 0\}}$ denotes the measure theoretic unit outer normal to $\{ u >0 \}$, cf. \cite[Thm.1,Sect.5.9]{EvGar}. Since by Remark \ref{rem:manifold}, $\{ u = 0 \}$ is locally a $C^1$-regular level set one obtains immediately that $\nu_{\{ u > 0\}}(x) = \frac{-\nabla u(x)}{|\nabla u (x) |}$. Together with the fact that $\mathcal{H}^1(\{u = 0 \})< \infty$ by Corollary \ref{cor:finhau} we obtain \eqref{eq:543}. Since $\nabla u$ is a continuous function that does not vanish on $\{u = 0 \} \subset \Omega_{\epsilon_0}^C$ there also exists some $\epsilon >0 $ such that $\overline{ B_\epsilon(\{ u = 0 \} )} \subset \Omega_{\epsilon_0}^C$ and $\nabla u $ does not vanish on $B_\epsilon( \{ u = 0 \} )$. Fix  $\eta \in C_0^\infty( B_\epsilon ( \{ u = 0 \}))$ arbitrarily such that $\eta \equiv 1$ on $\{u = 0 \}$. Now suppose that $\psi \in C_0(\Omega)$. Note that $\eta \equiv 1$ on $\mathrm{supp}(\mu)$ and $\frac{\psi}{|\nabla u|^2} \eta \in C_0(\Omega_{\epsilon_0}^C)$. Therefore one has by \eqref{eq:543}
\begin{align*}
\int \psi \; \mathrm{d}\mu = \int_{\Omega_{\epsilon_0}^C} \frac{\psi \eta}{|\nabla u|^2} |\nabla u |^2 \; \mathrm{d}\mu = \int_{\{u = 0\}} \frac{\psi \eta}{|\nabla u|^2} |\nabla u| \; \mathrm{d}\mathcal{H}^1 = \int_{ \{u = 0  \} } \frac{\psi}{|\nabla u|} \; \mathrm{d}\mathcal{H}^1.
\end{align*}
From there the claim is easy to deduce by standard arguments in measure theory. 
\end{proof}
Having now characterized the measure $\mu$ explicitly, one can obtain classical regularity with the representation \eqref{eq:347}. The details will be discussed in Appendix \ref{sec:regunod}. 
%\begin{remark}
%Note that in particular \eqref{eq:542} implies together with \eqref{eq:hausn} that if $\E(u) <|\Omega|$ then $\mu$ is nonzero.
%\end{remark}
\begin{lemma} [$C^2$-Regularity, Proof in Appendix \ref{sec:regunod}] \label{lem:regunod}
Let $u \in \A(u_0)$ be a minimizer. Then $ u \in C^2( \Omega)$.
\end{lemma}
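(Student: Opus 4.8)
The plan is to combine the explicit formula \eqref{eq:3.32} for the precise representative $(\Delta u)^*$ with the identification of the biharmonic measure in Lemma \ref{lem:bihammeasreg}, deduce from it that $\Delta u$ is locally H\"older continuous, and then bootstrap via interior Schauder estimates.

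First I would assemble the structural input. By Lemma \ref{lem:bihammeasreg}, $\mu=\frac{1}{\abs{\nabla u}}\,\mathcal{H}^1\llcorner_{\{u=0\}}$; by Corollary \ref{cor:nosing}, $\nabla u$ does not vanish on $\{u=0\}$, which is a compact subset of the open set $\Omega_{\epsilon_0}^C$, and by Corollary \ref{cor:finhau} it has finite $\mathcal{H}^1$-measure. Moreover Corollary \ref{cor:minregu} gives $u\in W^{3,2-\beta}(\Omega_{\epsilon_0}^C)$ for every $\beta>0$, so the Sobolev embedding on the Lipschitz domain $\Omega_{\epsilon_0}^C$ yields $u\in C^{1,\gamma}(\overline{\Omega_{\epsilon_0}^C})$ for every $\gamma\in(0,1)$. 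In particular $\rho:=\frac{1}{\abs{\nabla u}}$ is bounded and continuous on $\{u=0\}$, this set is a compact $C^{1,\gamma}$-manifold, and, being a $C^1$ curve of finite length, it satisfies the length estimate $\mathcal{H}^1(\{u=0\}\cap B_r(x))\le C\,r$ for every $x$ and every small $r>0$.

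Substituting $\mu=\rho\,\mathcal{H}^1\llcorner_{\{u=0\}}$ into \eqref{eq:3.32} and using $\int_{\{u=0\}}\rho\,\mathrm{d}\mathcal{H}^1=\mu(\Omega)$, that identity becomes, for $x\in\Omega_{\epsilon_0}^C$,
\begin{equation*}
(\Delta u)^*(x)=\Delta h(x)-\frac{\mu(\Omega)}{4\pi}-\frac{1}{4\pi}\int_{\{u=0\}}\log\abs{x-y}\,\rho(y)\,\mathrm{d}\mathcal{H}^1(y),
\end{equation*}
with $h\in C^\infty(\overline{\Omega_{\epsilon_0}^C})$ as in Lemma \ref{lem:bihammeasrep}. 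The decisive step is to show that the logarithmic single layer potential $S(x):=\int_{\{u=0\}}\log\abs{x-y}\,\rho(y)\,\mathrm{d}\mathcal{H}^1(y)$ belongs to $C^{0,\alpha}_{loc}(\mathbb{R}^2)$ for every $\alpha\in(0,1)$. This is classical; concretely, setting $d:=\abs{x-x'}$ one splits $\{u=0\}$ into its intersection with $B_{2d}(x)$, on which $\int\abs{\log\abs{x-y}}\,\mathrm{d}\mathcal{H}^1(y)$ and the analogous integral for $x'$ are both at most $C\,d\log\frac1d$ by the length estimate, and its complement, on which $\abs{\log\abs{x-y}-\log\abs{x'-y}}\le 2d/\abs{x-y}$, again integrable of order $d\log\frac1d$; hence $\abs{S(x)-S(x')}\le C\,d\log\frac1d$. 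Since $h$ is smooth it follows that $(\Delta u)^*$ is H\"older continuous on $\Omega_{\epsilon_0}^C$; since in addition $u$ is smooth on $\{u>0\}\cup\{u<0\}$ by Lemma \ref{lem:biham} and $\{u=0\}\subset\Omega_{\epsilon_0}^C$, we conclude $\Delta u\in C^{0,\alpha}_{loc}(\Omega)$ for every $\alpha<1$.

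Finally, $u\in W^{2,2}(\Omega)$ is a distributional solution of the Poisson equation $\Delta u=f$ with $f:=\Delta u\in C^{0,\alpha}_{loc}(\Omega)$, so interior Schauder estimates (see e.g.\ \cite[Chapter~4]{Gilbarg}) give $u\in C^{2,\alpha}_{loc}(\Omega)$, in particular $u\in C^2(\Omega)$. The main obstacle is precisely the H\"older continuity of the single layer potential $S$: one has to argue that the genuinely singular kernel $\log\abs{x-y}$ nonetheless produces an almost Lipschitz, hence H\"older, potential along the curve $\{u=0\}$, whereas all remaining steps are soft. One could instead start from the full Hessian representation \eqref{eq:347}, but then the bounded degree-zero kernels $\partial^2_{x_1x_2}F$ and $\partial^2_{x_1x_1}F-\partial^2_{x_2x_2}F$ would call for a more delicate layer-potential analysis, which the detour through $\Delta u$ and Schauder avoids; I also note that this argument in fact yields $u\in C^{2,\alpha}(\Omega)$ for every $\alpha<1$, and thereby upgrades $\{u=0\}$ to a $C^{2,\alpha}$-manifold.
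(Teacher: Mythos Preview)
Your argument is correct and takes a genuinely different route from the paper's. The paper proves continuity of each second partial derivative pointwise at every $x_0\in\{u=0\}$: it localizes the representation of Lemma~\ref{lem:bihammeasrep} with a cutoff $\xi$ supported in a small ball where $\{u=0\}$ is a $C^1$ graph, substitutes the explicit form of $\mu$ from Lemma~\ref{lem:bihammeasreg}, differentiates under the integral, performs a change of variable in the graph parameter, and applies dominated convergence to the resulting one-dimensional integral with a $\log$-singular kernel. In particular the paper handles all second derivatives by hand and does not pass through Schauder theory.

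Your approach is more global and more efficient: you stay with the formula \eqref{eq:3.32} on the whole of $\Omega_{\epsilon_0}^C$, recognize the relevant term as a logarithmic single layer potential on the Ahlfors-regular curve $\{u=0\}$, and use the standard near/far splitting to obtain a $d\log(1/d)$ modulus for $\Delta u$. This not only yields continuity of $\Delta u$ but in fact $\Delta u\in C^{0,\alpha}_{loc}(\Omega)$ for every $\alpha<1$, after which interior Schauder estimates give $u\in C^{2,\alpha}_{loc}(\Omega)$. Thus you obtain a strictly stronger conclusion than the paper's Lemma~\ref{lem:regunod}, and you avoid treating the mixed and anisotropic second derivatives separately. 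The paper's approach, on the other hand, is entirely self-contained and avoids invoking Schauder theory; it also makes the mechanism of continuity across $\{u=0\}$ completely explicit. Both rely on the same structural inputs (Corollary~\ref{cor:nosing}, Lemma~\ref{lem:bihammeasreg}, and the $C^{1}$-manifold structure of $\{u=0\}$), so neither requires anything not already available at this point in the paper.
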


%\begin{remark}\label{rem:c2}
%Proceeding similar to the previous proof, we can derive that every minimizer $u \in \A(u_0)$ lies in $ C^2(\Omega \setminus \{ u = \nabla u = 0 \}) $. 
%\end{remark}
\section{Proof of Theorem \ref{thm:1.1}}

\begin{proof}[Proof of Theorem \ref{thm:1.1}] 
We first recall parts of the statement that have already been proved on the way: The $C^2$-regularity of $u$ and the property that $\nabla u \neq 0 $ whenever $u =0 $ follow from Remark \ref{rem:manifold} and Lemma \ref{lem:regunod}. The $W^{3,2-\beta}_{loc} $-regularity follows from Lemma \ref{lem:biham} and Corollary \ref{cor:minregu}.
By Corollary \ref{lem:corsubham} we can infer that $\Delta u \geq 0 $. We show now that $\{ u = 0 \}$ is a closed connected $C^2$-hypersurface. First $\{ u = 0 \}$ is a $C^2$-manifold as zero level set of a $C^2$-function with nonvanishing gradient on $\{u = 0\}$. Note that $\{ u = 0 \}$ is orientable as $\nu = \frac{\nabla u}{|\nabla u |}$ defines a continuous normal vector field. Furthermore, each connected component of $\{ u = 0 \}$ is a connected, orientable $C^2$-manifold. Note that $\{ u= 0 \}$ has only finitely many connected components $(S_i)_{i = 1}^N$ since it is compact.
 We also claim that each connected component of $\{u = 0\}$ is compact. Indeed, connected components of topological spaces are closed the the same space, cf. \cite[Exercise 1.6.1]{Lawson}, and closed subsets of compact sets are compact. All in all, each connected component of $\{u = 0 \}$ is a compact, orientable, connected $C^2$-manifold. By the Jordan-Brower seperation theorem (see \cite{Lima}), we infer that for each $i \in \{ 1, ..., N \}$ the set $\mathbb{R}^2 \setminus S_i$ has two disjoint connected components, say $G_i$ and  $\mathbb{R}^2 \setminus ( G_i \cup S_i) $ the boundary of both of which is $S_i$. We claim that one of these two components is a subset of $\Omega$. For if not, one can find  an $x_1 \in G_i \setminus \Omega$ as well as $x_2 \in (\mathbb{R}^2 \setminus ( G_i \cup S_i)) \setminus \Omega$. One can then connect $x_1$ and $x_2$ with a continuous path lying in $\mathbb{R}^2 \setminus \Omega \subset \mathbb{R}^2 \setminus S_i $. This is a contradiction since $G_i$ and $\mathbb{R}^2 \setminus (G_i \cup S_i)$ are two different path components of $\mathbb{R}^2 \setminus S_i$.  Without loss of generaliy $G_i$ is contained in $\Omega$.  
Note that $G_i$ has positive distance of $\partial \Omega$ since $\overline{G_i}$ is compact $\inf_{G_i} \dist(\cdot, \partial \Omega)$ is attained in $\partial G_i = S_i$.

Since $u$ is subharmonic in $\Omega$ by Corollary \ref{lem:corsubham} we get that either $u \equiv 0$ in $G_i$ or $ u< 0 $ in $G_i$ by the strong maximum principle for subharmonic functions. The first possibility is excluded since $\nabla u $ does not vanish on $\{ u = 0\}$ as we already showed. 

We show now that $G_i\cap G_j = \emptyset$  for all $i \neq j$. Since $u< 0 $ in $G_i$ for all $i$, we get $S_j \cap G_i = \emptyset$ for all $j \neq i$. Therefore 
\begin{equation*}
\partial (G_i \cap G_j) \subset (S_j \cap \overline{G_i}) \cup (S_i \cap \overline{G_j}) = (S_j \cap G_i) \cup (S_i \cap G_j) \cup (S_i \cap S_j)  = \emptyset
\end{equation*}
 for all $i \neq j$ . This means that $\mathbb{R}^2$ is the disjoint union of $G_i \cap G_j$ and the interior of $(G_i \cap G_j)^c$. Since $\mathbb{R}^2$ is connected we obtain that $G_i \cap G_j = \emptyset$ for $i \neq j$. We show next that $\{ u< 0 \} = \bigcup_{i = 1}^N G_i$. Suppose that there is a point $\widetilde{x} \in \Omega \setminus \bigcup_{i = 1}^N G_i$ such that $u( \widetilde{x} ) < 0 $. Let $\widetilde{r}:= \sup\{ r > 0 : B_r( \widetilde{x} ) \subset \{ u < 0 \} \}$. Observe that $\widetilde{r}> 0 $ because of continuity of $u$. Note that $\overline{B_{\widetilde{r}}(\widetilde{x})} \subset \Omega$ because $ u > 0 $ on $\partial \Omega$.  
Hence, $\overline{B_{\widetilde{r}}( \widetilde{x} )} $ touches some $S_j$ tangentially. Note also that $\{u < 0 \} $ in $B_{\widetilde{r}} ( \widetilde{x} )$ and $B_{\widetilde{r}} ( \widetilde{x} ) \cap G_j = \emptyset$ since $\widetilde{x} \in \mathbb{R}^2 \setminus( G_j \cup S_j)$ and $B_{\widetilde{r}} ( \widetilde{x} ) $ can only intersect one connected component of $\mathbb{R}^2 \setminus S_j$. Let $p \in S_j$ be a point where $\overline{B_{\widetilde{r}}( \widetilde{x} )} $ touches $S_j$. Now observe that $t \mapsto u(p + t \nabla u(p))$ is continuously differentiable in a neighborhood of $p$ as $u \in C^1(\Omega)$. Therefore 
 \begin{equation*}
 \frac{d}{dt}_{\mid_{t = 0 }} u(p +t \nabla u(p) ) = | \nabla u (p)|^2  > 0
 \end{equation*}
 and hence there is $t_0 > 0 $ such that $\frac{d}{dt} u(p +t \nabla u(p) ) \geq \frac{1}{2} | \nabla u(p)|^2$ for each $t \in (-t_0,t_0)$. In particular the fundamental theorem of calculus yields that 
\begin{equation} \label{eq:positi}
 u(p + t \nabla u(p) ) > 0   \quad \forall t \in (0,t_0).
\end{equation}  
 Since $B_{\widetilde{r}}(\widetilde{x})$ touches $S_j$ tangentially at $p$ the exterior normal of $B_{\widetilde{r}}(\widetilde{x})$ at $p$ is given by $\nu = \pm \frac{\nabla u}{|\nabla u|}$. In case that $\nu = + \frac{\nabla u}{|\nabla u|}$ the exterior unit normal coincides with the exterior unit normal of $G_j$. Since $G_j$ and $B_{\widetilde{r}}(\widetilde{x})$ both satisfy the interior ball condition (see \cite[Remark 4.3.8]{Han}), we can now force a small ball into $G_j \cap B_{\widetilde{r}}(\widetilde{x})$ which is a contradiction to the fact that $G_j \cap  B_{\widetilde{r}}(\widetilde{x}) = \emptyset$. Therefore  $\nu = - \frac{\nabla u}{|\nabla u|}$ and hence there is $t_1 > 0 $ such that $ p + t \nabla u (p)$ lies in $B_{\widetilde{r}} ( \widetilde{x})$ for each $t \in (0,t_1)$. Choosing $t := \frac{1}{2} \min\{ t_0, t_1 \}$ we obtain a contradiction since $p  +t \nabla u(p) \in B_{\widetilde{r}}(\widetilde{x})$  and $ u( p + t \nabla u(p) ) > 0 $ according to  \eqref{eq:positi}, which is a contradiction to the choice of $B_{\widetilde{r}}(\widetilde{x})$. We have shown \eqref{eq:geb}.
 Given this, we get the following chain of set inclusions:  
\begin{equation*}
\{u = 0 \} = \bigcup_{i = 1}^\infty S_i = \bigcup_{i = 1}^\infty \partial G_i  \subset \partial\{ u < 0 \} \subset \{ u= 0 \},
\end{equation*}
where we used the continuity of $u$ in the last step. We obtain that $\partial \{ u< 0 \}  = \{ u = 0 \}$, which was also part of the statement.  The property that $\{ u = 0\}$ has finite $1$-Hausdorff measure follows from Corollary \ref{cor:finhau}. The only statement that remains to show is \eqref{eq:bihame}. We first show \eqref{eq:bihame} for $\phi \in C_0^\infty(\Omega)$. By \eqref{eq:bihammeas} one has
\begin{equation*}
2\int_\Omega \Delta u \Delta \phi = -\int_\Omega \phi \; \mathrm{d}\mu \quad \forall \phi \in C_0^\infty(\Omega)
\end{equation*} 
for a measure $\mu$ with $\mathrm{supp}(\mu)= \{ u= 0 \}$ which was examined more closely in Lemma \ref{lem:bihammeasreg}. From this lemma we can conclude that 
\begin{align*}
\mu(A) & = \int_A \frac{1}{|\nabla u | } \; \mathrm{d}\mathcal{H}^1 \llcorner_{ \{u = 0 , \nabla u \neq 0 \} } = \int_{\{u = 0 \}} \chi_A \frac{1}{|\nabla u | } \; \mathrm{d}\mathcal{H}^1. 
\end{align*} 
Using this representation of $\mu$ we obtain \eqref{eq:bihame} for $\phi \in C_0^\infty(\Omega)$ and by density also for $ \phi \in W_0^{2,2}(\Omega)$. Now suppose that $\phi \in W^{2,2}(\Omega) \cap W_0^{1,2}(\Omega)$. Choose $\eta \in C_0^\infty(\Omega_{\epsilon_0}^C)$ such that $0 \leq \eta \leq 1$ and $\eta \equiv 1$ in a neighborhood of $\{u \leq 0 \}$ that is compactly contained in $\Omega_{\epsilon_0}^C$ and rewrite $\phi = \phi \eta + \phi ( 1- \eta)$. Observe that $\phi (1- \eta)$ lies in $W^{2,2}(\Omega) \cap W_0^{1,2}(\Omega)$ and is compactly supported in $\{u > 0 \}$. By Lemma \ref{lem:biham} we infer that 
\begin{equation}\label{eq:wegmitrand}
2\int_\Omega \Delta u \Delta (\phi(1- \eta) ) \dx  = 0 . 
\end{equation}  
Note that $\phi \eta \in W_0^{2,2}(\Omega)$ as $\eta$ is compactly supported in $\Omega$. Using \eqref{eq:wegmitrand} and that we have already shown \eqref{eq:bihame} for $W_0^{2,2}$-test functions we find
\begin{equation*}
2 \int_\Omega \Delta u \Delta \phi \dx = 2\int_\Omega \Delta u \Delta (\eta \phi) \dx= - \int_{\{ u= 0 \} } \phi \eta  \frac{1 }{|\nabla u |} \; \mathrm{d}\mathcal{H}^1 .
\end{equation*}
Since $\eta \equiv 1$ on a neighborhood of $\{ u= 0 \}$ we obtain the claim. 
\end{proof}
\begin{cor}
Let $u \in \A(u_0)$ be a minimizer. Then $\partial \{ u > 0 \} = \{ u = 0 \} \cup \partial \Omega$. In particular $ \{u > 0 \}$ has $C^2$-boundary.  
\end{cor}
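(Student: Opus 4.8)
The plan is to compute the full topological boundary $\partial\{u>0\}$ in $\mathbb R^2$ by splitting space into the open set $\Omega$, its boundary $\partial\Omega$, and the open exterior $\mathbb R^2\setminus\overline\Omega$, and then to deduce $C^2$-regularity from Theorem \ref{thm:1.1} together with the hypothesis that $\partial\Omega$ is $C^2$ (Definition \ref{def:adm}). The two facts to keep in hand are that $u\in C^2(\Omega)\subset C(\overline\Omega)$, so $\{u>0\}$ is an open subset of $\Omega$, and that by Corollary \ref{cor:guterrand} one has $u\geq\frac{\delta}{2}>0$ on the one-sided collar $\Omega_{\epsilon_0}$ of $\partial\Omega$.

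First I would handle the part of the boundary outside $\Omega$. No point of $\mathbb R^2\setminus\overline\Omega$ lies in $\overline{\{u>0\}}$, since such a point has an open neighbourhood disjoint from $\overline\Omega\supset\{u>0\}$. On the other hand every $x\in\partial\Omega$ lies in $\partial\{u>0\}$: it is a limit of points of $\Omega_{\epsilon_0}$, where $u\geq\frac{\delta}{2}>0$, hence $x\in\overline{\{u>0\}}$, while $x\notin\{u>0\}$ because $\{u>0\}\subset\Omega$. Thus $\partial\Omega\subset\partial\{u>0\}\subset\overline\Omega$. Next I would treat interior points. If $x\in\Omega\cap\partial\{u>0\}$, then $u(x)$ cannot be positive (else $x$ would be interior to the open set $\{u>0\}$) and cannot be negative (else continuity of $u$ would produce a neighbourhood on which $u<0$, disjoint from $\{u>0\}$), so $u(x)=0$. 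Conversely, if $x\in\{u=0\}$ then $\nabla u(x)\neq 0$ by Theorem \ref{thm:1.1}, so moving from $x$ in the direction $\nabla u(x)$ gives points with $u>0$ arbitrarily close to $x$; since $x\notin\{u>0\}$, we get $x\in\partial\{u>0\}$. Combining the cases yields $\partial\{u>0\}=\{u=0\}\cup\partial\Omega$.

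For the final assertion, recall from Theorem \ref{thm:1.1} that $\{u=0\}$ is a compact $C^2$-hypersurface contained in $\Omega$ on which $\nabla u$ never vanishes; since $u_{\mid\partial\Omega}\geq\delta>0$, this hypersurface has strictly positive distance to $\partial\Omega$, which is $C^2$ by Definition \ref{def:adm}. Hence $\partial\{u>0\}$ is a disjoint union of two $C^2$-hypersurfaces. Locally near a point of $\{u=0\}$ the implicit function theorem represents $\{u=0\}$ as a $C^2$ graph with $\{u>0\}$ lying on one side (and $\{u<0\}$ on the other), and near $\partial\Omega$ one simply has $\{u>0\}=\Omega$ because $u\geq\frac{\delta}{2}$ on $\Omega_{\epsilon_0}$, so $\{u>0\}$ inherits the $C^2$-boundary of $\Omega$ there. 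This gives that $\{u>0\}$ has $C^2$-boundary. I do not expect a genuine obstacle here; the only point that calls for care is that the claim concerns the full boundary in $\mathbb R^2$, so one must argue separately at and beyond $\partial\Omega$, which is exactly where the hypothesis $u_0\geq\delta>0$ is used.
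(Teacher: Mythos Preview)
Your proof is correct and follows essentially the same approach as the paper's: both obtain the inclusion $\partial\{u>0\}\subset\{u=0\}\cup\partial\Omega$ from continuity of $u$, and the reverse inclusion from the collar neighbourhood (Corollary~\ref{cor:guterrand}) together with the nonvanishing gradient on $\{u=0\}$. The only cosmetic difference is that the paper packages the step $\{u=0\}\subset\partial\{u>0\}$ by citing the identity $\partial^*\{u>0\}=\{u=0\}$ from Lemma~\ref{lem:measthebou} and using $\partial^*\subset\partial$, whereas you argue it directly from $\nabla u(x)\neq 0$; both routes are equally short.
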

\begin{proof}
Recalling \eqref{eq:438} we find that 
\begin{equation*}
\partial \{u > 0\} = (\partial \{ u > 0 \} \cap \Omega) \cup \partial \Omega \supset (\partial^* \{ u > 0 \} \cap \Omega) \cup \partial \Omega  \supset \{ u =0\} \cup \partial \Omega .
\end{equation*}
The other inclusion $\partial \{ u >0 \} \subset \partial \Omega \cup \{ u = 0 \}$ is immediate by continuity of $u$. The rest of the claim follows from Theorem \ref{thm:1.1}. 
% By Theorem \ref{thm:1.1} we obtain that $\{ u=  0 \} = \{ u = 0 , \nabla u \neq 0 \}$ and hence  
%\begin{equation*}
%\partial \{u > 0\} \supset \{ u =0  \} \cup \partial \Omega \supset \partial \{ u >0 \},
%\end{equation*}
%where the last inclusion is immediate since $u$ is continuous. 
\end{proof}
%\begin{remark}
%By the previous Corollary and Theorem \ref{thm:1.1} we find that $\Omega_{\epsilon_0} \cap \{ u > 0 \}$ and $\{u< 0 \}$ have $C^2$ boundary. Moreover, by \ref{cor:minregu}, $u \in W^{3,\frac{3}{2}}( \{ u > 0 \} \cap \Omega_{\epsilon_0} )$ and $u \in W^{3, \frac{3}{2}} ( \{ u< 0 \} ) $. In addition, by Lemma \ref{lem:biham} $\Delta u $ is harmonic in $\{u > 0\} \cap \Omega_{\epsilon_0} $ and in $\{u < 0\}$. By elliptic regularity (see , we get that $u \in C^\infty ( \overline{ \{ u < 0 \} })$ and $u \in C^\infty( \overline{\{ u > 0 \} })$
%\end{remark} 
  \section{Measure of the Negativity Region}
We have aleady discovered in \eqref{eq:infbound}, Example \ref{ex:iotapos}, and Remark \ref{rem:nontriv} that for `small' boudary values $u_0$ the energy of minimizers falls below $|\Omega|$ and the nodal set is nontrivial. On the contrary, for `large' boundary values $u_0$, one gets minimizers with trivial nodal set, see Remark \ref{rem:iotafin}.  In this section, we want to derive some estimates that ensure one of the two cases.

\begin{lemma}[Universal Bound for Biharmonic Measure] 
Let $u \in \A(u_0)$ be a minimizer. Then 
\begin{equation}\label{eq:89}
\int_{\{ u = 0 \} } \frac{1}{|\nabla u | } \; \mathrm{d}\mathcal{H}^1 \leq \frac{2|\{u < 0 \}|}{\inf_{\partial \Omega } u_0 } \leq \frac{2|\Omega|}{\inf_{\partial \Omega} (u_0)}
\end{equation}
\end{lemma}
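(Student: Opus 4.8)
The plan is to feed the identity \eqref{eq:bihame} a single, well-chosen test function: the difference of $u$ and the harmonic extension of its own boundary data. Let $w\in W^{2,2}(\Omega)$ be the unique weak solution of $\Delta w=0$ in $\Omega$, $w=u_0$ on $\partial\Omega$ (this is the competitor already used in the proof of \eqref{eq:infbound}). Elliptic regularity gives $w-u_0\in W^{2,2}(\Omega)\cap W_0^{1,2}(\Omega)$, so that
\[
\phi:=w-u\in W^{2,2}(\Omega)\cap W_0^{1,2}(\Omega)
\]
is admissible in \eqref{eq:bihame}. Two elementary facts about $w$ will be used. First, by the minimum principle $w\ge m:=\inf_{\partial\Omega}u_0\ge\delta>0$ on $\overline{\Omega}$. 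Second, since $u$ is subharmonic by Corollary \ref{lem:corsubham} and agrees with $w$ on $\partial\Omega$, the maximum principle applied to the subharmonic function $u-w$ yields $u\le w$ in $\Omega$; in particular $\phi\ge 0$.

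Next I would insert $\phi=w-u$ into \eqref{eq:bihame}. On the left-hand side, $\Delta w=0$ annihilates the cross term and leaves $2\int_\Omega\Delta u\,\Delta\phi\dx=-2\int_\Omega(\Delta u)^2\dx$. On the right-hand side, $\phi=w-u=w$ on $\{u=0\}$ and $w\ge m$ there, while $\frac1{|\nabla u|}$ is a positive, bounded and $\mathcal{H}^1$-integrable weight on the compact set $\{u=0\}$ by Theorem \ref{thm:1.1}; hence
\[
-\int_{\{u=0\}}\phi\,\frac1{|\nabla u|}\; \mathrm{d}\mathcal{H}^1=-\int_{\{u=0\}}w\,\frac1{|\nabla u|}\; \mathrm{d}\mathcal{H}^1\le -m\int_{\{u=0\}}\frac1{|\nabla u|}\; \mathrm{d}\mathcal{H}^1 .
\]
Equating the two sides of \eqref{eq:bihame} and rearranging gives
\[
m\int_{\{u=0\}}\frac1{|\nabla u|}\; \mathrm{d}\mathcal{H}^1\le 2\int_\Omega(\Delta u)^2\dx .
\]

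It then remains to bound $\int_\Omega(\Delta u)^2\dx$ by $|\{u<0\}|$. Since $u$ is a minimizer, $\E(u)\le|\Omega|$ by \eqref{eq:infbound}, i.e. $\int_\Omega(\Delta u)^2\dx\le|\Omega|-|\{u>0\}|$. By Theorem \ref{thm:1.1} the set $\{u=0\}$ has finite $1$-Hausdorff measure, hence vanishing Lebesgue measure, so $|\Omega|=|\{u>0\}|+|\{u<0\}|$ and therefore $\int_\Omega(\Delta u)^2\dx\le|\{u<0\}|$. Plugging this into the previous inequality and dividing by $m$ yields the first inequality in \eqref{eq:89}, and the crude estimate $|\{u<0\}|\le|\Omega|$ gives the second.

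I do not expect a serious analytic obstacle: the argument is essentially a one-line computation once one realizes that $\phi=w-u$ simultaneously makes the cross term $\int_\Omega\Delta u\,\Delta w\dx$ vanish and is bounded below by $m$ exactly on the set carrying the measure $\mu$. The only points that require a word of care — admissibility of $\phi$, the comparison $u\le w$, and the positivity and $\mathcal{H}^1$-integrability of $\frac1{|\nabla u|}$ on $\{u=0\}$ — are all immediate consequences of results already established (Theorem \ref{thm:1.1}, Corollaries \ref{lem:corsubham} and \ref{cor:nosing}, and standard elliptic theory).
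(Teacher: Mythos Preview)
Your proof is correct and follows essentially the same approach as the paper: test \eqref{eq:bihame} with the difference between $u$ and the harmonic extension of its boundary data, use $\Delta w=0$ to reduce the left side to $-2\int_\Omega(\Delta u)^2\dx$, bound the harmonic extension from below by $\inf_{\partial\Omega}u_0$ on $\{u=0\}$ via the maximum principle, and finish with $\int_\Omega(\Delta u)^2\dx\le|\{u<0\}|$ from \eqref{eq:infbound} and $|\{u=0\}|=0$. The only cosmetic difference is that the paper uses $u-h$ (your $-\phi$) as the test function and does not record the comparison $u\le w$, which your argument does not actually need either.
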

\begin{proof}
Let $h \in W^{2,2}(\Omega)$ be a solution of 
\begin{equation*}
\begin{cases}
\Delta h = 0 & \mathrm{in} \; \Omega, \\
h = u_0  & \mathrm{on} \; \partial \Omega .
\end{cases}
\end{equation*}
Note that by elliptic regularity and the trace theorem, see \cite[Theorem 8.12]{Gilbarg}, $h$ lies actually in $W^{2,2}(\Omega)$ and $h - u_0 \in W_0^{1,2}(\Omega)$. Observe that $u - h = (u- u_0) + (u_0 - h) \in W^{2,2}(\Omega) \cap W_0^{1,2}(\Omega)$. We obtain with \eqref{eq:bihame}  
\begin{equation}\label{eq:hasuii}
\int_\Omega \Delta u \Delta (u-h) \dx=  - \frac{1}{2}\int_{\{ u= 0 \}}  \frac{u - h}{|\nabla u | } \; \mathrm{d}\mathcal{H}^1 = \frac{1}{2}\int_{\{ u= 0 \}}  \frac{ h}{|\nabla u | }  \; \mathrm{d}\mathcal{H}^1 . 
\end{equation}
For the left hand side we can estimate using harmonicity of $h$ and  \eqref{eq:infbound} 
\begin{equation*}
\int_\Omega \Delta u \Delta (u-h) \dx = \int_\Omega (\Delta u)^2  \dx = \E(u) - |\{ u >0 \}| \leq  |\Omega| - |\{ u >0 \}| = |\{u < 0 \}| ,
\end{equation*}
where we used that $|\{u = 0 \}| = 0 $ by Theorem \ref{thm:1.1}. 
Therefore \eqref{eq:hasuii} implies that
\begin{equation*}
|\{u < 0 \}| \geq \frac{1}{2} \int_{\{ u= 0 \}}  \frac{h}{|\nabla u | } \; \mathrm{d}\mathcal{H}^1.
\end{equation*}
By the maximum priciple and the construction of $h$ we obtain that $h \geq \inf_{\partial \Omega} u_0$ and hence 
\begin{equation*}
|\{ u< 0 \}| \geq \frac{1}{2} \inf_{\partial\Omega} ( u_0 )  \int_{\{ u= 0 \}}  \frac{1}{|\nabla u | } \; \mathrm{d}\mathcal{H}^1. \qedhere
\end{equation*}
\end{proof}
 
\begin{cor}[One-phase Solutions for Large Boundary Values] \label{cor:onephase}
Let $\Omega$, $u_0$ be as in Definition \ref{def:adm} and $u \in \A(u_0)$ be a minimizer. Then, either $ \{ u = 0\}= \emptyset $ or 
\begin{equation*}
| \{ u < 0 \} | \geq 2\pi \inf_{\partial \Omega } u_0 
\end{equation*} 
\end{cor}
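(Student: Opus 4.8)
The plan is to establish the scale-invariant lower bound $\int_{\{u=0\}}\frac{1}{|\nabla u|}\;\mathrm{d}\mathcal{H}^1 \geq 4\pi$ whenever $\{u=0\}\neq\emptyset$. Together with the preceding lemma \eqref{eq:89}, which asserts $\int_{\{u=0\}}\frac{1}{|\nabla u|}\;\mathrm{d}\mathcal{H}^1 \leq \frac{2|\{u<0\}|}{\inf_{\partial\Omega}u_0}$, this immediately gives $|\{u<0\}| \geq 2\pi\inf_{\partial\Omega}u_0$. (If $\{u=0\}=\emptyset$, there is nothing to prove.)

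To prove the bound I would first collect the geometry supplied by Theorem \ref{thm:1.1}: $G:=\{u<0\}$ is a bounded open set, nonempty since $\partial G=\{u=0\}\neq\emptyset$, with $\overline{G}\subset\Omega$, its topological boundary is $\{u=0\}$, a $C^2$-hypersurface of finite (hence positive) $\mathcal{H}^1$-measure with $|\{u=0\}|=0$, and $u\in C^2(\overline{G})$ with $\nabla u\neq 0$ on $\{u=0\}$. Since $u<0$ in $G$ and $u=0$ on $\partial G$, the outer unit normal of $G$ along $\{u=0\}$ equals $\nu = \nabla u/|\nabla u|$, so $\partial_\nu u = |\nabla u|>0$ there. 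The divergence theorem applied to $\nabla u$ on $G$ thus yields the identity
\begin{equation*}
\int_G \Delta u \dx = \int_{\{u=0\}} |\nabla u| \; \mathrm{d}\mathcal{H}^1 .
\end{equation*}

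Next I would bound the left-hand side by $|G|$. By Definition \ref{def:adm} and \eqref{eq:infbound} one has $\int_\Omega (\Delta u)^2 \dx = \E(u) - |\{u>0\}| \leq |\Omega| - |\{u>0\}| = |G|$, using $|\{u=0\}|=0$. Since $\Delta u\geq 0$ by Corollary \ref{lem:corsubham}, Cauchy--Schwarz gives $\big(\int_G \Delta u \dx\big)^2 \leq |G|\int_G (\Delta u)^2\dx \leq |G|^2$, hence $\int_G \Delta u\dx \leq |G|$, and therefore $\int_{\{u=0\}}|\nabla u|\;\mathrm{d}\mathcal{H}^1 \leq |G|$ by the identity above. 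Finally, the planar isoperimetric inequality for $G$ — whose perimeter equals $\mathcal{H}^1(\{u=0\})$ as $\partial G$ is $C^2$ — gives $\mathcal{H}^1(\{u=0\})^2 \geq 4\pi|G|$, while a second Cauchy--Schwarz, now on the curve $\{u=0\}$, gives $\mathcal{H}^1(\{u=0\})^2 \leq \big(\int_{\{u=0\}}|\nabla u|\;\mathrm{d}\mathcal{H}^1\big)\big(\int_{\{u=0\}}\frac{1}{|\nabla u|}\;\mathrm{d}\mathcal{H}^1\big)$. Combining the last three displays,
\begin{equation*}
\int_{\{u=0\}}\frac{1}{|\nabla u|}\;\mathrm{d}\mathcal{H}^1 \geq \frac{\mathcal{H}^1(\{u=0\})^2}{\int_{\{u=0\}}|\nabla u|\;\mathrm{d}\mathcal{H}^1} \geq \frac{4\pi|G|}{|G|} = 4\pi ,
\end{equation*}
which is what we wanted.

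The estimates here are elementary; I expect the only genuinely delicate point to be the measure-theoretic bookkeeping around $\partial\{u<0\}$ — verifying that $\overline{\{u<0\}}\subset\Omega$ and that the domains $G_i$ of Theorem \ref{thm:1.1} have pairwise disjoint closures, so that $\{u=0\}$ really is a closed $C^2$-hypersurface to which both the divergence theorem and the isoperimetric inequality apply with perimeter $\mathcal{H}^1(\{u=0\})$. These facts are, however, already established in (the proof of) Theorem \ref{thm:1.1}.
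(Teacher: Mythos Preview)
Your proof is correct and uses essentially the same ingredients as the paper's own argument: the energy bound $\int_\Omega(\Delta u)^2\leq |\{u<0\}|$ from \eqref{eq:infbound}, the divergence theorem on $\{u<0\}$ giving $\int_{\{u<0\}}\Delta u = \int_{\{u=0\}}|\nabla u|\,\mathrm{d}\mathcal{H}^1$, Cauchy--Schwarz twice (once on $\{u<0\}$, once on $\{u=0\}$), the isoperimetric inequality, and the previous lemma \eqref{eq:89}. The only difference is organizational: you first isolate the scale-invariant lower bound $\int_{\{u=0\}}\tfrac{1}{|\nabla u|}\,\mathrm{d}\mathcal{H}^1\geq 4\pi$ and then invoke \eqref{eq:89}, whereas the paper chains all the estimates directly to arrive at $|\{u<0\}|^2\geq 4\pi^2(\inf_{\partial\Omega}u_0)^2$; the two arrangements are logically equivalent.
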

\begin{proof}
Using that $| \{ u = 0 \}| =0$ by Theorem \ref{thm:1.1} and \eqref{eq:infbound} as well as  the Cauchy-Schwarz inequality and Gauss divergence theorem  we get
\begin{align*}
|\{ u < 0 \} |  & = |\Omega | - |\{ u > 0 \} | \geq \E(u) - |\{ u > 0 \} | = \int_\Omega (\Delta u )^2 \dx 
\\ & \geq \int_{\{ u<0 \} } (\Delta u )^2  \dx = \frac{1}{| \{ u < 0 \} |} \left( \int_{ \{ u < 0 \} } (\Delta u ) \dx \right)^2 
\\ & = \frac{1}{|\{ u < 0 \}|}  \left( \int_{\partial \{ u <0 \}} \nabla u \cdot \nu  \; \mathrm{d}\mathcal{H}^1 \right)^2 .
\end{align*}
Note that the exterior outer normal of $\{ u< 0 \}$ is given by $\nu = \frac{\nabla u }{|\nabla u |}$ and therefore we obtain with Theorem \ref{thm:1.1}
\begin{equation}\label{eq:711}
|\{ u < 0 \} |^2 \geq \left( \int_{\partial \{ u < 0 \}  } |\nabla u|  \; \mathrm{d}\mathcal{H}^1 \right)^2   = \left( \int_{\{ u = 0 \} } |\nabla u|  \; \mathrm{d}\mathcal{H}^1 \right)^2 .
\end{equation}
Now observe that by the Cauchy Schwarz inequality and \eqref{eq:89} we get  
\begin{align*}
\mathcal{H}^1( \partial \{ u >  0 \}  ) & \leq \left( \int_{ \{u= 0 \} } \frac{1}{| \nabla u |} \; \mathrm{d} \mathcal{H}^1 \right)^\frac{1}{2} \left( \int_{ \{u= 0 \} } {| \nabla u |} \; \mathrm{d} \mathcal{H}^1 \right)^\frac{1}{2}  \\ & \leq \left(2 \frac{|\{ u < 0 \}| }{\inf_{\partial \Omega} u_0 }  \right)^\frac{1}{2}   \left( \int_{ \{u= 0 \} } {| \nabla u |} \; \mathrm{d} \mathcal{H}^1 \right)^\frac{1}{2}.
\end{align*}
Rearranging and plugging into \eqref{eq:711} we find 
\begin{equation*}
|\{ u < 0 \} |^2 \geq \frac{\mathcal{H}^1(\partial \{ u <0 \} )^4}{4|\{ u <0 \}|^2} \left( \inf_{\partial \Omega} u_0 \right)^2 .
\end{equation*}
Using the isoperimetric inequality, see \cite[Theorem 14.1]{Maggi} we get that 
\begin{equation}\label{eq:negativityset}
|\{ u < 0 \} |^2 \geq \frac{1}{4} \frac{\mathcal{H}^1(\partial B_1(0) )^4}{|B_1(0)|^2}  \left(\inf_{\partial \Omega} u_0  \right)^2= 4\pi^2 \left( \inf_{\partial \Omega } u_0 \right)^2. \qedhere
\end{equation}
\end{proof}
\begin{remark}
This proves in particular that $\inf_{\partial \Omega} u_0 > \frac{|\Omega|}{2\pi}$ implies $\{u = 0 \}= \emptyset$, which is a lot better than the bound in Remark \ref{rem:iotafin}, at least for domains $\Omega$ with big Lebesgue measure.   
\end{remark}

\section{A Non-Uniqueness Result}\label{sec:nonuni}

 \begin{definition}[The Candidate for Non-Uniqueness] \label{def:cand}
 Let $\Omega = B_1(0)$. For $C > 0  $ let $\A(C)$ denote the admissible set associated to the boundary function $u_0 \equiv C$, see Definition \ref{def:adm}. 
 \begin{equation*}
 \iota := \sup \{ C > 0  : \inf_{u \in \A(C)} \E(u) < |B_1(0)| \} 
 \end{equation*}
 \end{definition}
 \begin{remark}
 Note that $\frac{1}{8\sqrt{2}} \leq \iota < \infty$ by Example \ref{ex:iotapos} and Remark \ref{rem:iotafin}. 
 \end{remark}
 \begin{lemma}[Energy in the Limit Case] \label{lem:limica}
 Let $\iota$ be as in Definition \ref{def:cand}. Then 
 \begin{equation}\label{eq:intaen}
 \inf_{ u \in \A(\iota) } \E(u) = |B_1(0)|. 
 \end{equation}
 \end{lemma}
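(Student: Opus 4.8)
The plan is to prove \eqref{eq:intaen} by a two-sided argument, using a continuity/compactness analysis of the map $C \mapsto m(C) := \inf_{u \in \A(C)} \E(u)$ near $C = \iota$. First I would observe that $m$ is monotone nondecreasing: if $C_1 < C_2$ and $u$ is admissible for $C_2$, then $u \cdot \frac{C_1}{C_2}$ has the right boundary values and $\{u\frac{C_1}{C_2} > 0\} = \{u > 0\}$, so $\E(u\frac{C_1}{C_2}) = \frac{C_1^2}{C_2^2}\int_\Omega(\Delta u)^2 + |\{u>0\}| \le \E(u)$, giving $m(C_1) \le m(C_2)$. Since $m(C) \le |B_1(0)|$ always by \eqref{eq:infbound}, and $m(C) < |B_1(0)|$ for $C < \iota$ by definition of the supremum, the only thing to rule out is $m(\iota) < |B_1(0)|$. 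So suppose for contradiction $m(\iota) < |B_1(0)|$.

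The key step is then an \emph{upper semicontinuity} estimate: I would show that for $C$ slightly bigger than $\iota$ one still has $m(C) < |B_1(0)|$, contradicting the definition of $\iota$ as a supremum. Let $u$ be a minimizer for $\A(\iota)$ with $\E(u) = m(\iota) < |B_1(0)|$ (existence by Remark \ref{lem:eximin}). For $C > \iota$ set $v := \frac{C}{\iota} u \in \A(C)$. Then $\{v > 0\} = \{u > 0\}$, hence
\begin{equation*}
m(C) \le \E(v) = \frac{C^2}{\iota^2} \int_\Omega (\Delta u)^2 \dx + |\{u > 0\}| = m(\iota) + \left(\frac{C^2}{\iota^2} - 1\right)\int_\Omega(\Delta u)^2\dx.
\end{equation*}
Since $\int_\Omega(\Delta u)^2 \le \E(u) = m(\iota) \le |B_1(0)|$ is a finite fixed number and $m(\iota) < |B_1(0)|$ strictly, choosing $C - \iota > 0$ small enough makes the right-hand side still strictly below $|B_1(0)|$. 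Thus $m(C) < |B_1(0)|$ for some $C > \iota$, which contradicts the definition of $\iota$ as the supremum of the set $\{C > 0 : m(C) < |B_1(0)|\}$. Therefore $m(\iota) = |B_1(0)|$, which is \eqref{eq:intaen}.

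The main (and really only) obstacle is making sure the rescaling $v = \frac{C}{\iota}u$ genuinely lies in $\A(C)$ and does not change the positivity set — both are immediate since multiplying by a positive constant preserves both the sign of the function pointwise and membership in $W^{2,2}(\Omega) \cap$ the affine space with constant boundary datum. One should also double check that the supremum defining $\iota$ is over a nonempty set bounded above, which is exactly the content of the preceding Remark (the interval $[\frac{1}{8\sqrt 2}, \infty)$ bound), so $\iota$ is a well-defined finite positive number and the contradiction argument applies. No compactness or $\Gamma$-convergence machinery is needed; the scaling structure of $\E$ in the constant-boundary-value setting does all the work.
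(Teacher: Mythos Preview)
Your argument is correct and in fact cleaner than the paper's. Both proofs proceed by contradiction, assuming $m(\iota) < |B_1(0)|$ and producing a $C>\iota$ with $m(C)<|B_1(0)|$. The difference lies in how the competitor for $\A(C)$ is built. The paper takes a minimizer $u_\iota\in\A(\iota)$ and uses the \emph{additive} shift $u_\iota+\epsilon\in\A(\iota+\epsilon)$; this changes the positivity set to $\{u_\iota>-\epsilon\}$, and in the limit $\epsilon\to 0$ one is left with $|\{u_\iota\ge 0\}|$ rather than $|\{u_\iota>0\}|$. To close the gap the paper must invoke Theorem~\ref{thm:1.1} to guarantee that $\{u_\iota=0\}$ is a Lebesgue null set. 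Your \emph{multiplicative} rescaling $v=\tfrac{C}{\iota}u$ sidesteps this completely: since the scaling factor is positive, $\{v>0\}=\{u>0\}$ exactly, and the only change in the energy is the factor $C^2/\iota^2$ on the Dirichlet term, which tends to $1$ as $C\downarrow\iota$. No limit in $\epsilon$, no measure-of-the-nodal-set argument, no appeal to the main regularity theorem. The monotonicity of $C\mapsto m(C)$ that you record is a pleasant byproduct of the same scaling trick, though it is not needed for the contradiction itself. The only place where your approach is less flexible is that multiplicative scaling requires the boundary datum to scale with the parameter, which is exactly the situation here since $u_0\equiv C$ is constant; the paper's additive shift would generalize to a one-parameter family $u_0+\epsilon$ for arbitrary $u_0$, but that generality is not used.
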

 \begin{proof}
 One inequality is immediate by \eqref{eq:infbound}. Now suppose that 
 \begin{equation}\label{eq:73}
 \inf_{ u \in \A(\iota) } \E(u) < |B_1(0)|. 
 \end{equation}
 Then by Theorem \ref{thm:1.1} and Remark \ref{lem:eximin} there exists a minimizer $u_\iota$ such that $ \{u_{\iota} =0 \} $ has finite $1-$Hausdorff measure and $\E(u_\iota) < |B_1(0)|$. Note that for each $\epsilon > 0$, $u_\iota + \epsilon \chi_{B_1(0)}$ is admissible for $u_0 \equiv \iota + \epsilon$. By the choice of $\iota$ we get
 \begin{equation*}
 |B_1(0)| \leq \E(u_{\iota} + \epsilon  ) = \int_{B_1(0)} (\Delta u_{\iota})^2 \dx + |\{ u_\iota + \epsilon > 0 \}| \quad \forall \epsilon > 0. 
 \end{equation*}
 Hence 
 \begin{equation*}
 \int_{B_1(0)} (\Delta u_{\iota})^2 \dx  +| \{ u_\iota > -  \epsilon   \}| \geq |B_1(0)| 
 \end{equation*}
 Letting $\epsilon >0 $ monotonically from above, we obtain with \cite[Theorem 1 in Section 1.1]{EvGar} that 
 \begin{equation}\label{eq:contiii}
  \int_{B_1(0)} (\Delta u_{\iota})^2 \dx +| \{ u_\iota \geq 0    \}| \geq |B_1(0)|. 
 \end{equation}
 As we already pointed out, $\{ u_\iota = 0 \}$ is a set of finite $1$-Hausdorff measure and hence a Lebesgue null set, see \cite[Section 2.1, Lemma 2]{EvGar}. Therefore \eqref{eq:contiii} can be reformulated to 
 \begin{equation*}
  \int_{B_1(0)} (\Delta u_{\iota})^2 \dx +| \{ u_\iota > 0    \}| \geq |B_1(0)|,
 \end{equation*}
 but the left hand side coincides with $\E(u_\iota)$, which is a contradiction to \eqref{eq:73}. 
 \end{proof}
 
\begin{remark}  \label{rem:wehamini}
Equation \eqref{eq:intaen} already yields one immediate minimizer, namely $u \equiv \iota$. We have to show that there exists yet another minimizer. 
\end{remark}
 
\begin{proof}[Proof of Theorem \ref{thm:nonun}] Let $\iota$ be as in Definition \ref{def:cand}, $\Omega= B_1(0)$ and $u_0 \equiv \iota$. 
Let $(\iota_n)_{n \in \mathbb{N}}$ be a sequence such that $\iota_n \leq \iota_{n+1}< \iota$ for each $n$, $\inf_{w \in \A(\iota_n)} \E(w) < |\Omega|$ and  $\iota_n \rightarrow \iota$ as $n \rightarrow \infty$. Such a sequence exists by the choice of $\iota$, see Definition \ref{def:cand}. For each $n \in \mathbb{N}$ let $u_n \in \A(\iota_n )$ be a  minimizer with boundary values $\iota_n$. By Remark \ref{rem:nontriv} we obtain that 
\begin{equation}\label{eq:neaga}
\inf_{x \in \Omega} u_n(x) \leq 0 .
\end{equation}
We claim that $||u_n||_{W^{2,2}}$ is bounded. Indeed, by \cite[Theorem 2.31]{Sweers} we get for some $C> 0$ independent of $n$
\begin{align*}
||u_n||_{W^{2,2}} &\leq ||\iota_n||_{W^{2,2}}+ C || u_n - \iota_n ||_{W^{2,2}} 
\leq  ||\iota_n||_{L^2} + C || \Delta (u_n - \iota_n )||_{L^2}
\\ & \leq \iota|\Omega|^\frac{1}{2} + C || \Delta u_n||_{L^2} \leq (\iota + C) \sqrt{|\Omega|},
\end{align*}
where we used \eqref{eq:infbound} in the last step. 
Therefore $(u_n)_{n = 1}^\infty$ has a weakly convergent subsequence in $W^{2,2}(\Omega)$, which we call $u_n$ again without relabeling. Let $u \in W^{2,2}(\Omega)$ be its weak limit. Since $u_n - \iota_n \in W_0^{1,2}(\Omega)$ and $W_0^{1,2}(\Omega)$ is weakly closed, we find that $u \in \A( \iota)$. Since $W^{2,2}(\Omega)$ embeds compactly into $C(\overline{\Omega})$, $u_n $ converges also uniformly to $u$. Using \eqref{eq:neaga} we obtain that 
\begin{equation*}
 \inf_{x \in \Omega} u(x) \leq 0 . 
\end{equation*}
In particular, $u$ differs from the function identical to $\iota$ which was already identified in  Remark \ref{rem:wehamini} as a minimizer in $\A(\iota)$. We show now that $u$ is another minimizer in $\A(\iota)$. By Lemma \ref{lem:limica}, the weak lower semicontinuity of the $L^2$ norm with respect to $L^2$-convergence and Fatou's Lemma we get
\begin{align}
|\Omega| & \leq \E(u) = \int_\Omega (\Delta u )^2 \dx + |\{u > 0 \}|\leq
\liminf_{n \rightarrow \infty} \int_\Omega (\Delta u_n)^2 \dx +  \int_\Omega \chi_{\{ u > 0 \} } \dx  \nonumber
\\ & =     \liminf_{n \rightarrow \infty} \int_\Omega (\Delta u_n)^2 \dx +  \int_\Omega \liminf_{n \rightarrow \infty } \chi_{\{ u_n > 0 \} } \dx \nonumber
\\ & \leq  \liminf_{n \rightarrow \infty} \int_\Omega (\Delta u_n)^2 \dx +  \liminf_{n \rightarrow \infty } \int_\Omega  \chi_{\{ u_n > 0 \} } \dx \nonumber
\\ &  \leq \liminf_{n \rightarrow \infty} \left( \int_\Omega (\Delta u_n)^2 \dx + |\{u_n > 0 \}| \right)  = \liminf_{n \rightarrow \infty } \E(u_n) \leq |\Omega| \label{eq:linenerg}.
\end{align}
Therefore $\E(u) = |\Omega| = \inf_{ w \in \A(\iota) }\E(w)$ by \eqref{eq:intaen}, which proves the claim.
\end{proof} 
\section{On Navier Boundary Conditions}\label{sec:fwts}
As we have only shown interior regularity of minimizers in Theorem \ref{thm:1.1} we cannot conclude anything about the behavior of the Laplacian at the boundary. However, the weak formulation of Navier boundary conditions in Definition \ref{def:adm} is equivalent to the strong formulation only provided that $u$ is regular enough to have trace at $\partial \Omega$, see the discussion in \cite[Section 2.7]{Sweers} for details. Provided that the domain $\Omega$ has actually smooth boundary (which we assume now), we can examine the measure-valued Poisson equation  \eqref{eq:bihame} more closely, using the following result about equivalence between conceptions of solutions to a measure-valued Poisson problem with Dirichlet boundary conditions. For a comprehensive study of measure-valued Poisson equations we refer to \cite{Ponce}.
\begin{lemma}[Measure-valued Poisson equation, cf. {\cite[Proposition 6.3]{Ponce}} and {\cite[Proposition 5.1]{Ponce}}]\label{lem:maeval}
Suppose that $\Omega \subset \mathbb{R}^n$ is a bounded domain with smooth boundary and suppose that $\mu$ is a finite Radon measure on $\Omega$. Further, let $w : \Omega \rightarrow \overline{\mathbb{R}}$ be Lebesgue measurable.  Then the following are equivalent 
\begin{enumerate}
\item (Weak solutions with vanishing trace) 
\begin{equation*}
w \in W_0^{1,1}(\Omega) \quad    \; \textrm{and} \quad \; \int_\Omega \nabla w \nabla \phi \; \mathrm{d}x = \int_\Omega \phi \; \mathrm{d}\mu \quad \quad \forall \phi \in C_0^\infty(\Omega) .
\end{equation*}
\item (Test functions that can feel the boundary) 
\begin{equation*}
w \in L^1(\Omega) \quad  \;  \textrm{and} \quad \; - \int_\Omega w \Delta \phi \; \mathrm{d}x = \int_\Omega \phi \; \mathrm{d}\mu \quad \forall \phi \in C^\infty(\overline{\Omega}) : \phi_{\mid_{\partial\Omega}} \equiv 0 .
\end{equation*}
\end{enumerate}
If one of the two statements hold true, then $w \in W_0^{1,q}(\Omega)$ for each $q \in \left[1, \frac{n}{n-1}\right)$.  
\end{lemma}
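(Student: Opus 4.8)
The plan is to reduce both formulations to the \emph{canonical solution} built from the Green function and then close the argument by uniqueness. Let $G$ be the Green function of $-\Delta$ on $\Omega$ with homogeneous Dirichlet data; since $\partial\Omega$ is smooth one has the classical estimates $|G(x,y)|\le C(1+|x-y|^{2-n})$ (read $|\log|x-y||$ when $n=2$) and $|\nabla_xG(x,y)|\le C|x-y|^{1-n}$, together with $G(\cdot,y)|_{\partial\Omega}=0$ for every $y$. Set $w_\mu(x):=\int_\Omega G(x,y)\,\mathrm{d}\mu(y)$. First I would show that $w_\mu$ already has \emph{all} the properties the lemma asks for. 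Running the measure--kernel machinery of Lemma \ref{lem:kernelmeas} and Corollary \ref{cor:regreg} with the gradient bound and Tonelli's theorem, exactly as in the proof of Corollary \ref{cor:minregu} (there the integrability threshold $\int_{B_R}|x-y|^{(1-n)q}\dx<\infty \Leftrightarrow q<\tfrac{n}{n-1}$ is what pins down the exponent), gives $w_\mu\in W^{1,q}(\Omega)$ for every $q\in[1,\tfrac{n}{n-1})$, and the vanishing trace of $G(\cdot,y)$ propagates so that $w_\mu\in W_0^{1,q}(\Omega)$. A Fubini computation using $\int_\Omega\nabla_xG(x,y)\cdot\nabla\phi(x)\dx=\phi(y)$ for $\phi\in C_0^\infty(\Omega)$, respectively $-\int_\Omega G(x,y)\Delta\phi(x)\dx=\phi(y)$ for $\phi\in C^\infty(\overline\Omega)$ with $\phi|_{\partial\Omega}=0$ and every $y\in\Omega$, then shows that $w_\mu$ satisfies both (1) and (2). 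This step in particular already establishes the last assertion of the lemma.

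It then remains to prove uniqueness in each of the two classes, since that forces any admissible $w$ to equal $w_\mu$ a.e. and hence to satisfy both formulations simultaneously. For class (2): if $v\in L^1(\Omega)$ satisfies $\int_\Omega v\,\Delta\phi\dx=0$ for all $\phi\in C^\infty(\overline\Omega)$ vanishing on $\partial\Omega$, then for arbitrary $\psi\in C_0^\infty(\Omega)$ one solves $-\Delta h_\psi=\psi$ in $\Omega$, $h_\psi|_{\partial\Omega}=0$; by elliptic regularity $h_\psi\in C^\infty(\overline\Omega)$, it is an admissible test function, and $\int_\Omega v\psi\dx=-\int_\Omega v\,\Delta h_\psi\dx=0$, whence $v\equiv 0$. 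For class (1): if $v\in W_0^{1,1}(\Omega)$ satisfies $\int_\Omega\nabla v\cdot\nabla\phi\dx=0$ for all $\phi\in C_0^\infty(\Omega)$, then by Weyl's lemma $v$ is smooth and harmonic in $\Omega$, and $\int_\Omega v\,\Delta h_\psi\dx=-\int_\Omega\nabla v\cdot\nabla h_\psi\dx$ (integration by parts, the boundary term dropping because $v$ has zero trace). Writing $\Omega_\delta:=\{x\in\Omega:\dist(x,\partial\Omega)>\delta\}$ and integrating $\int_{\Omega_\delta}\nabla v\cdot\nabla h_\psi\dx$ by parts leaves only $\int_{\partial\Omega_\delta}(\partial_\nu v)\,h_\psi\,\mathrm{d}\mathcal{H}^{n-1}$ (the interior term vanishes since $\Delta v=0$); letting $\delta\to0$, the left-hand side tends to $\int_\Omega\nabla v\cdot\nabla h_\psi\dx$ because $\nabla v\in L^1$, $\nabla h_\psi\in L^\infty$, while the right-hand side is controlled using $|h_\psi(x)|\le C\dist(x,\partial\Omega)$ and a coarea argument producing $\delta_j\to0$ with $\int_{\partial\Omega_{\delta_j}}|\nabla v|\,\mathrm{d}\mathcal{H}^{n-1}=o(1/\delta_j)$, so the boundary term is $o(1)$. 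Hence $\int_\Omega v\psi\dx=-\int_\Omega\nabla v\cdot\nabla h_\psi\dx=0$ and $v\equiv0$. Applying these to $v:=w-w_\mu$ in the respective class gives the equivalence.

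The technical heart, and the step I expect to be the main obstacle, is this low-regularity integration by parts near $\partial\Omega$ in the class-(1) uniqueness: with only $\nabla v\in L^1$ one cannot take a trace of $\nabla v$, so the vanishing of the boundary contribution has to be squeezed out of the smoothness of $\partial\Omega$, the linear decay $|h_\psi|=O(\dist(\cdot,\partial\Omega))$, and the coarea selection of good collars $\partial\Omega_{\delta_j}$; the product of these two estimates is $o(1)$. Everything else is the Fubini-and-Green-function bookkeeping already exercised in Section~3. One can reorganize matters — e.g.\ first proving directly that (1) implies the very weak formulation in (2) and then invoking class-(2) uniqueness — but the collar estimate at the boundary remains unavoidable; for a complete treatment we refer to \cite[Propositions 5.1 and 6.3]{Ponce}.
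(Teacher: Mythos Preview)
The paper does not prove this lemma; it merely cites \cite[Propositions 5.1 and 6.3]{Ponce} and uses the result as a black box. Your proposal is a correct and complete sketch of the standard Green--function--plus--uniqueness argument that underlies those propositions: construct the canonical solution $w_\mu$ via the Green kernel, derive its $W_0^{1,q}$ regularity from the gradient bound $|\nabla_xG(x,y)|\le C|x-y|^{1-n}$ and the integrability threshold $q<\tfrac{n}{n-1}$, check by Fubini that $w_\mu$ satisfies both formulations, and then reduce the equivalence to uniqueness in each class. Your identification of the class-(1) uniqueness as the delicate step is accurate, and the collar argument you outline (harmonicity of $v$ by Weyl, the decay $|h_\psi|\le C\dist(\cdot,\partial\Omega)$, and a coarea selection of levels $\delta_j\to0$ with $\delta_j\int_{\partial\Omega_{\delta_j}}|\nabla v|\,\mathrm d\mathcal H^{n-1}\to0$, which follows from $\nabla v\in L^1$) is exactly how one closes it. The only point you pass over quickly is why the vanishing trace of $G(\cdot,y)$ ``propagates'' to $w_\mu\in W_0^{1,q}$; this is routine (approximate $\mu$ by restrictions $\mu\llcorner\{\dist(\cdot,\partial\Omega)>1/k\}$, for which the associated potentials are smooth near $\partial\Omega$ and vanish there, and pass to the limit in $W^{1,q}$ using the uniform kernel bound), but it deserves one sentence.
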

This gives immediately the following 
\begin{cor} [Navier Boundary Conditions in the Trace Sense]
Suppose that $\Omega\subset \mathbb{R}^2$ has smooth boundary and let $u \in \mathcal{A}(u_0)$ be a minimizer. Then for each $\beta \in (0,1) $ one has that $u \in C^2(\Omega)\cap W^{3,2-\beta}(\Omega)$  and $\Delta u \in W_0^{1,2-\beta}(\Omega)$ .
\end{cor}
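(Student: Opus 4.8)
The plan is to read \eqref{eq:bihame} as a measure-valued Poisson equation for $\Delta u$, to obtain the vanishing boundary trace of $\Delta u$ from Lemma \ref{lem:maeval}, and then to recover global $W^{3,2-\beta}$-regularity by a routine elliptic bootstrap. First I would collect the ingredients. Since $u \in W^{2,2}(\Omega)$ and $\Omega$ is bounded, $\Delta u \in L^2(\Omega) \subset L^1(\Omega)$. By Theorem \ref{thm:1.1} the set $\{u = 0\}$ is a compact subset of $\Omega$ with $\mathcal{H}^1(\{u = 0\}) < \infty$ on which $\nabla u$ is continuous and nowhere zero, so
\begin{equation*}
\mu := \frac12\,\frac{1}{|\nabla u|}\,\mathcal{H}^1\llcorner_{\{u = 0\}}
\end{equation*}
is a nonnegative finite Radon measure on $\Omega$ (cf. also Lemma \ref{lem:bihammeasreg}). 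Moreover, for $\Omega$ smooth every $\phi \in C^\infty(\overline{\Omega})$ with $\phi_{\mid \partial \Omega} \equiv 0$ lies in $W^{2,2}(\Omega) \cap W_0^{1,2}(\Omega)$, so \eqref{eq:bihame} is applicable to all such $\phi$ and, after dividing by $-2$, reads $-\int_\Omega \Delta u\, \Delta \phi \dx = \int_\Omega \phi \;\mathrm{d}\mu$. Together with $\Delta u \in L^1(\Omega)$ this is exactly assertion $(2)$ of Lemma \ref{lem:maeval} for $w := \Delta u$, $n = 2$ and the measure $\mu$ above.

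From Lemma \ref{lem:maeval} I then get that $\Delta u \in W_0^{1,1}(\Omega)$ and, by the concluding line of that lemma, $\Delta u \in W_0^{1,q}(\Omega)$ for every $q \in [1, \tfrac{n}{n-1}) = [1,2)$; in particular $\Delta u \in W_0^{1,2-\beta}(\Omega)$ for each $\beta \in (0,1)$. This is the second assertion and it is precisely the Navier condition ``$\Delta u = 0$ on $\partial \Omega$'' understood in the trace sense. For the remaining assertion I would feed this back into the Dirichlet problem: $u - u_0 \in W_0^{1,2}(\Omega)$ solves $\Delta(u - u_0) = \Delta u - \Delta u_0 \in W^{1,2-\beta}(\Omega)$ with vanishing trace, and since $\partial\Omega$ is smooth and $u_0 \in C^\infty(\overline{\Omega})$, the $L^p$-regularity theory for the Poisson equation with $p = 2 - \beta \in (1,2)$ yields $u - u_0 \in W^{3,2-\beta}(\Omega)$, hence $u \in W^{3,2-\beta}(\Omega)$. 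Combined with $u \in C^2(\Omega)$ from Theorem \ref{thm:1.1} this would complete the proof.

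I expect no serious obstacle: the substantive content — solvability of the measure-valued Dirichlet problem and the resulting $W_0^{1,q}$-integrability for $q < n/(n-1)$ — is already imported through Lemma \ref{lem:maeval} (and ultimately \cite{Ponce}), so what remains is bookkeeping. The two points I would take care with are that smooth functions vanishing on $\partial\Omega$ are genuinely admissible test functions in \eqref{eq:bihame} (standard for smooth domains, since they belong to $W^{2,2}(\Omega) \cap W_0^{1,2}(\Omega)$), and tracking the sign and the factor $\tfrac12$ when matching \eqref{eq:bihame} against assertion $(2)$ of Lemma \ref{lem:maeval}; nothing genuinely new arises.
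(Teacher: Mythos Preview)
Your proposal is correct and follows essentially the same route as the paper: verify that \eqref{eq:bihame} places $w=\Delta u$ in case~(2) of Lemma~\ref{lem:maeval} with the finite measure $\mu=\tfrac{1}{2|\nabla u|}\mathcal{H}^1\llcorner_{\{u=0\}}$, conclude $\Delta u\in W_0^{1,2-\beta}(\Omega)$, and then invoke $L^p$ elliptic regularity (the paper cites \cite[Theorem 9.19]{Gilbarg}) to upgrade to $u\in W^{3,2-\beta}(\Omega)$. Your only addition is spelling out why $C^\infty(\overline{\Omega})$ functions vanishing on $\partial\Omega$ are admissible in \eqref{eq:bihame}, which the paper leaves implicit.
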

\begin{proof} Let $\beta \in ( 0 ,1)$. 
In view of \eqref{eq:bihame} one has that $w := \Delta u $ satisfies point $(2)$ of  Lemma \ref{lem:maeval} with $\mu = \frac{1}{2|\nabla u|} \mathcal{H}^1 \llcorner_{ \{u = 0 \} } $, which is a finite Radon measure because of Theorem \ref{thm:1.1}. We infer from Lemma \ref{lem:maeval} that $\Delta u \in W_0^{1,2-\beta} (\Omega)$. Since $2- \beta > 1$ we have maximal regularity for $\Delta u$ and can infer that $u \in W^{3, 2-\beta}(\Omega)$, cf. \cite[Theorem 9.19]{Gilbarg}. 
\end{proof}
\begin{remark}
Note in particular that the prevoius Corollary improves the regularity asserted in Theorem \ref{thm:1.1} for smooth domains $\Omega$. 
\end{remark}

We have shown that $\Delta u$ vanishes for a minimizer in the sense of traces. 
If $\Omega = B_1(0)$ there is another possible - and equally useful -  conception of vanishing at the boundary, namely that $\Delta u$ has vanishing radial limits on $\partial B_1(0)$, i.e. $\lim_{r \rightarrow 1-} \Delta u(r,\theta) = 0$ for a.e. $\theta \in (0,2\pi)$.

These two conceptions of vanishing have a nontrivial relation. A result that relates the concepts uses the fine topology, cf. \cite[Theorem 2.147]{ZiemerMaly}. We believe that consistency results can be shown with the cited theorem but the details would go beyond the scope of this article. Instead we give a self-contained proof that the Laplacian of a minimizer $u$ has vanishing radial limits in Appendix \ref{sec:vanrad}.

\section{Radial symmetry and Explicit Solutions}\label{sec:talenti}
In this section we show that for $\Omega = B_1(0)$ and $u_0 \equiv C$, there exists a radial minimizer. We will then be able to compute radial minimizers explicitly and determine the nonuniqueness level  $\iota$ from Definition   \ref{def:cand}.

\begin{definition}[Symmetric Nonincreasing Rearrangement]
Let $u : B_1(0) \rightarrow \overline{\mathbb{R}}$ be measurable. The function $u^* : B_1(0) \rightarrow \overline{\mathbb{R}}$ is the unique radial and radially nonincreasing function such that 
\begin{equation*}
| \{ u > t\} | = | \{ u^* > t \} | \quad \forall t \in \mathbb{R}.
\end{equation*}
\end{definition}
\begin{remark}\label{rem:102}
The fact that such a function exists follows from the construction in \cite[Chapter 3.3]{Lieb}. Moreover, one can show that for each $p \in [1,\infty]$,  $u \in L^p(B_1(0))$ implies that $u^* \in L^p(B_1(0))$ and $||u||_{L^p} = ||u^*||_{L^p}$, see \cite[Chapter 3.3]{Lieb}.
\end{remark}
We recall the famous Talenti rearrangement inequality, which we will use.
\begin{theorem}[Talenti's Inequality, cf. {\cite[Theorem 1]{Talenti}}]
Let $f \in L^2(B_1(0))$ and $w \in W_0^{1,2}(B_1(0))$ be the weak solution of 
\begin{equation*}
\begin{cases}
- \Delta u = f & \textrm{in } B_1(0) \\ u = 0 & \textrm{on } \partial B_1(0) 
\end{cases}.
\end{equation*}
Further, let $u\in W_0^{1,2}(B_1(0))$ be the weak solution of 
\begin{equation*}
\begin{cases}
- \Delta w = f^* & \textrm{in } B_1(0) \\ w = 0 & \textrm{on } \partial B_1(0) 
\end{cases}.
\end{equation*}
Then $w \geq u^*$ pointwise almost everywhere. 
\end{theorem}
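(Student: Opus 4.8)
The plan is to carry out Talenti's classical symmetrisation argument in the present planar setting. I write $u$ for the solution of the Poisson problem with right-hand side $f$ and $w$ for the radial solution with right-hand side $f^*$; with this labelling the assertion reads $w\ge u^*$, and it will be obtained by comparing the distribution functions of $u$ and $w$ through a pointwise differential inequality in the level parameter.

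First I would set $\mu(t):=|\{u>t\}|$. For almost every level $t$, testing $-\Delta u=f$ against the truncations $\min((u-t)_+,h)$ and letting $h\to0$ gives, via the coarea formula,
\begin{equation*}
\int_{\{u=t\}}|\nabla u|\,\mathrm{d}\mathcal{H}^1=\int_{\{u>t\}}f\dx=:F(t),\qquad -\mu'(t)=\int_{\{u=t\}}\frac{1}{|\nabla u|}\,\mathrm{d}\mathcal{H}^1 .
\end{equation*}
Cauchy--Schwarz on the level set yields $\mathcal{H}^1(\{u=t\})^2\le F(t)\,(-\mu'(t))$. Combining this with the isoperimetric inequality $\mathcal{H}^1(\{u=t\})\ge 2\sqrt{\pi\,\mu(t)}$ — valid because for a.e.\ $t$ the set $\{u=t\}$ coincides $\mathcal{H}^1$-a.e.\ with the reduced boundary $\partial^*\{u>t\}$ and carries its full perimeter — and with the Hardy--Littlewood bound $F(t)\le\int_0^{\mu(t)}f^{\dagger}(s)\ds=:\Phi(\mu(t))$, where $f^{\dagger}$ is the one-dimensional decreasing rearrangement of $f$, I obtain
\begin{equation*}
4\pi\,\mu(t)\le\Phi(\mu(t))\,(-\mu'(t))\qquad\text{for a.e. }t .
\end{equation*}

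The second step is to observe that for the symmetrised problem $-\Delta w=f^*$ on $B_1(0)$ the same computation produces an \emph{equality}: $w$ is radial and radially nonincreasing, its superlevel sets are centred discs, so Cauchy--Schwarz and the isoperimetric inequality are saturated and the Hardy--Littlewood step is an identity since $f^*$ is already rearranged; hence $\nu(t):=|\{w>t\}|$ satisfies $4\pi\,\nu(t)=\Phi(\nu(t))\,(-\nu'(t))$. Passing to the generalised inverses $U:=\mu^{-1}$ and $W:=\nu^{-1}$ — so that on the disc enclosing area $s$ one has $u^*=U(s)$ and $w=W(s)$ — the two relations become $-U'(s)\le\Phi(s)/(4\pi s)=-W'(s)$. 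Since $U(|B_1(0)|)=W(|B_1(0)|)=0$ by the homogeneous boundary condition, integrating from $s$ to $|B_1(0)|$ yields $U(s)\le W(s)$ for every $s$, that is $u^*(x)\le w(x)$ almost everywhere, which is the claim.

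The main obstacle is the level-set calculus for a merely $W^{1,2}$-function: one must control the critical set $\{\nabla u=0\}$, justify the coarea and Sard-type arguments, check that $\{u=t\}=\partial^*\{u>t\}$ up to an $\mathcal{H}^1$-null set for a.e.\ $t$ and that $\mu$ is absolutely continuous, and treat the endpoint of the ODE comparison carefully (the identification $U(|B_1(0)|)=W(|B_1(0)|)=0$, together with the radial monotonicity of $w$, is immediate when $f\ge0$, which is the situation relevant for the applications here, and in general requires the finer bookkeeping of \cite{Talenti}). Alternatively one simply invokes \cite[Theorem 1]{Talenti}.
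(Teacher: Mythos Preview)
The paper does not prove this theorem at all; it simply records the statement and cites \cite[Theorem~1]{Talenti}, then immediately applies it. Your proposal, by contrast, sketches the classical Talenti symmetrisation argument (coarea formula, Cauchy--Schwarz on level sets, isoperimetric inequality, Hardy--Littlewood, and an ODE comparison for the inverse distribution functions), which is precisely the route taken in Talenti's original paper. The outline is correct, and you have identified the genuine technical hurdles yourself: the level-set calculus for a general $W^{1,2}_0$ solution (Sard-type arguments, absolute continuity of $\mu$, identification of $\{u=t\}$ with $\partial^*\{u>t\}$ up to $\mathcal{H}^1$-null sets), and the endpoint bookkeeping when $f$ changes sign. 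For the application in the paper one actually has $f=\Delta(u-C)\ge0$ by Corollary~\ref{lem:corsubham}, so the simpler nonnegative case you flag would already suffice. Since the paper treats the result as a black box, your closing remark that one may ``simply invoke \cite[Theorem~1]{Talenti}'' is exactly what the paper does.
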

We obtain the radiality of the solution as an immediate consequence. 
\begin{cor}[Radiality]\label{cor:radial}
Suppose that $\Omega= B_1(0)$ and $u_0 \equiv C$. Then there exists a minimizer $ v \in \mathcal{A}(C)$ that is radial. 
\end{cor}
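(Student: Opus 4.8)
The plan is to symmetrize an arbitrary minimizer by means of Talenti's inequality. Fix any minimizer $u \in \A(C)$ (which exists by Remark \ref{lem:eximin}). Since $u$ is subharmonic by Corollary \ref{lem:corsubham} and $u \equiv C$ on $\partial B_1(0)$, the maximum principle gives $u \le C$ on $B_1(0)$, so that $p := C - u$ is a nonnegative element of $W^{2,2}(B_1(0)) \cap W_0^{1,2}(B_1(0))$. Because $C$ is constant, $p$ is the unique weak $W_0^{1,2}$-solution of $-\Delta p = g$ with $g := \Delta u \in L^2(B_1(0))$ and $g \ge 0$.

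Next I would feed $p$ into Talenti's inequality. Let $\widetilde{p} \in W_0^{1,2}(B_1(0))$ be the weak solution of $-\Delta \widetilde{p} = g^*$, where $g^*$ denotes the symmetric nonincreasing rearrangement of $g$; Talenti's inequality then yields $\widetilde{p} \ge p^*$ almost everywhere. Since $g \in L^2$ forces $g^* \in L^2$ with $\|g^*\|_{L^2} = \|g\|_{L^2}$ (Remark \ref{rem:102}), elliptic regularity upgrades $\widetilde{p}$ to $W^{2,2}(B_1(0)) \cap W_0^{1,2}(B_1(0))$, and $\widetilde{p}$ is radial by uniqueness of the Dirichlet problem with rotationally symmetric data. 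Setting $v := C - \widetilde{p}$, we obtain a radial function with $v - C = -\widetilde{p} \in W_0^{1,2}(B_1(0))$ and $v \in W^{2,2}(B_1(0))$, hence $v \in \A(C)$.

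Finally I would compare energies. Since $\Delta v = g^*$ we get $\int_{B_1(0)} (\Delta v)^2 \dx = \int_{B_1(0)} (g^*)^2 \dx = \int_{B_1(0)} g^2 \dx = \int_{B_1(0)} (\Delta u)^2 \dx$, again by Remark \ref{rem:102}. For the adhesion term, note that $\{v > 0\} = \{\widetilde{p} < C\}$ and $\{u > 0\} = \{p < C\}$; from $\widetilde{p} \ge p^*$ and the fact that rearrangement preserves the measures of superlevel sets, one has $|\{\widetilde{p} > t\}| \ge |\{p^* > t\}| = |\{p > t\}|$ for every $t$, and letting $t \uparrow C$ gives $|\{\widetilde{p} \ge C\}| \ge |\{p \ge C\}|$, whence $|\{v > 0\}| \le |\{u > 0\}|$. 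Combining the two estimates gives $\E(v) \le \E(u)$, and since $u$ is a minimizer this must be an equality, so $v$ is a radial minimizer. I do not anticipate a genuine obstacle here: the one subtlety is orienting the superlevel-set bookkeeping correctly — it is crucial that we symmetrize $C - u \ge 0$ rather than $u$ itself, since Talenti's inequality bounds the rearranged solution from \emph{below}, which shrinks $\{v > 0\} = \{\widetilde{p} < C\}$ but would enlarge $\{u > 0\}$ in the naive formulation — together with the routine check that $\widetilde{p} \in W^{2,2}$, which rests on $g^* \in L^2$.
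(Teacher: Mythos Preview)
Your proposal is correct and follows essentially the same approach as the paper: both symmetrize $C-u$ by solving the Poisson problem for $(\Delta u)^*$ and invoking Talenti's inequality, then use equimeasurability for the Dirichlet term and the pointwise comparison $\widetilde{p}\ge (C-u)^*$ for the adhesion term. Your extra observation that $u\le C$ via subharmonicity is not strictly needed (the paper uses equimeasurability directly), and your limiting argument $t\uparrow C$ could be shortened to the pointwise inclusion $\{\widetilde{p}<C\}\subset\{(C-u)^*<C\}$, but these are cosmetic differences.
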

\begin{proof}
First, fix a minimizer $u \in \A(C)$. Then by Remark \ref{rem:102} 
\begin{align}
\E(u) & = \int_\Omega (\Delta u )^2 \; \mathrm{d}x + |\{ u > 0 \}|  \nonumber \\
 & = \int_\Omega (\Delta (u- C))^2 \; \mathrm{d}x +  | \{ C - u <  C\} | \nonumber
 \\ & = \int_\Omega [( \Delta (u-C) )^* ]^2 \; \mathrm{d}x + |\{ (C-u)^* < C \}| \label{eq:tally} . 
\end{align}
Now define $w\in W^{2,2}(B_1(0)) \cap W_0^{1,2}(B_1(0))$ to be the weak solution of 
\begin{equation*}
\begin{cases}
- \Delta w = (\Delta (u-C))^* & \textrm{in } B_1(0) \\ w = 0 & \textrm{on } \partial B_1(0) 
\end{cases}.
\end{equation*}
Note that $w$ is radial since the right hand side is radial.  Observe now that $C- u$ is the unique weak solution of 
\begin{equation*}
\begin{cases}
- \Delta v = \Delta (u-C) & \textrm{in } B_1(0) \\ v = 0 & \textrm{on } \partial B_1(0) 
\end{cases}.
\end{equation*}
By Talenti's inequality (see previous theorem) $w \geq (C-u)^*$. In particular $|\{w < C\}| \leq |\{ (C-u)^* < C \} |$.  Therefore we can estimate  \eqref{eq:tally} in the following way
\begin{align*}
\E(u) & \geq \int_\Omega (\Delta w)^2 + | \{ w < C \} | \\ & = \int_\Omega (\Delta (C-w) )^2 - |\{ C- w > 0 \}| = \E(C-w).
\end{align*}  
Now define $v := C-w$. Then $v \in \mathcal{A}(C)$ since $v- C = -w \in W_0^{1,2}(\Omega)$. By the estimate above we see that $v$ is yet another minimizer.  
\end{proof}
Now we characterize the radial solutions explicitly using the following two propositions 
\begin{prop}[Radial Solutions on Annuli]
Let $A_{R_1,R_2} := \{ x \in \mathbb{R}^2 : R_1 < |x| < R_2 \}$ be an annulus with inner radius $R_1 \geq 0 $ and outer radius $R_2 > R_1$. If $w \in W^{2,2}(A_{R_1,R_2})$ is weakly biharmonic and radial then there exists constants $A,B,C, D \in \mathbb{R} $ such that 
\begin{equation}\label{eq:bihamsol}
w(x) = A|x|^2 + B + C \log|x|  + D \frac{|x|^2}{2} \log|x| 
\end{equation}
\end{prop}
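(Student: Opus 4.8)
# Proof Proposal

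\textbf{The plan is to} reduce the PDE $\Delta^2 w = 0$ for a radial function to an ODE in the radial variable $r = |x|$, solve it explicitly, and then check the $W^{2,2}$-integrability condition to confirm all four fundamental solutions survive on an annulus bounded away from the origin. Writing $w(x) = v(r)$ with $r = |x|$, the two-dimensional radial Laplacian is $\Delta w = v'' + \frac{1}{r} v' = \frac{1}{r}(r v')'$. Applying this operator twice, the biharmonic equation becomes the fourth-order Euler-type ODE
\begin{equation*}
\frac{1}{r}\left( r \left( \frac{1}{r}(r v')' \right)' \right)' = 0 \quad \text{on } (R_1, R_2).
\end{equation*}

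\textbf{First I would} observe that since $w \in W^{2,2}(A_{R_1,R_2})$ is weakly biharmonic, elliptic regularity (Weyl-type lemma for $\Delta^2$) gives $w \in C^\infty(A_{R_1,R_2})$, so $v \in C^\infty((R_1,R_2))$ and the ODE holds classically. Then I would integrate the ODE step by step: setting $g := \Delta w$, we have $\Delta g = 0$ with $g$ radial, so $\frac{1}{r}(r g')' = 0$, which integrates to $g(r) = \alpha + \beta \log r$ for constants $\alpha, \beta$. Next I would solve $\frac{1}{r}(r v')' = \alpha + \beta \log r$ by two more integrations: multiplying by $r$ and integrating gives $r v' = \frac{\alpha}{2} r^2 + \beta\left( \frac{r^2}{2}\log r - \frac{r^2}{4} \right) + \gamma$, and dividing by $r$ and integrating once more yields $v(r)$ as a linear combination of $r^2$, $1$, $\log r$, and $r^2 \log r$ (the $r^2$ and $\log r$ terms absorbing the homogeneous solutions of the intermediate steps and the $-\frac{r^2}{4}$ correction term). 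Relabeling the four resulting constants as $A, B, C, D$ and recalling $r = |x|$ gives precisely \eqref{eq:bihamsol}.

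\textbf{The main obstacle} — though it is mild — is bookkeeping: one must verify that the four basis functions $\{|x|^2, 1, \log|x|, |x|^2\log|x|\}$ are genuinely linearly independent solutions on $(R_1, R_2)$ and that no solution is lost, i.e. that the general solution of the fourth-order ODE is exactly this four-parameter family. This is standard ODE theory once the equation is recognized as an Euler equation with a repeated characteristic structure (indeed, substituting $r = e^t$ turns it into a constant-coefficient ODE whose characteristic polynomial has roots $0,0,2,2$ — wait, more precisely the radial biharmonic operator in 2D has indicial roots $0, 0, 2, 2$, giving basis $1, \log r, r^2, r^2\log r$). Since the excerpt only claims \emph{existence} of the constants for a given $W^{2,2}$ radial biharmonic $w$, and since $A_{R_1,R_2}$ is smoothly bounded away from $0$ for $R_1 > 0$ (and the $R_1 = 0$ case is handled by noting $w \in W^{2,2}$ near the origin forces no additional constraint at this stage — that is imposed later), no integrability obstruction arises here: every member of the family lies in $C^\infty(\overline{A_{R_1,R_2}})$ when $R_1>0$. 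Hence the representation \eqref{eq:bihamsol} follows, completing the proof.
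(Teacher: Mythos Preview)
Your proposal is correct and follows essentially the same approach as the paper, which simply states that ``the claim reduces to a straightforward ODE argument when expressing $\Delta^2$ in polar coordinates.'' You have merely filled in the details of that ODE argument (smoothness via elliptic regularity, successive integration of the radial equation, identification of the four-parameter family), which is exactly what the paper leaves to the reader.
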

\begin{proof}
%If $w \in W^{2,2}(A_{R_1,R_2}) $ is such that $w$ is weakly biharmonic then integration by parts yields that $\Delta w$ is weakly harmonic. By Weyl's Lemma, $\Delta w $ is smooth and hence so is $w$ by maximal regularity, cf. \cite[Theorem 9.19]{Gilbarg} and  \cite{Weyl}. Therefore we are looking for all smooth radial solutions of the biharmonic equation. Using polar coordinates we can express for a radial function $w = w(r)$ 
%\begin{equation*}
%\Delta w = w''(r) = \frac{1}{r} w'(r) \quad r \in (R_1,R_2) 
%\end{equation*}
%and hence 
%\begin{equation*}
%\Delta^2w = w''''(r) + \frac{2}{r}w'''(r) - \frac{1}{r^2} w''(r) + \frac{1}{r^3} w'(r) .
%\end{equation*}
%In particular $\Delta^2 w = 0 $ becomes a fourth-order ordinary linear differential equation. Hence the set of solutions forms a four-dimensional vector space. One easily checks that four linearly independent solutions of $\Delta^2 w = 0$ are given by 
%\begin{equation}
%w_1(r) = r^2, \quad w_2(r) =1,  \quad w_3(r) = \log(r), \quad w_4(r) = \frac{1}{2} r^2\log(r). 
%\end{equation}
%Equation \ref{eq:bihamsol} depicts an arbitrary linear combination of these four solutions and hence all possible solutions.
The claim reduces to a straightforward ODE argument when expressing $\Delta^2$ in polar coordinates. 
\end{proof}

\begin{prop}\label{prop:radsub}[Radial Zero Level Set] 
Let $u \in \A(u_0)$ be a radial minimizer. Then there exists $R_0 > 0 $ such that 
\begin{equation}
\{ u = 0 \} = \partial B_{R_0}(0)
\end{equation}
and $\{ u  > 0 \} = B_1(0) \setminus \overline{B_{R_0}(0)}$. 
\end{prop}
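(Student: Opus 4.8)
The plan is to reduce the statement to a one–dimensional observation about the radial profile of $u$. Write $u(x) = g(|x|)$; since $u \in C^2(B_1(0))$ by Lemma~\ref{lem:regunod} and $u$ is radial, $g$ is $C^2$ on $[0,1)$ and continuous up to $r=1$, with $g(1) = (u_0)_{|\partial B_1(0)} \geq \delta > 0$. In polar coordinates $\Delta u = g''(r) + \tfrac1r g'(r) = \tfrac1r\bigl(r g'(r)\bigr)'$ on $(0,1)$, and $\Delta u \geq 0$ there because $u$ is subharmonic by Corollary~\ref{lem:corsubham} and $\Delta u$ is continuous. The first key step is to deduce that $g$ is nondecreasing: $\bigl(r g'(r)\bigr)' \geq 0$ shows $r \mapsto r g'(r)$ is nondecreasing on $(0,1)$, and since $u \in C^1$ one has $\lim_{r\to0^+} r g'(r) = 0$, so this nondecreasing function is $\geq 0$ on $(0,1)$; dividing by $r$ gives $g' \geq 0$ on $(0,1)$, i.e.\ $g$ is nondecreasing on $[0,1]$.

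Next I would analyse the zero set $Z := \{\, r \in [0,1) : g(r) = 0 \,\}$, which by monotonicity and continuity of $g$ is either empty or a closed subinterval $[a,b]$ of $[0,1)$ (it stays away from a neighbourhood of $r=1$ because $g(1)>0$; here one may also invoke Corollary~\ref{cor:guterrand}). If $Z$ were nondegenerate, any $r \in (a,b) \subset (0,1)$ would be an interior point of the zero set of $g$, forcing $g'(r)=0$ and hence $\nabla u \equiv 0$ on the circle $|x|=r$, contradicting $\nabla u \neq 0$ on $\{u=0\}$ (Theorem~\ref{thm:1.1}). Likewise $0 \notin Z$, since $g(0)=0$ would give $\nabla u(0)=0$ by radiality and $C^1$-regularity, contradicting Corollary~\ref{cor:nosing}. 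In the non-trivial case $\{u=0\}\neq\emptyset$ (when $Z=\emptyset$ one has $u>0$ on all of $B_1(0)$, by continuity together with $g$ nondecreasing and $g(1)>0$) this forces $Z = \{R_0\}$ for some $R_0 \in (0,1)$, so that $\{u=0\} = \partial B_{R_0}(0)$.

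Finally, since $g$ is nondecreasing with $R_0$ its only zero, $g(r)>0$ for every $r \in (R_0,1]$ (it is $\geq g(R_0)=0$ by monotonicity and cannot equal $0$) and, symmetrically, $g(r)<0$ for $r \in [0,R_0)$; hence $\{u>0\} = \{\, R_0 < |x| < 1 \,\} = B_1(0)\setminus\overline{B_{R_0}(0)}$, which is the assertion (and incidentally $\{u<0\}=B_{R_0}(0)$). The only delicate points are local and have already been handled: the behaviour at the origin (so that monotonicity of $r g'(r)$ upgrades to $g'\geq0$) and the exclusion of a ``fat'' zero set, both of which rest on the $C^1$-regularity of $u$ and the non-vanishing of $\nabla u$ on $\{u=0\}$. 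I do not expect a genuine obstacle; the argument is essentially the ODE fact that a radial subharmonic function is radially monotone.
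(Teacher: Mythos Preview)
Your proof is correct and takes a somewhat different route from the paper's. The paper argues structurally: by Theorem~\ref{thm:1.1} the nodal set decomposes into finitely many closed $C^2$-curves $S_1,\dots,S_N$, which by radiality must be concentric circles $\partial B_{r_i}(0)$; then the strong maximum principle, applied to the subharmonic $u$ on the largest ball $B_{r_N}(0)$, forces $u<0$ in its interior, contradicting the existence of any inner nodal circle, so $N=1$. You instead exploit the ODE form of subharmonicity to show that the radial profile $g$ is nondecreasing (via $(rg')'\geq 0$ together with $rg'(r)\to 0$ as $r\to 0^+$), and then read off the structure of the zero set directly from monotonicity combined with $\nabla u\neq 0$ on $\{u=0\}$. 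Your approach is a bit more hands-on and yields the radial monotonicity of $u$ as a pleasant byproduct; the paper's is shorter because it simply quotes the global decomposition of $\{u=0\}$ already established. Both arguments rest on the same two ingredients, namely subharmonicity (Corollary~\ref{lem:corsubham}) and the nonvanishing of $\nabla u$ on the nodal set (Theorem~\ref{thm:1.1} and Corollary~\ref{cor:nosing}).
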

\begin{proof}
%Suppose that $u \in W^{2,2}(B_1(0))$ is radial and subharmonic. Hence $u(x) = f(|x|)$ for some continous $f: [0,1) \rightarrow \mathbb{R}$ and all $x \in B_1(0)$. Note that then
%\begin{equation*}
%f(r) = \avint_{\partial B_r(0)} u(y) \; \mathrm{d}S_r(y) .
%\end{equation*}
%According to \cite[Proposition 4.4.15]{Berenstein}
%\begin{equation*}
%r \mapsto \avint_{\partial B_r(0)} u(y) \; \mathrm{d}S_r(y)
%\end{equation*}
%is nondecreasing. Hence $f$ is nondecreasing and so $\{ f = 0 \}$ is a closed interval $[a,b]$ for some $a\leq b$. If $a < b $ then $\{ u = 0 \} $ contains the annulus with inner radius $a$ and outer radius $b$, which is not a set of measure zero. Hence $a= b =: R_0$ and $\{ u = 0\} = \partial B_{R_0}(0)$. 
According to Theorem \ref{thm:1.1} one has $\{u = 0 \} = \bigcup_{i = 1}^N S_i $ for closed disjoint $C^2$-manifolds $S_i$ all of which form a connected component of $\{ u = 0 \}$. Since $u$ is radial one has $S_i = \partial  B_{r_i}(0)$ for some radii $r_i > 0 $. Without loss of generality $r_1 < ... < r_N$. It remains to show that $N = 1$. If $N >1$ then $u \equiv 0 $ on $\partial B_{r_N}(0)$. By subharmonicity one has $u <0 $ on $B_{r_N}(0)$. However now $r_1< r_N$ and therefore one obtains a contradiction to $u = 0 $ on $\partial B_{r_1}(0)$.
\end{proof}

\begin{lemma} [Explicit radial solutions] \label{lem:explrad}
Suppose $\Omega = B_1(0)$. Let $u_0$ be a positive constant. Define  
\begin{equation}\label{eq:infii}
h(u_0) := \min \left\lbrace \pi , \inf_{R_0 \in (0,1)} \left( \frac{4\pi u_0^2}{\frac{1-R_0^2}{2}+ R_0^2 \log(R_0) } + \pi (1-R_0^2) \right) \right\rbrace .
\end{equation}
In case that $h(u_0) < \pi$, the infimum in \eqref{eq:infii} is attained and for each $R \in (0,1)$ that realizes the infimum in \eqref{eq:infii} the function 

\begin{equation*}
u(x) =u_0\begin{cases} \frac{  \log R  |x|^2 - R^2 \log R}{R^2-1- 2R^2 \log R}  & 0 \leq |x| \leq  R, \\ \frac{-  |x|^2 + R^2 -2 R^2 \log R  +   R^2 \log |x| +  {|x|^2} \log |x|}{{R^2-1}-2 R^2 \log R} & R < |x| < 1. \end{cases}
\end{equation*} 
%\begin{equation*}
%u(x) := \frac{u_0}{\frac{R^2-1}{2}- R\log(R)} \begin{cases}
%\frac{1}{2} \log(R) |x|^2 - \frac{1}{2}R^2 \log(R) & |x| \leq R \\ 
%-\frac{1}{2}|x|^2 + \frac{1}{2}R^2 - R^2 \log(R) + \frac{1}{2} R^2 \log|x| + \frac{1}{2}|x|^2 \log|x| & |x|> R 
%\end{cases}
%\end{equation*}
is a minimizer with energy $\E(u) = h(u_0)$. In case that $h(u_0) = \pi$ a minimizer is given by a constant. 
\end{lemma}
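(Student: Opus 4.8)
The plan is to combine Corollary \ref{cor:radial} (existence of a radial minimizer), Proposition \ref{prop:radsub} (a radial minimizer has nodal set $\partial B_{R_0}(0)$ for a single radius $R_0$), and the explicit solution formula \eqref{eq:bihamsol} for radial biharmonic functions on annuli, and then to reduce the minimization over $\A(u_0)$ to the one-parameter minimization over $R_0 \in (0,1)$ that appears in \eqref{eq:infii}. First I would fix a radial minimizer $v$ (Corollary \ref{cor:radial}); by Proposition \ref{prop:radsub} there is $R_0 \in (0,1)$ with $\{v=0\}=\partial B_{R_0}(0)$ and $\{v>0\}=B_1(0)\setminus\overline{B_{R_0}(0)}$, unless $\{v=0\}=\emptyset$, in which case $v$ is harmonic with constant boundary data, hence $v\equiv u_0$, with energy $\pi=|B_1(0)|$, giving the ``$h(u_0)=\pi$'' alternative. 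So assume the nodal sphere is present. On each of the two annuli $B_{R_0}(0)$ and $B_1(0)\setminus\overline{B_{R_0}(0)}$ the minimizer is biharmonic (Lemma \ref{lem:biham}), hence of the form \eqref{eq:bihamsol}. On the inner disc $B_{R_0}(0)$ the $\log|x|$ and $|x|^2\log|x|$ terms must vanish to keep $v\in W^{2,2}$ near the origin, so $v(x)=A|x|^2+B$ there. The matching/boundary conditions are: $v=u_0$ on $\partial B_1(0)$; $\Delta v=0$ on $\partial B_1(0)$ (the Navier boundary condition in the trace sense, Section \ref{sec:fwts}); $v=0$ on $\partial B_{R_0}(0)$ from both sides; $C^1$-matching of $v$ across $\partial B_{R_0}(0)$ (Theorem \ref{thm:1.1}, $u\in C^1$); and $C^2$-matching across $\partial B_{R_0}(0)$ would be too much — instead the correct extra condition comes from \eqref{eq:bihame}, which across the smooth interface $\partial B_{R_0}(0)$ says precisely that the jump in the normal derivative of $\Delta v$ equals $-\tfrac{1}{2|\nabla v|}$. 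Solving this linear system in the four-plus-four constants, with $R_0$ as a parameter, yields the displayed formula for $u$ (and one checks the inner/outer pieces agree to first order at $|x|=R$ and that the outer piece is positive on $R<|x|<1$, using subharmonicity / the sign of $\nabla u$ from Theorem \ref{thm:1.1}).

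Next I would compute $\E(u)$ for this one-parameter family. Since $\{u>0\}=B_1(0)\setminus\overline{B_{R_0}(0)}$ we have $|\{u>0\}|=\pi(1-R_0^2)$. For the Dirichlet term $\int_{B_1(0)}(\Delta u)^2\,\mathrm{d}x$: on the inner disc $\Delta u$ is constant (since $u=A|x|^2+B$ there, $\Delta u=4A$), and on the outer annulus $\Delta u$ is an explicit affine-in-$\log$ expression; carrying out the radial integral (a routine but slightly lengthy computation) produces exactly $\dfrac{4\pi u_0^2}{\frac{1-R_0^2}{2}+R_0^2\log R_0}$. Alternatively, and more cleanly, one can use \eqref{eq:bihame} with $\phi=u-h$ (as in the proof of Lemma 7.1, where $h$ is the harmonic extension of $u_0$, namely $h\equiv u_0$): this gives $\int (\Delta u)^2 = \tfrac12\int_{\{u=0\}}\frac{h}{|\nabla u|}\,\mathrm{d}\mathcal H^1 = \tfrac12\cdot\frac{u_0}{|\nabla u(R_0)|}\cdot 2\pi R_0 = \frac{\pi R_0 u_0}{|\nabla u(R_0)|}$, and one evaluates $|\nabla u(R_0)|$ from the explicit formula; this should be reconciled with the expression in \eqref{eq:infii}. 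Either way, $\E(u)=\frac{4\pi u_0^2}{\frac{1-R_0^2}{2}+R_0^2\log R_0}+\pi(1-R_0^2)$, so minimizing $\E$ over radial admissible functions with a nodal sphere is exactly minimizing this quantity over $R_0\in(0,1)$. Comparing with the no-nodal-set value $\pi$ gives $h(u_0)$ as in \eqref{eq:infii}, and when $h(u_0)<\pi$ the infimum over $R_0$ is $<\pi$, hence attained at some interior $R\in(0,1)$ (the function of $R_0$ is continuous on $(0,1)$ and tends to $\pi$ as $R_0\to 0^+$ and to $+\infty$ as $R_0\to 1^-$, because the denominator $\frac{1-R_0^2}{2}+R_0^2\log R_0\to 0^-$... — one must check the sign of this denominator; in fact it is negative on $(0,1)$, which is why the formula for $u$ has the denominator $R^2-1-2R^2\log R>0$ up to sign — I would verify $\frac{1-R_0^2}{2}+R_0^2\log R_0<0$ and track signs carefully). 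For any such minimizing $R$, the corresponding explicit $u$ is a radial function in $\A(u_0)$ with $\E(u)=h(u_0)=\inf_{\A(u_0)}\E$, hence a minimizer.

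The main obstacle I anticipate is \textbf{pinning down the fourth matching condition at $|x|=R$ and getting all the constants (and signs) right}. The $W^{2,2}\cap W^{1,2}_0$ and Navier conditions give only enough relations once one correctly uses the distributional identity \eqref{eq:bihame} to supply the jump condition $[\partial_\nu \Delta u]=-\frac{1}{2|\nabla u|}$ across the interface — writing this out in radial coordinates and keeping the normalization factor $\tfrac12$ from \eqref{eq:bihame} consistent with the $2\int\Delta u\Delta\phi$ on the left is the delicate bookkeeping step. A secondary technical point is justifying that a radial minimizer really does have $\Delta u = 0$ on $\partial B_1(0)$ in a sense strong enough to impose it as a pointwise boundary condition on the explicit ODE solution; here I would invoke the trace-sense Navier condition from Section \ref{sec:fwts} together with the fact that on the annulus $R<|x|<1$ the function $\Delta u$ is smooth up to the boundary (Corollary \ref{cor:guterrand}), so its trace and its boundary value coincide. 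Once the linear algebra is done, verifying $\E(u)=h(u_0)$ and positivity of the outer piece is routine.
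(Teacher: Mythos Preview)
Your plan has a genuine gap in the matching conditions at $|x|=R_0$. You write that ``$C^2$-matching across $\partial B_{R_0}(0)$ would be too much'' and replace it by the jump condition $[\partial_\nu \Delta u] = -\tfrac{1}{2|\nabla u|}$ coming from \eqref{eq:bihame}. But Theorem~\ref{thm:1.1} states that every minimizer lies in $C^2(\Omega)$, so $\Delta u$ \emph{is} continuous across the nodal sphere, and this is exactly the sixth linear condition the paper imposes (together with regularity at the origin, $u(R_0)=0$ from both sides, $C^1$-matching, $u=u_0$ on $\partial B_1$, and the Navier condition $\Delta u=0$ on $\partial B_1$ from Appendix~\ref{sec:vanrad}). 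Solving \emph{that} system yields the displayed one-parameter family $w_{R_0}$, which is $C^2$ for every $R_0\in(0,1)$, and a direct radial integration of $(\Delta w_{R_0})^2$ gives the energy expression in \eqref{eq:infii}. Since the radial minimizer must equal some $w_{R_0}$ (or be constant), minimizing over $R_0$ finishes the proof.

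If instead you drop continuity of $\Delta u$ and impose the jump condition, you obtain a \emph{different} one-parameter family $\tilde w_{R_0}$ whose members are not $C^2$ for generic $R_0$; the energy $\E(\tilde w_{R_0})$ is then not given by the formula in \eqref{eq:infii}, so you would not recover the statement of the lemma. The jump condition from \eqref{eq:bihame} is not a substitute for $C^2$-matching: for the paper's family $w_{R_0}$ it is precisely the first-order optimality condition in $R_0$, hence equivalent to the minimization step, not to one of the linear matching equations. For the same reason your ``cleaner'' alternative computation of $\int(\Delta u)^2$ via \eqref{eq:bihame} with $\phi=u-u_0$ is valid only for the actual minimizer, not for every member of a comparison family, so it cannot produce $\E(w_{R_0})$ as a function of $R_0$.
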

\begin{proof}Recall there exists a radial minimizer $u$  by Corollary \ref{cor:radial}. 
By Theorem \ref{thm:1.1}, Corollary \ref{lem:corsubham} and Proposition \ref{prop:radsub} we deduce that $\{ u = 0 \} $ is either empty or there exists $R_0 \in (0,1)$ such that $\{ u = 0 \} = \partial B_{R_0}(0)$. If $\{u = 0\}$ is empty then the minimizer is a contant.  In the other case, Lemma \ref{lem:biham} implies that $u$ is weakly biharmonic on the annuli $\{ 0 < |x| < R_0 \}$ and $\{R_0 < |x|<1\}$. Hence there exist real numbers $C_1,D_1, E_1,F_1,C_2,D_2,E_2,F_2$ such that 
\begin{equation*}
u(x)  = \begin{cases}  C_1 |x|^2 + D_1 + E_1 \log|x| + F_1 \frac{|x|^2}{2}\log|x| & 0 < |x| < R_0 \\ C_2 |x|^2 + D_2 + E_2 \log |x| + F_2 \frac{|x|^2}{2} \log |x| & R_0 < |x| < 1 \end{cases} .
\end{equation*} 
Since $u$ has to be continuous at zero we deduce that $E_1 = 0 $. Since second derivatives of $u$ have to be continuous at zero it is an easy computation to show that $F_1 = 0$. By the Navier boundary conditions (cf. Appendix \ref{sec:vanrad}) we get that $4C_2 + 2F_2 = 0 $ and thus
% Therefore 
%\begin{equation*}
%u(x)  = \begin{cases}  C_1 |x|^2 + D_1  & 0 < |x| < R_0 \\ C_2 |x|^2 + D_2 + E_2 \log |x| + F_2 \frac{|x|^2}{2} \log |x| & R_0 < |x| < 1 \end{cases} .
%\end{equation*} 
%Note that then 
%\begin{equation*}
%\Delta u(x) =  \begin{cases} 4C_1   & 0 < |x| < R_0 \\4C_2 + F_2( 2\log|x| + 2)  & R_0 < |x| < 1 \end{cases}. 
%\end{equation*}
%By Lemma \ref{lem:strongnav} we conclude that $4C_2 + 2F_2 = 0$ , i.e $C_2 = - \frac{F_2}{2}$ and thus 
\begin{equation}\label{eq:hilfreich}
\Delta u(x) =  \begin{cases} 4C_1   & 0 < |x| < R_0 \\2 F_2 \log|x|  & R_0 < |x| < 1 \end{cases}. 
\end{equation}
As $\Delta u$ has to be continuous we obtain that $4C_1 = 2 F_2 \log R_0 $, i.e. $C_1 = \frac{1}{2}F_2 \log R_0$.
The fact that $ u = 0 $ on $\partial B_{R_0} (0 )$ implies that 
$0 =  C_1 R_0^2 + D_1 $  and hence $D_1 = - C_1 R_0^2 = - \frac{F_2}{2}R_0^2 \log R_0$. From all these computations we obtain 
\begin{equation*}
u(x)  = \begin{cases} \frac{1}{2} F_2 \log R_0  |x|^2 - \frac{1}{2}F_2 R_0^2 \log R_0  & 0 < |x| \leq  R_0 \\- \frac{1}{2} F_2 |x|^2 + D_2 + E_2 \log |x| + F_2 \frac{|x|^2}{2} \log |x| & R_0 < |x| < 1 \end{cases} .
\end{equation*}
If we take the radial derivative $\partial_r u$ in both cases and set them equal we obtain 
\begin{equation*}
F_2 R_0  \log R_0 = - F_2 R_0 + E_2 \frac{1}{R_0} + F_2 R_0 \log R_0 + \frac{1}{2} F_2 R_0 ,
\end{equation*}
which results in $E_2 = \frac{1}{2}F_2 R_0^2$ and thus 
\begin{equation*}
u(x) =  \begin{cases} \frac{1}{2} F_2 \log R_0  |x|^2 - \frac{1}{2}F_2 R_0^2 \log R_0  & 0 < |x| \leq  R_0 \\- \frac{1}{2} F_2 |x|^2 + D_2 +  \frac{1}{2} F_2 R_0^2 \log |x| + F_2 \frac{|x|^2}{2} \log |x| & R_0 < |x| < 1 \end{cases} .
\end{equation*}
Note another time that $0 = \lim_{ |x| \rightarrow R_0 + } u$ and therefore 
\begin{equation*}
0 = D_2 + F_2 \left( - \frac{1}{2}R_0^2 + R_0^2 \log R_0  \right) .
\end{equation*}
Hence $D_2 = \frac{1}{2}F_2 R_0^2 - F_2 R_0^2 \log R_0 $ and this yields that
\begin{equation*}
u(x) = F_2 \begin{cases} \frac{1}{2}  \log R_0  |x|^2 - \frac{1}{2}R_0^2 \log R_0  & 0 \leq |x| \leq  R_0 \\- \frac{1}{2}  |x|^2 + \frac{1}{2} R_0^2 - R_0^2 \log R_0  +  \frac{1}{2} R_0^2 \log |x| +  \frac{|x|^2}{2} \log |x| & R_0 < |x| < 1 \end{cases}
\end{equation*} 
Using that $u  \equiv u_0$ on $\partial B_1(0)$ we find 
\begin{equation}\label{eq:eff2}
u_0 = F_2 \left( \frac{R_0^2-1}{2}- R_0^2 \log R_0 \right),
\end{equation}
which finally determines $R_0$. Hence we know that there must exist some $R_0 \in (0,1)$ such that 
\begin{equation}\label{eq:notkrituu}
u(x) =u_0\begin{cases} \frac{  \log R_0  |x|^2 - R_0^2 \log R_0 }{R_0^2-1- 2R_0^2 \log R_0}  & 0 \leq |x| \leq  R_0, \\ \frac{-  |x|^2 + R_0^2 -2 R_0^2 \log R_0  +   R_0^2 \log |x| +  {|x|^2} \log |x|}{{R_0^2-1}-2 R_0^2 \log R_0} & R_0 < |x| < 1. \end{cases}
\end{equation} 
Now define for $R_0 \in (0,1)$ the function $w_{R_0} \in \A(u_0)$ to be the right hand side of \eqref{eq:notkrituu}.
We have shown either $\{u = 0 \} $ is empty or that the minimizer is given by some $w_{R_0^*}$ for some $R_0^* \in (0,1)$. 
Going back to \eqref{eq:hilfreich} and using that according to Proposition \ref{prop:radsub} $|\{ u > 0 \}| = \pi (1-R_0^2)$ we obtain that 
\begin{align*}
\E(w_{R_0})& = 16C_1^2 \pi R_0^2 +4 F_2^2 \int_{B_1 \setminus B_{R_0}} 4 F_2^2 \log^2|x| \; \mathrm{d}x + \pi (1- R_0^2) \\&  = 4F_2^2 \pi \left( R_0^2 \log^2 R_0 + 2 \int_{R_0}^1 r \log^2 r \; \mathrm{d}r \right) + \pi (1- R_0^2), 
\end{align*} 
where we use the derived parameter identity for $C_1$ and radial integration in the last step. Using that 
\begin{equation*}
\int_{R_0}^1 r \log^2 r \; \mathrm{d}r = \frac{R_0^2}{2} \log^2 R_0 + \frac{R_0^2}{2} \log R_0 + \frac{1- R_0^2}{4}
\end{equation*}
we obtain using \eqref{eq:eff2}
\begin{align}
\E(w_{R_0} ) & = 4 F_2^2 \pi \left( R_0^2 \log R_0 + \frac{1-R_0^2}{2} \right) + \pi (1- R_0^2) \nonumber \\ & \label{eq:ero}  = \frac{4\pi u_0^2}{\frac{1-R_0^2}{2} + R_0^2 \log R_0 } + \pi (1- R_0^2) .
\end{align}
We have shown that for each $R_0 \in (0,1)$ we can find an admissible function $w_{R_0} \in \A(u_0)$ such that $\E(w_{R_0})$ is given by the right hand side of \eqref{eq:ero}. Moreover we know that a minimizer $u$ is among such $w_{R_0}$ in case that $\{ u = 0 \} \neq \emptyset$. In case that $\{u = 0 \} = \emptyset $ however, we know from Remark \ref{rem:nontriv} that $\E(u) = \pi$. We obtain that 
\begin{equation}\label{eq:enerrad}
\E(u) = \min \left\lbrace \pi,  \inf_{R_0 \in (0,1) }  \frac{4\pi u_0^2}{\frac{1-R_0^2}{2} + R_0^2 \log R_0 } + \pi (1- R_0^2) \right\rbrace .
\end{equation}
and in case that  the infimum is smaller than $\pi$, it is attained by some $R_0^* \in (0,1) $ such that a minimizer is given by $w_{R_0^*}$. 
\end{proof}
\begin{remark}
Let $h(u_0)$ be the quantity defined in the previous lemma. If $h(u_0)< \pi$ then one has to find  
\begin{equation*}
\inf_{R_0 \in (0,1)} \left( \frac{4\pi u_0^2}{\frac{1-R_0^2}{2}+ R_0^2 \log(R_0) } + \pi (1-R_0^2) \right).  
\end{equation*}
To do so, we set the first derivative of the expression equal zero, which becomes
\begin{equation*}
0 = \frac{-8\pi u_0^2 R_0 \log R_0 }{\left( \frac{1-R_0^2}{2}+ R_0^2 \log R_0 \right)^2} - 2\pi R_0^2.
\end{equation*} 
%which implies 
%\begin{equation*}
%4 \pi u_0^2 =  \frac{-\pi}{\log(R_0) }\left( \frac{1-R_0^2}{2}+ R_0^2 \log R_0 \right)^2.
%\end{equation*}
Solving for $u_0$ and plugging this into \eqref{eq:infii} we find that 
\begin{align}
\inf_{ w \in \mathcal{A}(u_0) }\E(w) & = - \frac{\pi}{\log R_0 } \left( \frac{1-R_0^2}{2}+ R_0^2 \log R_0 \right) + \pi (1- R_0^2) \nonumber
\\ & =  \pi \left( 1- 2R_0^2 + \frac{R_0^2-1}{2 \log R_0} \right) \label{eq:rdill}
\end{align}
\end{remark}

\begin{lemma}[The nonuniqueness level]\label{lem:iooo}
Let $\Omega = B_1(0)$. Then the quantity $\iota$ in Definition \ref{def:cand} is given by  
 \begin{equation}\label{eq:iooo}
 \iota = \frac{R_*}{2} \sqrt{\frac{1 -R_*^2}{2} + R_*^2 \log R_* } \simeq 0.112814
 \end{equation}
 where $R_* \simeq 0.533543$ is the unique solution of 
 \begin{equation}\label{eq:glfrr}
 \frac{R^2 - 1}{2\log R} - 2R^2 = 0, \quad R \in (0,1).
 \end{equation}
\end{lemma}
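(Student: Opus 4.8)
The plan is to combine the explicit energy formula of Lemma~\ref{lem:explrad} with an elementary one–variable optimization. Recall from \eqref{eq:enerrad} that for $\Omega = B_1(0)$ and constant boundary value $u_0 = C > 0$,
\[
\inf_{w \in \A(C)} \E(w) = \min\left\{\pi,\ \inf_{R \in (0,1)} \left(\frac{4\pi C^2}{g(R)} + \pi(1 - R^2)\right)\right\},
\]
where I abbreviate $g(R) := \tfrac{1-R^2}{2} + R^2\log R$. The first step is to record that $g > 0$ on $(0,1)$: indeed $g(1) = 0$ and $g'(R) = 2R\log R < 0$ on $(0,1)$, so $g$ is strictly decreasing and positive there. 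Consequently, for fixed $R \in (0,1)$ the bracketed quantity is $< \pi$ if and only if $4\pi C^2/g(R) < \pi R^2$, i.e.\ if and only if $C < \Phi(R)$, where $\Phi(R) := \tfrac{R}{2}\sqrt{g(R)}$. Hence $\inf_{w \in \A(C)} \E(w) < \pi$ precisely when $C < \sup_{R \in (0,1)}\Phi(R)$, so comparing with Definition~\ref{def:cand} (and recalling Lemma~\ref{lem:limica} for the equality case $C=\iota$) gives
\[
\iota = \sup_{R \in (0,1)}\Phi(R).
\]

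Next I would show the supremum is attained at a unique interior point. Since $g(R) \to \tfrac12$ as $R \to 0^+$ and $g(R) \to 0$ as $R \to 1^-$, we have $\Phi(R) \to 0$ at both endpoints while $\Phi > 0$ on $(0,1)$, so $\Phi$ attains a positive maximum at some $R_* \in (0,1)$. To pin down $R_*$ it is convenient to maximize $\Psi := 4\Phi^2 = R^2 g(R) = \tfrac{R^2 - R^4}{2} + R^4\log R$. A direct computation gives $\Psi'(R) = R\,q(R)$ with $q(R) := 1 - R^2 + 4R^2\log R$, and $q'(R) = 2R(1 + 4\log R)$, so $q$ is strictly decreasing on $(0, e^{-1/4})$ and strictly increasing on $(e^{-1/4},1)$. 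Since $q(0^+) = 1 > 0$, $q(e^{-1/4}) = 1 - 2e^{-1/2} < 0$ and $q(1) = 0$, the function $q$ has exactly one zero $R_* \in (0, e^{-1/4}) \subset (0,1)$ and $q < 0$ on $(R_*, 1)$. Therefore $\Psi' > 0$ on $(0,R_*)$ and $\Psi' < 0$ on $(R_*,1)$, so $R_*$ is the unique maximizer of $\Phi$ and
\[
\iota = \Phi(R_*) = \frac{R_*}{2}\sqrt{\frac{1-R_*^2}{2} + R_*^2\log R_*}.
\]
Finally, multiplying $q(R)=0$ by $2\log R \neq 0$ shows $q(R)=0 \iff \tfrac{R^2-1}{2\log R} - 2R^2 = 0$, which is exactly \eqref{eq:glfrr}; hence $R_*$ is its unique solution in $(0,1)$, and the numerical values $R_* \simeq 0.533543$, $\iota \simeq 0.112814$ follow by solving this transcendental equation numerically. (As a side remark, substituting $\log R_* = \tfrac{R_*^2-1}{4R_*^2}$ back into $g(R_*)$ yields the cleaner form $\iota = \tfrac{R_*}{4}\sqrt{1-R_*^2}$.)

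The routine parts---the sign of $g$, the derivative computations, the substitution $u_0 \leftrightarrow R_0$ in \eqref{eq:eff2}--\eqref{eq:ero} already carried out in Lemma~\ref{lem:explrad}---are mechanical. The only point requiring genuine care is the monotonicity analysis of $q$, which must simultaneously deliver existence and uniqueness of $R_*$ and confirm that the critical point is the global maximum of $\Phi$ rather than some spurious stationary point; everything else is bookkeeping around the explicit formula.
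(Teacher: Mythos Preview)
Your argument is correct and takes a genuinely different route from the paper. The paper first establishes uniqueness of $R_*$ via the substitution $R^2=e^u$ and the Lambert $W$-function, then argues indirectly: it takes a sequence $\iota_n\uparrow\iota$ with $\E(u_n)<\pi$, invokes the lower bound \eqref{eq:negativityset} from Corollary~\ref{cor:onephase} to keep the nodal radii $R_n$ away from zero, extracts a limit $R_n\to R_*$ using the energy identity \eqref{eq:rdill}, and finally reads off $\iota$ from the limiting equation. You instead recognise directly from \eqref{eq:enerrad} that $\inf_{\A(C)}\E<\pi$ if and only if $C<\Phi(R)$ for some $R$, which immediately yields $\iota=\sup_{(0,1)}\Phi$; the one-variable calculus on $\Psi=4\Phi^2$ then delivers both existence and uniqueness of the maximiser and the equivalence with \eqref{eq:glfrr} in one stroke. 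Your approach is shorter and entirely self-contained within Lemma~\ref{lem:explrad}: it avoids the Lambert $W$-function, the compactness/limit argument, and the appeal to the measure estimate \eqref{eq:negativityset}. The paper's route, by contrast, makes visible the connection to the non-uniqueness construction in Section~\ref{sec:nonuni} (the limit of the $u_n$ really is the second minimiser at level $\iota$), which your purely analytic optimisation does not display.
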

\begin{proof}
First we show that \eqref{eq:glfrr} has a unique solution $R_* \in (0,1)$. For this we first rewrite the equation multiplying by $2\log R$. 
\begin{equation*}
R^2 - 1 - 4 R^2 \log R = 0 
\end{equation*}
Using that $2 \log R = \log R^2 $ and substituting $R^2 = e^u$ for some $u \in (- \infty, 0) $ we find 
 \begin{equation*}
 e^u - 1 - 2ue^u = 0 \quad \Leftrightarrow \quad  ( 1-2u) = e^{-u}. 
 \end{equation*}
 By \cite[Eq. (2.23)]{Lambert} this equation has the solution 
 \begin{equation}\label{eq:spezfu}
 u \in  \frac{1}{2}+ W \left(- \frac{1}{2\sqrt{e}} \right) 
 \end{equation}
 where $W$ denotes the Lambert $W-$function, i.e. the multi-valued inverse of $f(x) = x e^x$. Note that for each negative number in $a \in (- e^{-1},0)$, $W(a)$ is exactly two-valued with one value smaller than $-1$ and one value larger than $-1$. This can be seen using that $f$ is negative on $(- \infty, 0)$ and has a global minimum at $-1$ with value $e^{-1}$. Moreover $f$ is decreasing on $(-\infty, -1) $ and increasing $(-1, 0)$. All of these assertions can be proved with standard techniques. 
 Now note that $f(-\frac{1}{2}) = - \frac{1}{2\sqrt{e}}$ and therefore $-\frac{1}{2}$ is one values of $w$, i.e. the first possible solution of $u$ is $u= 0 $. This however does not lie in ths interval $(-\infty, 0 )$ and hence resubstitution does not generate a vlues $R \in (0,1)$. The only remaining possibility is the other value of $W \left(- \frac{1}{2\sqrt{e}} \right)$ that falls strictly below $-1$ and hence the corresponding solution for $u$ lies in $(- \infty, 0 )$, cf. \eqref{eq:spezfu}. Therefore this unique solution $u_* \in (-\infty, 0 )$ generates a unique solution $R_* = e^{\frac{1}{2} u_*} \in (0,1)$. Now we show \eqref{eq:iooo}. 

By Lemma \ref{lem:limica} one can find a minimizer with energy $\pi= |B_1(0)|$. Recall from the proof of Theorem \ref{thm:nonun} that a minimizer $u \in \mathcal{A}(\iota)$ can be constructed by taking a weak $W^{2,2}$-limit of  minimizers $u_n \in \mathcal{A}(\iota_n)$ for some sequence of constants $(\iota_n)_{n \in \mathbb{N}}$ that converges from below to $\iota$. Without loss of generality we can assume that there exists $\delta > 0 $ such that $\iota_n \geq \delta > 0 $ for each $n \in \mathbb{N}$. By definition of $\iota$ one can  achieve that $\E(u_n) < \pi$ for all $n \in \mathbb{N}$. Repeating the computation in \eqref{eq:linenerg} one can also has
\begin{equation*}
\pi = \lim_{n \rightarrow \infty } \E(u_n).
\end{equation*}
Now note also that $(u_n)_{n \in \mathbb{N}}$ can be chosen to be a sequence of radial minimizers. In particular we can choose $u_n$ to be of the form 
%\begin{equation*}
%u_n(x) := \frac{\iota_n}{\frac{R_n^2-1}{2}- R_n\log(R_n)} \begin{cases}
%\frac{1}{2} \log(R_n) |x|^2 - \frac{1}{2}R_n^2 \log(R_n) & |x| \leq R_n \\ 
%-\frac{1}{2}|x|^2 + \frac{1}{2}R_n^2 - R_n^2 \log(R_n) + \frac{1}{2} R_n^2 \log|x| + \frac{1}{2}|x|^2 \log|x| & |x|> R_n 
%\end{cases}
%\end{equation*}
\begin{equation*}
u(x) = \iota_n \begin{cases} \frac{  \log R_n  |x|^2 - R_n^2 \log R_n }{R_n^2-1- 2R_n^2 \log R_n}  & 0 \leq |x| \leq  R_n, \\ \frac{-  |x|^2 + R_n^2 -2 R_n^2 \log R_n  +   R_n^2 \log |x| +  {|x|^2} \log |x|}{{R_n^2-1}-2 R_n^2 \log R_n} & R_n < |x| < 1 \end{cases}
\end{equation*} 
for some $R_n \in (0,1)$. By \eqref{eq:rdill} we infer that $R_n$ satisfies
\begin{equation*}
 \pi \underset{n \rightarrow \infty}{\longleftarrow} \E(u_n) = \pi \left( 1 - 2R_n^2 + \frac{R_n^2 - 1}{2\log(R_n)} \right)
\end{equation*}
and hence 
\begin{equation*}
\frac{R_n^2 - 1}{2\log(R_n)} - 2R_n^2 \rightarrow 0 \quad ( n \rightarrow \infty) .
\end{equation*}
By \eqref{eq:negativityset}, we obtain that $\pi R_n^2 \geq 4\pi \iota_n^2 \geq 4\pi^2 \delta$. Therefore $R_n \geq 2\sqrt{\pi }\sqrt{\delta}$ is bounded from below by a strictly positive constant. 
Define $a : [0,1] \rightarrow \mathbb{R}^2$ to be the continuous extension of $z \mapsto \frac{z^2-1}{2\log z} - 2 z^2$. 
By compactness of $[0,1]$, $(R_n)_{n \in \mathbb{N}}$ has a convergent subsequence (again denoted by $(R_n)_{n \in \mathbb{N}}$)to some limit $R \in [0,1]$ that satisfies $a(R) = 0 $. Since $a(1) = -3 \neq 0 $ this equation is only solved by zero and by $R_*$ determined above. However $R \neq 0 $ since $(R_n)_{n \in \mathbb{N}}$ is bounded away from zero. This implies that $R = R_*$ and in particular that $(R_n)_{n \in \mathbb{N}}$ converges to $R_*$. By Lemma \ref{lem:explrad} we infer - since $\E(u_n) < \pi$ -  that 
\begin{equation*}
\E(u_n) = \frac{4\pi \iota_n^2}{\frac{1-R_n}{2}+ R_n^2 \log R_n} + \pi ( 1- R_n^2).
\end{equation*}
Using that $\iota_n \rightarrow \iota$, $R_n \rightarrow R_*$ and $\E(u_n) \rightarrow \pi$ as $n \rightarrow \infty$ we obtain in the limit that 
\begin{equation*}
\pi = \frac{4\pi \iota^2}{\frac{1-R_*}{2}+ R_*^2 \log R_*} + \pi ( 1- R_*^2) .
\end{equation*} 
Solving for $\iota$ we obtain the claim. 
\end{proof}

Next we list some selected numerical values for radial minimizers in Table \ref{tabelle} and give some plots in the figure below. For this let $R$ be the set of all points $R_0 \in (0,1)$ where $h(u_0)$ in \eqref{eq:infii} is attained (which conicides with the radius of the nodal sphere of a minimizer)

\begin{table}[h]
\begin{tabular}{|c|c|c|}
\hline 
$u_0$ &$R(u_0)$   & $\inf_{A(u_0)} \E$ \\
\hline 
$0.01$ & $0.924036$ & $0.682707$ \\
\hline
$0.02$ & $0.876984$ & $1.07223$ \\
\hline
$0.04$ & $0.797621$ & $1.67144$ \\ 
\hline 
$0.08$ & $0.654679$ & $2.56739$ \\
\hline 
$0.1$ & $0.582373$ & $2.93062$ \\ 
\hline 
$0.11$ & $0.544514$ & $3.09661$ \\
\hline 
$0.112$ & $0.536733$ & $3.12866$ \\
\hline  
\end{tabular}
\vspace{0.1cm}
\caption{Energy and nodal radius for selected boundary data}\label{tabelle}
\end{table}

 \begin{figure}[ht]
    \centering
    \begin{subfigure}[b]{0.48\textwidth}
        \includegraphics[width=\textwidth]{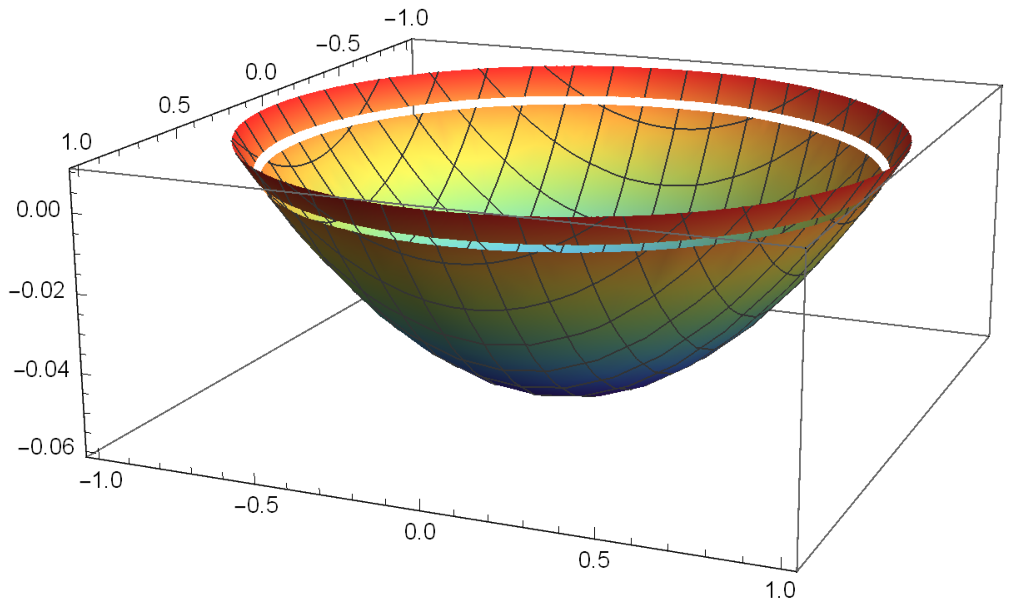}
        \caption{$u_0 = 0.01$: 3D-Plot}
%        \label{fig:gull}
    \end{subfigure}
    ~
    \begin{subfigure}[b]{0.48\textwidth}
        \includegraphics[width=\textwidth]{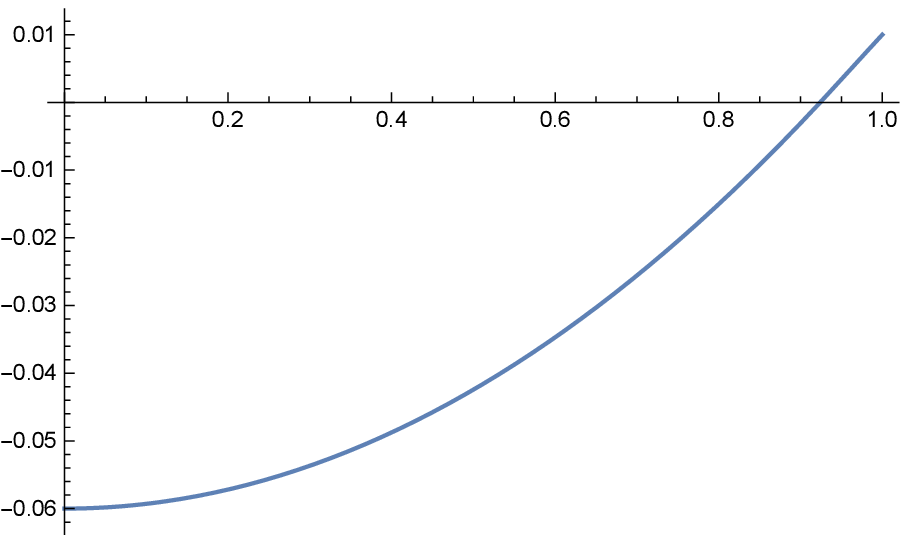}
        \caption{$u_0 = 0.01$: Profile curve}
    \end{subfigure}
    %add desired spacing between images, e. g. ~, \quad, \qquad, \hfill etc. 
    %(or a blank line to force the subfigure onto a new line)
    
    \begin{subfigure}[b]{0.48\textwidth}
        \includegraphics[width=\textwidth]{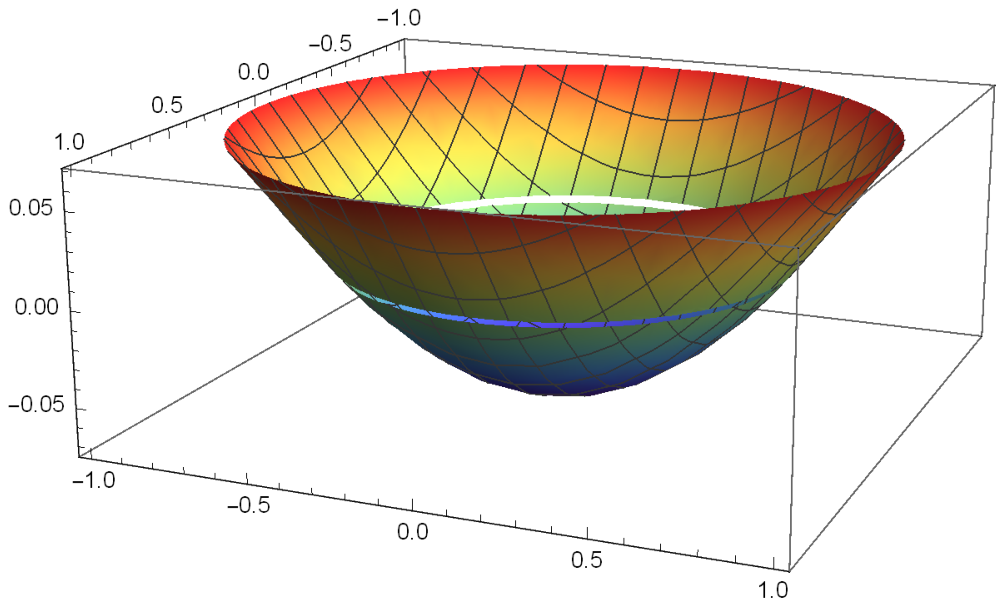}
        \caption{$u_0 = 0.07$: 3D-Plot}
    \end{subfigure}
    ~
    \begin{subfigure}[b]{0.48\textwidth}
        \includegraphics[width=\textwidth]{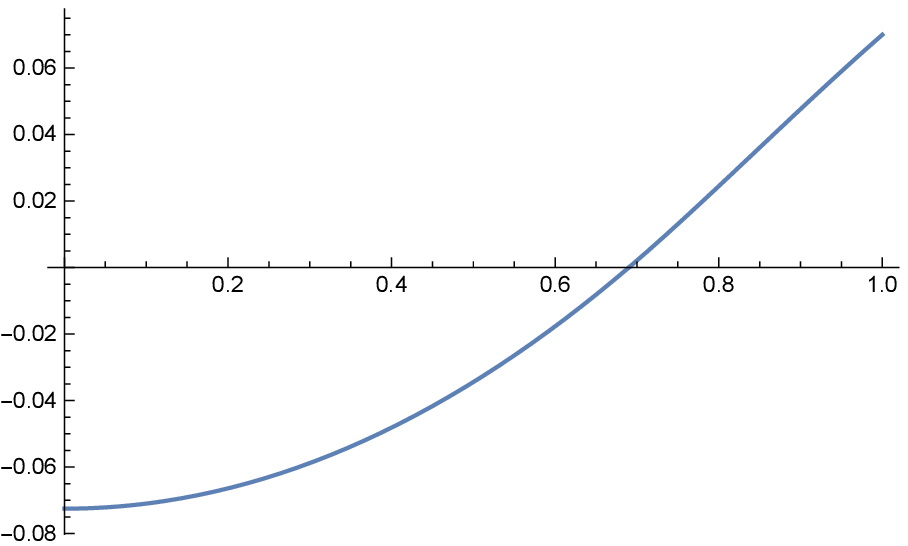}
        \caption{$u_0= 0.07$: Profile curve}
    \end{subfigure}  
    
     \begin{subfigure}[b]{0.48\textwidth}
        \includegraphics[width=\textwidth]{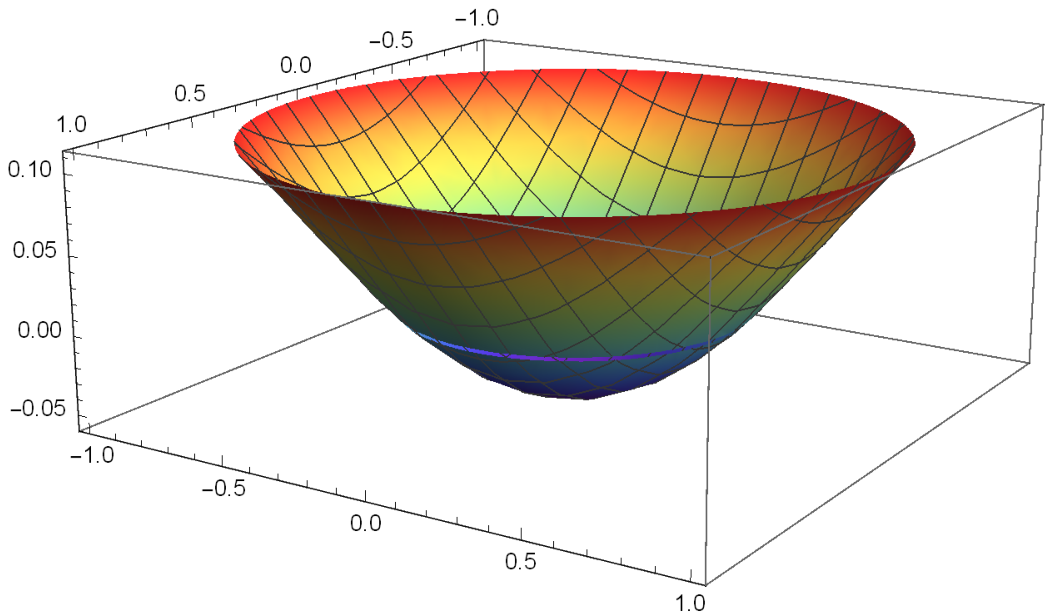}
        \caption{$u_0= 0.112$: 3D-Plot}
    \end{subfigure}
    ~
    \begin{subfigure}[b]{0.48\textwidth}
        \includegraphics[width=\textwidth]{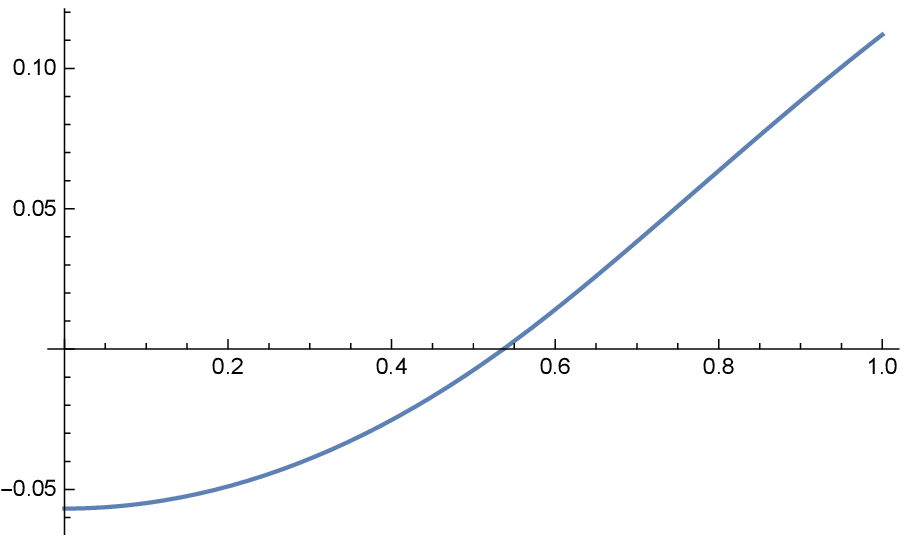}
        \caption{$u_0 = 0.112$: Profile curve}
    \end{subfigure}  
    \caption{Selected Minimizers and their radial profile curves}\label{fig:OrbitlikeFreeElastica}
\end{figure}

\section{Optimality Discussion and Closing Remarks}\label{sec:opti}
In this section we present some open problems which we think would be interesting to consider in the context of the biharmonic Alt-Caffarelli problem. As we outlined in the introduction, the biharmonic Alt-Caffarelli problem is fundamentally different from some more established higher order variational problems with free boundary and therefore we believe that new techniques have to be developed.

\begin{remark}[Interior  regularity]
It is an interesting question whether one can expect more interior regularity of minimizers than $C^2(\Omega)$. Recall that by Theorem \ref{thm:1.1}, regularity of minimizers and regularity of the free boundary are connected by the fact that minimizers have nonvanishing gradient on their nodal set. There is however one obstruction to higher regularity: The explicit minimizers found in Lemma \ref{lem:explrad} are do not lie in $C^3(\Omega)$. What remains then open is $C^{2,\gamma}$-regularity for some $\gamma > 0$. The solutions in Lemma \ref{lem:explrad} actually have a Lipschitz second derivative, so it is likely that better regularity statements can be derived. %As far as we know, the precise regularity is open and we consider it interesting. 
%One of these is that $C^2$-regularity is the optimal classical regularity in one dimension, as \cite[Example 2 in Section 5]{Valdinoci} shows (with slightly different assumptions on the boundary function, that are however not important for this discussion). We have also computed minimizers explicitly in Lemma \ref{lem:explrad}, which are not $C^3(\Omega)$. An interesting fact however is that   
\end{remark}

Another interesting and not immediately related question is the regularity up to the free boundary.

\begin{remark}[Regularity up to the free boundary]
We have found in Lemma \ref{lem:biham} that $u \in C^\infty( \{ u> 0 \})$, as $\Delta u $ is harmonic in $\{ u > 0 \}$. Moreover, Theorem \ref{thm:1.1} implies that $\Delta u $ is continuous on $\overline{\{ u < 0 \} }$, which makes it a classical solution to a Dirichlet problem.   
Higher Regularity of $\Delta u_{\mid_{ u< 0 }}$ up to the free boundary turns out to be an interesting problem. The free boundary is regular enough for elliptic regularity theory, cf. \cite[Theorem 9.15]{Gilbarg}. However, it is unclear whether $\Delta u_{\mid_{ \{ u = 0 \}}}$ is a trace of a $W^{2,p}$ function for any $p \in (1,2)$, which is also a requirement in \cite[Theorem 9.15]{Gilbarg}.
 This is actually delicate, see  \cite{Hadamard} and \cite[Page 3]{Daners} for relevant counterexamples. Further regularity up to the free boundary would improve the regularity of the free boundary itself, which we do not consider impossible. Hence such a discussion is useful and could potentially give way to future research. 
\end{remark}

\begin{remark}[Dirichlet boundary conditions]
The argumentation in the present article relied heavily on a weak version of the  `maximum principle for systems', see \cite[Section 2.7]{Sweers} for the exact connection between elliptic systems and higher order PDE's with Navier boundary conditions. In the case of Dirichlet boundary conditions where these priciples are not available, statements like Corollary \ref{lem:corsubham} are not expected to hold true in the way they do in our case. We nevertheless believe that a discussion of the Dirichlet problem is also doable. It would be an interesting question whether the results are similar at all. The question has also been asked for other higher order free boundary problems, see \cite{Friedman} for the biharmonic obstacle problem, where rich similarities can be found.  
\end{remark}

\begin{remark}[Connectedness of the Free Boundary] 
It would also be interesting to understand some global properties of the minimizer. For example it is worth asking whether conditions on $\Omega$ and $u_0$ can be found under which the free boundary is connected, i.e. $N = 1$ in Theorem \ref{thm:1.1}. One would expect that $N=1$ is not always the case, for example for dumbbell-shaped domains. Such global properties of the solution are difficult to understand - again due to the lack of a maximum principle for fourth order equations. 
\end{remark}

\appendix

\section{Proof of Lemma \ref{lem:bihammeasrep}} \label{sec:tecprf}
\begin{proof}[Proof of Lemma \ref{lem:bihammeasrep}]
What one formally does in this proof is plug the fundamental solution $F$ into \eqref{eq:bihammeas}. Since $F$ is however not admissible for this equation one has to localize and regularize before, producing error terms that we study in the following. 

Choose $\xi \in C_0^\infty(\Omega)$ such that $0 \leq \xi \leq 1$ and $\xi \equiv 1$ on an open neighborhood $U$ of $\Omega_{\epsilon_0}^C$. Now choose $(\zeta_n)_{n = 1}^\infty \subset C_0^\infty(\Omega)$ such that $\zeta_n \rightarrow \Delta (u-u_0) $ in $L^2(\Omega)$. For each $n \in \mathbb{N}$ let $\phi_n$ be the solution of 
\begin{equation*}
\begin{cases}
\Delta \phi_n = \zeta_n  & \mathrm{in} \; \Omega \\
\phi_n = 0 &  \mathrm{on} \; \partial \Omega . 
\end{cases}
\end{equation*} 
By elliptic regularity, $\phi_n \in W_0^{1,2}(\Omega)\cap W^{2,2}(\Omega)\cap C^\infty(\Omega)$. Moreover, by Definition \ref{def:hilbers}
\begin{equation*}
 ||\phi_n - (u-u_0)||_{W^{2,2}\cap W_0^{1,2}} = ||\Delta \phi_n - \Delta (u-u_0)||_{L^2} =||\zeta_n - \Delta(u-u_0)|| \rightarrow 0  \quad (n \rightarrow \infty) .
\end{equation*} 
 Since $W^{2,2}(\Omega) $ embeds into $C(\overline{\Omega})$, we obtain that $\phi_n$ converges uniformly to $u - u_0$.  Since $\phi_n \xi \in C_0^\infty(\mathbb{R}^2)$  we can compute 
\begin{align}
u(x) \xi(x) & = (u(x) - u_0(x) )\xi(x) + u_0(x) \xi(x) \nonumber
\\ & = \lim_{n \rightarrow \infty} \phi_n(x) \xi(x) + u_0(x) \xi(x) \nonumber
\\ & = \lim_{n \rightarrow \infty} \int F(x,y) \Delta^2 ( \phi_n(y) \xi(y) ) dy + u_0(x) \xi(x) \nonumber
\\ &= \lim_{n \rightarrow \infty} \int \Delta_y F(x,y) \Delta ( \phi_n(y) \xi(y) ) \dy + u_0(x) \xi(x) \nonumber
\\ & = \lim_{n \rightarrow \infty} \int_\Omega \Delta_y F(x,y) ( \Delta \phi_n (y) \xi(y) + 2 \nabla \phi_n (y) \nabla \xi(y) + \phi_n(y) \Delta \xi(y) )  \dy \nonumber
 \\ & \qquad \qquad  + u_0(x) \xi(x) \nonumber
\\ & = \int_\Omega \Delta_y F(x,y) \Delta(u-u_0)(y) \xi(y) \dy   + 2\int_\Omega \Delta_y F(x,y)\nabla(u-u_0)(y)  \nabla \xi(y) \dy \nonumber \\ & \qquad \qquad  + \int_\Omega \Delta_y F(x,y) (u-u_0)(y) \Delta \xi(y) \dy + u_0(x) \xi(x) \nonumber
\\ & = \int_\Omega \Delta_y F(x,y) \Delta u (y) \xi(y) \dy - \int_\Omega  \Delta_y F(x,y) \Delta u_0(y) \xi(y) \dy + R_1(x), \label{eq:223} 
\end{align}
where 
\begin{align}\label{eq:defR1}
R_1(x)  & :=  2\int_\Omega \Delta_y F(x,y)\nabla(u-u_0)(y)  \nabla \xi(y) \dy \nonumber   \\ & \quad \quad + \int_\Omega \Delta_y F(x,y) (u-u_0)(y) \Delta \xi(y) \dy  + u_0(x) \xi(x). 
\end{align}
%Note that by construction $\nabla \xi \equiv  \Delta\xi  \equiv 0$ in $U$, where $U$ is chosen as in the beginning of the proof. Therefore both integrals in \eqref{eq:defR1} can be rewritten as integrals over $U^C$. We claim that $R_1 \in C^\infty(\overline{\Omega_{\epsilon_0}^C})$. To show this observe that by \eqref{eq:laplaci} $\Delta_y F(\cdot,y) \in C^\infty(\overline{\Omega_{\epsilon_0}^C})$ 
%for each $y$ with positive distance of $\overline{\Omega_{\epsilon_0}^C}$. Furthermore, by construction $U^C$ has positive distance of $\overline{\Omega_{\epsilon_0}^C}$. By the Leibniz integral rule we obtain the intermediate claim. 
Since $F$ is smooth as long as $x \neq y$ and by choice of $\xi$ we obtain that $R_1 \in C^\infty( \overline{\Omega_{\epsilon_0}^C})$.
We further examine \eqref{eq:223} noting that
\begin{equation}\label{eq:smoothi}
\int_\Omega \Delta_y F(x,y) \Delta u_0(y) \xi(y) \dy = \int_\Omega \Delta_y F(x,y) \Delta (u_0 \xi)(y)  \dy + R_2(x)  
\end{equation}
where 
\begin{equation*}
R_2(x) := - 2 \int_\Omega\Delta_yF(x,y) \nabla u_0(y) \nabla \xi(y) \dy - \int_\Omega \Delta_y F(x,y) u_0(y) \Delta \xi(y) \dy,   
\end{equation*}
which lies in $C^\infty(\overline{\Omega_{\epsilon_0}^C})$  with the same arguments used above. Now since $u_0 \xi \in C_0^\infty(\Omega)\subset C_0^\infty(\mathbb{R}^2)$, \eqref{eq:smoothi} simplifies by definition of the fundamental solution to 
\begin{equation*}
\int_\Omega \Delta_y F(x,y) \Delta u_0(y) \xi(y) \dy = u_0(x) \xi(x) + R_2(x) , 
\end{equation*}
and the right hand side of this equation lies in $C^\infty( \overline{\Omega_{\epsilon_0}^C}) $. Using this and \eqref{eq:223} we obtain 
\begin{equation}\label{eq:228}
u(x) \xi(x) = \int_\Omega \Delta_y F(x,y) \Delta u (y) \xi(y) \dy + \widetilde{h}(x)
\end{equation}
for some $\widetilde{h} \in C^\infty(\overline{\Omega_{\epsilon_0}^C})$. Now observe that 
\begin{equation}\label{eq:229}
 \int_\Omega \Delta_y F(x,y) \Delta u (y) \xi(y) dy = \int_\Omega \Delta u \Delta_y ( F(x, \cdot) \xi(\cdot)) dy + R_3(x) 
\end{equation}
where 
\begin{equation*}
R_3(x) = - 2 \int_\Omega \Delta u(y) \nabla_y F(x,y) \nabla \xi(y) dy - \int_\Omega \Delta u (y) F(x,y) \Delta \xi(y) dy .
\end{equation*}
Note that $R_3 \in C^\infty( \overline{\Omega_{\epsilon_0}^C} )$ for the very same reason as $R_1,R_2 $ are $C^\infty$. For the first summand on the right hand side of \eqref{eq:229} we can use \eqref{eq:bihammeas} since $F(x, \cdot) \xi \in W_0^{2,2}(\Omega)$ to obtain 
\begin{equation}\label{eq:regula}
u(x) \xi(x) = - \frac{1}{2}\int_\Omega F(x,y) \xi(y) d\mu(y)  + h(x) 
\end{equation}
for some $h \in C^\infty(\overline{\Omega_{\epsilon_0}^C} ) $. Note that by construction of $\Omega_{\epsilon_0}$ in  Corollary \ref{cor:guterrand}, $\xi \equiv 1 $ on $U \supset \Omega_{\epsilon_0}^C \supset \{ u = 0\} \supset \supp(\mu)$. Therefore we can leave out the $\xi$ in the $\mu$-integration. Now we plug in $x \in \overline{\Omega_{\epsilon_0}^C}$. Then $\xi(x) = 1$ and hence by  \eqref{eq:regula} 
\begin{equation*}
u(x) = - \frac{1}{2}\int_\Omega F(x,y) d\mu(y) + h(x). \qedhere
\end{equation*}
\end{proof}

\section{Proof of Lemma \ref{lem:regunod}} \label{sec:regunod}

\begin{proof}[Proof of Lemma \ref{lem:regunod}]
We only show, for the sake of simplicity that $\Delta u \in C(\Omega)$. Other second derivatives work similarly. 
For $x \in \{ u> 0 \} \cup \{ u< 0 \}$ one can infer continuity of the Laplacian from Lemma \ref{lem:biham}. Now fix $x \in \{ u = 0 \} $. We show that $\Delta u $ is continuous at $x$. Choose $r \in( 0, \tfrac{1}{2})  $ such that $\nabla u \neq 0 $ on $\overline{B_{2r}(x)}$ and $B_r(x) \cap \{ u = 0 \} $ possesses a graph representation i.e. there exists a bounded interval $U \supset \{ x_1\} $ open and $h \in C^1(\overline{U})$ such that 
\begin{equation}\label{eq:graphr}
B_r(x) \cap \{ u = 0  \} = \{ (y',h(y')) : y' \in U \} .
\end{equation}
Now let $\xi \in C_0^\infty(B_r(x))$ be such that $0 \leq \xi \leq 1$ and  $\xi \equiv 1 $ on $B_{\frac{r}{2}}(x)$. Then we can infer just like in the derivation of \eqref{eq:regula} that for each $z \in B_{\frac{r}{4}}(x)$ we have 
\begin{equation}\label{eq:559}
u(z)\xi(z) =  -\frac{1}{2}\int F(z,y) \xi(y)\; \mathrm{d}\mu(y) + h(z) ,
\end{equation}
where $h \in C^\infty(\overline{B_\frac{r}{4}(x)})$ and $F$ is given in Lemma \ref{lem:green}. Recall that by \eqref{eq:542} $\mu$ can be characterized further. 
%\begin{align*}
%\mu(A \cap B_r(x)) &  =  \int_{A \cap B_r(x)} \frac{1}{|\nabla u |} \; \mathrm{d}\mathcal{H}^1  \llcorner_{ \{ u = 0  \} }.
%\end{align*}
Given this \eqref{eq:559} simplifies to
\begin{equation*}
u(z) = h(z)  - \frac{1}{2} \int F(z,y) \xi(y) \frac{1}{|\nabla u(y)  |} \; \mathrm{d}\mathcal{H}^1  \llcorner_{ \{ u = 0  \} }(y) \quad \forall z \in B_{\frac{r}{4}}(x) .
\end{equation*}
Using the graph reparametrization \eqref{eq:graphr} we get 
\begin{equation}\label{eq:estii}
u(z) = h(z) - \frac{1}{2} \int_U F(z,(y', h(y')) \xi(y',h(y')) \frac{1}{|\nabla u|(y',h(y'))  } \sqrt{ 1 +| \nabla h (y')|^2} \dy'.
\end{equation}
By choice of $r$ we have $| (y', h(y')) - x| < r < \frac{1}{2}$ for each $y' \in U$ and $|z-x|< \frac{r}{4}< \frac{1}{8}$. Hence $| z- (y', h(y')) | < \frac{5}{8} <1$ which implies that the expression in the integral is negative, see the properties of $F$ in Lemma \ref{lem:green}.
%Explicitly, we have 
%\begin{align*}
%u(z)& = h(z) - \frac{1}{32\pi}\int_U \bigg( (|z_1 - y'|^2 + |z_2 - h(y')|^2)\log  ((|z_1 - y'|^2 + |z_2 - h(y')|^2) \\ &  \quad \quad \quad \quad   \quad \quad \quad \quad \quad \quad \quad \quad \quad \quad \quad \quad \quad \quad \quad   \frac{\xi(y', h(y'))}{|\nabla u | (y', h(y'))} \sqrt{ 1 +| \nabla h (y')|^2} \bigg) \dy' 
%\end{align*}
Taking the derivative using similar techniques as in the proof of Corollary \ref{cor:regreg} we get 
\begin{align*}
  \Delta u(z)  & = \Delta h (z) - \frac{1}{4\pi} \int_{U} \bigg( \left( 1 + \frac{1}{2} \log (| z_1 - y'|^2 + |z_2 - h(y')|^2 ) \right)  \\ &   \quad \quad \quad \quad   \quad \quad \quad \quad \quad \quad \quad \quad \quad \quad\frac{\xi(y', h(y'))}{|\nabla u | (y', h(y'))} \sqrt{ 1 +| \nabla h (y')|^2}\bigg) \dy'
\end{align*}
Later we will use the dominated convergence theorem to show continuity. To do so,
we substitute $y'' = z_1 - y'$ to get for each $z \in B_{\frac{r}{4}}(x) $ 
\begin{align}
\Delta u(z) = \Delta h (z) -  & \frac{1}{4\pi} \int \bigg( \left( 1 + \frac{1}{2} \log (|y''|^2 + |z_2 - h(z_1 - y'')|^2 ) \right) \nonumber \\ & \quad \quad  \chi_{z_1 - U}(y'')  \frac{\xi(z_1- y'', h(z_1 - y''))}{|\nabla u | (z_1 - y'', h(z_1 - y''))} \sqrt{ 1 +| \nabla h (z_1 - y'')|^2} \bigg)\; \mathrm{d}y''\label{eq:567}
\\ =: \Delta h(z) - & \int g(z,y'') \dy''.\nonumber 
\end{align}
Now suppose that $\left(z^{(n)} \right)_{n = 1}^\infty = \left(  (z_1^{(n)}, z_2^{(n)}) \right)_{n = 1}^\infty \subset B_{\frac{r}{4}}(x)$ is a sequence such that $z^{(n)} \rightarrow x$. By monotonicity of the logarithm and an argument similar to the discussion after \eqref{eq:estii}  we have 
\begin{equation*}
1 + \frac{1}{2} \log |y''|^2 \leq  \left( 1 + \frac{1}{2} \log (|y''|^2 + |z_2 - h(z_1 - y'')|^2 ) \right) \leq 1 .
\end{equation*}
Hence, 
\begin{equation}\label{eq:logbound}
\left\vert 1 + \frac{1}{2} \log (|y''|^2 + |z_2 - h(z_1 - y'')|^2 ) \right\vert \leq  1 + | \log |y''| | .
\end{equation}
Moreover, 
\begin{equation*}
y'' \mapsto \left\vert  \chi_{z_1 - U}(y'')  \frac{\xi(z_1- y'', h(z_1 - y''))}{|\nabla u | (z_1 - y'', h(z_1 - y''))} \sqrt{ 1 +| \nabla h (z_1 - y'')|^2} \right\vert 
\end{equation*}
can be bounded independently of $z \in B_\frac{r}{4}(x)$ by  $C \chi_{B_A(0)}(y'')$ for some appropriate $C,A > 0 $. 
Given this and \eqref{eq:logbound} the integrand in \eqref{eq:567} can be bounded by $C(1 + | \log |y''| |) \chi_{B_A(0)}(y'')$, which is an integrable dominating function. 
% This dominating function is integrable since 
%\begin{equation*}
%\int_{-A}^A  C(1 + | \log |y''| |) = 2 \int_0^1 C(1 -  \log y'' ) dy'' + 2\int_1^A C (1 +  \log y'' ) dy'' < \infty,
%\end{equation*} 
%because the antiderivative of $s \mapsto \log(s)$ is given by $s \mapsto s \log(s) - s $, which is bounded in $(0,A]$. 
By the dominated convergence theorem we can interchange pointwise a.e. limits and integration. Since $g(z^{(n)}, y'') \rightarrow g(z,y'')$ for Lebesgue almost every $y'' \in \mathbb{R}$ we obtain that $\Delta u (z^{(n)}) \rightarrow \Delta u (x)$ which shows the desired continuity.
\end{proof}
\section{Vanishing radial limits}\label{sec:vanrad}
 We assume for this appendix section that $\Omega = B_1(0)$. We will first show existence of the radial limits and then improve upon this result by showing that they vanish.
 \begin{lemma}[Existence of Radial Limits]\label{lem:radlim} 
Let $\Omega = B_1(0)$ and $u \in \A(u_0)$ be a minimizer. Let $(r, \theta) \in [0,1) \times [0,2\pi)$ be the polar coordinate representation of $\Omega$. Then for almost every $\theta \in [0,2\pi]$ there exists
\begin{equation*}
\lim_{r \rightarrow 1} \Delta u (r, \theta ) := B(\theta) .
\end{equation*} 
Moreover $B \in L^1(0,2\pi)$.
\end{lemma}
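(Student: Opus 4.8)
The plan is to read off the radial boundary behaviour of $\Delta u$ from that of $v := (\Delta u)^{*}$, which by Corollary~\ref{cor:suubhaam} is a nonnegative superharmonic function on $B_{1}(0)$. First I would record two structural facts. By \eqref{eq:bihammeas} one has $2\,\Delta(\Delta u) = -\mu$ in the sense of distributions, so the Riesz measure of $v$ is $\tfrac{1}{2}\mu$; and since $(u_{0})_{\mid\partial\Omega}\geq\delta>0$, Corollary~\ref{cor:guterrand} gives $\{u=0\}\subset\Omega_{\epsilon_{0}}^{C}$, so $\supp\mu\subset\{u=0\}$ is a compact set $K\subset B_{1}(0)$. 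In particular $v$ is harmonic, hence smooth, on the annulus $\Omega_{\epsilon_{0}}=\{1-\epsilon_{0}<|x|<1\}$, and there $v$ agrees pointwise with $\Delta u$ by the mean value property; it therefore suffices to prove the statement for $v$.

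Next, since $v\geq 0$ it admits the harmonic minorant $0$, so the Riesz decomposition theorem on $B_{1}(0)$ yields $v=\mathcal{G}\nu+H$, where $\nu=\tfrac{1}{2}\mu$, $\mathcal{G}\nu(x)=\int_{B_{1}(0)}G_{B_{1}}(x,y)\,\mathrm{d}\nu(y)$ is the associated Green potential, and $H\geq 0$ is harmonic on $B_{1}(0)$. For the potential term I would use that $\nu$ is carried by the compact set $K\subset B_{1}(0)$ and that, for each $y\in K$, the function $x\mapsto G_{B_{1}}(x,y)$ extends continuously to $\overline{B_{1}(0)}$ with boundary value $0$ on $\partial B_{1}(0)$, uniformly for $y\in K$; dominated convergence then forces $\mathcal{G}\nu(x)\to 0$ uniformly as $|x|\to 1$. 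For the harmonic term, Herglotz's representation writes $H$ as the Poisson integral of a finite positive measure $\lambda$ on $\partial B_{1}(0)$, and Fatou's theorem gives a radial limit $H(r,\theta)\to\tfrac{\mathrm{d}\lambda}{\mathrm{d}\theta}(\theta)$ for a.e.\ $\theta$, with $\tfrac{\mathrm{d}\lambda}{\mathrm{d}\theta}\in L^{1}(0,2\pi)$. Adding the two contributions, $v(r,\theta)\to B(\theta):=\tfrac{\mathrm{d}\lambda}{\mathrm{d}\theta}(\theta)$ for a.e.\ $\theta$ with $B\in L^{1}(0,2\pi)$, and since $v=\Delta u$ near $\partial B_{1}(0)$ this is exactly the claim.

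I do not expect a genuine obstacle here: the content is classical potential theory on the disc, and everything it needs — nonnegativity and superharmonicity of $(\Delta u)^{*}$, and the compactness of $\supp\mu$ inside $B_{1}(0)$ — is already established in Sections~2 and~3. The points to be careful about are the sign in the identification of the Riesz measure as $+\tfrac{1}{2}\mu$, the validity of the Riesz decomposition on all of $B_{1}(0)$ (automatic from $v\geq 0$), and citing Fatou's theorem in the form producing an a.e.\ radial limit equal to an $L^{1}$ density. If one prefers to bypass the abstract decomposition, one can argue directly: by Weyl's lemma $H:=\Delta u+\tfrac{1}{4\pi}\int_{B_{1}(0)}\log|\cdot-y|\,\mathrm{d}\mu(y)$ is harmonic on $B_{1}(0)$, the log-potential is smooth and bounded near $\partial B_{1}(0)$ because $\supp\mu\subset\subset B_{1}(0)$, so $\Delta u\geq 0$ forces $H$ to be bounded below on $B_{1}(0)$; applying Fatou's theorem to a suitable nonnegative constant plus $H$ then gives the radial limits of $H$, hence of $\Delta u$.
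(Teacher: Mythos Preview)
Your proof is correct but follows a genuinely different route from the paper's. The paper invokes Littlewood's theorem on radial limits of subharmonic functions directly: it checks that $-\Delta u$ is subharmonic (continuous by Theorem~\ref{thm:1.1}, weakly subharmonic by \eqref{eq:subbiham}) and that the circle means $r\mapsto\int_{0}^{2\pi}|\Delta u(r,\theta)|\,\mathrm{d}\theta$ are bounded, which is immediate from $\Delta u\geq 0$ and superharmonicity (the circle averages are dominated by $\Delta u(0)$). Integrability of $B$ then follows by Fatou's lemma. Your argument instead decomposes $v=(\Delta u)^{*}$ via the Riesz decomposition into a Green potential of the compactly supported measure $\tfrac{1}{2}\mu$ plus a nonnegative harmonic function $H$, then handles the potential by uniform vanishing of $G_{B_{1}}(\cdot,y)$ at the boundary for $y$ in a compact set, and $H$ via Herglotz plus Fatou's theorem for Poisson integrals of measures. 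The paper's approach is shorter (a single black-box citation), while yours is more structural: it identifies $B(\theta)$ explicitly as the Radon--Nikodym density of the Herglotz measure of $H$, which is extra information that could streamline the subsequent Lemma~\ref{lem:strongnav}. Note that by this point Theorem~\ref{thm:1.1} is available, so $\Delta u$ is continuous and you may dispense with the distinction between $\Delta u$ and $(\Delta u)^{*}$; also the precise identification of the Riesz measure as $\tfrac{1}{2}\mu$ is not actually needed for your argument, only that it is supported away from $\partial B_{1}(0)$.
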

\begin{proof}
We apply \cite[Main Theorem]{Littlewood}, which says that any subharmonic function $v$ in $B_1(0)$ such that 
\begin{equation}\label{eq:avin}
[0, 1) \ni r \mapsto \int_0^{2\pi} |v(r, \theta) | \; \mathrm{d}\theta \in \mathbb{R} \quad \textrm{is bounded}
\end{equation}
 has radial limits in $\mathbb{R}$ for almost every $\theta \in [0,2\pi)$  as $r \rightarrow 1$, i.e. there is a measurable map $L = L(\theta) : [0, 2\pi ) \rightarrow \mathbb{R}$ such that 
\begin{equation*}
\lim_{r \rightarrow 1} v(r, \theta ) = L(\theta) \quad\textrm{for almost every} \; \theta \in [0,2\pi).
\end{equation*}
To show existence of radial limits of $\Delta u$, we check \eqref{eq:avin} for $v = -\Delta u$, which is subharmonic by \cite[Theorem 4.3]{Serrin}, as it is continuous by Theorem \ref{thm:1.1} and weakly subharmonic by Lemma \ref{lem:EL}.   Note that $|-\Delta u (0)|$ is a finite number as $0$ lies in the interior if $B_1(0)$. By superharmonicity of $\Delta u$ and $\Delta u \geq 0 $ by Corollary \ref{lem:corsubham} we get
\begin{equation*}
|-\Delta u(0)| = \Delta u(0) \geq \frac{1}{2\pi} \int_0^{2\pi} \Delta u (r , \theta) \; \mathrm{d}\theta = \frac{1}{2\pi} \int_0^{2\pi} |\Delta u (r, \theta) | \; \mathrm{d}\theta   \quad  \forall r \in (0,1), 
\end{equation*} 
which implies that $v = - \Delta u$ fulfills \eqref{eq:avin} and hence the existence of radial limits of $\Delta u$. 
 Define for almost every $\theta \in [0, 2\pi)$. 
\begin{equation*}
B(\theta) := \lim_{r \rightarrow 1}  \Delta u(r, \theta) .
\end{equation*}
Notice that $B (\theta ) \geq  0$. By Fatou's Lemma we have 
\begin{align}\label{eq:86}
\frac{1}{2\pi} \int_0^{2\pi}  B(\theta)  \; \mathrm{d}\theta & \leq \liminf_{ r \rightarrow 1 } \frac{1}{2\pi} \int_0^{2\pi} \Delta u(r, \theta)  \; \mathrm{d}\theta  = \liminf_{ r \rightarrow 1 } \frac{1}{2\pi r} \int_0^{2\pi} \Delta u(r, \theta) \; \mathrm{d}\theta \nonumber \\ & = \liminf_{ r \rightarrow 1 }  \fint_{\partial B_r(0) } \Delta u(x) \; \mathrm{d}S_r(x) ,
\end{align}
where $\mathrm{d}S_r$ denotes the surface measure on $\partial B_r(0)$. Since $\Delta u$ is superharmonic, the integral average is bounded from above by $\Delta u (0)$ which is a finite number. Therefore $B \in L^1(0,2\pi)$ and the claim is shown. 
%and set $\phi(x) := \frac{|x|^2 -1}{2}$, which lies in $W^{2,2}\cap W_0^{1,2}$, as one can easily check. Observe that $B(\theta) \geq 0 $ for almost every $\theta$ as $\Delta u$ is nonnegative. Let $dS_r$ denote the surface measure on $\partial B_r(0)$.  By Fatou's Lemma and the Gauss Divergence Theroem we obtain 
%\begin{align}
%\int_0^2{\pi} B(\theta) d\theta  & \leq \liminf_{ r \rightarrow 1 }  \int_0^{2\pi} \Delta u (r, \theta) d\theta   = \liminf_{r \rightarrow 1 } \frac{1}{r} \int_{\partial B_r(0) } \Delta u(x) \left( x \cdot \frac{x}{r} \right) dS_r(x)
%\\ & =  \liminf_{r \rightarrow 1 }  \int_{\partial B_r(0) } \Delta u(x) \left( x \cdot \frac{x}{r} \right) dS_r(x) = \int_{B_r(x) } \mathrm{div} (\Delta u(x) \cdot x ) dx  
%\end{align}
\end{proof}
\begin{remark}
One can infer from \eqref{eq:86} that 
\begin{equation}\label{eq:88}
\frac{1}{2\pi} \int_0^{2\pi}  B(\theta) \;  \mathrm{d}\theta \leq \liminf_{ r \rightarrow 1 }  \fint_{\partial B_r(0) } \Delta u(x) \; \mathrm{d}S_r(x)  = \inf_{r \in (0,1) } \fint_{\partial B_r(0) } \Delta u(x) \; \mathrm{d}S_r(x), 
\end{equation}
where the last equality holds because of superharmonicity of $\Delta u$ (cf. proof of Lemma \ref{lem:radlim}) and \cite[Proposition 4.4.15]{Berenstein}. 
\end{remark}

\begin{lemma}[Strong Navier Boundary Conditions] \label{lem:strongnav}
Let $u \in \A(u_0)$ be a minimizer in $\Omega = B_1(0)$. Let $B(\theta)$ be defined as in Lemma \ref{lem:radlim}. Then $B(\theta) = 0 $ for almost every $\theta \in [0,2\pi)$. In particular the radial limits of $\Delta u$ exist $\mathcal{H}^1$ almost everywhere and equal zero. 
\end{lemma}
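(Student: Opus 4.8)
The plan is to show that the radial limits $B(\theta)$ of $\Delta u$, which exist and are nonnegative by Lemma \ref{lem:radlim}, must all vanish. The natural strategy is a comparison/minimality argument: if $B$ were positive on a set of positive measure, then replacing $u$ near $\partial B_1(0)$ by the harmonic extension of its own boundary data would strictly decrease the Dirichlet energy $\int_\Omega (\Delta u)^2$, contradicting minimality (the positivity set near the boundary is all of a collar, so the adhesion term is unaffected). Concretely, recall from Corollary \ref{cor:guterrand} that $u$ is smooth and biharmonic, with $u \geq \delta/2$, on the collar $\Omega_{\epsilon_0}$; thus $\Delta u$ is harmonic there, and by Lemma \ref{lem:biham} the only "defect" from being the harmonic extension of $u_0$ is carried by the inner trace of $u$ and $\Delta u$ on the inner boundary of the collar.

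First I would set up the competitor. Let $h$ solve $\Delta h = 0$ in $B_1(0)$ with $h = u_0$ on $\partial B_1(0)$; since $u_0$ is constant, $h \equiv u_0$. More usefully, fix a small annulus $A = \{\rho < |x| < 1\} \subset \Omega_{\epsilon_0}$ and let $w$ be the biharmonic function on $A$ matching $u$ and $\nabla u$ on $\partial B_\rho(0)$ and with $w = u_0$, $\Delta w = 0$ on $\partial B_1(0)$ — i.e. $w$ satisfies mixed Navier-type data. Define $\widetilde u = u$ on $B_\rho(0)$ and $\widetilde u = w$ on $A$. Then $\widetilde u \in \A(u_0)$ and $|\{\widetilde u > 0\}| = |\{u > 0\}|$ since both equal $u$ on $B_\rho$ and both are positive on all of $A$ (shrinking $\epsilon_0$ if needed). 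By minimality $\int_A (\Delta u)^2 \le \int_A (\Delta w)^2$. On the other hand, $w$ is the unique minimizer of $\int_A (\Delta v)^2$ among $v$ with the prescribed data on $\partial B_\rho$ and the Navier condition $\Delta v = 0$ on $\partial B_1$, because the Navier boundary term drops out of the first variation. Hence $\int_A(\Delta w)^2 \le \int_A(\Delta u)^2$, forcing $w \equiv u$ on $A$ by strict convexity, and in particular $\Delta u = \Delta w = 0$ on $\partial B_1(0)$ in the trace sense. It then remains to identify the trace with the radial limit $B(\theta)$; this follows because $\Delta u$ is harmonic and in $W^{1,2-\beta}(A)$ up to $\partial B_1$ (Corollary after Lemma \ref{lem:maeval}), so its $W_0^{1,2-\beta}$ trace coincides with its radial limit a.e.

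An alternative, perhaps cleaner route avoids introducing $w$: since $\Delta u \ge 0$ is superharmonic on $B_1(0)$ and, by \eqref{eq:88}, $\frac{1}{2\pi}\int_0^{2\pi} B(\theta)\,d\theta \le \inf_{r}\fint_{\partial B_r}\Delta u$, one can try to show the right-hand side is $0$. If it were positive, then $\Delta u \ge c > 0$ would follow on a neighborhood of $\partial B_1$ (by the mean-value inequality downward for superharmonic functions, $\Delta u(x) \ge \fint_{\partial B_r(x)}\Delta u$), and integrating the biharmonic equation $\int_\Omega \Delta u \Delta \phi = 0$ for suitable $\phi$ supported near $\partial B_1$ against this lower bound produces a contradiction with \eqref{eq:bihame}, whose right-hand side is supported on the compactly-contained set $\{u=0\}$. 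I would carry out whichever of these is shorter; the comparison argument is more robust.

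The main obstacle I anticipate is the identification of the trace of $\Delta u$ on $\partial B_1(0)$ with the pointwise radial limit $B(\theta)$ — a priori these are different notions of boundary value, and the paper itself flags (in Section \ref{sec:fwts}) that relating them in general requires fine-topology machinery from \cite{ZiemerMaly}. Here, however, one is in the favorable situation that $\Delta u$ is \emph{harmonic} in the collar $A$ (Lemma \ref{lem:biham}), so it is real-analytic in $A$ and its boundary behavior is governed by classical Hardy-space / Fatou theory for harmonic functions on the disk; the $W_0^{1,2-\beta}$-trace and the nontangential (hence radial) limit of a harmonic function agree a.e. on $\partial B_1$. So the obstacle is really just assembling these classical facts carefully. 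A secondary technical point is verifying that the competitor $\widetilde u$ genuinely lies in $\A(u_0) = W^{2,2}(\Omega)\cap\{v - u_0 \in W_0^{1,2}\}$, which amounts to checking the matching of Cauchy data of $u$ and $w$ across $\partial B_\rho$ — guaranteed since $u \in C^\infty$ near $\partial B_\rho \subset \Omega_{\epsilon_0}$ by Corollary \ref{cor:guterrand} and $w$ is constructed to match that data.
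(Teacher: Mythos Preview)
Your first approach (the competitor argument) is essentially correct and is a genuinely different route from the paper's. A few remarks:

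\textbf{On the competitor argument itself.} Steps (a)--(d) work, but they are in fact redundant with the Corollary after Lemma~\ref{lem:maeval}, which already gives $\Delta u \in W_0^{1,2-\beta}(\Omega)$, i.e.\ zero trace on $\partial B_1$. So the real content of your first approach is the identification step (e): since $\Delta u$ is harmonic in the collar, the Sobolev trace and the radial/nontangential limits must agree a.e. This is precisely the step the paper flags in Section~\ref{sec:fwts} as requiring care and then \emph{deliberately sidesteps}; you are arguing, correctly, that harmonicity in the collar makes it tractable via classical Fatou/Hardy theory (one writes $\Delta u$ on the annulus as the harmonic extension of its smooth inner trace and its outer $W^{1,q}$-trace, uses uniqueness for the Dirichlet problem in $W^{1,q}$ with $q>1$, and then invokes nontangential convergence of Poisson integrals). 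This is a legitimate and conceptually clean alternative. One small correction: your claim that $w>0$ on $A$ is not justified (there is no maximum principle for the biharmonic mixed problem), but you do not need it---since $u>0$ on all of $A$, you only need $|\{\widetilde u>0\}|\le|\{u>0\}|$, which holds because $|\{w>0\}\cap A|\le |A|=|\{u>0\}\cap A|$.

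\textbf{Comparison with the paper.} The paper argues by contradiction: assuming $\tfrac{1}{2\pi}\int B>0$, it tests \eqref{eq:bihame} against the radial functions $\phi_p(x)=|x|^p-1$, uses \eqref{eq:88} to bound $\int\Delta u\,\Delta\phi_p$ from below by $c\int\Delta\phi_p=2\pi c\,p\to\infty$, and uses \eqref{eq:89} to bound it from above by a constant times $\|\phi_p\|_\infty=1$. This is completely self-contained---no harmonic boundary theory needed---at the price of being less conceptual. Your approach explains \emph{why} the Navier condition holds (minimality plus the natural boundary condition), while the paper's approach is a slick direct computation.

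\textbf{On your second approach.} The step ``$\Delta u \ge c>0$ on a neighborhood of $\partial B_1$ by the mean-value inequality downward for superharmonic functions'' is incorrect as stated: superharmonicity gives $\Delta u(x)\ge\fint_{\partial B_r(x)}\Delta u$ for balls centered at $x$, not a pointwise bound from spherical averages about the origin. The repair is exactly what the paper does: take \emph{radial} test functions $\phi$, so that $\int\Delta u\,\Delta\phi$ factors through the spherical averages $r\mapsto\fint_{\partial B_r(0)}\Delta u$, and then \eqref{eq:88} applies directly without any pointwise bound.
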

\begin{proof}
We show that 
\begin{equation}\label{eq:dingenss}
\frac{1}{2\pi }\int_0^{2\pi} B(\theta) \; \mathrm{d}\theta = 0  .
\end{equation}
The claim follows from this since $B$ is nonnegative (recall Lemma \ref{lem:radlim} and Corollary \ref{lem:corsubham}). Now suppose the opposite, i.e.  
\begin{equation*}
\frac{1}{2\pi }\int_0^{2\pi} B(\theta) \; \mathrm{d}\theta > 0  .
\end{equation*}
 Let $f \in C([0,1])$ and consider the Poisson problem 
\begin{equation*}
\begin{cases}
\Delta \phi (x) = f(|x|) &  x \in  B_1(0) \\
\phi(x) = 0 & x \in \partial B_1(0) 
\end{cases}.
\end{equation*}
By elliptic regularity, see \cite[Theorem 8.12]{Gilbarg}, the problem has a solution $\phi \in W^{2,2}(\Omega) \cap W_0^{1,2}(\Omega)$. Now observe using \eqref{eq:89}, \eqref{eq:88}, spherical integration and Fubini's Theorem: 
\begin{align*}
||\phi||_{L^\infty(\Omega) } \frac{|\Omega|}{\inf_{\partial B_1(0)} u_0} & \geq \int_{\{ u = 0 \}} \frac{-\phi}{|\nabla u|} \; \mathrm{d}\mathcal{H}^1 = \int_{B_1(0)} \Delta u \Delta \phi \dx = \int_{B_1(0)} \Delta u(x)  f(|x|) \dx 
\\ & = \int_0^1 f(r) \int_{\partial B_r(0)} \Delta u \; \mathrm{d}S_r \dr = 2\pi \int_0^1 r f(r) \fint_{\partial B_r(0)} \Delta u \; \mathrm{d}S_r \dr
\\ & \geq 2\pi  \int_0^1 r f(r) \frac{1}{2\pi} \int_{0}^{2\pi} B(\theta)\; \mathrm{d}\theta  \dr \\ & = \left( \int_0^1 2\pi r f(r) \dr  \right)  \left(  \frac{1}{2\pi}\int_0^{2\pi} B(\theta) \; \mathrm{d}\theta \right) 
\\ & = \left( \int_{B_1(0)} f(|x|) \dx  \right)  \left(  \frac{1}{2\pi}\int_0^{2\pi} B(\theta) \; \mathrm{d}\theta \right)
\\ & =  \left( \int_{B_1(0)} \Delta \phi(x) \dx  \right)  \left(  \frac{1}{2\pi}\int_0^{2\pi} B(\theta) \; \mathrm{d}\theta \right). 
\end{align*}
This implies that for each $\phi \in C^2(\overline{B_1(0)}) \cap W_0^{1,2}(B_1(0))$ that is radial one has 
\begin{equation}\label{eq:824}
\left( \int_{B_1(0)} \Delta \phi(x) \dx  \right) \leq  \left(  \frac{1}{2\pi}\int_0^{2\pi} B(\theta) \; \mathrm{d}\theta \right)^{-1} \frac{|\Omega|}{\inf_{\partial B_1(0)} u_0} ||\phi||_{L^\infty(B_1(0))} .
\end{equation}
Now consider for arbitrary $p \in [2,\infty)$, $\phi_p(x) := |x|^p -1 $. Observe that $\phi_p$ is radial, lies in $C^2 (\overline{B_1(0)}) \cap W_0^{1,2}(B_1(0))$, that $||\phi_p||_{L^\infty(B_1(0))} = 1$ and  $\Delta \phi_p = p^2 |x|^{p-2}$. Therefore 
\begin{equation*}
\left( \int_{B_1(0)} \Delta \phi_p(x) \dx  \right)  = \int_0^1 2\pi p^2 r r^{p-2} \dr  = 2\pi p .
\end{equation*}
Plugging this into  \eqref{eq:824} we obtain
\begin{equation*}
2\pi p  \leq  \left(  \frac{1}{2\pi}\int_0^{2\pi} B(\theta) d\theta \right)^{-1} \frac{|\Omega|}{\inf_{\partial B_1(0)} u_0} .
\end{equation*}
We can obtain a contradiction choosing $ p : = \frac{1}{\pi}\left(  \frac{1}{2\pi}\int_0^{2\pi} B(\theta) d\theta \right)^{-1} \frac{|\Omega|}{\inf_{\partial B_1(0)} u_0} $, which is an admissible choice. Hence we have shown \eqref{eq:dingenss}  by contradiction. 
\end{proof}

\begin{remark}
Note also that by Corollary \ref{cor:guterrand} and Lemma \ref{lem:biham}, $\Delta u$ is harmonic in the Annulus $B_1(0) \setminus \overline{B}_{r_0}(0)$ for some $r_0$ sufficiently close to $1$. By \cite[Theorem 2]{Dahlberg}, $\Delta u$ has also nontangential limits as $x \rightarrow e^{i \theta} \in \partial B_1(0)$ for almost every $\theta \in [0, 2\pi)$. Of course the nontangential limit has to coincide with the radial limit, which is zero as we just proved. Hence  we obtain `$\Delta u = 0$ $\mathcal{H}^1$-almost everywhere on $\partial B_1(0)$' in the sense of nontangential limits, see \cite{Dahlberg} for more details on these.   
\end{remark}

%\begin{remark}[Connectedness of the free boundary]
%Another interesting question is under which conditions the quantity $N$ in Theorem \ref{thm:1.1} equals $1$, i.e. $\{ u < 0 \}$ is connected.    
%\end{remark}
% We have shown that any minimizer $u \in \A(u_0)$ lies in $C^2(\Omega)\cap W_{loc}^{3,2-\beta}(\Omega)$ for each $\beta > 0 $. The goal of this section is to show that this is optimal. 
% \begin{lemma}
% 
% 
% \end{lemma}

%Note also that we are unable to convert the weak formulation of the Navier boundary conditions into the strong formulation at this point, since the usual argument needs at least $W^{3,1}$ regularity up to the boundary, so that one can obtain a trace of $\Delta u$  

\end{document}